\newtheorem{defin}{Definition}
\newtheorem{lemma}{Lemma}
\newtheorem{prop}{Proposition}
\newtheorem{theo}{Theorem}
\newtheorem{corol}{Corollary}
\newenvironment{proof}{\medskip\par\noindent{\bf Proof}}{\hfill $\Box$
\medskip\par}
\newcommand{\C}{\mathbb{C}}
\newcommand{\R}{\mathbb{R}}
\newcommand{\Z}{\mathbb{Z}}
\begin{document}
\title{On parametric multilevel $q-$Gevrey asymptotics for some linear Cauchy problem}
\author{{\bf A. Lastra\footnote{The author is partially supported by the project MTM2012-31439 of Ministerio de Ciencia e
Innovacion, Spain}, S. Malek}\\
University of Alcal\'{a}, Departamento de F\'{i}sica y Matem\'{a}ticas,\\
Ap. de Correos 20, E-28871 Alcal\'{a} de Henares (Madrid), Spain,\\
University of Lille 1, Laboratoire Paul Painlev\'e,\\
59655 Villeneuve d'Ascq cedex, France,\\
{\tt alberto.lastra@uah.es}\\
{\tt Stephane.Malek@math.univ-lille1.fr }}
\date{}
\maketitle
\thispagestyle{empty}
{ \small \begin{center}
{\bf Abstract}
\end{center}

We study a linear $q-$difference-differential Cauchy problem, under the action of a perturbation parameter $\epsilon$. This work deals with a $q-$analog of the research made in~\cite{lama15} giving rise to a generalization of the work~\cite{ma15}. This generalization is related to the nature of the forcing term which suggests the use of a $q-$analog of an acceleration procedure. 

The proof leans on a $q-$analog of the so-called Ramis-Sibuya theorem which entails two distinct $q-$Gevrey orders. The work concludes with an application of the main result when the forcing term solves a related problem.

\medskip

\noindent Key words: asymptotic expansion, Borel-Laplace transform, Fourier transform, Cauchy problem, formal power series, singular perturbation, q-difference-differential equation. 2010 MSC: 35C10, 35C20.}
\bigskip

\section{Introduction}

The present work deals with the study of the solution $u(t,z,\epsilon)$ of a family of inhomogeneous linear $q-$difference-differential Cauchy problems of the form
\begin{align}
&Q(\partial_z)\sigma_qu(t,z,\epsilon)=(\epsilon t)^{d_{D}}\sigma_{q}^{\frac{d_{D}}{k_2}+1}R_{D}(\partial_{z})u(t,z,\epsilon)\nonumber\\
&\hspace{3cm}+\sum_{\ell=1}^{D-1}\left(\sum_{\lambda\in I_{\ell}}t^{d_{\lambda,\ell}}\epsilon^{\Delta_{\lambda,\ell}}\sigma_{q}^{\delta_\ell}c_{\lambda,\ell}(z,\epsilon)R_{\ell}(\partial_z)u(t,z,\epsilon)\right)+\sigma_{q}f(t,z,\epsilon).\label{e1}
\end{align}
Here, $D, k_2, d_{D}$ are positive integers with $D\ge 3$, $q$ is a real number with $q>1$ and for every $1\le \ell\le D-1$, $I_\ell$ is a finite nonempty subset of nonnegative integers whilst $\delta_{\ell}$ is a positive integer. For each $1\le \ell\le D-1$ and $\lambda\in I_{\ell}$, we take $d_{\lambda,\ell}\ge1$ and $\Delta_{\lambda,\ell}\ge0$. 

The elements $Q$ and $R_{\ell}$, for $1\le \ell\le D$,  are polynomials with $\deg(Q)\ge \deg(R_{D})\ge\deg(R_{\ell})$ for all $1\le\ell\le D-1$. The details on the properties satisfied by the previous constants and polynomials involved in the equation under study are carefully described at the beginning of Section~\ref{seccion5}. We also give an example of a problem under study in the present work at the end of Section~\ref{seccion5}.

The variable $\epsilon$ acts as a perturbation parameter in the problem. We describe an asymptotic meaning of the solutions and provide the existence of a formal solution to the main problem with respect to this parameter (see Theorem~\ref{teo1215}).

For every $\gamma\in\R$, the operator $\sigma_{q}^{\gamma}$ appearing in (\ref{e1}) stands for the generalization of the dilation operator on $t$ variable, with $\gamma=1$. More precisely, for any function $g$ given in a set $H$, $\sigma_{q}^{\gamma}$ is defined by
$$\sigma_{q}^{\gamma}(g(t)):=g(q^{\gamma}t),$$ 
whenever the right-hand side makes sense, i.e. if $q^{\gamma}t\in H$ for all $t\in H$. We will also consider the natural extension of this definition to the formal framework in the following way: given a formal power series $\hat{f}(z)=\sum_{\ell\ge0}f_{\ell}z^{\ell}$ with coefficients in a set which is closed under multiplication by real numbers (in our concerns, this set would turn out to be a complex Banach space), the formal power series $\sigma_q^{\gamma}\hat{f}$ is given by $\sum_{\ell\ge0}q^{\gamma\ell}f_{\ell}z^{\ell}$.

For every $1\le \ell\le D-1$ and $\lambda\in I_{\ell}$, the function $c_{\lambda,\ell}(z,\epsilon)$ is constructed as the inverse Fourier transform with respect to $z$ of a continuous function $(m,\epsilon)\mapsto C_{\lambda,\ell}(m,\epsilon)$ defined in $\R\times B$, where $B$ is a neighborhood of the origin. As a matter of fact, $c_{\lambda,\ell}$ is a bounded holomorphic function defined in a horizontal strip in the variable $z$, say $H_{\beta'}$ (see (\ref{e268bb})), times $B$.

The forcing term $f(t,z,\epsilon)$ turns out to be a holomorphic function defined in $\mathcal{T}\times H_{\beta'}\times \mathcal{E}$, where $\mathcal{T}$ and $\mathcal{E}$ stand for finite sector with vertex at the origin. In the sequel, we provide more details on this function which is crucial in order to understand the interest of this work.

We choose $1\le k_1<k_2$, and put 
$$\frac{1}{\kappa}=\frac{1}{k_1}-\frac{1}{k_2}.$$

The construction of $f(t,z,\epsilon)$ regards as follows. Let $m\mapsto F_{n}(m,\epsilon)$ be a continuous function for $m\in\R$ and holomorphic with respect to $\epsilon\in B$, for every $n\ge0$. We assume the formal power series $F(T,m,\epsilon)=\sum_{n\ge0}F_{n}(m,\epsilon)T^{n}$ is such that its formal $q-$Borel transform of order $k_1$ (see Definition~\ref{defi155})
$$\psi_{k_1}(\tau,m,\epsilon):=\mathcal{B}_{q;1/k_1}(F(T,z,\epsilon))(\tau)=\sum_{n\ge0}\frac{F_n}{(q^{1/k_1)^{\frac{n(n-1)}{2}}}}\tau^n$$
is convergent in a neighborhood of the origin, $D_1$, with respect to $\tau$ variable. Moreover, we assume there exists a finite family of directions $(\mathfrak{d}_{p})_{0\le p\le \varsigma-1}$ such that $\psi_{k_1}$ extends holomorphically to an infinite sector $U_{\mathfrak{d}_{p}}$ with vertex at 0 and bisecting direction $\mathfrak{d}_{p}$, with $q-$exponential growth of order $k_1$ at infinity, uniformly with respect to $\epsilon\in B$. We write $\psi_{k_1}^{\mathfrak{d}_{p}}$ for this extension. This last assertion states there exists an appropriate $\varsigma_{\psi_{k_1}}(m)>0$ such that
\begin{equation}\label{e3}
\sup_{\epsilon\in B}|\psi_{k_1}^{\mathfrak{d}_{p}}(\tau,m,\epsilon)|\le \varsigma_{\psi_{k_1}}(m)\exp\left(\frac{k_1\log^2|\tau|}{2\log(q)}\right),
\end{equation}
for every $\tau\in U_{\mathfrak{d}_{p}}$ with $\tau\notin\overline{D}_1$ (see~(\ref{e859})). 

One may apply $q-$Laplace transform of order $k_1$ on $\psi_{k_1}^{\mathfrak{d}_{p}}$ (see Lemma~\ref{lema403}). Also, the dependence on $m$ lying in $\varsigma_{\psi_{k_1}}(m)$ allows us to take inverse Fourier transform on this variable and define $\tilde{f}^{\mathfrak{d}_{p}}$ as the result of both transformations. Finally, regarding assumption 3) in Definition~\ref{def818}, one may apply the change of variable $\tau\mapsto \epsilon t$ to define $f^{\mathfrak{d}_{p}}(t,z,\epsilon):=\tilde{f}^{\mathfrak{d}_{p}}(t \epsilon,z,\epsilon)$, as a holomorphic and bounded function defined in $\mathcal{T}\times H_{\beta'}\times \mathcal{E}_{p}$. Here, $\mathcal{E}_{p}$ is a finite sector with vertex at the origin in the perturbation parameter, where the family $(\mathcal{E}_{p})_{0\le p\le\varsigma-1}$ is chosen to determine a good covering in $\C^{\star}$ (see Definition~\ref{def817}). For a more detailed construction of these elements, we refer to Section~\ref{seccion5}.

More precisely, we aim to study the solution $u^{\mathfrak{d}_{p}}(t,z,\epsilon)$ for $0\le p\le \varsigma-1$ of a family of  related problems regarding each direction of extendability associated to the forcing term, rather than (\ref{e1}). We write
\begin{align}
&Q(\partial_z)\sigma_qu^{\mathfrak{d}_{p}}(t,z,\epsilon)=(\epsilon t)^{d_{D}}\sigma_{q}^{\frac{d_{D}}{k_2}+1}R_{D}(\partial_{z})u^{\mathfrak{d}_{p}}(t,z,\epsilon)\nonumber\\
&\hspace{3cm}+\sum_{\ell=1}^{D-1}\left(\sum_{\lambda\in I_{\ell}}t^{d_{\lambda,\ell}}\epsilon^{\Delta_{\lambda,\ell}}\sigma_{q}^{\delta_\ell}c_{\lambda,\ell}(z,\epsilon)R_{\ell}(\partial_z)u^{\mathfrak{d}_{p}}(t,z,\epsilon)\right)+\sigma_{q}f^{\mathfrak{d}_{p}}(t,z,\epsilon).\label{e2}
\end{align}
for each $0\le p\le \varsigma -1$ for the different equations under study.

Let us take a brief look at equation (\ref{e1}) (or equation (\ref{e2})) and describe some concerns which are important to understand the nature of the problem studied. Regarding variable $z$ in equation (\ref{e2}), we have decided to split the right-hand side in two terms: a first term related to $R_{D}(\partial_{z})$ in which the degree of the operator exceeds those of the remaining terms, associated to $R_{\ell}(\partial_{z})$, $0\le \ell\le D-1$. It is at this point where one of the $q-$Gevrey growth phenomena regulating the equation arises. Indeed, the dilation operator of this term causes a $q-$Gevrey phenomena of type $k_2$ to appear. 

As a first attempt one is tempted to study an auxiliary problem in the Borel plane directly, following the classical method of summability of formal solutions of different types of equations. In this direction, regarding Proposition~\ref{prop259}, one might try to apply $q-$Borel transform of order $k_2$ at both sides of equation (\ref{e1}), study the resulting $q-$difference-convolution problem (\ref{e479}), obtain a solution to this problem having an adequate growth in $\tau$ variable in order to provide a solution to (\ref{e1}) via the analytic inverse operator, the $q-$Laplace transform of order $k_2$. However, this procedure, followed in the recent work~\cite{ma15}, is not fruitful because of the growth nature of the forcing term. Indeed, the application of $q-$Borel transform of order $k_2$ on the forcing term gives rise to a formal power series which might have null radius of convergence. 

The alternative procedure followed in this work is to split the summation procedure in two steps. Firstly, we proceed with a $q-$analog of Borel-Laplace summation method of a lower type, $\kappa$ and attain the solution by means of an acceleration-like action. It is worth mentioning that this idea is an adaptation of that in~\cite{lama15}, to the $q-$Gevrey case. Also, the idea of concatenating formal and analytic $q-$analogs of Borel and Laplace operators in order to solve $q-$difference equations appears in~\cite{dr}.    

The present work continues a series of works dedicated to the asymptotic behavior of holomorphic solutions to different kind of $q-$difference-differential problems involving irregular singularities investigated in~\cite{lama12},~\cite{lama13},~\cite{lamasa12},~\cite{ma11}. These works can be classified in the branch of studies devoted to study from an analytic point of view of $q-$difference
equations and their formal/analytic classiffication in ~\cite{virasazh},~\cite{mazh},~\cite{rasazh},~\cite{rasazh2},~\cite{razh}. It is worth pointing out another approach in the construction of a $q-$analog of summability for formal solutions to inhomogeneous linear $q-$difference-differential equations based on Newton polygon methods, see~\cite{taya}, and also the contribution in the framework of nonlinear $q-$analogs of Briot-Bouquet type partial differential equations, see~\cite{ya}.

Let us exhibit the plan of the work.

We first state the definitions and some properties of the Banach spaces of functions involved in the construction of the solution of equation (\ref{e2}). The elements of this spaces consist of holomorphic functions defined in an infinite sector with vertex at 0 (resp. an infinite sector with vertex at infinity and a disc at 0) subjected to a $q-$exponencial growth at infinity, with respect to the first variable. Also an exponential decreasement at $\pm\infty$ is assumed in the real-valued variable $m$ (see Section 2). In Section 3, we recall some formal and analytic transformations such as the formal $q-$Borel transform of a positive order, and the analytic $q-$Laplace transform of a positive order. This transformations were introduced in the work~\cite{dr} to construct meromorphic solutions to linear $q-$difference equations from formal ones. In this section we also give a review on the properties satisfied by inverse Fourier transform, $\mathcal{F}^{-1}$.

In Section~\ref{seccion41}, we consider the auxiliary equation
\begin{align}&Q(im)\sigma_{q}U(T,m,\epsilon)=T^{d_{D}}\sigma_{q}^{\frac{d_{D}}{k_2}+1}R_{D}(im)U(T,m,\epsilon)\nonumber\\
+&\sum_{\ell=1}^{D-1}\left(\sum_{\lambda\in I_{\ell}}T^{d_{\lambda,\ell}}\epsilon^{\Delta_{\lambda,\ell}-d_{\lambda,\ell}}\frac{1}{(2\pi)^{1/2}}\int_{-\infty}^{\infty}C_{\lambda,\ell}(m-m_1,\epsilon)R_{\ell}(im_1)U(q^{\delta_{\ell}}T,m_1,\epsilon)dm_1\right)+\sigma_{q}F(T,m,\epsilon).\label{e4}
\end{align}
and study the resulting equation after the action of formal $q-$Borel transformation of order $k_1$ on an equation coming from (\ref{e4}):
\begin{align}&Q(im)\frac{\tau^{k_1}}{(q^{1/k_1})^{k_1(k_1-1)/2}}w_{k_1}(\tau,m,\epsilon)=\frac{\tau^{d_D+k_1}}{(q^{1/k_1})^{(d_{D}+k_1)(d_{D}+k_1-1)/2}}\sigma_{q}^{-d_{D}/\kappa}R_{D}(im)w_{k_1}(\tau,m,\epsilon)\nonumber\\
+&\sum_{\ell=1}^{D-1}\left(\sum_{\lambda\in I_{\ell}}\frac{\epsilon^{\Delta_{\lambda,\ell}-d_{\lambda,\ell}}\tau^{d_{\lambda,\ell}+k_1}}{(q^{1/k_1})^{(d_{\lambda,\ell}+k_1)(d_{\lambda,\ell}+k_1-1)/2}}\sigma_q^{\delta_{\ell}-\frac{d_{\lambda,\ell}}{k_1}-1}\frac{1}{(2\pi)^{1/2}}(C_{\lambda,\ell}(m,\epsilon)\ast^{R_{\ell}}w_{k_1}(\tau,m,\epsilon))\right)\nonumber\\
&\hspace{10cm}+\frac{\tau^{k_1}}{(q^{1/k_1})^{k_{1}(k_1-1)/2}}\psi_{k_1}(\tau,m,\epsilon).\label{e5}
\end{align}
For every $0\le p\le \varsigma-1$, we come up to a novel auxiliary problem fixing $\psi_{k_1}:=\psi_{k_1}^{\mathfrak{d}_{p}}$, and by means of Proposition~\ref{prop321} we get the existence of a solution $w_{k_1}^{\mathfrak{d}_{p}}(\tau,m,\epsilon)$ of (\ref{e5}), which satisfies that
$$\sup_{\stackrel{\tau\in U_{\mathfrak{d}_{p}}\cup \overline{D}_1}{m\in\R}}|w_{k_1}^{\mathfrak{d}_{p}}(\tau,m,\epsilon)|\le C_{w_{k_1}^{\mathfrak{d}_{p}}}\frac{1}{(1+|m|)^{\mu}}e^{-\beta|m|}\exp\left(\frac{\kappa\log^2|\tau+\delta|}{2\log(q)}+\alpha\log|\tau+\delta|\right),$$
uniformly with respect to $\epsilon\in B$, for some $C_{w_{k_1}^{\mathfrak{d}_{p}}},\tilde{\delta},\alpha,\mu,\beta>0$.

In Section~\ref{seccion42}, we study the action of formal $q-$Borel transform of order $k_2$ on an equation obtained from (\ref{e4}). As it was pointed out above, one can not guarantee convergence of the resulting element in a neighborhood of the origin, arriving at the formal problem
\begin{align}&Q(im)\frac{\tau^{k_2}}{(q^{1/k_2})^{k_2(k_2-1)/2}}\hat{w}_{k_2}(\tau,m,\epsilon)=R_{D}(im)\frac{\tau^{d_D+k_2}}{(q^{1/k_2})^{(d_{D}+k_2)(d_{D}+k_2-1)/2}}\hat{w}_{k_2}(\tau,m,\epsilon)\nonumber\\
+&\sum_{\ell=1}^{D-1}\left(\sum_{\lambda\in I_{\ell}}\frac{\epsilon^{\Delta_{\lambda,\ell}-d_{\lambda,\ell}}\tau^{d_{\lambda,\ell}+k_2}}{(q^{1/k_2})^{(d_{\lambda,\ell}+k_2)(d_{\lambda,\ell}+k_2-1)/2}}\sigma_q^{\delta_{\ell}-\frac{d_{\lambda,\ell}}{k_2}-1}\frac{1}{(2\pi)^{1/2}}(C_{\lambda,\ell}(m,\epsilon)\ast^{R_{\ell}}\hat{w}_{k_2}(\tau,m,\epsilon))\right)\nonumber\\
&\hspace{8cm}+\frac{\tau^{k_2}}{(q^{1/k_2})^{k_{2}(k_2-1)/2}}\hat{\mathcal{B}}_{q;1/k_2}(F(T,m,\epsilon)).\label{e6}
\end{align}
We substitute the formal power series $\hat{\mathcal{B}}_{q;1/k_2}(F(T,m,\epsilon))$ by the acceleration of $\psi_{k_1}^{\mathfrak{d}_{p}}$ for each $0\le p\le \varsigma-1$ and study the resulting equation for each $p$. The role of the acceleration operator is being played by $q-$Laplace transform of an adecquate order. Indeed, we substitute $\hat{\mathcal{B}}_{q;1/k_2}(F(T,m,\epsilon))$ by $\mathcal{L}_{q;1/\kappa}(h\mapsto\psi_{k_1}^{\mathfrak{d}_{p}}(h,m,\epsilon))(\tau)$, constructed in Lemma~\ref{lema403}, for every $0\le p\le \varsigma-1$. By Proposition~\ref{prop571}, we obtain a solution $w_{k_{2}}^{\mathfrak{d}_{p}}$ of the previous problem by means of a fixed point result in appropriate Banach spaces. This solution is defined in $(\mathcal{R}_{\mathfrak{d}_{p}}^{b}\cup S_{\mathfrak{d}_{p}})\times \R\times B$, where $\mathcal{R}_{\mathfrak{d}_{p}}^{b}$ (resp. $S_{\mathfrak{d}_{p}}$) stands for a finite wide sector (resp. an infinite sector) of bisecting direction $\mathfrak{d}_{p}$.
Indeed, one has
\begin{equation}\label{e7}
\sup_{\stackrel{\tau\in (\mathcal{R}_{\mathfrak{d}_{p}}^{b}\cup S_{\mathfrak{d}_{p}})}{m\in\R}}|w_{k_2}^{\mathfrak{d}_{p}}(\tau,m,\epsilon)|\le C_{w_{k_2}^{\mathfrak{d}_{p}}}\frac{1}{(1+|m|)^{\mu}}e^{-\beta|m|}\exp\left(\frac{k_2\log^2|\tau|}{2\log(q)}+\nu\log|\tau|\right),
\end{equation}
for some $C_{w_{k_2}^{\mathfrak{d}_{p}}},\nu>0$.

We prove (Proposition~\ref{prop638}) that $w_{k_{2}}^{\mathfrak{d}_{p}}$ and $\mathcal{L}^{\mathfrak{d}_{p}}_{q_{1/\kappa}}(\tau\mapsto w_{k_{1}}^{\mathfrak{d}_{p}}(\tau,m,\epsilon))$ coincide in the intersection of their domains of definition. Consequently, $\mathcal{L}^{\mathfrak{d}_{p}}_{q_{1/\kappa}}(\tau\mapsto w_{k_{1}}^{\mathfrak{d}_{p}}(\tau,m,\epsilon))$ can be extended to  $\mathcal{R}_{\mathfrak{d}_{p}}^{b}\cup S_{\mathfrak{d}_{p}}$. In view of (\ref{e7}) and the choice of the domains associated to the good covering $(\mathcal{E}_{p})_{0\le p\le \varsigma-1}$ (see Definition~\ref{def818}), one can define the function
$$ u^{\mathfrak{d}_{p}}(t,z,\epsilon):=\mathcal{F}^{-1}\left(m\mapsto  \frac{1}{\pi_{q^{1/k_2}}}\int_{L_{\gamma_p}}\frac{w^{\mathfrak{d}_{p}}_{k_2}(u,m,\epsilon)}{\Theta_{q^{1/k_2}}\left(\frac{u}{\epsilon t}\right)}\frac{du}{u}\right)(z),$$
for every $(t,z,\epsilon)\in \mathcal{T}\times H_{\beta'}\times \mathcal{E}_{p}$, for all $0\le p\le \varsigma-1$. Here $L_{\gamma_{p}}$ stands for a ray from 0 to infinity contained in $S_{\mathfrak{d}_{p}}$ see Theorem~\ref{teo872}. The function $u^{\mathfrak{d}_{p}}(t,z,\epsilon)$ solves (\ref{e2}) (see Theorem~\ref{teo872}).

In the spirit of~\cite{dr}, the procedure we have followed can be summarized in some sense by the composition of these operators:
$$\mathcal{L}_{q;1/k_2}\circ\mathcal{L}_{q;1/\kappa}\circ\hat{\mathcal{B}}_{1/\kappa}\circ\hat{\mathcal{B}}_{1/k_2}=\mathcal{L}_{q;1/k_2}\circ\mathcal{L}_{q;1/\kappa}\circ\hat{\mathcal{B}}_{1/k_1}.$$
In the second part of Section~\ref{seccion5} we study the difference of two solutions and obtain two different results, depending on the geometry of the problem (see Proposition~\ref{prop925} and Proposition~\ref{prop1047}). The previous results are applied in Section~\ref{seccion6} to attain the main result of the work (Theorem~\ref{teo1281}), by means of a novel two-level version of a $q-$analog of Ramis-Sibuya theorem (see Theorem~\ref{teo1215}), namely, the existence of a formal power series 
$$\hat{u}(t,z,\epsilon)=\sum_{m\ge0}h_m(t,z)\frac{\epsilon^m}{m!}\in\mathbb{F}[[\epsilon]],$$
with coefficients in the Banach space $\mathbb{F}$ of bounded holomorphic functions defined on $\mathcal{T}\times H_{\beta'}$ with the supremum norm. This formal power series is a formal solution of 
\begin{align*}
&Q(\partial_z)\sigma_q\hat{u}(t,z,\epsilon)=(\epsilon t)^{d_{D}}\sigma_{q}^{\frac{d_{D}}{k_2}+1}R_{D}(\partial_{z})\hat{u}(t,z,\epsilon)\\
&\hspace{3cm}+\sum_{\ell=1}^{D-1}\left(\sum_{\lambda\in I_{\ell}}t^{d_{\lambda,\ell}}\epsilon^{\Delta_{\lambda,\ell}}\sigma_{q}^{\delta_\ell}c_{\lambda,\ell}(z,\epsilon)R_{\ell}(\partial_z)\hat{u}(t,z,\epsilon)\right)+\sigma_{q}\hat{f}(t,z,\epsilon),
\end{align*}
where $\hat{f}$ is the common asymptotic representation of $f^{\mathfrak{d}_{p}}$ with respect to the perturbation parameter $\epsilon$, as described in Lemma~\ref{lema1271}. The sense in which $\hat{u}(t,z,\epsilon)$ represents $u^{\mathfrak{d}_{p}}(t,z,\epsilon)$ for all $0\le p\le \varsigma-1$ is detailed in Theorem~\ref{teo1281}.

The work concludes in Section~\ref{seccion7} with an application of the main result when the formal power series $F(T,m,\epsilon)$ is a solution of another related problem (see Theorem~\ref{teofin}).

\section{Banach spaces of functions and related results}

Throughout the whole section we fix real numbers $\beta,\mu>0$, $q>1$ and $\alpha$. Some conditions on this elements may be described when needed in the following constructions and results.

Through this section we assume $U_d\subseteq \C^{\star}:=\C\setminus\{0\}$ is a sector with vertex at the origin and bisecting direction $d\in\R$. We also choose $\rho>0$ and consider the disc centered at $0\in\C$ with radius $\rho$, notated by $D(0,\rho):=\{\tau\in\C:|\tau|<\rho\}$. Let $\delta>0$ and assume that the distance from $U_d\cup D(0,\rho)$ to the real number $-\delta$ is positive. We also take $k>0$.

We denote $\overline{D}(0,\rho)$ the closure of $D(0,\rho)$.  

\begin{defin}
We denote $\hbox{Exp}^{d}_{(k,\beta,\mu,\alpha,\rho)}$ the vector space of continuous complex valued functions $(\tau,m)\mapsto h(\tau,m)$ on $U_d\cup \overline{D}(0,\rho)\times \R$, holomorphic with respect to $\tau$ on $U_d\cup D(0,\rho)$ such that
$$\left\|h(\tau,m)\right\|_{(k,\beta,\mu,\alpha,\rho)}:=\sup_{\stackrel{\tau\in U_{d}\cup \overline{D}(0,\rho)}{m\in\R}}(1+|m|)^{\mu}e^{\beta|m|}\exp\left(-\frac{k\log^2|\tau+\delta|}{2\log(q)}-\alpha\log|\tau+\delta|\right)|h(\tau,m)|<\infty. $$
The set $\hbox{Exp}^{d}_{(k,\beta,\mu,\alpha,\rho)}$ turns out to be a Banach space when endowed with the norm $\left\|\cdot\right\|_{(k,\beta,\mu,\alpha,\rho)}$.
\end{defin}

The previous norm is a modified version of that used in the previous work~\cite{lama12}, and by the first author in~\cite{ma15}. Here, a shift on the variable $\tau$ is needed in such a way that the elements belonging to $\hbox{Exp}^{d}_{(k,\beta,\mu,\alpha,\rho)}$ remain holomorphic and bounded in a neighborhood of the origin, whilst $q-$exponential behavior at infinity is preserved.

\begin{lemma}
Let $(\tau,m)\mapsto a(\tau,m)$ be a bounded continuous function defined in $(U_{d}\cup \overline{D}(0,\rho))\times\R$, holomorphic with respect to $\tau$ on $U_{d}\cup D(0,\rho)$. For every $h(\tau,m)\in\hbox{Exp}_{(k,\beta,\mu,\alpha,\rho)}^{d}$, the function $a(\tau,m)h(\tau,m)\in \hbox{Exp}_{(k,\beta,\mu,\alpha,\rho)}^{d}$ and 
$$\left\|a(\tau)h(\tau,m)\right\|_{(k,\beta,\mu,\alpha)}\le C_{1}\left\|h(\tau,m)\right\|_{(k,\beta,\mu,\alpha,\rho)},$$
where $C_{1}=\sup_{\tau\in (U_{d}\cup \overline{D}(0,\rho)),m\in\R}|a(\tau,m)|$.
\end{lemma}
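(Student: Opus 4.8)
The plan is to verify directly that the modified norm controls the product $a(\tau,m)h(\tau,m)$ pointwise, so that the supremum defining $\|\cdot\|_{(k,\beta,\mu,\alpha,\rho)}$ can only decrease by a constant factor. First I would observe that $ah$ is continuous on $(U_d\cup\overline D(0,\rho))\times\R$ and holomorphic in $\tau$ on $U_d\cup D(0,\rho)$, since it is a product of two functions with these properties; thus $ah$ is a legitimate candidate for membership in $\hbox{Exp}^{d}_{(k,\beta,\mu,\alpha,\rho)}$, and it only remains to check finiteness of the norm.

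For the estimate, for each $\tau\in U_d\cup\overline D(0,\rho)$ and $m\in\R$ I would write
\[
(1+|m|)^{\mu}e^{\beta|m|}\exp\!\left(-\frac{k\log^2|\tau+\delta|}{2\log q}-\alpha\log|\tau+\delta|\right)|a(\tau,m)h(\tau,m)|
\]
and bound $|a(\tau,m)|$ by its supremum $C_1$ over the same domain, pulling that constant out. What remains is exactly the weight times $|h(\tau,m)|$, whose supremum over $\tau$ and $m$ is by definition $\|h\|_{(k,\beta,\mu,\alpha,\rho)}<\infty$. Taking the supremum over all $\tau$ and $m$ on the left then yields $\|ah\|_{(k,\beta,\mu,\alpha,\rho)}\le C_1\|h\|_{(k,\beta,\mu,\alpha,\rho)}$, which in particular shows the left-hand side is finite, so $ah\in\hbox{Exp}^{d}_{(k,\beta,\mu,\alpha,\rho)}$.

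There is no real obstacle here: the only point requiring a word of care is that $C_1=\sup_{\tau\in U_d\cup\overline D(0,\rho),\,m\in\R}|a(\tau,m)|$ is finite, which is guaranteed by the hypothesis that $a$ is bounded, and that the boundedness/holomorphy domain of $a$ matches that of the elements of the space so the product inherits the structural properties. (One should also read the displayed inequality in the statement with the evident typos corrected — $a(\tau)$ for $a(\tau,m)$ and the missing $\rho$ in the subscript on the left — but this does not affect the argument.) The proof is therefore a one-line majorization once the setup is unwound, and the lemma is essentially the statement that multiplication by a bounded holomorphic function is a bounded operator on this weighted sup-norm space.
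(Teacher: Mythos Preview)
Your proof is correct and is exactly the natural direct verification: bound $|a(\tau,m)|$ by its supremum $C_1$, pull the constant out of the weight, and recognize what remains as the definition of $\|h\|_{(k,\beta,\mu,\alpha,\rho)}$. The paper in fact states this lemma without proof, treating it as immediate from the definition of the norm, so your argument is precisely the omitted routine check.
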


\begin{prop}\label{prop82}
Let $\gamma_1,\gamma_2\ge0$ such that 
\begin{equation}\label{e74}
\gamma_1\le k\gamma_2.
\end{equation}
Then, there exists $C_2>0$ (depending on $k,q,\alpha,\gamma_1,\gamma_2,\delta$) with
$$\left\|\tau^{\gamma_1}\sigma_{q}^{-\gamma_2}f(\tau,m)\right\|_{(k,\beta,\mu,\alpha,\rho)}\le C_2\left\|f(\tau,m)\right\|_{(k,\beta,\mu,\alpha,\rho)},$$
for every $f(\tau,m)\in\hbox{Exp}^{d}_{(k,\beta,\mu,\alpha,\rho)}$.
\end{prop}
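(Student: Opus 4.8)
The plan is to work directly with the definition of the norm $\|\cdot\|_{(k,\beta,\mu,\alpha,\rho)}$. First I would fix $f\in\hbox{Exp}^{d}_{(k,\beta,\mu,\alpha,\rho)}$ and write $g(\tau,m):=\tau^{\gamma_1}\sigma_{q}^{-\gamma_2}f(\tau,m)=\tau^{\gamma_1}f(q^{-\gamma_2}\tau,m)$. Since $U_d\cup D(0,\rho)$ is invariant under the dilation $\tau\mapsto q^{-\gamma_2}\tau$ (as $q>1$ and $\gamma_2\ge0$ shrink the set toward the origin, which it contains, and the sector is stable), $g$ is well defined, continuous on $(U_d\cup\overline{D}(0,\rho))\times\R$ and holomorphic in $\tau$ on the interior. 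The exponential-in-$m$ and polynomial-in-$m$ weights are untouched by the operator, so they cancel against the same factors coming from $f$; the whole problem reduces to the scalar estimate
\begin{equation}\label{e74aux}
|\tau|^{\gamma_1}\exp\!\left(-\frac{k\log^2|\tau+\delta|}{2\log q}-\alpha\log|\tau+\delta|\right)\exp\!\left(\frac{k\log^2|q^{-\gamma_2}\tau+\delta|}{2\log q}+\alpha\log|q^{-\gamma_2}\tau+\delta|\right)\le C_2,
\end{equation}
uniformly in $\tau\in U_d\cup\overline{D}(0,\rho)$.

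Next I would control each piece in \eqref{e74aux}. Because the distance from $U_d\cup D(0,\rho)$ to $-\delta$ is positive, one has $|\tau+\delta|\ge c_0>0$ everywhere, and $|\tau+\delta|$, $|q^{-\gamma_2}\tau+\delta|$ are comparable to $\max(1,|\tau|)$ up to multiplicative constants; in particular $\log|\tau+\delta|$ and $\log|q^{-\gamma_2}\tau+\delta|$ differ by a bounded amount, so the two $\alpha\log(\cdot)$ terms and the factor $|\tau|^{\gamma_1}$ (which behaves like $\exp(\gamma_1\log|\tau|)$ for large $|\tau|$ and is bounded for $|\tau|$ bounded) contribute only a factor of polynomial type in $|\tau|$. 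The decisive term is the difference of the two $q$-exponential quadratics, $\frac{k}{2\log q}\bigl(\log^2|q^{-\gamma_2}\tau+\delta|-\log^2|\tau+\delta|\bigr)$. Writing $\log|q^{-\gamma_2}\tau+\delta|=\log|\tau+\delta|-\gamma_2\log q+\eta(\tau)$ with $\eta(\tau)$ bounded (this uses again $|\tau+\delta|\ge c_0$ to bound the perturbation from replacing $\tau$ by $q^{-\gamma_2}\tau$ inside $|\cdot+\delta|$), the difference of squares expands to $-2\gamma_2\log q\cdot\log|\tau+\delta|+(\hbox{bounded})\cdot\log|\tau+\delta|+(\hbox{bounded})$, so after multiplying by $\frac{k}{2\log q}$ the leading term is $-k\gamma_2\log|\tau+\delta|$. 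Hence the exponential of this difference is bounded by $\exp\!\bigl(-k\gamma_2\log|\tau+\delta|+C\bigr)$, i.e. by $C'\,|\tau+\delta|^{-k\gamma_2}$.

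Finally I would assemble the estimates: the left-hand side of \eqref{e74aux} is bounded by a constant times $|\tau|^{\gamma_1}\,|\tau+\delta|^{-k\gamma_2}\,|\tau+\delta|^{|\alpha'|}$ for some fixed $\alpha'$ absorbing the leftover logarithmic terms, and since $|\tau|\le|\tau+\delta|+\delta$ and $|\tau+\delta|$ is bounded below, this is $\le C_2<\infty$ precisely because the hypothesis \eqref{e74}, $\gamma_1\le k\gamma_2$, guarantees the total power of $|\tau+\delta|$ is nonpositive (the extra bounded $\alpha'$-power being harmless for $|\tau|$ bounded and the genuine growth being governed by $\gamma_1-k\gamma_2\le0$). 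Taking the supremum over $\tau$ and $m$ gives $\|g\|_{(k,\beta,\mu,\alpha,\rho)}\le C_2\|f\|_{(k,\beta,\mu,\alpha,\rho)}$. I expect the main technical point to be the bookkeeping in the second paragraph: carefully showing that shifting $\tau\mapsto q^{-\gamma_2}\tau$ inside $\log|{\cdot}+\delta|$ costs only a bounded error (so that the quadratic terms telescope to produce exactly the gain $|\tau+\delta|^{-k\gamma_2}$ and no uncontrolled residual), since without the $+\delta$ shift the identity $\log|q^{-\gamma_2}\tau|=\log|\tau|-\gamma_2\log q$ would be exact, and one must verify the shift does not spoil it near the origin or at infinity.
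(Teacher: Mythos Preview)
Your overall strategy matches the paper's: reduce to the scalar estimate \eqref{e74aux}, expand the difference of the two $\log^2$ terms, and use $\gamma_1\le k\gamma_2$ to kill the resulting power. The gap is in the middle step. You write $\log|q^{-\gamma_2}\tau+\delta|=\log|\tau+\delta|-\gamma_2\log q+\eta(\tau)$ with $\eta$ merely \emph{bounded}, and then the difference of squares contains the cross term $2\eta(\tau)\log|\tau+\delta|$. With only $\eta$ bounded this term is $O(\log|\tau+\delta|)$, which after exponentiation produces an extra factor $|\tau+\delta|^{\alpha'}$ with some $\alpha'>0$ that you cannot control. Your final assembly then needs $\gamma_1+\alpha'\le k\gamma_2$, not just $\gamma_1\le k\gamma_2$, so the argument as written does not close.

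What is actually true (and what the paper proves carefully) is that $\eta(\tau)=\log\dfrac{|\tau+\delta q^{\gamma_2}|}{|\tau+\delta|}=\log\Bigl|1+\dfrac{\delta(q^{\gamma_2}-1)}{\tau+\delta}\Bigr|=O(1/|\tau|)$ for large $|\tau|$, so that $\eta(\tau)\log|\tau+\delta|$ is in fact \emph{bounded}, not merely $O(\log|\tau+\delta|)$. The paper obtains this by factoring $\log^2|\tau+\delta q^{\gamma_2}|-\log^2|\tau+\delta|$ as a product of a sum and a difference of logarithms and estimating $\log\bigl(1+\frac{\delta(q^{\gamma_2}-1)}{|\tau+\delta|}\bigr)\le C/|\tau+\delta|$; combined with the compact-set bound this yields a uniform constant. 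Once you have that, the exponential of the difference is genuinely $\le C'|\tau+\delta q^{\gamma_2}|^{-k\gamma_2}$ with no spurious extra power, and then $|\tau|^{\gamma_1}|\tau+\delta q^{\gamma_2}|^{-k\gamma_2}$ is bounded exactly by the hypothesis $\gamma_1\le k\gamma_2$. So your plan is right, but you must upgrade ``$\eta$ bounded'' to ``$\eta(\tau)\log|\tau+\delta|$ bounded'' and drop the stray $|\tau+\delta|^{|\alpha'|}$ from the conclusion.
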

\begin{proof}
Let $f\in\hbox{Exp}^{d}_{(k,\beta,\mu,\alpha,\rho)}$. One can write
\begin{align*}
&\left\|\tau^{\gamma_1}\sigma_{q}^{-\gamma_2}f(\tau,m)\right\|_{(k,\beta,\mu,\alpha,\rho)}=\sup_{\stackrel{\tau\in U_{d}\cup \overline{D}(0,\rho)}{m\in\R}}(1+|m|)^{\mu}e^{\beta|m|}\exp\left(-\frac{k\log^2|\tau+\delta|}{2\log(q)}-\alpha\log|\tau+\delta|\right)|\tau|^{\gamma_1}\\
&\times  |f(\tau/q^{\gamma_2},m)|\exp\left(-\frac{k\log^2\left|\frac{\tau}{q^{\gamma_2}}+\delta\right|}{2\log(q)}-\alpha\log\left|\frac{\tau}{q^{\gamma_2}}+\delta\right|\right)\exp\left(\frac{k\log^2\left|\frac{\tau}{q^{\gamma_2}}+\delta\right|}{2\log(q)}+\alpha\log\left|\frac{\tau}{q^{\gamma_2}}+\delta\right|\right)
\end{align*}

We observe that $\sigma_{q}^{-\gamma_{2}}(U_{d}\cup \overline{D}(0,\rho))\subseteq U_{d}\cup \overline{D}(0,\rho)$. As a consequence, the previous expression is upper bounded by
\begin{equation}\label{e93}
\left\|f(\tau,m)\right\|_{(k,\beta,\mu,\alpha,\rho)}\sup_{\tau\in U_{d}\cup \overline{D}(0,\rho)}\exp\left(\frac{k(\log^2\left|\frac{\tau}{q^{\gamma_2}}+\delta\right|-\log^2|\tau+\delta|)}{2\log(q)}\right)|\tau|^{\gamma_1}\left(\frac{\left|\tau/q^{\gamma_2}+\delta\right|}{|\tau+\delta|}\right)^\alpha.
\end{equation}

For every $\tau\in U_{d}\cup \overline{D}(0,\rho)$ one has
\begin{equation}\label{e102}
\log^2\left|\frac{\tau}{q^{\gamma_2}}+\delta\right|-\log^2|\tau+\delta|=\log^2|\tau+\delta q^{\gamma_2}|+\log^2(q^{\gamma_2})-2\gamma_2\log(q)\log|\tau+\delta q^{\gamma_2}|-\log^2|\tau+\delta|.
\end{equation}
Let $\rho_1=2\rho$. For every $\tau\in S_{d}$ with $|\tau|>\rho_1$, standard calculations yield 
\begin{align}
\log^2|\tau+\delta q^{\gamma_2}|-\log^2|\tau+\delta|&=\log|(\tau+\delta q^{\gamma_2})(\tau+\delta)|\log\left|\frac{\tau+\delta q^{\gamma_2}}{\tau+\delta}\right|\nonumber\\
&\le C_{21}\log|\tau|\log\left(1+\frac{\delta(q^{\gamma_2}-1)}{|\tau+\delta|}\right)\le C_{22}\log|\tau|\frac{\delta(q^{\gamma_2}-1)}{|\tau+\delta|}\le C_{23},\label{e102b}
\end{align}
for some $C_{21},C_{22},C_{23}>0$ (depending on $q,\gamma_2,\delta$). On the other hand, the function 
$$\tau\mapsto \log^2|\tau+\delta q^{\gamma_2}|-\log^2|\tau+\delta|$$
is continuous in the compact set $\overline{D}(0,\rho)\cup \{\tau\in \overline{U_{d}}: |\tau|\le\rho_1\}$. This and (\ref{e102b}) provide the existence of $C_{24}>0$ (depending on $k, q,\gamma_2,\delta$) such that (\ref{e102}) is estimated from above by $C_{24}-2\gamma_2\log(q)\log|\tau+\delta q^{\gamma_2}|$. Taking into account these estimates, one derives the existence of $C_{25}>0$ (depending on $k,\delta,q,\gamma_2$) such that
$$\left\|\tau^{\gamma_1}\sigma_{q}^{-\gamma_2}f(\tau,m)\right\|_{(k,\beta,\mu,\alpha,\rho)}\le C_{25}\left\|f(\tau,m)\right\|_{(k,\beta,\mu,\alpha,\rho)}\sup_{\tau\in U_{d}\cup \overline{D}(0,\rho)}|\tau|^{\gamma_1}\left(\frac{\left|\tau+\delta q^{\gamma_2}\right|}{|\tau+\delta|}\right)^\alpha|\tau+\delta q^{\gamma_{2}}|^{-k\gamma_2}$$
It is straight to check that
$$\sup_{\tau\in U_{d}\cup \overline{D}(0,\rho)}|\tau|^{\gamma_1}\left(\frac{\left|\tau+\delta q^{\gamma_2}\right|}{|\tau+\delta|}\right)^\alpha|\tau+\delta q^{\gamma_{2}}|^{-k\gamma_2}\le  \sup_{x\ge0}C_{26}x^{\gamma_1}(x+\delta q^{\gamma_2})^{-k\gamma_2},$$
for some $C_{26}>0$. The result follows from here, in view of (\ref{e74}).
\end{proof}

\begin{defin}
We denote $E_{(\beta,\mu)}$ the vector space of continuous functions $h:\R\to\C$ such that
$$\left\|h(m)\right\|_{(\beta,\mu)}:=\sup_{m\in\R}(1+|m|)^{\mu}\exp(\beta|m|)|h(m)|<\infty.$$
The pair $(E_{(\beta,\mu)},\left\|\cdot\right\|_{(\beta,\mu)})$ is a Banach space. 
\end{defin}

\begin{lemma}\label{lema123}
Let $h_j:\R\to\C$ be a continuous function for $j=1,2$. Assume that $sup_{m\in\R}|h_1(m)|$ is finite, and $h_2\in E_{(\beta,\mu)}$. Then the product $h_1h_2\in E_{(\beta,\mu)}$ and 
$$\left\|h_1(m)h_2(m)\right\|_{(\beta,\mu)}\le \left(\sup_{m\in\R}|h_1(m)|\right)\left\|h_2(m)\right\|_{(\beta,\mu)}.$$
\end{lemma}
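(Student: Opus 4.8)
The plan is to argue pointwise and then pass to the supremum. First I would observe that $h_1 h_2$ is continuous on $\R$, being a product of two continuous functions, so the only thing to check is finiteness of the weighted supremum together with the stated bound. Set $M:=\sup_{m\in\R}|h_1(m)|$, which is finite by hypothesis. For each fixed $m\in\R$ one has the exact factorization
$$(1+|m|)^{\mu}\exp(\beta|m|)\,|h_1(m)h_2(m)|=|h_1(m)|\cdot(1+|m|)^{\mu}\exp(\beta|m|)\,|h_2(m)|,$$
and estimating the first factor by $M$ and the second by $\left\|h_2(m)\right\|_{(\beta,\mu)}$ (finite since $h_2\in E_{(\beta,\mu)}$) gives
$$(1+|m|)^{\mu}\exp(\beta|m|)\,|h_1(m)h_2(m)|\le M\left\|h_2(m)\right\|_{(\beta,\mu)}$$
for every $m\in\R$.

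Taking the supremum over $m\in\R$ on the left-hand side then shows simultaneously that $h_1h_2\in E_{(\beta,\mu)}$ and that $\left\|h_1(m)h_2(m)\right\|_{(\beta,\mu)}\le M\left\|h_2(m)\right\|_{(\beta,\mu)}$, which is the assertion. There is essentially no obstacle here: the only point worth a word of care is that the weight $(1+|m|)^{\mu}\exp(\beta|m|)$ multiplies a single function, so the factorization above is exact and introduces no auxiliary constant — in contrast with Proposition~\ref{prop82}, where the dilation $\sigma_{q}^{-\gamma_2}$ forces a genuine comparison of two weights and hence the appearance of a constant $C_2$.
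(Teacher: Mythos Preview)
Your proof is correct and matches the paper's approach exactly: the paper simply states that the result follows from the definition of the norm $\left\|\cdot\right\|_{(\beta,\mu)}$, and you have spelled out precisely that verification. The closing comparison with Proposition~\ref{prop82} is an extra remark not present in the paper, but it is accurate and does no harm.
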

\begin{proof}
It follows from the definition of the norm $\left\|\cdot\right\|_{(\beta,\mu)}$.
\end{proof}

Let $h_j:\R\to\C$ be a continuous function for $j=1,2$. Let $Q\in\C[X]$. One can define the convolution product 
$$h_1(m)\ast^{Q}h_2(m):=\int_{-\infty}^{\infty}h_1(m-m_1)Q(im_1)h_2(m_1)dm_1,\quad m\in\R,$$
whenever the integral converges, extending the classical convolution product for $Q\equiv 1$. 

\begin{prop}\label{prop128}
Let $Q,R\in\C[x]$ such that $\deg(R)\ge \deg(Q)$, $R(im)\neq0$ for every $m\in\R$. Assume moreover that $\mu>\deg(Q)+1$. Given a continuous function $b:\R\to\C$ with $\sup_{m\in\R}|b(m)R(im)|\le1$, the space $(E_{(\beta,\mu)},\left\|\cdot\right\|_{(\beta,\mu)})$ turns out to be a Banach algebra when endowed with the product $\ast^{b,Q}$ defined by $h_1(m)\ast^{b,Q}h_2(m):=b(m)h_1(m)\ast^{Q}h_2(m)$ for every $m\in\R$.
\end{prop}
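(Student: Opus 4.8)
The plan is to verify the three Banach-algebra axioms for $(E_{(\beta,\mu)},\ast^{b,Q})$ that are not already routine: that $\ast^{b,Q}$ maps into $E_{(\beta,\mu)}$ with the submultiplicative norm bound $\|h_1\ast^{b,Q}h_2\|_{(\beta,\mu)}\le C\|h_1\|_{(\beta,\mu)}\|h_2\|_{(\beta,\mu)}$ for some absolute $C>0$ (one may even aim for $C=1$ after absorbing constants into $\mu,\beta$, but a bound with a constant suffices), together with commutativity/associativity, which follow from Fubini once the bound guarantees absolute convergence. The heart of the matter is the norm estimate, so I would concentrate on that and treat the algebra identities as a remark at the end.

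First I would write out, for fixed $m\in\R$,
\[
(1+|m|)^{\mu}e^{\beta|m|}\,|b(m)|\,\Bigl|\int_{-\infty}^{\infty}h_1(m-m_1)Q(im_1)h_2(m_1)\,dm_1\Bigr|
\le \int_{-\infty}^{\infty}(1+|m|)^{\mu}e^{\beta|m|}\,|h_1(m-m_1)|\,|Q(im_1)|\,|h_2(m_1)|\,dm_1,
\]
using $|b(m)|\le 1/|R(im)|$ only when needed (here it is not, since $b$ is merely bounded; the hypothesis $\sup_m|b(m)R(im)|\le1$ is what makes the constant clean, and I would keep it in reserve). Next, insert and extract the weights for $h_1$ and $h_2$: multiply and divide by $(1+|m-m_1|)^{\mu}e^{\beta|m-m_1|}$ and $(1+|m_1|)^{\mu}e^{\beta|m_1|}$. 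By the triangle inequality $e^{\beta|m|}\le e^{\beta|m-m_1|}e^{\beta|m_1|}$, the exponential factors cancel cleanly. For the polynomial factor I would use the elementary inequality
\[
(1+|m|)^{\mu}\le 2^{\mu}(1+|m-m_1|)^{\mu}\,\frac{(1+|m_1|)^{\mu}}{(1+|m_1|)^{\mu}}\quad\text{rewritten as}\quad \frac{(1+|m|)^{\mu}}{(1+|m-m_1|)^{\mu}(1+|m_1|)^{\mu}}\le \frac{2^{\mu}}{(1+|m_1|)^{\mu}} + \frac{2^{\mu}}{(1+|m-m_1|)^{\mu}},
\]
or more simply the bound $(1+|m|)^{\mu}\le 2^{\mu}(1+|m-m_1|)^{\mu}(1+|m_1|)^{\mu}$ (valid since $1+|m|\le (1+|m-m_1|)(1+|m_1|)$ and raising to the $\mu$-th power); the latter is cruder but enough. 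After the dust settles one is left with
\[
\|h_1\ast^{b,Q}h_2\|_{(\beta,\mu)}\le 2^{\mu}\,\|h_1\|_{(\beta,\mu)}\|h_2\|_{(\beta,\mu)}\int_{-\infty}^{\infty}\frac{|Q(im_1)|}{(1+|m_1|)^{\mu}}\,dm_1.
\]

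The main obstacle — really the only one — is controlling that last integral: since $\deg Q\le \deg R$ is irrelevant here and what matters is $\mu>\deg(Q)+1$, one has $|Q(im_1)|\le C_Q(1+|m_1|)^{\deg Q}$, so the integrand is $O((1+|m_1|)^{\deg Q-\mu})$ with $\deg Q-\mu<-1$, hence integrable; call its value $C(Q,\mu)<\infty$. This finiteness is exactly where the hypothesis $\mu>\deg(Q)+1$ is used, and it is the one place a reader should pause. Collecting, $C=2^{\mu}C_QC(Q,\mu)$ works. Finally, the $\sup_m|b(m)R(im)|\le1$ hypothesis can be brought in to replace $|Q(im_1)|$ by $|Q(im_1)|/|R(im_1)|$ in problems where $b$ carries the $R$-denominator, sharpening the constant; for the bare statement one just notes $b$ bounded suffices. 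Commutativity follows from the change of variables $m_1\mapsto m-m_1$ in the defining integral (absorbing $b$ and noting $Q$ sits on the $m_1$-slot — so in fact $\ast^Q$ is not symmetric in general, but the statement only claims $\ast^{b,Q}$ makes $E_{(\beta,\mu)}$ a Banach algebra, i.e. closed and normed; I would phrase associativity via Fubini on the iterated integral, whose absolute convergence is now guaranteed by the estimate just proved). I would end by remarking that the unit, if one wants a unital algebra, would have to be treated separately, but the statement as given only asserts the Banach-algebra (closed, submultiplicative) structure, which the above establishes.
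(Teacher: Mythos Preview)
Your argument has a real gap at the step where you pass to the displayed inequality
\[
\|h_1\ast^{b,Q}h_2\|_{(\beta,\mu)}\le 2^{\mu}\,\|h_1\|_{(\beta,\mu)}\|h_2\|_{(\beta,\mu)}\int_{-\infty}^{\infty}\frac{|Q(im_1)|}{(1+|m_1|)^{\mu}}\,dm_1.
\]
After inserting the weights for $h_1$ and $h_2$, the kernel you must bound is
\[
|b(m)|\;\frac{(1+|m|)^{\mu}\,|Q(im_1)|}{(1+|m-m_1|)^{\mu}(1+|m_1|)^{\mu}}.
\]
Your ``cruder'' inequality $(1+|m|)^{\mu}\le 2^{\mu}(1+|m-m_1|)^{\mu}(1+|m_1|)^{\mu}$ cancels \emph{both} polynomial weights and leaves $\int_{\R}|Q(im_1)|\,dm_1$, which diverges whenever $\deg Q\ge 0$; it is not ``enough''. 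Your refined splitting
\[
\frac{(1+|m|)^{\mu}}{(1+|m-m_1|)^{\mu}(1+|m_1|)^{\mu}}\le \frac{2^{\mu}}{(1+|m_1|)^{\mu}}+\frac{2^{\mu}}{(1+|m-m_1|)^{\mu}}
\]
is correct, but it produces \emph{two} integrals, and only the first is the one you display. The second,
\[
\int_{-\infty}^{\infty}\frac{|Q(im_1)|}{(1+|m-m_1|)^{\mu}}\,dm_1,
\]
is \emph{not} uniformly bounded in $m$: a change of variable and the bound $|Q(i(m-s))|\le C_Q(1+|m|)^{\deg Q}(1+|s|)^{\deg Q}$ show it behaves like $(1+|m|)^{\deg Q}$ for large $|m|$. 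This is exactly where the hypothesis $\sup_m|b(m)R(im)|\le 1$ together with $\deg R\ge\deg Q$ is needed (not merely to ``clean the constant''): since $|b(m)|\le 1/|R(im)|\le C(1+|m|)^{-\deg R}$, the factor $|b(m)|(1+|m|)^{\deg Q}$ is bounded. Note also that $b$ is evaluated at $m$, not at $m_1$, so your remark about ``replacing $|Q(im_1)|$ by $|Q(im_1)|/|R(im_1)|$'' misplaces the variable.

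The paper does not reproduce the proof (it cites~\cite{lama14}), but the computation in the proof of the very next proposition shows the intended route: one uses $|b(m)R(im)|\le 1$ to extract a factor $1/|R(im)|\sim (1+|m|)^{-\deg R}$, bounds $|Q(im_1)|\le C(1+|m_1|)^{\deg Q}$, and then invokes a Peetre--type convolution estimate (Lemma~4 in~\cite{ma2}) of the form
\[
\sup_{m\in\R}(1+|m|)^{\mu-\deg Q}\int_{-\infty}^{\infty}\frac{dm_1}{(1+|m-m_1|)^{\mu}(1+|m_1|)^{\mu-\deg Q}}<\infty,
\]
valid because $\mu>\deg Q+1$. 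Your refined splitting, once the second term is kept and controlled via $b$, amounts to an elementary proof of this same lemma; but as written, your argument drops the term that carries the whole difficulty.
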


We refer to~\cite{lama14} for a proof of the previous result.

\begin{prop}\label{prop143}
Let $Q,R,b$ be as in Proposition~\ref{prop128}. We assume $c(m)\in E_{(\beta,\mu)}$. Then, for every $f\in\hbox{Exp}^{d}_{(k,\beta,\mu,\alpha,\rho)}$, the function $c\ast^{b,Q}f\in \hbox{Exp}^{d}_{(k,\beta,\mu,\alpha,\rho)}$ and 
\begin{equation}\label{e145} \left\|c(m)\ast^{b,Q}f(\tau,m)\right\|_{(k,\beta,\mu,\alpha,\rho)}\le C_3\left\|c(m)\right\|_{(\beta,\mu)}\left\|f(\tau,m)\right\|_{(k,\beta,\mu,\alpha,\rho)},
\end{equation}
for some $C_{3}>0$ (depending on $\mu, q,\alpha,k,Q(X),R(X)$).
\end{prop}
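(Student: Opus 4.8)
The plan is to combine the multiplication estimate of Proposition~\ref{prop82} (applied with $\gamma_1=\gamma_2=0$, so really Lemma~1) with the norm estimate for the convolution product $\ast^{b,Q}$ on $E_{(\beta,\mu)}$ contained in Proposition~\ref{prop128}. Concretely, I would start from the definition of the norm $\|\cdot\|_{(k,\beta,\mu,\alpha,\rho)}$ applied to $g(\tau,m):=c(m)\ast^{b,Q}f(\tau,m)=b(m)\int_{-\infty}^{\infty}c(m-m_1)Q(im_1)f(\tau,m_1)\,dm_1$, and peel off the $\tau$-dependent exponential weight. Since the $q$-exponential weight $\exp(-\tfrac{k\log^2|\tau+\delta|}{2\log q}-\alpha\log|\tau+\delta|)$ does not involve $m$ at all, it factors through the integral: for fixed $\tau$, the quantity $E(\tau)|g(\tau,m)|$ (with $E(\tau)$ the reciprocal weight) equals $\bigl| b(m)\int c(m-m_1)Q(im_1)\bigl(E(\tau)f(\tau,m_1)\bigr)dm_1\bigr|$, so the whole problem reduces, for each fixed $\tau$, to an estimate purely in the variable $m$.

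Next I would observe that, for fixed $\tau\in U_d\cup\overline{D}(0,\rho)$, the function $m\mapsto E(\tau)f(\tau,m)$ lies in $E_{(\beta,\mu)}$ with $\|E(\tau)f(\tau,\cdot)\|_{(\beta,\mu)}\le \|f\|_{(k,\beta,\mu,\alpha,\rho)}$ — this is immediate from comparing the two norm definitions. Applying the Banach-algebra-type bound of Proposition~\ref{prop128} (the estimate underlying the fact that $\ast^{b,Q}$ makes $E_{(\beta,\mu)}$ a Banach algebra, i.e. $\|h_1\ast^{b,Q}h_2\|_{(\beta,\mu)}\le C\|h_1\|_{(\beta,\mu)}\|h_2\|_{(\beta,\mu)}$ for some constant $C=C_3$ depending on $\mu,Q,R$), one gets
$$\bigl\|\,c(m)\ast^{b,Q}\bigl(E(\tau)f(\tau,m)\bigr)\bigr\|_{(\beta,\mu)}\le C_3\,\|c\|_{(\beta,\mu)}\,\|E(\tau)f(\tau,\cdot)\|_{(\beta,\mu)}\le C_3\,\|c\|_{(\beta,\mu)}\,\|f\|_{(k,\beta,\mu,\alpha,\rho)}.$$
Unwinding the definition of $\|\cdot\|_{(\beta,\mu)}$, this says exactly that $(1+|m|)^\mu e^{\beta|m|}E(\tau)|g(\tau,m)|$ is bounded by the right-hand side, uniformly in $m$; and since the bound is independent of $\tau$, taking the supremum over $\tau\in U_d\cup\overline{D}(0,\rho)$ as well gives precisely \eqref{e145}.

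It remains to check the two regularity requirements for membership in $\hbox{Exp}^{d}_{(k,\beta,\mu,\alpha,\rho)}$: that $c\ast^{b,Q}f$ is continuous on $(U_d\cup\overline{D}(0,\rho))\times\R$ and holomorphic in $\tau$ on $U_d\cup D(0,\rho)$. Continuity and holomorphy in $\tau$ follow from differentiation under the integral sign, which is justified by the uniform (in $m_1$, locally uniform in $\tau$) domination provided by the weighted bound on $f$ together with $\mu>\deg(Q)+1$, ensuring the integrand decays like $(1+|m_1|)^{\deg(Q)-\mu}$ times integrable factors — exactly the hypothesis carried over from Proposition~\ref{prop128} that makes the convolution converge. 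I expect the only genuinely delicate point to be this interchange of limits/derivatives with the integral, i.e. verifying a uniform integrable majorant; the norm estimate itself is essentially a bookkeeping consequence of the two previously established propositions since the $\tau$-weight is constant in the integration variable $m_1$.
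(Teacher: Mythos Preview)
Your proposal is correct and follows essentially the same approach as the paper: the key observation that the $\tau$-weight is constant in the integration variable $m_1$, reducing the estimate to the $E_{(\beta,\mu)}$ convolution bound of Proposition~\ref{prop128}, is exactly what the paper does. Your presentation is slightly more economical in that you invoke Proposition~\ref{prop128} directly for the norm bound, whereas the paper re-derives that estimate inline (the computation culminating in \eqref{e159}).
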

\begin{proof}
Let $f\in\hbox{Exp}^{d}_{(k,\beta,\mu,\alpha,\rho)}$. Regarding Proposition~\ref{prop128}, it is direct to check that $c(m)\ast^{b,Q}f(\tau,m)$ defines a continuous function in $U_{d}\cup \overline{D}(0,\rho)\times \R$ and holomorphic in $U_{d}\cup D(0,\rho)$ with respect to the variable $\tau$. From the definition of the space $\hbox{Exp}^{d}_{(k,\beta,\mu,\alpha,\rho)}$ one has
$$\left\|c(m)\ast^{b,Q}f(\tau,m)\right\|_{(k,\beta,\mu,\alpha,\rho)}$$
\begin{align*}
&\le\sup_{\stackrel{\tau\in U_{d}\cup \overline{D}(0,\rho)}{m\in\R}}(1+|m|)^{\mu}e^{\beta |m|}\exp\left(-\frac{k\log^2|\tau+\delta|}{2\log(q)}-\alpha\log|\tau+\delta|\right)\\
&\times\frac{|R(im)b(m)|}{|R(im)|}\int_{-\infty}^{\infty}\left((1+|m-m_1|)^{\mu}e^{\beta|m-m_1|}|c(m-m_1)||Q(im_1)|\right.\\
&\times\left.|f(\tau,m_1)|(1+|m_1|)^\mu e^{\beta|m_1|}\right)\frac{e^{-\beta|m-m_1|}e^{-\beta|m_1|}}{(1+|m-m_1|)^{\mu}(1+|m_1|)^{\mu}}  dm_1
\end{align*}
From the hypotheses made on $Q$ and $R$, there exist $C_{31},C_{32}>0$ such that
\begin{equation}\label{e157}
|Q(im_1)|\le C_{31}(1+|m_1|)^{\deg(Q)},\qquad |R(im)|\ge C_{32}(1+|m|)^{\deg(R)},
\end{equation}
for every $m\in\R$. The triangular inequality, the estimates in (\ref{e157}), and Lemma 4 in~\cite{ma2} (or Lemma~2.2 in~\cite{cota2}) yield the existence of $C_{3}>0$ such that 
\begin{align}
&\frac{(1+|m|)^{\mu}}{|R(im)|}\int_{-\infty}^{\infty}\frac{|Q(im_1)|e^{\beta(|m|-|m-m_1|-|m_1|)}}{(1+|m-m_1|)^{\mu}(1+|m_1|)^{\mu}}dm_1 \nonumber \\
&\le \sup_{m\in\R}\frac{C_{31}}{C_{32}}(1+|m|)^{\mu-\deg(Q)}\int_{-\infty}^{\infty} \frac{1}{(1+|m-m_1|)^{\mu}(1+|m_1|)^{\mu-\deg(Q)}}dm_1\le C_{3},
\label{e159}
\end{align}
for every $m\in\R$, provided that $\mu>\deg(Q)+1$. This proves (\ref{e145}).
\end{proof}

Let $S_{d}$ be an infinite sector of bisecting direction $d$ and $\mathcal{R}_{d}^{b}$ be a finite sector of bisecting direction $d$. We take $\nu\in\R$.

We define another space of functions which will be useful in the sequel. It corresponds to that of Definition 1 in~\cite{ma15}. 
\begin{defin}
$\hbox{Exp}^{d}_{(k,\beta,\mu,\nu)}$ stands for the vector space of continuous complex valued functions $(\tau,m)\mapsto h(\tau,m)$ on $( S_{d} \cup\overline{\mathcal{R}_{d}^{b}})\times \R$, holomorphic with respect to $\tau$ on $S_d \cup \mathcal{R}_{d}^{b}$ such that
$$\left\|h(\tau,m)\right\|_{(k,\beta,\mu,\nu)}:=\sup_{\tau\in (S_{d} \cup\overline{\mathcal{R}_{d}^{b}}),m\in\R}(1+|m|)^{\mu}e^{\beta|m|}\exp\left(-\frac{k\log^2|\tau|}{2\log(q)}-\nu\log|\tau|\right)|h(\tau,m)|<\infty. $$
The space $\hbox{Exp}^{d}_{(k,\beta,\mu,\nu)}$ turns out to be a Banach space when endowed with the norm $\left\|\cdot\right\|_{(k,\beta,\mu,\nu)}$.
\end{defin}

The growth behavior of the elements in the space $\hbox{Exp}^{d}_{(k,\beta,\mu,\nu)}$ differs at 0, when compared to the growth rate of the elements in $\hbox{Exp}^{d}_{(k,\beta,\mu,\nu,\rho)}$ at the origin with respect to $\tau$ variable. However, both spaces share functions with the same growth at infinity.

We state some auxiliary lemmas in the shape as those enunciated for the space $\hbox{Exp}_{(k,\beta,\mu,\alpha,\rho)}^{d}$. The proofs for these results are omitted, and they can be found in~\cite{ma15}.

\begin{lemma}\label{lema182}
Let $(\tau,m)\mapsto a(\tau,m)$ be a bounded continuous function in $S_{d} \cup\overline{\mathcal{R}_{d}^{b}}\times\R$, holomorphic with respect to $\tau$ on $S_{d} \cup\mathcal{R}_{d}^{b}$. For every $h(\tau,m)\in\hbox{Exp}_{(k,\beta,\mu,\nu)}^{d}$, the function $a(\tau,m)h(\tau,m)\in \hbox{Exp}_{(k,\beta,\mu,\nu)}^{d}$ and 
$$\left\|a(\tau,m)h(\tau,m)\right\|_{(k,\beta,\mu,\nu)}\le \tilde{C}_{1}\left\|h(\tau,m)\right\|_{(k,\beta,\mu,\nu)},$$
where $\tilde{C}_{1}=\sup_{\tau\in (S_{d} \cup\overline{\mathcal{R}_{d}^{b}}),m\in\R}|a(\tau,m)|$.
\end{lemma}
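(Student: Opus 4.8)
The final statement to prove is Lemma~\ref{lema182}, which asserts that multiplication by a bounded continuous function $a(\tau,m)$ (holomorphic in $\tau$ on the interior) is a bounded operator on the Banach space $\hbox{Exp}^{d}_{(k,\beta,\mu,\nu)}$, with operator norm controlled by $\sup|a|$.

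The plan is to argue directly from the definition of the norm $\|\cdot\|_{(k,\beta,\mu,\nu)}$, mirroring the proof of the analogous Lemma~1 for the space $\hbox{Exp}^{d}_{(k,\beta,\mu,\alpha,\rho)}$. First I would check that the product $a(\tau,m)h(\tau,m)$ genuinely lies in the ambient function class: it is continuous on $(S_d\cup\overline{\mathcal{R}_d^b})\times\R$ as a product of continuous functions, and for fixed $m$ it is holomorphic in $\tau$ on $S_d\cup\mathcal{R}_d^b$ as a product of two functions holomorphic there. Then, for any $(\tau,m)$ in the relevant set, I would write
\begin{align*}
&(1+|m|)^{\mu}e^{\beta|m|}\exp\left(-\frac{k\log^2|\tau|}{2\log(q)}-\nu\log|\tau|\right)|a(\tau,m)h(\tau,m)|\\
&\qquad\le |a(\tau,m)|\cdot(1+|m|)^{\mu}e^{\beta|m|}\exp\left(-\frac{k\log^2|\tau|}{2\log(q)}-\nu\log|\tau|\right)|h(\tau,m)|\\
&\qquad\le \left(\sup_{\tau\in(S_d\cup\overline{\mathcal{R}_d^b}),\,m\in\R}|a(\tau,m)|\right)\|h(\tau,m)\|_{(k,\beta,\mu,\nu)}.
\end{align*}
Taking the supremum over all $(\tau,m)$ on the left-hand side then yields exactly $\|a(\tau,m)h(\tau,m)\|_{(k,\beta,\mu,\nu)}\le \tilde{C}_1\|h(\tau,m)\|_{(k,\beta,\mu,\nu)}$ with $\tilde{C}_1=\sup|a(\tau,m)|$, which in particular shows the product is finite-normed and hence belongs to $\hbox{Exp}^{d}_{(k,\beta,\mu,\nu)}$.

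There is essentially no serious obstacle here: the weight $(1+|m|)^{\mu}e^{\beta|m|}\exp(-k\log^2|\tau|/(2\log q)-\nu\log|\tau|)$ multiplies the function pointwise, so the estimate factors cleanly and the only input needed is boundedness of $a$. The one point deserving a word is that on the space $\hbox{Exp}^{d}_{(k,\beta,\mu,\nu)}$ the weight involves $\log|\tau|$, which blows up as $\tau\to 0$; but this causes no trouble because membership in the space already presupposes that $h$ (and hence $ah$) is controlled by precisely this weight near the origin, so the pointwise bound above holds uniformly on all of $(S_d\cup\overline{\mathcal{R}_d^b})\times\R$ without any case distinction at $\tau=0$. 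Since the argument is short and standard, and identical in structure to the cited Lemma~1 in~\cite{ma15}, I would simply record the chain of inequalities above and remark that taking suprema completes the proof.
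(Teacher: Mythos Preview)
Your proposal is correct and is precisely the straightforward pointwise-factoring argument one expects; the paper itself omits the proof and refers to~\cite{ma15}, where the same direct computation from the definition of the norm is carried out. Your remark about the behavior of the weight near $\tau=0$ is harmless but unnecessary, since the quadratic term $-\frac{k}{2\log q}\log^2|\tau|$ dominates and drives the weight to zero there, so no special care is needed.
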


\begin{prop}\label{prop188}
Let $\gamma_{1},\gamma_{2},\gamma_{3}\ge0$ such that
\begin{equation}\label{e191}
\gamma_2\ge k\gamma_3,\quad \gamma_1+k\gamma_3\ge\gamma_2.
\end{equation}
Let $a_{\gamma_{1}}(\tau)$ be a holomorphic function on $S_{d} \cup\mathcal{R}_{d}^{b}$, continuous on $S_{d} \cup\overline{\mathcal{R}_{d}^{b}}$ with $(1+|\tau|)^{\gamma_{1}}|a_{\gamma_{1}}(\tau)|\le 1$ for every $\tau\in(S_{d} \cup\overline{\mathcal{R}_{d}^{b}})$. Then, for every $f\in\hbox{Exp}_{(k,\beta,\mu,\nu)}^{d}$, the function $a_{\gamma_{1}}(\tau)\tau^{\gamma_2}\sigma_{q}^{-\gamma_3}f(\tau,m)$ belongs to $\hbox{Exp}_{(k,\beta,\mu,\nu)}^{d}$, and there exists $\tilde{C}_{2}>0$, depending on $k,q,\nu,\gamma_1,\gamma_2,\gamma_3$, such that
$$ \left\|a_{\gamma_{1}}(\tau)\tau^{\gamma_2}\sigma_{q}^{-\gamma_3}f(\tau,m)\right\|_{(k,\beta,\mu,\nu)}\le\tilde{C}_{2}\left\|f(\tau,m)\right\|_{(k,\beta,\mu,\nu)}.$$ 
\end{prop}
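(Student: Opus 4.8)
\textbf{Proof proposal for Proposition \ref{prop188}.}

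The plan is to mimic the proof of Proposition \ref{prop82} but track the extra factor $a_{\gamma_1}(\tau)$ and the fact that the relevant growth weight is now $\exp\left(-\frac{k\log^2|\tau|}{2\log(q)}-\nu\log|\tau|\right)$ with no shift $\delta$. First I would write out $\left\|a_{\gamma_1}(\tau)\tau^{\gamma_2}\sigma_q^{-\gamma_3}f(\tau,m)\right\|_{(k,\beta,\mu,\nu)}$ and insert multiplicatively the weight evaluated at $\tau/q^{\gamma_3}$, exactly as in \eqref{e93}, so as to bound the expression by $\left\|f\right\|_{(k,\beta,\mu,\nu)}$ times
$$
\sup_{\tau\in S_d\cup\overline{\mathcal R_d^b}}|a_{\gamma_1}(\tau)|\,|\tau|^{\gamma_2}\exp\left(\frac{k\left(\log^2|\tau/q^{\gamma_3}|-\log^2|\tau|\right)}{2\log(q)}\right)\left(\frac{|\tau/q^{\gamma_3}|}{|\tau|}\right)^{\nu},
$$
using that $\sigma_q^{-\gamma_3}(S_d\cup\overline{\mathcal R_d^b})\subseteq S_d\cup\overline{\mathcal R_d^b}$. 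Here, since there is no shift, the log-square difference is exact: $\log^2|\tau/q^{\gamma_3}|-\log^2|\tau| = \gamma_3^2\log^2(q)-2\gamma_3\log(q)\log|\tau|$, which is much cleaner than the estimates \eqref{e102}--\eqref{e102b} needed in the shifted case. Likewise $(|\tau/q^{\gamma_3}|/|\tau|)^{\nu}=q^{-\gamma_3\nu}$ is a constant.

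After absorbing the constants $\exp(k\gamma_3^2\log(q)/2)$ and $q^{-\gamma_3\nu}$, the supremum reduces to controlling
$$
\sup_{\tau\in S_d\cup\overline{\mathcal R_d^b}}|a_{\gamma_1}(\tau)|\,|\tau|^{\gamma_2}\,|\tau|^{-k\gamma_3}.
$$
Then I would use the hypothesis $(1+|\tau|)^{\gamma_1}|a_{\gamma_1}(\tau)|\le 1$ to replace $|a_{\gamma_1}(\tau)|$ by $(1+|\tau|)^{-\gamma_1}$, reducing everything to the one-variable estimate
$$
\sup_{x\ge 0}\frac{x^{\gamma_2-k\gamma_3}}{(1+x)^{\gamma_1}}<\infty,
$$
with $x=|\tau|$. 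This supremum is finite precisely because of the two conditions in \eqref{e191}: the first inequality $\gamma_2\ge k\gamma_3$ guarantees the exponent $\gamma_2-k\gamma_3\ge 0$ so there is no blow-up at $x=0$, and the second inequality $\gamma_1+k\gamma_3\ge\gamma_2$, i.e. $\gamma_1\ge\gamma_2-k\gamma_3$, guarantees the numerator does not outgrow the denominator as $x\to\infty$. Collecting the accumulated constants gives the desired $\tilde C_2$ depending only on $k,q,\nu,\gamma_1,\gamma_2,\gamma_3$.

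I do not expect any serious obstacle here; the only point requiring a little care is the elementary verification that $a_{\gamma_1}(\tau)\tau^{\gamma_2}\sigma_q^{-\gamma_3}f(\tau,m)$ genuinely lies in the space, i.e. is continuous on $(S_d\cup\overline{\mathcal R_d^b})\times\R$ and holomorphic in $\tau$ on $S_d\cup\mathcal R_d^b$ — this follows from the stability of the sector under $\sigma_q^{-\gamma_3}$ together with the hypotheses on $a_{\gamma_1}$ and $f$ — and the bookkeeping of which constants depend on which parameters. The absence of the shift $\delta$ (compared with Proposition \ref{prop82}) actually makes this proof strictly easier than that one, since the log-square difference is computed in closed form rather than estimated.
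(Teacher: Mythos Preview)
Your argument is correct and is precisely the natural approach: the paper itself omits this proof and refers to \cite{ma15}, so there is no in-paper argument to compare against, but your adaptation of the method of Proposition~\ref{prop82} to the unshifted weight is exactly what is expected. As you observe, the absence of the shift $\delta$ makes the identity $\log^2|\tau/q^{\gamma_3}|-\log^2|\tau|=\gamma_3^2\log^2(q)-2\gamma_3\log(q)\log|\tau|$ exact, and the two inequalities in \eqref{e191} are used exactly where you say: $\gamma_2\ge k\gamma_3$ to control the behavior near $\tau=0$ and $\gamma_1+k\gamma_3\ge\gamma_2$ to control the behavior at infinity.
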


\begin{prop}\label{prop197}
Let $Q(X),R(X)$ and $b$ be as in Proposition~\ref{prop128}. We assume $c(m)\in E_{(\beta,m)}$. Then, for every $f(\tau,m)\in\hbox{Exp}^{d}_{(k,\beta,\mu,\nu)}$ the function $c(m)\ast^{b,Q}f(\tau,m)\in\hbox{Exp}_{(k,\beta,\mu,\nu)}^{d}$ and there exists $\tilde{C}_{3}>0$, depending on $Q(X), R(X),\mu, q, \nu, k$, such that
$$\left\|c(m)\ast^{b,Q}f(\tau,m)\right\|_{(k,\beta,\mu,\nu)}\le\tilde{C}_{3}\left\|c(m)\right\|_{(\beta,\mu)}\left\|f(\tau,m)\right\|_{(k,\beta,\mu,\nu)}.$$
\end{prop}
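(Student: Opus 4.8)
The plan is to prove Proposition~\ref{prop197} by mimicking the argument of Proposition~\ref{prop143}, since the two Banach spaces $\hbox{Exp}^{d}_{(k,\beta,\mu,\nu)}$ and $\hbox{Exp}^{d}_{(k,\beta,\mu,\alpha,\rho)}$ differ only in the behavior near $\tau=0$ (a shift by $\delta$ versus a genuine disc), while sharing exactly the same $q$-exponential weight at infinity. First I would check that $c(m)\ast^{b,Q}f(\tau,m)$ is well defined as a continuous function on $(S_d\cup\overline{\mathcal{R}_d^b})\times\R$ and holomorphic in $\tau$ on $S_d\cup\mathcal{R}_d^b$: this follows from the convergence of the defining integral, which in turn is a consequence of Proposition~\ref{prop128} (the hypothesis $\mu>\deg(Q)+1$ together with $\sup_{m}|b(m)R(im)|\le1$ and $\deg(R)\ge\deg(Q)$ guarantee absolute convergence, uniformly on compact subsets of the $\tau$-domain, whence holomorphy by Morera's theorem).

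Next I would insert the weights. Writing out $\left\|c\ast^{b,Q}f\right\|_{(k,\beta,\mu,\nu)}$ from the definition, I multiply and divide the integrand by the factors $(1+|m-m_1|)^{\mu}e^{\beta|m-m_1|}$ and $(1+|m_1|)^{\mu}e^{\beta|m_1|}$, exactly as in the proof of Proposition~\ref{prop143}. The $\tau$-dependent exponential weight $\exp(-\tfrac{k\log^2|\tau|}{2\log q}-\nu\log|\tau|)$ simply factors through the integral against $|f(\tau,m_1)|$, producing $\left\|f\right\|_{(k,\beta,\mu,\nu)}$, because the convolution acts only on the $m$ variable; this is where the difference in the $\tau$-geometry between the two spaces becomes irrelevant. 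What remains is to bound
$$
\frac{(1+|m|)^{\mu}}{|R(im)|}\int_{-\infty}^{\infty}\frac{|Q(im_1)|\,e^{\beta(|m|-|m-m_1|-|m_1|)}}{(1+|m-m_1|)^{\mu}(1+|m_1|)^{\mu}}\,dm_1
$$
by a constant $\tilde{C}_3$ depending only on $Q,R,\mu,q,\nu,k$. Using the polynomial estimates $|Q(im_1)|\le C_{31}(1+|m_1|)^{\deg Q}$ and $|R(im)|\ge C_{32}(1+|m|)^{\deg R}$ together with $|m|\le|m-m_1|+|m_1|$ (so the exponential factor is $\le1$), this reduces to the standard integral estimate of Lemma~4 in~\cite{ma2} (equivalently Lemma~2.2 in~\cite{cota2}), valid precisely because $\mu>\deg(Q)+1$ and $\deg(R)\ge\deg(Q)$. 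Multiplying the surviving factor $|b(m)R(im)|\le1$ back in, the claimed inequality $\left\|c\ast^{b,Q}f\right\|_{(k,\beta,\mu,\nu)}\le\tilde{C}_3\left\|c\right\|_{(\beta,\mu)}\left\|f\right\|_{(k,\beta,\mu,\nu)}$ follows, and in particular $c\ast^{b,Q}f\in\hbox{Exp}^{d}_{(k,\beta,\mu,\nu)}$.

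I do not expect a serious obstacle here: the estimate is essentially identical to that of Proposition~\ref{prop143}, and the only thing one must be careful about is that the weight in $\tau$ now involves $|\tau|$ rather than $|\tau+\delta|$, so no shift-comparison (of the kind carried out in the proof of Proposition~\ref{prop82}) is needed — the $\tau$-weight is literally constant with respect to $m$ and pulls straight out of the integral. The mild bookkeeping point is simply to confirm that the domain $S_d\cup\overline{\mathcal{R}_d^b}$ causes no trouble in the holomorphy/continuity verification, which is immediate since the integral bound is locally uniform in $\tau$ there. Hence the proof is a routine adaptation, and indeed the paper defers it to~\cite{ma15}.
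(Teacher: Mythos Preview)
Your proposal is correct and matches the paper's stance exactly: the paper omits the proof, stating that it is analogous to Proposition~\ref{prop143} and referring to~\cite{ma15}, and your write-up carries out precisely that adaptation. The key observation you make --- that the $\tau$-weight factors straight out because the convolution acts only in $m$, so no shift-comparison is needed --- is the only point where the argument differs from that of Proposition~\ref{prop143}, and you identify it correctly.
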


\section{Review of some formal and analytic transforms}

In the present section, we recall the definitions and main properties of some formal and analytic transforms. More precisely, we will be concerned with $q-$Borel, $q-$Laplace and Fourier transforms. Throughout this section, $\mathbb{E}$ stands for a complex Banach space.

Let $q>1$ be a real number and $k\ge1$ be an integer. The next definition and result can both be found in~\cite{dr}, and also in the previous work~\cite{ramis}. 

\begin{defin}\label{defi155}
Let $\hat{a}(T)=\sum_{n\ge0}a_nT^n\in\mathbb{E}[[T]]$. We define the formal $q-$Borel transform of order $k$ of $\hat{a}(T)$ as the formal power series
$$\hat{\mathcal{B}}_{q;1/k}(\hat{a}(T))(\tau)=\sum_{n\ge0}a_n\frac{\tau^n}{(q^{1/k})^{n(n-1)/2}}\in\mathbb{E}[[\tau]].$$
\end{defin}

We recall that for every $\gamma\in\R$, the operator $\sigma_{q}^{\gamma}$, acting on $\mathbb{E}[[T]]$, stands for the generalized dilation operator on $T$ variable, $\sigma_{q}^{\gamma}(\hat{a}(T))=\hat{a}(q^{\gamma}T)$ for every $\hat{a}(T)\in\mathbb{E}[[T]]$.

The proof of this result can be found in~\cite{ma15}, Proposition 5.

\begin{prop}\label{prop157}
Let $\sigma\in\mathbb{N}$ and $j\in\mathbb{Q}$. Then, the following formal identity holds
$$\hat{\mathcal{B}}_{q;1/k}(T^{\sigma}\sigma_{q}^{j}\hat{a}(T))(\tau)=\frac{\tau^\sigma}{(q^{1/k})^{\sigma(\sigma-1)/2}}\sigma_q^{j-\frac{\sigma}{k}}\left(\hat{\mathcal{B}}_{q;1/k}(\hat{a}(T))(\tau)\right),$$
for every $\hat{a}(T)\in\mathbb{E}[[T]]$.
\end{prop}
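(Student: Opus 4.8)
The plan is to verify the identity by direct computation on a single monomial $\hat{a}(T) = a_n T^n$ and then extend by linearity and formal continuity, since both $\hat{\mathcal{B}}_{q;1/k}$ and $\sigma_q^\gamma$ act coefficientwise on formal power series. So first I would reduce to showing
$$\hat{\mathcal{B}}_{q;1/k}\bigl(T^{\sigma}\sigma_{q}^{j}(a_n T^n)\bigr)(\tau)=\frac{\tau^\sigma}{(q^{1/k})^{\sigma(\sigma-1)/2}}\sigma_q^{j-\frac{\sigma}{k}}\Bigl(\hat{\mathcal{B}}_{q;1/k}(a_n T^n)(\tau)\Bigr)$$
for each fixed $n\ge 0$.

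For the left-hand side, I would compute step by step: applying $\sigma_q^j$ gives $a_n q^{jn} T^n$; multiplying by $T^\sigma$ gives $a_n q^{jn} T^{n+\sigma}$; then the $q$-Borel transform of order $k$ of this monomial, by Definition~\ref{defi155}, is $a_n q^{jn}\tau^{n+\sigma}/(q^{1/k})^{(n+\sigma)(n+\sigma-1)/2}$. For the right-hand side, $\hat{\mathcal{B}}_{q;1/k}(a_n T^n)(\tau)= a_n \tau^n/(q^{1/k})^{n(n-1)/2}$; applying $\sigma_q^{j-\sigma/k}$ (acting on the $\tau$ variable, so multiplying the coefficient of $\tau^n$ by $(q)^{(j-\sigma/k)n}$) gives $a_n q^{(j-\sigma/k)n}\tau^n/(q^{1/k})^{n(n-1)/2}$; multiplying by $\tau^\sigma/(q^{1/k})^{\sigma(\sigma-1)/2}$ gives $a_n q^{(j-\sigma/k)n}\tau^{n+\sigma}/\bigl((q^{1/k})^{n(n-1)/2}(q^{1/k})^{\sigma(\sigma-1)/2}\bigr)$.

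The only thing left is to match the powers of $q$ on the two sides. Comparing exponents of $q^{1/k}$ (after writing $q^{jn}=(q^{1/k})^{jkn}$ and $q^{(j-\sigma/k)n}=(q^{1/k})^{jkn-\sigma n}$), the left side has exponent $jkn-\tfrac{(n+\sigma)(n+\sigma-1)}{2}$ and the right side has exponent $jkn-\sigma n-\tfrac{n(n-1)}{2}-\tfrac{\sigma(\sigma-1)}{2}$; so it suffices to check the purely combinatorial identity
$$\frac{(n+\sigma)(n+\sigma-1)}{2}=\sigma n+\frac{n(n-1)}{2}+\frac{\sigma(\sigma-1)}{2},$$
which is immediate from expanding $(n+\sigma)(n+\sigma-1)=n(n-1)+\sigma(\sigma-1)+2n\sigma$. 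This is the crux of the argument, though it is entirely elementary; the statement allows $j\in\mathbb{Q}$ rather than just $j\in\mathbb{Z}$, but since the dilation operator $\sigma_q^\gamma$ was defined on formal series for arbitrary real $\gamma$, nothing changes and no obstacle arises there. Finally I would note that summing over $n\ge 0$ and using linearity of both operators on $\mathbb{E}[[T]]$ (which is legitimate as a formal identity, coefficient by coefficient) yields the claimed formula for general $\hat{a}(T)\in\mathbb{E}[[T]]$.
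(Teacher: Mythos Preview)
Your proof is correct. The paper does not actually prove this proposition but merely cites \cite{ma15}, Proposition~5; the direct coefficientwise computation on monomials that you carry out is the natural (and essentially the only) argument, so your proposal agrees with what one finds upon following the reference.
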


At this point, we can recall the definition of a $q-$Laplace transform of order $k$, extending that used in~\cite{lama12} for $k=1$, and introduced in the work~\cite{dizh}. It provides a continuous $q-$analog for the formal inverse of $\hat{\mathcal{B}}_{q;1/k}$ developed in~\cite{dr}. The associated kernel of the $q-$Laplace operator is Jacobi theta function of order $k$ defined by
$$\Theta_{q^{1/k}}(x)=\sum_{n\in\Z}q^{-\frac{n(n-1)}{2k}}x^n,$$
for $x\in\C^{\star}$, $m\in\Z$. This function solves the $q-$difference equation
\begin{equation}\label{e196}
\Theta_{q^{1/k}}(q^{\frac{m}{k}}x)=q^{\frac{m(m+1)}{2k}}x^m\Theta_{q^{1/k}}(x),
\end{equation}
for every $x\in\C^\star$. As a direct consequence of Lemma~4.1 in~\cite{lama12}, extended for any value of $k$, Jacobi theta function of order $k$ satisfies that for every $\tilde{\delta}>0$ there exists a positive constant $C_{q,k}$ not depending on $\tilde{\delta}$, such that
\begin{equation}\label{e200}
\left|\Theta_{q^{1/k}}(x)\right|\ge C_{q,k}\tilde{\delta}\exp\left(\frac{k}{2}\frac{\log^2|x|}{\log(q)}\right)|x|^{1/2},
\end{equation}
for every $x\in\C^{\star}$ verifying $|1+xq^{\frac{m}{k}}|>\tilde{\delta}$, for all $m\in\Z$. This last property is crucial in order for the $q-$Laplace transform of order $k$ to be well-defined.

\begin{defin}\label{def203}
Let $\rho>0$ and $U_{d}$ be an unbounded sector with vertex at 0 and bisecting direction $d\in\R$. Let $f:D(0,\rho)\cup U_{d}\to\mathbb{E}$ be a holomorphic function, continuous on $\overline{D}(0,\rho)$ such that there exist constants $K>0$ and $\alpha\in\R$ with
\begin{equation}\label{e205}
\left\|f(x)\right\|_{\mathbb{E}}\le K\exp\left(\frac{k}{2}\frac{\log^2|x|}{\log(q)}+\alpha\log|x|\right)
\end{equation}
for every $x\in U_{d}$, $|x|\ge \rho$ and 
\begin{equation}\label{e209}
\left\|f(x)\right\|_{\mathbb{E}}\le K
\end{equation}
for all $x\in \overline{D}(0,\rho)$. Take $\gamma\in\R$ such that $e^{i\gamma}\in U_{d}$. We put $\pi_{q^{1/k}}=\frac{\log(q)}{k}\prod_{n\ge0}(1-\frac{1}{q^{\frac{n+1}{k}}})^{-1}$, and define the $q-$Laplace transform of order $k$ of $f$ in direction $\gamma$ as
$$\mathcal{L}^{\gamma}_{q;1/k}(f(x))(T)=\frac{1}{\pi_{q^{1/k}}}\int_{L_{\gamma}}\frac{f(u)}{\Theta_{q^{1/k}}\left(\frac{u}{T}\right)}\frac{du}{u},$$
where $L_{\gamma}$ stands for the set $\R_{+}e^{i\gamma}:=\{te^{i\gamma}:t\in (0,\infty)\}$.
\end{defin}

The incoming lemmas are stated without proof which can be found in~\cite{ma15}, Lemma 4 and Proposition 6. The first result studies the domain of definition of the $q-$Laplace transform of order $k$ whilst the second states a commutation formula of the $q-$Laplace operator of order $k$ with respect to some other operators.

\begin{lemma}\label{lema219}
Let $\tilde{\delta}>0$. Under the hypotheses of Definition~\ref{def203}, $\mathcal{L}^{\gamma}_{q;1/k}(f(x))(T)$ defines a bounded and holomorphic function on the domain $\mathcal{R}_{\gamma,\tilde{\delta}}\cap D(0,r_1)$ for any $0<r_1\le q^{\left(\frac{1}{2}-\alpha\right)/k}/2$, where
$$\mathcal{R}_{\gamma,\tilde{\delta}}=\left\{T\in\C^{\star}:|1+\frac{e^{i\gamma}r}{T}|>\tilde{\delta},\hbox{ for all }r\ge0\right\}.$$
The value of $\mathcal{L}^{\gamma}_{q;1/k}(f(x))(T)$ does not depend on the choice of $\gamma$ under the condition $e^{i\gamma}\in S_{d}$ due to Cauchy formula.
\end{lemma}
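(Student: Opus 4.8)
\textbf{Proof plan for Lemma~\ref{lema219}.}

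The plan is to follow the classical Laplace-type estimate strategy, adapted to the $q$-setting via the theta-function lower bound~(\ref{e200}). First I would fix $T\in\mathcal{R}_{\gamma,\tilde{\delta}}\cap D(0,r_1)$ and parametrize the ray $L_{\gamma}$ by $u=re^{i\gamma}$, $r\in(0,\infty)$, so that the integral defining $\mathcal{L}^{\gamma}_{q;1/k}(f(x))(T)$ becomes $\frac{1}{\pi_{q^{1/k}}}\int_{0}^{\infty}\frac{f(re^{i\gamma})}{\Theta_{q^{1/k}}(re^{i\gamma}/T)}\frac{dr}{r}$. The key point is that for $T\in\mathcal{R}_{\gamma,\tilde{\delta}}$, the argument $x=re^{i\gamma}/T$ satisfies $|1+xq^{m/k}|>\tilde{\delta}$ for all $m\in\Z$ (this is exactly how the region $\mathcal{R}_{\gamma,\tilde{\delta}}$ is designed: writing $q^{m/k}r=\tilde r\ge0$ ranges over a set for which the defining inequality of $\mathcal{R}_{\gamma,\tilde{\delta}}$ applies), so the lower bound~(\ref{e200}) for $\Theta_{q^{1/k}}$ is available uniformly along the whole ray.

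Next I would split the integral at $|u|=\rho$. On the inner part $0<r\le\rho$ I use the bound~(\ref{e209}), namely $\|f(re^{i\gamma})\|_{\mathbb{E}}\le K$, together with~(\ref{e200}) applied to $x=re^{i\gamma}/T$; this gives an integrand dominated by a constant times $\exp\!\left(-\frac{k}{2}\frac{\log^2(r/|T|)}{\log(q)}\right)(r/|T|)^{-1/2}\frac{1}{r}$, whose integral over $(0,\rho]$ converges (the Gaussian-in-$\log$ decay dominates). On the outer part $r\ge\rho$ I use~(\ref{e205}): $\|f(re^{i\gamma})\|_{\mathbb{E}}\le K\exp\!\left(\frac{k}{2}\frac{\log^2 r}{\log(q)}+\alpha\log r\right)$, and again~(\ref{e200}) for the denominator. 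The decisive cancellation is
$$\frac{k}{2}\frac{\log^2 r}{\log(q)}-\frac{k}{2}\frac{\log^2(r/|T|)}{\log(q)}=\frac{k}{\log(q)}\log r\,\log|T|-\frac{k}{2}\frac{\log^2|T|}{\log(q)},$$
so the integrand over $r\ge\rho$ is bounded by a constant (depending on $T$, $\tilde\delta$, $K$) times $r^{\alpha-1/2+\frac{k\log|T|}{\log(q)}}|T|^{1/2}$ up to the fixed factor $\exp(-\frac{k}{2}\frac{\log^2|T|}{\log(q)})$; this is integrable on $[\rho,\infty)$ precisely when the exponent of $r$ is $<-1$, i.e. when $\alpha+\frac{1}{2}+\frac{k\log|T|}{\log(q)}<0$, which holds as soon as $|T|<q^{(\frac12-\alpha)/k}$, guaranteed by $|T|<r_1\le q^{(\frac12-\alpha)/k}/2$. (One should double-check the sign: $\|f\|\le K\exp(\frac{k}{2}\frac{\log^2|x|}{\log q}+\alpha\log|x|)$ with $|x|=r$, so the surviving power of $r$ is $r^{\alpha}\cdot r^{\frac{k\log|T|}{\log q}}\cdot r^{-1/2}\cdot r^{-1}$, and $\log|T|<0$ forces this below $r^{-1}$ for large $r$ — the $r_1$ bound ensures integrability uniformly.) This establishes absolute convergence, hence boundedness of $\mathcal{L}^{\gamma}_{q;1/k}(f(x))(T)$ on the stated domain; holomorphy follows from differentiation under the integral sign, legitimate by the same dominated-convergence estimates applied on compact subsets of $\mathcal{R}_{\gamma,\tilde{\delta}}\cap D(0,r_1)$.

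Finally, the independence of $\gamma$: given $\gamma_1,\gamma_2$ with $e^{i\gamma_1},e^{i\gamma_2}\in S_d$, I would apply Cauchy's theorem to $u\mapsto f(u)/(\Theta_{q^{1/k}}(u/T)u)$ on the sector bounded by $L_{\gamma_1}$, $L_{\gamma_2}$ and an arc $|u|=\bar r$, letting $\bar r\to\infty$ (the arc contribution vanishes by the outer estimate above) and $\bar r\to0$ (arc contribution vanishes by the inner estimate); since the integrand is holomorphic in $u$ on the sector $D(0,\rho)\cup U_d$ minus the (absent, since $|1+(u/T)q^{m/k}|>\tilde\delta$) zeros of the theta factor, the two ray integrals agree. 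I expect the main obstacle to be the bookkeeping in the outer estimate — keeping careful track of how the $\frac{k}{2\log q}\log^2$ terms combine and verifying that the threshold $r_1\le q^{(\frac12-\alpha)/k}/2$ is exactly what makes the surviving power of $r$ integrable — rather than anything conceptually deep.
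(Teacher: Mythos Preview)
Your proposal is correct and follows the expected route. The paper itself does not give a proof of this lemma; it states explicitly that Lemma~\ref{lema219} is ``stated without proof which can be found in~\cite{ma15}, Lemma~4''. Your argument---splitting the ray at $|u|=\rho$, using~(\ref{e209}) on the inner piece and~(\ref{e205}) on the outer piece together with the theta lower bound~(\ref{e200}), then reading off the integrability threshold $|T|<q^{(1/2-\alpha)/k}$ from the surviving power of $r$---is precisely the standard computation, and it is in fact the same estimate the authors carry out later in the proof of Lemma~\ref{lema403} (see~(\ref{e426})--(\ref{e435}), where the identical cancellation of $\log^2$ terms appears). Your own parenthetical self-correction catches the dropped $r^{-1}$ from $du/u$; with that factor included the exponent is $\alpha-\tfrac32+\tfrac{k\log|T|}{\log q}$ and the condition $<-1$ gives exactly $|T|<q^{(1/2-\alpha)/k}$, matching the statement. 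The Cauchy-theorem argument for independence of $\gamma$ is also the intended one.
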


\begin{prop}\label{prop259}
Let $f$ be a function satisfying the properties in Definition~\ref{def203}, and $\tilde{\delta}>0$. Then, for every $\sigma\ge0$ one has
$$T^{\sigma}\sigma_{q}^{j}(\mathcal{L}^{\gamma}_{q;1/k}f(x))(T)=\mathcal{L}_{q;1/k}^{\gamma}\left(\frac{x^{\sigma}}{(q^{1/k})^{\sigma(\sigma-1)/2}}\sigma_{q}^{j-\frac{\sigma}{k}}f(x)\right)(T),$$
for every $T\in\mathcal{R}_{\gamma,\tilde{\delta}}\cap D(0,r_1)$, where $0<r_1\le q^{(\frac{1}{2}-\alpha)/k}/2$.
\end{prop}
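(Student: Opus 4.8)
The strategy is to reduce the identity to the defining $q$-difference equation (\ref{e196}) for the Jacobi theta kernel, exactly as one does in the classical Laplace-transform setting where multiplication by $T$ corresponds to $\partial_x$ acting on the kernel. First I would start from the right-hand side, writing out the $q$-Laplace integral explicitly:
\begin{equation*}
\mathcal{L}_{q;1/k}^{\gamma}\left(\frac{x^{\sigma}}{(q^{1/k})^{\sigma(\sigma-1)/2}}\sigma_{q}^{j-\frac{\sigma}{k}}f(x)\right)(T)=\frac{1}{\pi_{q^{1/k}}}\int_{L_{\gamma}}\frac{u^{\sigma}}{(q^{1/k})^{\sigma(\sigma-1)/2}}\,\frac{f(q^{j-\sigma/k}u)}{\Theta_{q^{1/k}}\left(\frac{u}{T}\right)}\,\frac{du}{u}.
\end{equation*}
Here one must first check that the integrand makes sense, i.e.\ that $x\mapsto x^{\sigma}\sigma_q^{j-\sigma/k}f(x)$ still satisfies the growth bounds (\ref{e205}), (\ref{e209}) of Definition~\ref{def203} on the (rescaled) domain, so that Lemma~\ref{lema219} applies and the integral converges for $T\in\mathcal{R}_{\gamma,\tilde\delta}\cap D(0,r_1)$; this is where the polynomial-in-$\log|x|$ nature of the exponent and the explicit shape of $\mathcal{R}_{\gamma,\tilde\delta}$ are used, together with the fact that multiplying by $q^{j-\sigma/k}$ only dilates $L_\gamma$ along itself.

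Next I would perform the change of variable $u=q^{\sigma/k-j}v$ in the integral (this maps $L_\gamma$ to itself since the dilation factor is a positive real), turning the integrand into a factor $f(v)$ divided by $\Theta_{q^{1/k}}\!\big(q^{\sigma/k-j}v/T\big)$, times an explicit power of $q$ and a power of $v$. The key algebraic step is then to apply the functional equation (\ref{e196}) of the theta function with $m=\sigma-jk$ (an integer, since $\sigma\in\mathbb{N}$ and — at least in the relevant applications — $jk\in\mathbb{Z}$; more carefully one iterates (\ref{e196}) in integer steps and absorbs the non-integer part, or one simply invokes that $\Theta_{q^{1/k}}(q^{m/k}x)=q^{m(m+1)/(2k)}x^m\Theta_{q^{1/k}}(x)$ extends the needed relation) to rewrite
$\Theta_{q^{1/k}}\!\big(q^{\sigma/k-j}\,v/T\big)$ in terms of $\Theta_{q^{1/k}}(v/T)$ multiplied by an explicit power of $q$ and by $(v/T)^{\sigma-jk}$. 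Collecting all the powers of $q$ and of $v$, and pulling the resulting power $T^{\sigma}\sigma_q^{j}$ out of the integral, should leave precisely
\begin{equation*}
T^{\sigma}\sigma_q^{j}\left(\frac{1}{\pi_{q^{1/k}}}\int_{L_{\gamma}}\frac{f(v)}{\Theta_{q^{1/k}}\left(\frac{v}{T}\right)}\frac{dv}{v}\right)=T^{\sigma}\sigma_q^{j}\big(\mathcal{L}_{q;1/k}^{\gamma}f(x)\big)(T),
\end{equation*}
which is the claimed identity. Finally I would note, as in Lemma~\ref{lema219}, that the result is independent of the admissible choice of $\gamma$ by Cauchy's theorem, and that both sides are holomorphic on $\mathcal{R}_{\gamma,\tilde\delta}\cap D(0,r_1)$, so the formal identity is in fact an identity of holomorphic functions there.

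The main obstacle I expect is purely bookkeeping: correctly tracking the three separate powers of $q^{1/k}$ that appear — the $(q^{1/k})^{-\sigma(\sigma-1)/2}$ normalization in the statement, the factor produced by the change of variable $u=q^{\sigma/k-j}v$, and the factor $q^{m(m+1)/(2k)}$ coming from (\ref{e196}) — and checking that they combine to exactly cancel, leaving only the clean operator $T^\sigma\sigma_q^j$ outside. A secondary point requiring a little care is the hypothesis "for every $\sigma\ge0$" versus the integrality needed to apply (\ref{e196}) cleanly; I would handle this by observing that the computation only ever uses $\sigma_q$ with exponent $j-\sigma/k$, and that the theta functional equation can be applied with the exact shift $q^{\sigma/k-j}$ by writing that shift as $q^{(\sigma-jk)/k}$ and using the stated form of (\ref{e196}) with $m=\sigma-jk$, which is legitimate for the values of $\sigma,j,k$ occurring in the paper. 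Since the statement is quoted from \cite{ma15}, Proposition~6, the routine verification of the growth estimates can be deferred to that reference.
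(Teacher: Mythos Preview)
The paper does not give its own proof of this proposition; it simply refers to Proposition~6 of \cite{ma15}. Your approach via a change of variable in the $q$-Laplace integral together with the functional equation (\ref{e196}) is the standard one and is correct in substance.

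There is one point where your bookkeeping can be sharpened, and it is precisely the integrality issue you flag at the end. If you try to apply (\ref{e196}) in a single step with $m=\sigma-jk$, you run into trouble: in the applications of this proposition (e.g.\ in (\ref{e668}) with $k=\kappa$ and $j=\delta_\ell-1-d_{\lambda,\ell}/k_2$) the quantity $jk$ is generally not an integer, so (\ref{e196}) as stated does not apply, and there is no extension of that identity to non-integer $m$. The clean fix is to separate the two operators. First observe that $\sigma_q^{j}(\mathcal{L}^\gamma_{q;1/k}f)(T)=\mathcal{L}^\gamma_{q;1/k}(\sigma_q^{j}f)(T)$ by the change of variable $u=q^{j}v$ alone, with no theta identity required; this holds for any real $j$. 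Then apply the case $j=0$ of the proposition, namely $T^{\sigma}\mathcal{L}^\gamma_{q;1/k}g(T)=\mathcal{L}^\gamma_{q;1/k}\big(\tfrac{x^{\sigma}}{(q^{1/k})^{\sigma(\sigma-1)/2}}\sigma_q^{-\sigma/k}g\big)(T)$ with $g=\sigma_q^{j}f$; here the change of variable $u=q^{\sigma/k}v$ leads to $\Theta_{q^{1/k}}(q^{\sigma/k}v/T)$, and now (\ref{e196}) applies with the integer $m=\sigma$. The powers of $q$ then combine exactly as you anticipate. With this two-step decomposition your proof goes through without any appeal to ``values occurring in the paper''.
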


We are also making use of Fourier transform and some of its properties, in the spirit of~\cite{lama14,ma15}.

\begin{prop}\label{prop267} Take $\mu>1,\beta>0$ and let $f\in E_{(\beta,\mu)}$. The inverse Fourier transform is defined by
$$\mathcal{F}^{-1}(f)(x)=\frac{1}{(2\pi)^{1/2}}\int_{-\infty}^{\infty}f(m)\exp\left(ixm\right) dm,$$
for $x\in\R$, which can be extended to an analytic function on the strip
\begin{equation}\label{e268bb}
H_{\beta}=\{z\in\C:|\Im(z)|<\beta\}.
\end{equation}
Let $\phi(m)=imf(m)\in E_{(\beta,\mu-1)}$. Then, we have
\begin{equation}\label{e272}
\partial_z\mathcal{F}^{-1}(f)(z)=\mathcal{F}^{-1}(\phi)(z),
\end{equation}
for every $z\in H_{\beta}$.

Let $g\in E_{(\beta,\mu)}$ and let $\psi(m)=\frac{1}{(2\pi)^{1/2}}(f\ast g)(m)$, the convolution product of $f$ and $g$, for all $m\in\R$. A direct application of Proposition~\ref{prop128} when choosing $b\equiv R\equiv Q\equiv 1$ allow us to affirm that the function $\psi$ is an element of $E_{(\beta,\mu)}$. Moreover, we have
\begin{equation}\label{e278}
\mathcal{F}^{-1}(f)(z)\mathcal{F}^{-1}(g)(z)=\mathcal{F}^{-1}(\psi)(z),
\end{equation}
for every $z\in H_{\beta}$.

\end{prop}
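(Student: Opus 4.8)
\textbf{Proof proposal for Proposition~\ref{prop267}.}

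The three assertions are standard facts about the (inverse) Fourier transform restricted to the weighted spaces $E_{(\beta,\mu)}$, and the plan is to treat them one by one, the only non-trivial point being the absolute convergence of the defining integral and of the integrals one differentiates under the sign.

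First I would check that $\mathcal{F}^{-1}(f)$ extends holomorphically to $H_\beta$. For $z=x+iy$ with $|y|<\beta$, write $|f(m)\exp(izm)|=|f(m)|e^{-ym}\le \|f\|_{(\beta,\mu)}(1+|m|)^{-\mu}e^{\beta|m|}e^{-ym}$. Since $|y|<\beta$ we have $e^{\beta|m|}e^{-ym}\le e^{(\beta-|y|)|m|}\cdot e^{(|y|+ |y|)|m|}$... more cleanly: $e^{\beta|m|-ym}\le e^{(\beta+|y|)|m|}$ is not integrable, so instead I would fix a compact subset $K$ of $H_\beta$, choose $\beta'<\beta$ with $|\Im(z)|\le\beta'$ on $K$, and bound $|f(m)\exp(izm)|\le \|f\|_{(\beta,\mu)}(1+|m|)^{-\mu}e^{-(\beta-\beta')|m|}$, which is integrable since $\mu>1$ (indeed even $\mu>0$ suffices here). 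Uniform convergence on $K$ together with holomorphy of the integrand in $z$ gives, by Morera's theorem or the standard theorem on integrals depending holomorphically on a parameter, that $\mathcal{F}^{-1}(f)$ is holomorphic on $H_\beta$.

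For (\ref{e272}), since $\phi(m)=imf(m)$ satisfies $\|\phi\|_{(\beta,\mu-1)}\le\|f\|_{(\beta,\mu)}$ (as $|im|(1+|m|)^{\mu-1}\le(1+|m|)^\mu$), the function $\mathcal{F}^{-1}(\phi)$ is well-defined on $H_\beta$ by the previous paragraph (note $\mu-1>0$, and one only needs integrability, which holds on compacta of $H_\beta$). Then $\partial_z$ of the integrand is $imf(m)\exp(izm)=\phi(m)\exp(izm)$, and the same domination on compact subsets of $H_\beta$ (now with the extra factor $|m|$ absorbed into the gain $(1+|m|)^{-(\mu-1)}$ times $e^{-(\beta-\beta')|m|}$) justifies differentiation under the integral sign, yielding (\ref{e272}). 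For (\ref{e278}), the membership $\psi\in E_{(\beta,\mu)}$ is quoted from Proposition~\ref{prop128} with $b\equiv R\equiv Q\equiv 1$, so $\mathcal{F}^{-1}(\psi)$ is defined on $H_\beta$; the identity then follows from Fubini's theorem applied to $\mathcal{F}^{-1}(f)(z)\mathcal{F}^{-1}(g)(z)=\frac{1}{2\pi}\int\int f(m_1)g(m_2)\exp(iz(m_1+m_2))\,dm_1\,dm_2$, the change of variables $(m_1,m_2)\mapsto(m-m_1,m_1)$, and recognizing the inner integral as $\frac{1}{(2\pi)^{1/2}}(f\ast g)(m)=\psi(m)$. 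Fubini is legitimate because the double integral of the absolute value factors, on compacta of $H_\beta$, as a product of two convergent one-dimensional integrals of the type estimated above.

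The main (and only real) obstacle is purely bookkeeping: making sure all the exponential weights work out so that every integrand is dominated, locally uniformly in $z\in H_\beta$, by an integrable function of $m$; once the strict inequality $|\Im(z)|<\beta$ is exploited to produce a genuine exponential decay factor $e^{-(\beta-\beta')|m|}$, holomorphy, differentiation under the integral, and Fubini are all routine. Since this is a well-known result, I would simply cite \cite{lama14} or \cite{ma15} for the details rather than reproduce them.
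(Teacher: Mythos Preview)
Your proposal is correct and, in fact, more detailed than what the paper offers: the paper states Proposition~\ref{prop267} without proof, merely indicating that these are standard properties of the inverse Fourier transform ``in the spirit of~\cite{lama14,ma15}''. Your concluding remark that one should simply cite \cite{lama14} or \cite{ma15} is therefore exactly the route the paper takes.

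One small cosmetic point: in your first display you momentarily wrote the bound $|f(m)|\le \|f\|_{(\beta,\mu)}(1+|m|)^{-\mu}e^{\beta|m|}$, which has the wrong sign in the exponential (the definition of $E_{(\beta,\mu)}$ gives $e^{-\beta|m|}$); you evidently noticed the inconsistency yourself and arrived at the correct final estimate $|f(m)\exp(izm)|\le \|f\|_{(\beta,\mu)}(1+|m|)^{-\mu}e^{-(\beta-\beta')|m|}$ on compacta, but it would be cleaner to write the correct inequality from the start and avoid the detour.
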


\section{Formal and analytic solutions to an auxiliary convolution problem}\label{seccion4}

Let $1\le k_1<k_2$ and $D\ge3$ be integer numbers.
Let $\kappa>0$ be defined by
\begin{equation}\label{e236}
\frac{1}{\kappa}=\frac{1}{k_1}-\frac{1}{k_2}.
\end{equation}

We also take $q\in\R$, $q>1$ and assume that for every $1\le \ell\le D-1$, $I_{\ell}$ stands for a finite nonempty set of nonnegative integers.

Let $d_{D}\ge1$ be an integer. For every $1\le \ell\le D-1$, we consider an integer $\delta_{\ell}\ge1$ and for each $\lambda\in I_{\ell}$, we choose integers $d_{\lambda,\ell}\ge 1$ , $\Delta_{\lambda,\ell}\ge0$. In addition to that, we make the assumption that

$$\quad\delta_1=1,\quad \delta_{\ell}<\delta_{\ell+1}\quad ,$$
for every $1\le\ell\le D-1$.

We make the hypotheses 
\begin{equation}\label{e180}
\Delta_{\lambda,\ell}\ge d_{\lambda,\ell},\quad\frac{d_{\lambda,\ell}}{k_2}+1\ge \delta_\ell,\qquad \frac{d_{D}-1}{k_2}+1\ge \delta_{\ell}
\end{equation}
for every $1\le \ell\le D-1$ and all $\lambda\in I_{\ell}$. Let $Q,R_{\ell}\in\C[X]$, $1\le \ell\le D$, with
\begin{equation}\label{e92}
\deg(Q)\ge\deg(R_{D})\ge \deg(R_{\ell}), \quad Q(im)\neq0,\quad R_{D}(im)\neq 0,
\end{equation}
for all $1\le \ell\le D-1$ and $m\in\R$. For every $1\le \ell\le D-1$ and all $\lambda\in I_{\ell}$, we choose the function $m\mapsto C_{\lambda,\ell}(m,\epsilon)$ in the space $E_{(\beta,\mu)}$ for some $\beta>0$ and $\mu>\deg(R_{D})+1$. In addition to that, we assume all these functions depend holomorphically on $\epsilon\in D(0,\epsilon_0)$ for some $\epsilon_0>0$, and also the existence of a positive constant $\tilde{C}_{\lambda,\ell}$ such that 
\begin{equation}\label{e223}
\left\|C_{\lambda,\ell}(m,\epsilon)\right\|_{(\beta,\mu)}\le \tilde{C}_{\lambda,\ell},
\end{equation}
for every $\epsilon\in D(0,\epsilon_0)$.

Let $F(T,m,\epsilon)=\sum_{n\ge0}F_{n}(m,\epsilon)T^n$ be a formal power series in $T$ with coefficients in $E_{(\beta,\mu)}$ which depend holomorphically on $\epsilon\in D(0,\epsilon_0)$. 

We assume the formal power series $\hat{\mathcal{B}}_{q,1/k_1}(F(T,m,\epsilon))(\tau)\in E_{(\beta,\mu)}[[\tau]]$, which depends holomorphically on $\epsilon\in D(0,\epsilon_0)$. Moreover, we assume uniform bounds on the perturbation parameter in the domain of convergence. More precisely, we assume there exists $C_{F}>0$, which does not depend on $\epsilon\in D(0,\epsilon_0)$, such that 
\begin{equation}\label{e270}
\left\|F_{n}(m,\epsilon)\right\|_{(\beta,\mu)}\le C_{F}\rho^{-n}q^{\frac{n(n-1)}{2k_1}}.
\end{equation}
The nature of $\hat{\mathcal{B}}_{q,1/k_1}(F(T,m,\epsilon))(\tau)$ allow us to affirm that this function can be extended to an unbounded sector of bisecting direction $d$, say $U_{d}$, under $q-$exponential bounds of $k_1$ type. Again, we assume uniformity on the bounds for the perturbation parameter in the sense that, if we denote this extension by $\psi_{k_1}$, then one has that
\begin{equation}\label{e268}
|\psi_{k_1}(\tau,m,\epsilon)|\le C_{\psi_{k_1}}\frac{e^{-\beta|m|}}{(1+|m|)^{\mu}}\exp\left(\frac{k_1\log^{2}|\tau+\delta|}{2\log(q)}+\alpha\log|\tau+\delta|\right),
\end{equation}
for some $C_{\psi_{k_1}}>0$, $\tau\in U_{d}\cup \overline{D}(0,\rho)$ and $m\in\R$, valid for all $\epsilon\in D(0,\epsilon_0)$.

We consider the equation
\begin{align}&Q(im)\sigma_{q}U(T,m,\epsilon)=T^{d_{D}}\sigma_{q}^{\frac{d_{D}}{k_2}+1}R_{D}(im)U(T,m,\epsilon)\nonumber\\
+&\sum_{\ell=1}^{D-1}\left(\sum_{\lambda\in I_{\ell}}T^{d_{\lambda,\ell}}\epsilon^{\Delta_{\lambda,\ell}-d_{\lambda,\ell}}\frac{1}{(2\pi)^{1/2}}\int_{-\infty}^{\infty}C_{\lambda,\ell}(m-m_1,\epsilon)R_{\ell}(im_1)U(q^{\delta_{\ell}}T,m_1,\epsilon)dm_1\right)+\sigma_{q}F(T,m,\epsilon).\label{e149}
\end{align}

\begin{prop}
Let $U_{h}\in E_{(\beta,\mu)}$ for 
$$h\in\left\{0,1,...,\max\{\max_{1\le \ell\le D-1,\lambda\in I_{\ell}}d_{\lambda,\ell},d_{D}\}\right\},$$
depending holomorphically on $\epsilon\in D(0,\epsilon_0)$.
Then, there exists a unique formal power series $\hat{U}(T,m,\epsilon)=\sum_{n\ge0}U_{n}(m,\epsilon)T^n\in E_{(\beta,\mu)}[[T]]$ which solves (\ref{e149}). The elements $U_{n}$ depend holomorphically on $\epsilon\in D(0,\epsilon_0)$.
\end{prop}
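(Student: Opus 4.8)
The plan is to determine the coefficients $U_n(m,\epsilon)$ recursively by plugging the ansatz $\hat{U}(T,m,\epsilon)=\sum_{n\ge0}U_n(m,\epsilon)T^n$ into (\ref{e149}) and identifying coefficients of $T^n$, and then to verify by induction that each $U_n$ lies in $E_{(\beta,\mu)}$ and depends holomorphically on $\epsilon$. First I would expand each operator in (\ref{e149}) on formal power series: the left-hand side contributes $Q(im)q^n U_n(m,\epsilon)$ to the coefficient of $T^n$; the term $T^{d_D}\sigma_q^{d_D/k_2+1}R_D(im)U$ contributes $q^{(n-d_D)(d_D/k_2+1)}R_D(im)U_{n-d_D}(m,\epsilon)$ (interpreted as zero when $n<d_D$); each term $T^{d_{\lambda,\ell}}\epsilon^{\Delta_{\lambda,\ell}-d_{\lambda,\ell}}\sigma_q^{\delta_\ell}$ applied under the integral contributes $\epsilon^{\Delta_{\lambda,\ell}-d_{\lambda,\ell}}q^{(n-d_{\lambda,\ell})\delta_\ell}\frac{1}{(2\pi)^{1/2}}\int_{-\infty}^{\infty}C_{\lambda,\ell}(m-m_1,\epsilon)R_\ell(im_1)U_{n-d_{\lambda,\ell}}(m_1,\epsilon)\,dm_1$ to the coefficient of $T^n$; and $\sigma_q F$ contributes $q^n F_n(m,\epsilon)$. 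Since $Q(im)\neq 0$ for all $m\in\R$ and $q^n\neq 0$, one can solve for $U_n(m,\epsilon)$ in terms of $F_n$ and the lower-index coefficients $U_{n-d_D}, U_{n-d_{\lambda,\ell}}$, which gives the recursion; the initial data $U_h$ for $h\le\max\{\max d_{\lambda,\ell},d_D\}$ are prescribed, so the recursion is well-posed and the formal solution is unique.

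Next I would carry out the induction showing $U_n\in E_{(\beta,\mu)}$. The base cases are given by hypothesis. For the inductive step, in the explicit formula for $U_n$ the term $q^n F_n(m,\epsilon)/(q^n Q(im))$ is a bounded multiple of $F_n$ divided by $Q(im)$; since $\deg Q\ge 0$ and $Q(im)\neq 0$, dividing by $Q(im)$ preserves $E_{(\beta,\mu)}$ (this is essentially Lemma~\ref{lema123} applied to $h_1(m)=1/Q(im)$, which is bounded because $|Q(im)|\ge C(1+|m|)^{\deg Q}\ge C$). The terms $R_D(im)U_{n-d_D}(m,\epsilon)/(q^{d_D(\cdots)}Q(im))$ are handled by noting that $R_D(im)/Q(im)$ is bounded (since $\deg R_D\le\deg Q$), so again Lemma~\ref{lema123} applies. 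The convolution terms are handled by Proposition~\ref{prop128}: writing $b(m)=1/R_D(im)$ so that $\sup_m|b(m)R_D(im)|=1\le 1$, and using $\deg R_\ell\le\deg R_D$, $\mu>\deg R_D+1$, the expression $\frac{1}{(2\pi)^{1/2}}\frac{1}{Q(im)}C_{\lambda,\ell}(m,\epsilon)\ast^{R_\ell}U_{n-d_{\lambda,\ell}}$ lies in $E_{(\beta,\mu)}$ (absorbing the bounded factor $R_D(im)/Q(im)$ via Lemma~\ref{lema123}), with the convolution product $\ast^{R_\ell}$ controlled exactly as in the proof of Proposition~\ref{prop143}. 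Finally, holomorphy of $U_n$ in $\epsilon\in D(0,\epsilon_0)$ follows because $U_n$ is obtained from the holomorphic data $F_n$, $C_{\lambda,\ell}$ and the lower $U_j$ by finitely many operations (multiplication by $\epsilon$-powers, pointwise multiplication by $m$-dependent functions, and an absolutely convergent parameter integral), each of which preserves holomorphy; convergence of the integrals in $m$ is uniform on compact subsets of $D(0,\epsilon_0)$ by the bounds already established, so differentiation under the integral sign is legitimate.

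The main obstacle, such as it is, is purely bookkeeping: one must track the exact powers of $q$ and of $\epsilon$ that appear and make sure that the hypotheses $\deg Q\ge\deg R_D\ge\deg R_\ell$, $Q(im)\neq 0$, and $\mu>\deg R_D+1$ are precisely what is needed to keep every intermediate object inside $E_{(\beta,\mu)}$ with finite norm. No growth bound in $n$ is claimed here (unlike in the later propositions), so there is no need to sum the recursion or to estimate the constants carefully — only membership in $E_{(\beta,\mu)}$ coefficient by coefficient and holomorphic dependence on $\epsilon$ are required, which makes the argument a routine induction once the recursion is written down.
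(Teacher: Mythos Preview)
Your proposal is correct and follows essentially the same approach as the paper: write down the recursion obtained by equating coefficients of $T^n$, then invoke Lemma~\ref{lema123} and Proposition~\ref{prop128} together with (\ref{e92}) to see each $U_n\in E_{(\beta,\mu)}$, and use (\ref{e180}) for holomorphy in $\epsilon$. The paper's proof is considerably terser, but the substance is identical.
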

\begin{proof}
A formal power series $\hat{U}(T,m,\epsilon)=\sum_{n\ge0}U_{n}(m,\epsilon)T^n$ provides a formal solution of (\ref{e149}) if its coefficients satisfy the recursion formula
\begin{align}
&Q(im)U_n(m,\epsilon)q^n=R_{D}(im)U_{n-d_{D}}(m,\epsilon)q^{\left(\frac{d_{D}}{k_2}+1\right)(n-d_{D})}\label{e164}\\
&+\sum_{\ell=1}^{D-1}\sum_{\lambda\in I_{\ell}}\epsilon^{\Delta_{\lambda,\ell}-d_{\lambda,\ell}}q^{(n-d_{\lambda,\ell})\delta_{\ell}}\frac{1}{(2\pi)^{1/2}}\int_{-\infty}^{\infty}C_{\lambda,\ell}(m-m_1,\epsilon)R_{\ell}(im_{1})U_{n-d_{\lambda,\ell}}(m_1,\epsilon)dm_1\nonumber \\
&\hspace{9cm}+F_{n}(m,\epsilon)q^n,\nonumber
\end{align}
for every $\epsilon\in D(0,\epsilon_0)$, and $m\in\R$. Holomophicity of $U_{n}(m,\epsilon)$ for every $n\ge0$ comes from the previous recursion formula and assumption (\ref{e180}). Moreover, regarding Proposition~\ref{prop128} together with assumption (\ref{e92}), and Lemma~\ref{lema123} one derives $U_{n}(m,\epsilon)\in E_{(\beta,\mu)}$ for every $n\ge0$.  
\end{proof}

As it was explained in the introduction, a procedure of Borel-Laplace summation on the perturbation parameter is not valid in this framework from the growth nature of the source and the nature of the singularities associated to the equation in $\epsilon$. This phenomena is treated in in two steps: a first step in which Borel-Laplece summation procedure provides a holomorphic solution with too large growth at infinity so that Laplace transform is not available; and a second step, solving this difficulty, by means of an acceleration operator. 

\subsection{Analytic solutions for an auxiliary problem arising from the action of the formal $q-$Borel transform of order $k_1$}\label{seccion41}

We commence with the first step in the procedure to follow. For that purpose, we multiply both sides of equation (\ref{e149}) by $T^{k_1}$ to get 

\begin{align}&Q(im)T^{k_1}\sigma_{q}U(T,m,\epsilon)=T^{d_{D}+k_1}\sigma_{q}^{\frac{d_{D}}{k_2}+1}R_{D}(im)U(T,m,\epsilon)\nonumber\\
+&\sum_{\ell=1}^{D-1}\left(\sum_{\lambda\in I_{\ell}}T^{d_{\lambda,\ell}+k_1}\epsilon^{\Delta_{\lambda,\ell}-d_{\lambda,\ell}}\frac{1}{(2\pi)^{1/2}}\int_{-\infty}^{\infty}C_{\lambda,\ell}(m-m_1,\epsilon)R_{\ell}(im_1)U(q^{\delta_{\ell}}T,m_1,\epsilon)dm_1\right)\nonumber\\
&\hspace{10cm}+T^{k_1}\sigma_{q}F(T,m,\epsilon).\label{e216}
\end{align}

By applying the formal $q-$Borel transform of order $k_1$ at both sides of the quation (\ref{e216}) and bearing in mind the identity described in Proposition~\ref{prop157}, we arrive at

\begin{align}&Q(im)\frac{\tau^{k_1}}{(q^{1/k_1})^{k_1(k_1-1)/2}}w_{k_1}(\tau,m,\epsilon)=\frac{\tau^{d_D+k_1}}{(q^{1/k_1})^{(d_{D}+k_1)(d_{D}+k_1-1)/2}}\sigma_{q}^{-d_{D}/\kappa}R_{D}(im)w_{k_1}(\tau,m,\epsilon)\nonumber\\
+&\sum_{\ell=1}^{D-1}\left(\sum_{\lambda\in I_{\ell}}\frac{\epsilon^{\Delta_{\lambda,\ell}-d_{\lambda,\ell}}\tau^{d_{\lambda,\ell}+k_1}}{(q^{1/k_1})^{(d_{\lambda,\ell}+k_1)(d_{\lambda,\ell}+k_1-1)/2}}\sigma_q^{\delta_{\ell}-\frac{d_{\lambda,\ell}}{k_1}-1}\frac{1}{(2\pi)^{1/2}}(C_{\lambda,\ell}(m,\epsilon)\ast^{R_{\ell}}w_{k_1}(\tau,m,\epsilon))\right)\nonumber\\
&\hspace{10cm}+\frac{\tau^{k_1}}{(q^{1/k_1})^{k_{1}(k_1-1)/2}}\psi_{k_1}(\tau,m,\epsilon).\label{e224}
\end{align}

Here, we have put
\begin{equation}\label{e313}
w_{k_1}(\tau,m,\epsilon):=\hat{\mathcal{B}}_{q;1/k_1}(U(T,m,\epsilon))(\tau),\qquad \psi_{k_1}(\tau,m,\epsilon):=\hat{\mathcal{B}}_{q;1/k_1}(F(T,m,\epsilon))(\tau).
\end{equation}

The main aim of this section is to prove that $w_{k_1}$ is indeed a holomorphic function in a neighborhood of the origin in the variable $\tau$, with values in $E_{(\beta,\mu)}$, and holomorphic with respect to the perturbation parameter $\epsilon$. Moreover, $w_{k_1}$ can be extended in $\tau$ variable to an infinite sector under $q-$exponential growth of $\kappa$ type.

A fix point theorem in an appropriate Banach space is studied.

\begin{prop}\label{prop321}
Let $\varpi>0$. Under the hypotheses made at the beginning of Section~\ref{seccion4} on the functions involved in the construction of the equation (\ref{e224}), if $R_{D}$ and $Q$ are chosen so that $\sup_{m\in\R}\frac{|R_{D}(im)|}{|Q(im)|}$ is small enough, and there exist small enough positive constants $\zeta_{\psi_{k_1}}$ and $\zeta_{\lambda,\ell}$ for every $1\le \ell\le D-1$ and all $\lambda\in I_{\ell}$ with
\begin{equation}\label{e286}
 \tilde{C}_{\lambda,\ell}\le\zeta_{\lambda,\ell},\qquad  C_{\psi_{k_1}}\le\zeta_{\psi_{k_1}},
\end{equation}
then the equation (\ref{e224}) admits a unique solution $w_{k_1}^d(\tau,m,\epsilon)$ in the space $\hbox{Exp}^{d}_{(\kappa,\beta,\mu,\alpha,\rho)}$ such that $\bigl\|w_{k_1}^{d}(\tau,m,\epsilon)\bigr\|_{(\kappa,\beta,\mu,\alpha,\rho)}\le \varpi$ for every $\epsilon\in D(0,\epsilon_0)$.  Moreover, this function is holomorphic with respect to $\epsilon$ in $D(0,\epsilon_0)$.
\end{prop}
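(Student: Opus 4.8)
The plan is to recast equation (\ref{e224}) as a fixed-point equation $w_{k_1}=\mathcal{H}(w_{k_1})$ on a closed ball of the Banach space $\hbox{Exp}^{d}_{(\kappa,\beta,\mu,\alpha,\rho)}$ and to apply the contraction mapping principle with parameter $\epsilon$. First I would solve (\ref{e224}) formally for $w_{k_1}$: dividing by $Q(im)\tau^{k_1}(q^{1/k_1})^{-k_1(k_1-1)/2}$ and using $Q(im)\neq0$ (assumption (\ref{e92})), the map $\mathcal{H}$ is the sum of three operators acting on $w=w(\tau,m,\epsilon)$, namely
\begin{align*}
\mathcal{H}(w)&=\frac{(q^{1/k_1})^{k_1(k_1-1)/2}}{(q^{1/k_1})^{(d_D+k_1)(d_D+k_1-1)/2}}\,\frac{R_D(im)}{Q(im)}\,\tau^{d_D}\sigma_q^{-d_D/\kappa}w\\
&\quad+\sum_{\ell=1}^{D-1}\sum_{\lambda\in I_\ell}\frac{\epsilon^{\Delta_{\lambda,\ell}-d_{\lambda,\ell}}(q^{1/k_1})^{k_1(k_1-1)/2}}{(2\pi)^{1/2}(q^{1/k_1})^{(d_{\lambda,\ell}+k_1)(d_{\lambda,\ell}+k_1-1)/2}}\,\frac{1}{Q(im)}\,\tau^{d_{\lambda,\ell}}\sigma_q^{\delta_\ell-\frac{d_{\lambda,\ell}}{k_1}-1}\bigl(C_{\lambda,\ell}\ast^{R_\ell}w\bigr)\\
&\quad+\frac{1}{Q(im)}\,\psi_{k_1}(\tau,m,\epsilon).
\end{align*}
Here the convolution $\ast^{R_\ell}$ is to be read in the normalized form $\ast^{b,Q}$ of Proposition~\ref{prop128} by writing $1/Q(im)=b(m)$ with $R\equiv R_D$ (so $\sup_m|b(m)R_D(im)|\le1$ after the smallness hypothesis on $R_D/Q$), $Q\equiv R_\ell$, and using $\deg R_D\ge\deg R_\ell$ together with $\mu>\deg R_D+1$.

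The key estimates are all already available. For the first term, Lemma~2 (multiplication by the bounded function $R_D(im)/Q(im)$) followed by Proposition~\ref{prop82} with $\gamma_1=d_D$, $\gamma_2=d_D/\kappa$ gives a bound $C\cdot\sup_m\frac{|R_D(im)|}{|Q(im)|}\cdot\|w\|$; note the hypothesis of Proposition~\ref{prop82}, $\gamma_1\le\kappa\gamma_2$, reads $d_D\le\kappa\cdot d_D/\kappa=d_D$, which holds. For the sum over $\ell$, I combine Lemma~2 (multiplication by $1/Q(im)$, bounded since $\mu$ is large and $Q(im)\neq0$, but more simply since $1/Q(im)$ is bounded on $\R$), Proposition~\ref{prop143} (the convolution estimate, giving a factor $\|C_{\lambda,\ell}\|_{(\beta,\mu)}\le\tilde C_{\lambda,\ell}$ via (\ref{e223})), and Proposition~\ref{prop82} with $\gamma_1=d_{\lambda,\ell}$, $\gamma_2=\frac{d_{\lambda,\ell}}{k_1}+1-\delta_\ell$; one must check $\gamma_2\ge0$ and $\gamma_1\le\kappa\gamma_2$, which follow from (\ref{e180})—indeed $\frac{d_{\lambda,\ell}}{k_2}+1\ge\delta_\ell$ gives room, and $\kappa\ge k_1$ so $\kappa\gamma_2=\kappa(\frac{d_{\lambda,\ell}}{k_1}+1-\delta_\ell)\ge d_{\lambda,\ell}$. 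The factor $|\epsilon^{\Delta_{\lambda,\ell}-d_{\lambda,\ell}}|$ is bounded on $D(0,\epsilon_0)$ since $\Delta_{\lambda,\ell}\ge d_{\lambda,\ell}$ by (\ref{e180}). Finally the forcing term: by (\ref{e268}) $\psi_{k_1}\in\hbox{Exp}^{d}_{(k_1,\beta,\mu,\alpha,\rho)}$, and since $k_1<\kappa$ the $q$-exponential weight of order $\kappa$ dominates that of order $k_1$, so $\psi_{k_1}\in\hbox{Exp}^{d}_{(\kappa,\beta,\mu,\alpha,\rho)}$ with norm $\le C\,C_{\psi_{k_1}}$; multiplying by the bounded $1/Q(im)$ keeps it there.

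Assembling these, one gets $\|\mathcal{H}(w)\|_{(\kappa,\beta,\mu,\alpha,\rho)}\le A\|w\|+B$ where $A$ can be made $<\tfrac12$ by taking $\sup_m\frac{|R_D(im)|}{|Q(im)|}$ and the $\zeta_{\lambda,\ell}$ small, and $B\le$ const $\cdot\zeta_{\psi_{k_1}}$; choosing $\zeta$'s so that $B\le\varpi/2$ yields $\mathcal{H}(\overline{B}(0,\varpi))\subseteq\overline{B}(0,\varpi)$. Since $\mathcal{H}$ is affine in $w$ (the only $w$-dependence is linear), $\|\mathcal{H}(w_1)-\mathcal{H}(w_2)\|\le A\|w_1-w_2\|$ with the same $A<\tfrac12$, so $\mathcal{H}$ is a contraction and Banach's theorem gives the unique fixed point $w_{k_1}^d$ with norm $\le\varpi$, for each $\epsilon\in D(0,\epsilon_0)$. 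Holomorphy in $\epsilon$ follows because $\mathcal{H}$ depends holomorphically on $\epsilon$ (through $\epsilon^{\Delta_{\lambda,\ell}-d_{\lambda,\ell}}$ and through $\psi_{k_1}$, which is holomorphic in $\epsilon$ by assumption) and the fixed point of a uniformly contractive, holomorphically-parametrized family is holomorphic—concretely, the Picard iterates $\mathcal{H}^{(n)}(0)$ are holomorphic in $\epsilon$ and converge uniformly on $D(0,\epsilon_0)$, so the limit is holomorphic. The main obstacle, and the only place requiring genuine care, is bookkeeping the admissibility conditions $\gamma_1\le\kappa\gamma_2$ and $\gamma_2\ge0$ in each invocation of Proposition~\ref{prop82}: this is exactly where hypotheses (\ref{e180}) and the relation $1/\kappa=1/k_1-1/k_2$ (hence $\kappa>k_1$) are used, and one must verify they hold for every $\ell$ and $\lambda$ simultaneously; once that is done, the contraction estimate is routine.
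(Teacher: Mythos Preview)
Your proposal is correct and follows essentially the same approach as the paper's proof: the same operator $\mathcal{H}_\epsilon^{k_1}$ is defined, the three terms are bounded via Proposition~\ref{prop82} (checking $\gamma_1\le\kappa\gamma_2$), Proposition~\ref{prop143}, and the inclusion $\hbox{Exp}^{d}_{(k_1,\beta,\mu,\alpha,\rho)}\subseteq\hbox{Exp}^{d}_{(\kappa,\beta,\mu,\alpha,\rho)}$ for the forcing term, and the contraction mapping theorem is applied on $\overline{D}(0,\varpi)$. Your remark that holomorphy in $\epsilon$ follows from uniform convergence of the Picard iterates makes explicit what the paper simply calls ``by construction''; the only place where your sketch is slightly terse is the verification $\kappa\gamma_2\ge d_{\lambda,\ell}$ for the convolution terms, which in fact reduces precisely to the hypothesis $\frac{d_{\lambda,\ell}}{k_2}+1\ge\delta_\ell$ in (\ref{e180}) after using $\kappa=\frac{k_1k_2}{k_2-k_1}$.
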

\begin{proof}
Let $\epsilon\in D(0,\epsilon_0)$.

Let $\mathcal{H}_{\epsilon}^{k_1}$ be the map defined by
\begin{align*}
&\mathcal{H}_{\epsilon}^{k_1}(w(\tau,m)):=\frac{(q^{1/k_1})^{k_1(k_1-1)/2}}{(q^{1/k_1})^{(d_{D}+k_1)(d_{D}+k_1-1)/2}}\tau^{d_D}\sigma_{q}^{-d_{D}/\kappa}\frac{R_{D}(im)}{Q(im)}w(\tau,m)\nonumber\\
+&\sum_{\ell=1}^{D-1}\left(\sum_{\lambda\in I_{\ell}}\frac{\epsilon^{\Delta_{\lambda,\ell}-d_{\lambda,\ell}}(q^{1/k_1})^{k_1(k_1-1)/2}}{(q^{1/k_1})^{(d_{\lambda,\ell}+k_1)(d_{\lambda,\ell}+k_1-1)/2}}\tau^{d_{\lambda,\ell}}\sigma_q^{\delta_{\ell}-\frac{d_{\lambda,\ell}}{k_1}-1}\frac{1}{(2\pi)^{1/2}}\frac{1}{{Q(im)}}(C_{\lambda,\ell}(m,\epsilon)\ast^{R_{\ell}}w_{k_1}(\tau,m,\epsilon))\right)\\
&\hspace{10cm}+\frac{1}{Q(im)}\psi_{k_1}(\tau,m,\epsilon)
\end{align*}

Let $\varpi>0$ and $w(\tau,m)\in\hbox{Exp}^{d}_{(\kappa,\beta,\mu,\alpha,\rho)}$ with $\left\|w(\tau,m)\right\|_{(\kappa,\beta,\mu,\alpha,\rho)}\le\varpi$.

Taking into account (\ref{e92}) one can apply Lemma~\ref{lema123}, and from Proposition~\ref{prop82}, one gets
\begin{equation}\label{e260}
\left\|\frac{R_{D}(im)}{Q(im)} \tau^{d_D}\sigma_{q}^{-d_{D}/\kappa}w(\tau,m)\right\|_{(\kappa,\beta,\mu,\alpha,\rho)}\le C_{2}C_{R_{D}Q}\left\|w(\tau,m)\right\|_{(\kappa,\beta,\mu,\alpha,\rho)}\le C_{2}C_{R_{D}Q}\varpi, 
\end{equation}
for some $C_2>0$, and where $C_{R_DQ}=\sup_{m\in\R}|R_{D}(im)|/|Q(im)|$. Let $1\le \ell\le D-1$ and $\lambda\in I_{\ell}$. Bearing in mind (\ref{e180}) and from Proposition~\ref{prop82} and Proposition~\ref{prop143} we have
\begin{align}
&\left\|\epsilon^{\Delta_{\lambda,\ell}-d_{\lambda,\ell}}\tau^{d_{\lambda,\ell}}\sigma_q^{\delta_{\ell}-\frac{d_{\lambda,\ell}}{k_1}-1}\frac{1}{{Q(im)}}(C_{\lambda,\ell}(m,\epsilon)\ast^{R_{\ell}}w_{k_1}(\tau,m,\epsilon))\right\|_{(\kappa,\beta,\mu,\alpha,\rho)}\hspace{4cm} \nonumber \\
&\le C_{2} C_{3}\epsilon_{0}^{\Delta_{\lambda,\ell}-d_{\lambda,\ell}}\left\|C_{\lambda,\ell}(m,\epsilon)\right\|_{(\beta,\mu)}\left\|w(\tau,m)\right\|_{(\kappa,\beta,\mu,\alpha,\rho)}
 \le C_2C_3\epsilon_0^{\Delta_{\lambda,\ell}-d_{\lambda,\ell}} \tilde{C}_{\lambda,\ell}\varpi\nonumber \\
&\le C_2C_3\epsilon_0^{\Delta_{\lambda,\ell}-d_{\lambda,\ell}} \zeta_{\lambda,\ell}\varpi,
\label{e294}
\end{align}
for every $1\le \ell\le D-1$ and all $\lambda\in I_{\ell}$. We recall $\tilde{C}_{\lambda,\ell}$ is defined in (\ref{e223}). 

The polynomial $Q$ satisfies that $Q(im)\neq 0$ for all $m\in\R$. This entails that $|Q(im)|\ge C_{Q}$ for every $m\in\R$, for some $C_{Q}>0$. We depart from $\psi_{k_1}\in\hbox{Exp}^{d}_{(k_1,\beta,\mu,\alpha,\rho)}\subseteq\hbox{Exp}^{d}_{(\kappa,\beta,\mu,\alpha,\rho)}$, where the inclusion is a continuous map. Then, 
\begin{equation}\label{e315}
\left\|\frac{1}{Q(im)}\psi_{k_1}(\tau,m,\epsilon)\right\|_{(\kappa,\beta,\mu,\alpha,\rho)}\le \frac{1}{C_{Q}}\left\|\psi_{k_1}(\tau,m,\epsilon)\right\|_{(\kappa,\beta,\mu,\alpha,\rho)}\le \frac{1}{C_{Q}}C_{\psi_{k_1}}\le\frac{1}{C_{Q}}\zeta_{\psi_{k_1}}.
\end{equation}

Regarding (\ref{e260}), (\ref{e294})and (\ref{e315}), we get 
\begin{align*}
\left\|\mathcal{H}_{\epsilon}^{k_1}(w(\tau,m))\right\|_{(\kappa,\beta,\mu,\alpha,\rho)}&\le\frac{(q^{1/k_1})^{k_1(k_1-1)/2}}{(q^{1/k_1})^{(d_{D}+k_1)(d_{D}+k_1-1)/2}}C_2C_{R_{D}Q}\varpi\\
&+\sum_{\ell=1}^{D-1}\sum_{\lambda\in I_{\ell}}\frac{\epsilon_0^{\Delta_{\lambda,\ell}-d_{\lambda,\ell}}(q^{1/k_1})^{k_1(k_1-1)/2}}{(q^{1/k_1})^{(d_{\lambda,\ell}+k_1)(d_{\lambda,\ell}+k_1-1)/2}}\frac{1}{(2\pi)^{1/2}}C_2C_3\zeta_{\lambda,\ell}\varpi+\frac{1}{C_{Q}}\zeta_{\psi_{k_1}}
\end{align*}

Now, we choose $C_{R_{D}Q},\zeta_{\psi_{k_1}}$ and $\zeta_{\lambda,\ell}$ for every $1\le \ell\le D-1$ and $\lambda \in I_{\ell}$ such that
\begin{align*}
&\frac{(q^{1/k_1})^{k_1(k_1-1)/2}}{(q^{1/k_1})^{(d_{D}+k_1)(d_{D}+k_1-1)/2}}C_2C_{R_{D}Q}\varpi\\
&+\sum_{\ell=1}^{D-1}\sum_{\lambda\in I_{\ell}}\frac{\epsilon_0^{\Delta_{\lambda,\ell}-d_{\lambda,\ell}}(q^{1/k_1})^{k_1(k_1-1)/2}}{(q^{1/k_1})^{(d_{\lambda,\ell}+k_1)(d_{\lambda,\ell}+k_1-1)/2}}\frac{1}{(2\pi)^{1/2}}C_2C_3\zeta_{\lambda,\ell}\varpi+\frac{1}{C_{Q}}\zeta_{\psi_{k_1}}\le\varpi.
\end{align*}

One derives that the operator $\mathcal{H}^{k_1}_{\epsilon}$ maps $\overline{D}(0,\varpi)\subseteq\hbox{Exp}^{d}_{(\kappa,\beta,\mu,\alpha,\rho)}$ into itself. Let $w^{1}_{k_1},w^{2}_{k_1}\in \hbox{Exp}^{d}_{(\kappa,\beta,\mu,\alpha,\rho)}$, with $\bigl\|w^{j}_{k_1}\bigr\|_{(\kappa,\beta,\mu,\alpha,\rho)}\le\varpi$ for $j=1,2$.

Analogous estimates as before allow us to prove that
\begin{align*}
&\left\|\mathcal{H}^{k_1}_{\epsilon}(w^1_{k_1}(\tau,m))-\mathcal{H}^{k_1}_{\epsilon}(w^2_{k_1}(\tau,m))\right\|_{(\kappa,\beta,\mu,\alpha,\rho)}\\
&\le \frac{(q^{1/k_1})^{k_1(k_1-1)/2}C_2C_{R_{D}Q}}{(q^{1/k_1})^{(d_{D}+k_1)(d_{D}+k_1-1)/2}}\bigl\|w^{1}_{k_1}(\tau,m)-w^{2}_{k_1}(\tau,m)\bigr\|_{(\kappa,\beta,\mu,\alpha,\rho)}\\
&+\sum_{\ell=1}^{D-1}\sum_{\lambda\in I_{\ell}}\frac{\epsilon_0^{\Delta_{\lambda,\ell}-d_{\lambda,\ell}}(q^{1/k_1})^{k_1(k_1-1)/2}}{(q^{1/k_1})^{(d_{\lambda,\ell}+k_1)(d_{\lambda,\ell}+k_1-1)/2}}\frac{1}{(2\pi)^{1/2}}C_2C_3\zeta_{\lambda,\ell}\bigl\|w^{1}_{k_1}(\tau,m)-w^{2}_{k_1}(\tau,m)\bigr\|_{(\kappa,\beta,\mu,\alpha,\rho)}
\end{align*}

We choose $C_{R_{D}Q}$ and $\zeta_{\lambda,\ell}$ for every $1\le \ell\le D-1$ and $\lambda \in I_{\ell}$ such that
$$ \frac{(q^{1/k_1})^{k_1(k_1-1)/2}C_2C_{R_{D}Q}}{(q^{1/k_1})^{(d_{D}+k_1)(d_{D}+k_1-1)/2}}+\sum_{\ell=1}^{D-1}\sum_{\lambda\in I_{\ell}}\frac{\epsilon_0^{\Delta_{\lambda,\ell}-d_{\lambda,\ell}}(q^{1/k_1})^{k_1(k_1-1)/2}}{(q^{1/k_1})^{(d_{\lambda,\ell}+k_1)(d_{\lambda,\ell}+k_1-1)/2}}\frac{1}{(2\pi)^{1/2}}C_2C_3\zeta_{\lambda,\ell}\le \frac{1}{2},$$

and conclude that 
$$\left\|\mathcal{H}^{k_1}_{\epsilon}(w^1_{k_1}(\tau,m))-\mathcal{H}^{k_1}_{\epsilon}(w^2_{k_1}(\tau,m))\right\|_{(\kappa,\beta,\mu,\alpha,\rho)}\le \frac{1}{2}\left\|w^1_{k_1}(\tau,m))-w^2_{k_1}(\tau,m)\right\|_{(\kappa,\beta,\mu,\alpha,\rho)}.$$

The closed disc $\overline{D}(0,\varpi)\subseteq\hbox{Exp}^{d}_{(\kappa,\beta,\mu,\alpha,\rho)}$ is a complete metric space for the norm $\left\|\cdot\right\|_{(\kappa,\beta,\mu,\alpha,\rho)}$. Then, the operator $\mathcal{H}^{k_1}_{\epsilon}$ is a contractive map from $\overline{D}(0,\varpi)$ into itself. The classical contractive mapping theorem states the existence of a unique fixed point, say $w^{d}_{k_1}(\tau,m,\epsilon)$. Holomorphy of $w^{d}_{k_1}(\tau,m,\epsilon)$ with respect to $\epsilon$ is guaranteed by construction, and also one has
$w^{d}_{k_1}(\tau,m,\epsilon)$ is a solution of (\ref{e224}).
\end{proof}

The next step consists on studying the solutions of a second auxiliary problem, derived from (\ref{e149}). Its solution is linked to that of (\ref{e224}) by means of some appropriate $q-$Laplace transform which plays the role of an acceleration operator. The main aim of the following results is to conclude that the acceleration of the function $w_{k_{1}}^d$ obtained in Proposition~\ref{prop321} coincides with the analytic solution of the novel auxiliary equation under study. 

We first establish an accelerator-like result on the function $\psi_{k_{1}}$, defined in (\ref{e313}).

\begin{lemma}\label{lema403}
Let $\tilde{\delta}>0$. The function 
\begin{equation}\label{e399}
\tau\mapsto \psi_{k_2}(\tau,m,\epsilon):=\mathcal{L}^{d}_{q;1/\kappa}(h\mapsto \psi_{k_1}(h,m,\epsilon))(\tau)=\frac{1}{\pi_{q^{1/\kappa}}}\int_{L_{d}}\frac{\psi_{k_1}(u,m,\epsilon)}{\Theta_{q^{1/\kappa}}\left(\frac{u}{\tau}\right)}\frac{du}{u}
\end{equation}
is analytic in the set $\mathcal{R}_{d,\tilde{\delta}}$. Moreover, the function $(\tau,m)\mapsto \psi_{k_2}(\tau,m,\epsilon)$ is continuous for $m\in\R$ and $\tau\in\mathcal{R}_{d,\tilde{\delta}}$, and depends holomorphically on $\epsilon\in D(0,\epsilon_0)$. Moreover, there exist $C_{\psi_{k_2}}>0$ and $\nu\in\R$ such that
\begin{equation}\label{e405}
\left|\psi_{k_2}(\tau,m,\epsilon)\right|\le C_{\psi_{k_2}}e^{-\beta|m|}(1+|m|)^{-\mu}\exp\left(\frac{k_2\log^2|\tau|}{2\log(q)}+\nu\log|\tau|\right),
\end{equation}
for every $\tau\in\mathcal{R}_{d,\tilde{\delta}}$, $m\in\R$ and $\epsilon\in D(0,\epsilon_0)$.
\end{lemma}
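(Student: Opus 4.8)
The statement asserts three things about $\psi_{k_2}$: (i) that the $q$-Laplace integral (\ref{e399}) of order $\kappa$ defines an analytic function on $\mathcal{R}_{d,\tilde\delta}$; (ii) continuity in $m$ and holomorphy in $\epsilon$; and (iii) the exponential bound (\ref{e405}) with a $q$-Gevrey rate of the higher order $k_2$. The natural route is to verify that $h\mapsto\psi_{k_1}(h,m,\epsilon)$ fits the hypotheses of Definition~\ref{def203} and Lemma~\ref{lema219} with $k$ replaced by $\kappa$, then to read off (i) and (ii) from Lemma~\ref{lema219}, and to obtain (iii) by a direct estimate of the defining integral using the lower bound (\ref{e200}) for the theta function of order $\kappa$ together with the $\kappa$-exponential growth bound (\ref{e268}) on $\psi_{k_1}$.

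\textbf{Step 1: fit $\psi_{k_1}$ into the $q$-Laplace framework of order $\kappa$.} By hypothesis (\ref{e268}), $\psi_{k_1}(\tau,m,\epsilon)$ is holomorphic on $D(0,\rho)\cup U_d$, continuous up to the boundary, and satisfies a bound of the form $|\psi_{k_1}(\tau,m,\epsilon)|\le K(m)\exp\!\bigl(\tfrac{k_1}{2}\tfrac{\log^2|\tau+\delta|}{\log q}+\alpha\log|\tau+\delta|\bigr)$ with $K(m)=C_{\psi_{k_1}}e^{-\beta|m|}(1+|m|)^{-\mu}$. First I would absorb the shift $\delta$ into the constants: for $|\tau|$ large, $\log|\tau+\delta|=\log|\tau|+O(1/|\tau|)$, so up to adjusting $K$ and the linear coefficient the bound becomes the clean form (\ref{e205}) of Definition~\ref{def203} with exponent $k_1$. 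Since $k_1<\kappa$ (because $1/\kappa=1/k_1-1/k_2<1/k_1$), a function with $q$-exponential growth of order $k_1$ a fortiori satisfies the growth condition (\ref{e205}) with $k$ replaced by $\kappa$: indeed $\tfrac{k_1}{2}\tfrac{\log^2|\tau|}{\log q}\le \tfrac{\kappa}{2}\tfrac{\log^2|\tau|}{\log q}$ for $|\tau|$ large, and on any bounded region $\psi_{k_1}$ is bounded by (\ref{e209}). Hence Definition~\ref{def203} applies to $f=\psi_{k_1}(\cdot,m,\epsilon)$ with order $\kappa$, and the direction $d$ with $e^{id}\in U_d$ is admissible.

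\textbf{Step 2: analyticity, continuity, holomorphy.} Lemma~\ref{lema219}, applied with $k=\kappa$, immediately gives that $\mathcal{L}^d_{q;1/\kappa}(\psi_{k_1}(\cdot,m,\epsilon))(\tau)$ is well-defined and holomorphic on $\mathcal{R}_{d,\tilde\delta}$ — note that here we do not intersect with a small disc $D(0,r_1)$, which is the point of the extended growth exponent: because we allow the full $\kappa$-exponential growth, the $q$-Laplace integral converges on all of $\mathcal{R}_{d,\tilde\delta}$, not merely near the origin. Continuity in $m$ and holomorphy in $\epsilon\in D(0,\epsilon_0)$ follow from uniform convergence of the integral in a neighborhood of any point, which in turn follows from the uniform-in-$(m,\epsilon)$ bound (\ref{e268}) and the standard theorems on parameter-dependent integrals (differentiation under the integral sign / Morera's theorem for $\epsilon$).

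\textbf{Step 3: the growth bound (\ref{e405}) — the main obstacle.} This is the computational heart. Parametrize $L_d$ by $u=re^{id}$, $r>0$. Using the lower bound (\ref{e200}) for $\Theta_{q^{1/\kappa}}(u/\tau)$ — valid on $\mathcal{R}_{d,\tilde\delta}$ by construction of that domain — one gets $|\Theta_{q^{1/\kappa}}(u/\tau)|\ge C_{q,\kappa}\tilde\delta\exp\!\bigl(\tfrac{\kappa}{2}\tfrac{\log^2(r/|\tau|)}{\log q}\bigr)(r/|\tau|)^{1/2}$, hence
$$\left|\psi_{k_2}(\tau,m,\epsilon)\right|\le \frac{K(m)}{|\pi_{q^{1/\kappa}}|\,C_{q,\kappa}\tilde\delta}\int_0^\infty \exp\!\left(\frac{k_1}{2}\frac{\log^2 r}{\log q}+\alpha\log r-\frac{\kappa}{2}\frac{\log^2(r/|\tau|)}{\log q}\right)\left(\frac{r}{|\tau|}\right)^{-1/2}\frac{dr}{r}.$$
The substitution $r=e^s$ turns the exponent into a quadratic polynomial in $s$ with negative leading coefficient $\tfrac{k_1-\kappa}{2\log q}<0$, so the Gaussian integral converges; completing the square in $s$ produces exactly a term $\tfrac{?}{2}\tfrac{\log^2|\tau|}{\log q}$ plus a linear term in $\log|\tau|$. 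The arithmetic of completing the square, using $\tfrac{1}{\kappa-k_1}$ and the relation $\tfrac{1}{\kappa}=\tfrac1{k_1}-\tfrac1{k_2}$, should be arranged to show that the coefficient of $\log^2|\tau|$ comes out to be exactly $\tfrac{k_2}{2\log q}$ (this is where the identity (\ref{e236}) is used crucially, as in the classical acceleration lemma — the new order produced is the harmonic-type combination $k_2$), while the residual linear term defines the constant $\nu$ and the overall constant, together with $K(m)=C_{\psi_{k_1}}e^{-\beta|m|}(1+|m|)^{-\mu}$, yields $C_{\psi_{k_2}}$ and the stated shape of (\ref{e405}). The main care required is (a) handling the small-$r$ part of the integral, where the bound (\ref{e209}) rather than (\ref{e268}) is in force, by splitting $\int_0^\rho+\int_\rho^\infty$ and checking the former contributes a convergent piece of the same or smaller growth; and (b) tracking the $O(1/|\tau|)$ discrepancies from replacing $\log|\tau+\delta|$ by $\log|\tau|$ so that they are absorbed into constants without affecting the $\log^2$ coefficient. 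Finally, uniformity in $\epsilon\in D(0,\epsilon_0)$ is automatic since $C_{\psi_{k_1}}$ in (\ref{e268}) does not depend on $\epsilon$.
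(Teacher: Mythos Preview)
Your approach matches the paper's: absorb the shift $\delta$ to fit Definition~\ref{def203} with order $\kappa$, then estimate the integral directly via (\ref{e200}) and the identity $-\kappa+\kappa^2/(\kappa-k_1)=k_2$ to produce the $k_2$-growth exponent (the paper uses an elementary split-at-$1$ and maximum-of-$x^m e^{-m_2\log^2 x}$ bound rather than your Gaussian completion of the square in $s=\log r$, but the arithmetic is identical). One small correction to Step~2: Lemma~\ref{lema219} as stated only yields holomorphy on $\mathcal{R}_{d,\tilde\delta}\cap D(0,r_1)$, not on all of $\mathcal{R}_{d,\tilde\delta}$ --- the extension to the full domain comes from the convergence established by your Step~3 estimate itself (the slack $k_1<\kappa$ makes the integral finite for every $\tau\in\mathcal{R}_{d,\tilde\delta}$), which is also how the paper implicitly proceeds.
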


\begin{proof}
We first rewrite (\ref{e268}).

Direct calculations allow us to affirm the existence of a constant $C_{41}$, only depending on $\delta$, such that
$$\frac{k_1\log^{2}|\tau+\delta|}{2\log(q)}+\alpha\log|\tau+\delta|\le \frac{k_1\log^{2}|\tau|}{2\log(q)}+\alpha\log|\tau|+C_{41},$$
for every $\tau\in S_{d}$. This entails (\ref{e268}) can be rewritten in the form
\begin{equation}\label{e406}
|\psi_{k_1}(\tau,m,\epsilon)|\le \tilde{C}_{\psi_{k_1}}\frac{e^{-\beta|m|}}{(1+|m|)^{\mu}}\exp\left(\frac{k_1\log^{2}|\tau|}{2\log(q)}+\alpha\log|\tau|\right),
\end{equation}
for some constant $\tilde{C}_{\psi_{k_1}}>0$. We recall that $k_1\le \kappa$ (see (\ref{e236})) so that
\begin{equation}\label{e416}
|\psi_{k_1}(\tau,m,\epsilon)|\le \tilde{C}_{\psi_{k_1}}\frac{e^{-\beta|m|}}{(1+|m|)^{\mu}}\exp\left(\frac{\kappa\log^{2}|\tau|}{2\log(q)}+\alpha\log|\tau|\right).
\end{equation}
This remains valid for all $\epsilon\in D(0,\epsilon_0)$, $\tau\in U_{d}$ and $m\in\R$. Moreover, for every $\tau\in \overline{D}(0,\rho)$, the functions $\log^2|\tau+\delta|$ and $\log|\tau+\delta|$ are upper bounded. As a consequence, there exists $\check{C}_{\psi_{k_1}}>0$ such that 
\begin{equation}\label{e410}
|\psi_{k_1}(\tau,m,\epsilon)|\le\check{C}_{\psi_{k_1}},
\end{equation}
for every $\epsilon\in D(0,\epsilon_0)$, $\tau\in \overline{D}(0,\rho)$ and $m\in\R$.
In view of (\ref{e410}) and (\ref{e416}) one can follow the construction described in Definition~\ref{def203} in order to affirm that $\psi_{k_2}$ is well-defined as considered in (\ref{e399}). More precisely, given $\tilde{\delta}>0$, $(\tau,m,\epsilon)\mapsto\psi_{k_2}(\tau,m,\epsilon)$ turns out to be a continuous and bounded function defined in $(\mathcal{R}_{d,\tilde{\delta}}\cap D(0,r_1))\times\R\times D(0,\epsilon_0)$ for any $0\le r_1\le q^{(1/2-\alpha)/\kappa}/2$, and holomorphic with respect to $\tau$ variable in $\mathcal{R}_{d,\tilde{\delta}}\cap D(0,r_1)$. In addition to that, from (\ref{e406}) and (\ref{e200}) one has
\begin{align}
&\left|\mathcal{L}_{q;1/\kappa}^{d}(h\mapsto\psi_{k_1}(h,m,\epsilon))(\tau)\right|=\left|\frac{1}{\pi_{q^{1/\kappa}}}\int_{L_{d}}\frac{\psi_{k_1}(u,m,\epsilon)}{\Theta_{q^{1/\kappa}}\left(\frac{u}{\tau}\right)}\frac{du}{u}\right|\nonumber \\
&\le \frac{\tilde{C}_{\psi_{k_1}}e^{-\beta|m|}|\tau|^{1/2}}{(1+|m|)^{\mu}C_{q,\kappa}\tilde{\delta}\pi_{q^{1/\kappa}}}\int_{0}^{\infty}\exp\left(\frac{k_1\log^2(r)}{2\log(q)}+\alpha\log(r)\right)\frac{1}{\exp\left(\frac{\kappa\log^2\left(\frac{r}{|\tau|}\right)}{2\log(q)}\right)r^{1/2}}\frac{dr}{r}\label{e426}
\end{align}

for every $\tau$ such that $\left|1+\frac{re^{id}}{\tau}\right|>\tilde{\delta}$ for all $r\ge0$, $m\in\R$ and $\epsilon\in D(0,\epsilon_0)$. One has
$$r^{\alpha-3/2}\exp\left(\frac{k_1\log^2r}{2\log(q)}-\frac{\kappa\log^2\left(r/|\tau|\right)}{2\log(q)}\right)=r^{\alpha-3/2+\frac{\kappa\log|\tau|}{\log(q)}}\exp\left(-\frac{(\kappa-k_1)\log^2(r)}{2\log(q)}-\frac{\kappa\log^2|\tau|}{2\log(q)}\right)$$
This last equality and (\ref{e426}) allow us to write
\begin{align}
\left|\mathcal{L}_{q;1/\kappa}^{d}(\psi_{k_1}(h,m,\epsilon))(\tau)\right|&\le 
\frac{\tilde{C}_{\psi_{k_1}}e^{-\beta|m|}|\tau|^{1/2}}{(1+|m|)^{\mu}C_{q,\kappa}\tilde{\delta}\pi_{q^{1/\kappa}}}\exp\left(-\frac{\kappa\log^2|\tau|}{2\log(q)}\right)\nonumber \\
&\times \int_{0}^{\infty}r^{\alpha-3/2+\frac{\kappa\log|\tau|}{\log(q)}}\exp\left(-\frac{(\kappa-k_1)\log^2(r)}{2\log(q)}\right)dr.\label{e435}
\end{align}

Let $\tau$ be chosen as above. We put $m_1=\alpha-3/2+\frac{\kappa\log|\tau|}{\log(q)}$, $m_{11}=m_{1}+1/2$, $m_{12}=m_1+2$ and $m_2=\frac{\kappa-k_1}{2\log(q)}$, and study
$$\int_{0}^{\infty}r^{m_{1}}e^{-m_2\log^2(r)}dr=\int_{0}^{1}\frac{1}{r^{1/2}}r^{m_{11}}e^{-m_2\log^2(r)}dr+\int_{1}^{\infty}\frac{1}{r^{2}}r^{m_{12}}e^{-m_2\log^2(r)}dr.$$

For $j=1,2$, the function $x\mapsto h_j(x)= x^{m_{1j}}\exp\left(-m_2\log^2(x)\right)$ attains its maximum value for $x\in[0,\infty)$ at $x_{j0}=\exp(\frac{m_{1j}}{2m_2})$. One has $h(x_{j0})=\exp(\frac{m_{1j}^2}{4m_2})$. Direct calculations show the existence of real constants $C_{42},C_{43}$, with $C_{42}>0$, only depending on $k_1,k_2,q,$ such that
$$\int_{0}^{\infty}r^{m_{1}}e^{-m_2\log^2(r)}dr\le C_{42}|\tau|^{C_{43}}\exp\left(\frac{\kappa^2\log^2|\tau|}{2\log(q)(\kappa-k_1)}\right).$$

Hence, (\ref{e435}) is estimated from above by
$$C_{44}\frac{\tilde{C}_{\psi_{k_1}}e^{-\beta|m|}|\tau|^{1/2+C_{43}}}{(1+|m|)^{\mu}}\exp\left(-\frac{\kappa\log^2|\tau|}{2\log(q)}\right)\exp\left(\frac{\kappa^2\log^2|\tau|}{2\log(q)(\kappa-k_1)}\right)$$
for some $C_{44}>0$ only depending on $k_1,k_2,q$.

Then, (\ref{e405}) follows from the fact that 
$$-\kappa+\frac{\kappa^2}{\kappa-k_1}=k_2,$$
and by taking $\nu=1/2+C_{43}$.
\end{proof}

\subsection{Analytic solutions for an auxiliary problem arising from the action of the formal $q-$Borel transform of order $k_2$}\label{seccion42}

In the previous section, we have studied the problem arisen from the application of the formal $q-$Borel transform of order $k_1$ to the equation (\ref{e149}). As a second step, we study a second auxiliary equation coming from the application of the formal $q-$Borel transform of order $k_2$ to the equation (\ref{e149}).

For that purpose, we multiply at both sides of the equality (\ref{e149}) by $T^{k_2}$ to get

\begin{align}&Q(im)T^{k_2}\sigma_{q}U(T,m,\epsilon)=T^{d_{D}+k_2}\sigma_{q}^{\frac{d_{D}}{k_2}+1}R_{D}(im)U(T,m,\epsilon)\nonumber\\
+&\sum_{\ell=1}^{D-1}\left(\sum_{\lambda\in I_{\ell}}T^{d_{\lambda,\ell}+k_2}\epsilon^{\Delta_{\lambda,\ell}-d_{\lambda,\ell}}\frac{1}{(2\pi)^{1/2}}\int_{-\infty}^{\infty}C_{\lambda,\ell}(m-m_1,\epsilon)R_{\ell}(im_1)U(q^{\delta_{\ell}}T,m_1,\epsilon)dm_1\right)\nonumber\\
&\hspace{10cm}+T^{k_2}\sigma_{q}F(T,m,\epsilon).\label{e472}
\end{align}

We take formal $q-$Borel transform of order $k_2$ at both sides of the previous equation. By means of the property held by formal $q-$Borel transform described in Proposicion~\ref{prop157}, we get

\begin{align}&Q(im)\frac{\tau^{k_2}}{(q^{1/k_2})^{k_2(k_2-1)/2}}\hat{w}_{k_2}(\tau,m,\epsilon)=R_{D}(im)\frac{\tau^{d_D+k_2}}{(q^{1/k_2})^{(d_{D}+k_2)(d_{D}+k_2-1)/2}}\hat{w}_{k_2}(\tau,m,\epsilon)\nonumber\\
+&\sum_{\ell=1}^{D-1}\left(\sum_{\lambda\in I_{\ell}}\frac{\epsilon^{\Delta_{\lambda,\ell}-d_{\lambda,\ell}}\tau^{d_{\lambda,\ell}+k_2}}{(q^{1/k_2})^{(d_{\lambda,\ell}+k_2)(d_{\lambda,\ell}+k_2-1)/2}}\sigma_q^{\delta_{\ell}-\frac{d_{\lambda,\ell}}{k_2}-1}\frac{1}{(2\pi)^{1/2}}(C_{\lambda,\ell}(m,\epsilon)\ast^{R_{\ell}}\hat{w}_{k_2}(\tau,m,\epsilon))\right)\nonumber\\
&\hspace{10cm}+\frac{\tau^{k_2}}{(q^{1/k_2})^{k_{2}(k_2-1)/2}}\hat{\psi}_{k_2}(\tau,m,\epsilon),\label{e479}
\end{align}
where 
$$\hat{w}_{k_2}(\tau,m,\epsilon):=\hat{\mathcal{B}}_{q;1/k_2}(U(T,m,\epsilon))(\tau),\qquad \hat{\psi}_{k_2}(\tau,m,\epsilon):=\hat{\mathcal{B}}_{q;1/k_2}(F(T,m,\epsilon))(\tau).$$

It is worth mentioning that we have assumed a $q-$Gevrey growth of order $k_1$ related to the elements $(F_{n}(m,\epsilon))_{n\ge0}$ (see (\ref{e270})). From the fact that $k_2>k_1$, one can only affirm that $\hat{\psi}_{k_2}(\tau,m,\epsilon)$ is a formal power series in $\tau$, with coefficients in the space $E_{(\beta,\mu)}$. This point is crucial to understand the cause of coming up to two different $q-$Borel-Laplace transformations to attain our aims.

We now proceed to substitute this formal element by an acceleration of $\psi_{k_1}$, named $\psi_{k_2}$, constructed in Lemma~\ref{lema403} and solve the equation arising from this substitution. Heuristically speaking, an excessive type of growth in the transformation of length $k_2$ is reduced in two steps: a first one related to $k_1$ ($k_1<k_2$) and a second step accelerating up to $k_2$. The splitting of the problem would help us to attain convergence.

Following this plan, we now consider the equation (\ref{e479}) in which $\hat{\psi}_{k_{2}}$ is substituted by $\psi_{k_2}$ constructed in Lemma~\ref{lema403}. Namely, we study the equation

\begin{align}
&Q(im)\frac{\tau^{k_2}}{(q^{1/k_2})^{k_2(k_2-1)/2}}w_{k_2}(\tau,m,\epsilon)=R_{D}(im)\frac{\tau^{d_D+k_2}}{(q^{1/k_2})^{(d_{D}+k_2)(d_{D}+k_2-1)/2}}w_{k_2}(\tau,m,\epsilon)\nonumber\\
+&\sum_{\ell=1}^{D-1}\left(\sum_{\lambda\in I_{\ell}}\frac{\epsilon^{\Delta_{\lambda,\ell}-d_{\lambda,\ell}}\tau^{d_{\lambda,\ell}+k_2}}{(q^{1/k_2})^{(d_{\lambda,\ell}+k_2)(d_{\lambda,\ell}+k_2-1)/2}}\sigma_q^{\delta_{\ell}-\frac{d_{\lambda,\ell}}{k_2}-1}\frac{1}{(2\pi)^{1/2}}(C_{\lambda,\ell}(m,\epsilon)\ast^{R_{\ell}}w_{k_2}(\tau,m,\epsilon))\right)\nonumber\\
&\hspace{10cm}+\frac{\tau^{k_2}}{(q^{1/k_2})^{k_{2}(k_2-1)/2}}\psi_{k_2}(\tau,m,\epsilon).\label{e492}
\end{align}

We now make this assumptions on $R_{D}$ and $Q$. Let $\eta_{Q,R_{D}}>0$ and $d_{Q,R_{D}}\in\R$ such that 
\begin{equation}\label{e496}
\frac{Q(im)}{R_{D}(im)}\in S_{Q,R_{D}},\quad m\in\R,
\end{equation}
where $S_{Q,R_{D}}$ is the unbounded set
$$S_{Q,R_{D}}=\left\{z\in\C:|z|\ge r_{Q,R_{D}},|\arg(z)-d_{Q,R_{D}}|\le \eta_{Q,R_{D}}\right\}.$$
We consider the polynomial 
\begin{equation}\label{e558}
P_{m}(\tau)=\frac{Q(im)}{(q^{1/k_2})^{k_2(k_2-1)/2}}-\frac{R_{D}(im)}{(q^{1/k_2})^{\frac{(d_{D}+k_2)(d_{D}+k_2-1)}{2}}}\tau^{d_D},
\end{equation}
and factorize it 
$$P_{m}(\tau)=-\frac{R_{D}(im)}{(q^{1/k_2})^{\frac{(d_{D}+k_2)(d_{D}+k_2-1)}{2}}}\prod_{l=0}^{d_{D}-1}(\tau-q_l(m)),$$
with
$$
q_{l}(m)=\left(\frac{|Q(im)|}{|R_{D}(im)|}(q^{1/k_2})^{\frac{(d_{D}+k_2)(d_{D}+k_2-1)-k_2(k_2-1)}{2}}\right)^{1/d_{D}}\nonumber\exp\left(\sqrt{-1}\left(\frac{1}{d_{D}}\arg\left(\frac{Q(im)}{R_{D}(im)}\right)+\frac{2\pi l}{d_{D}}\right)\right)\label{e506},
$$
for every $0\le l\le d_{D}-1$. Moreover, we establish some conditions on $S_{Q,R_{D}}$ with respect to $S_{d}$ and $\mathcal{R}_{d}^{b}$. Indeed, let $\rho_1>0$ such that $\mathcal{R}_{d}^{b}\subseteq \overline{D}(0,\rho_1)$ and assume
\begin{enumerate}
\item[1)] There exists $M_1>0$ such that
\begin{equation}\label{e513}
|\tau-q_{l}(m)|\ge M_{1}(1+|\tau|),
\end{equation}
for every $0\le l\le d_{D}-1$, all $m\in\R$ and  $\tau\in S_{d}\cup \overline{D}(0,\rho_1)$. Indeed, in view of (\ref{e496}) and (\ref{e506}), one may choose $r_{Q,R_{D}}$ so that $|q_l(m)|>2\rho_1$ for every $m\in\R$, $0\le l\le d_{D}-1$. In addition to that, $q_{\ell}(m)$ is a root of $P_m(\tau)$. This entails
$$q_{\ell}(m)^{d_{D}}=\frac{Q(im)}{R_{D}(im)}(q^{1/k_2})^{\frac{(d_{D}+k_2)(d_{D}+k_2-1)-k_2(k_2-1)}{2}}\in S_{Q,R_{D}}$$
for every $m\in\R$, in view of (\ref{e496}). As a matter of fact, one can choose small enough $\eta_{Q,R_{D}}$ so that every $q_{\ell}(m)$ lies in a finite family of infinite sectors with vertex at the origin and such that there exists $d\in\R$ so that $S_{d}$ does have empty intersection with all such infinite sectors.

Under these assumptions, one can choose $S_d$ under the property that $q_l(m)/\tau$ does not belong to some open disc centered at 1 for every $0\le l\le d_{D}-1$, $\tau\in S_{d}$ and $m\in\R$. This configuration guarantees (\ref{e513}) is fulfilled.
\item[2)] There exists $M_2>0$ and $l_0\in\{0,...,d_{D}-1\}$ such that
\begin{equation}\label{e520}
|\tau-q_{\ell_0}(m)|\ge M_2|q_{l_0}(m)|,
\end{equation}
for every $m\in\R$ and $\tau\in  S_{d}\cup\overline{D}(0,\rho_1)$. Under assumption 1), we notice that for any fixed $0\le l_0\le d_{D}-1$, the quotient $\tau/q_{l_0}(m)$ has positive distance to 1, for every $\tau\in\overline{D}(0,\rho_1)\cup S_{d}$, and $m\in\R$. This implies (\ref{e520}) for some small enough $M_2$.  
\end{enumerate}

Under the previous situation, one derives
\begin{align}
|P_{m}(\tau)|&\ge M_{1}^{d_{D}-1}M_2\frac{|R_{D}(im)|}{(q^{1/k_2})^{\frac{(d_{D}+k_2)(d_{D}+k_2-1)}{2}}}\left(\frac{|Q(im)|}{|R_{D}(im)|}(q^{1/k_2)^{\frac{(d_{D}+k_2)(d_{D}+k_2-1)-k_2(k_2-1)}{2}}}\right)^{1/d_{D}}\nonumber \\
&\times (1+|\tau|)^{d_{D}-1}\ge C_{P}(r_{Q,R_{D}})^{1/d_{D}}|R_{D}(im)|(1+|\tau|)^{d_{D}-1},\label{e529}
\end{align}
for some positive constant $C_P$, valid for every $\tau\in \overline{D}(0,\rho_1)\cup S_{d}$ and $m\in\R$.

Let $\mathcal{R}_{d}^{b}$ be a bounded sector with vertex at 0 and bisecting direction $d$. We assume $\mathcal{R}_{d}^{b}\subseteq \overline{D}(0,\rho_1)$.

\begin{prop}\label{prop571}
Let $\varpi>0$. Under the hypotheses made at the beginning of Section~\ref{seccion4} on the elements involved in the construction of equation (\ref{e492}), under assumptions 1) and 2) above, if there exist small enough positive constants $\zeta_{\psi_{k_2}},\zeta_{\lambda,\ell}$ for $1\le \ell\le D-1$ and $\lambda\in I_{\ell}$ such that 
\begin{equation}\label{e571}
\tilde{C}_{\lambda,\ell}\le\zeta_{\lambda,\ell}\qquad C_{\psi_{k_2}}\le\zeta_{\psi_{k_2}},
\end{equation}
then, for every $\epsilon\in D(0,\epsilon_0)$, the equation (\ref{e492}) admits a unique solution $w_{k_2}^{d}(\tau,m,\epsilon)$ in the space $\hbox{Exp}^{d}_{(k_2,\beta,\mu,\nu)}$, for $\nu\in\R$ determined in (\ref{e405}). Moreover, $\bigl\|w_{k_2}^{d}(\tau,m,\epsilon)\bigr\|_{(k_2,\beta,\mu,\nu)}\le \varpi$, and this function is holomorphic with respect to $\epsilon$ in $D(0,\epsilon_0)$.
\end{prop}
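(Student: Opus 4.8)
The argument runs along the same lines as the proof of Proposition~\ref{prop321}: we convert (\ref{e492}) into a fixed point equation in the Banach space $\hbox{Exp}^{d}_{(k_2,\beta,\mu,\nu)}$, with $\nu$ the real number supplied by (\ref{e405}), and invoke the contractive mapping theorem. The only genuinely new ingredient is that the factor multiplying the unknown on the ``homogeneous'' side is no longer the scalar $Q(im)$ but the polynomial $P_{m}(\tau)$ of (\ref{e558}), whose inverse is controlled by the lower bound (\ref{e529}).

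First I would factor $\tau^{k_2}$ out of (\ref{e492}) — every term carries it — and move the two terms containing $w_{k_2}$ with no further $\tau$-power to the left, so that, on $S_{d}\cup\mathcal{R}_{d}^{b}$, equation (\ref{e492}) becomes equivalent to
\begin{align*}
P_{m}(\tau)\,w_{k_2}(\tau,m,\epsilon)={}&\sum_{\ell=1}^{D-1}\sum_{\lambda\in I_{\ell}}\frac{\epsilon^{\Delta_{\lambda,\ell}-d_{\lambda,\ell}}\tau^{d_{\lambda,\ell}}}{(q^{1/k_2})^{(d_{\lambda,\ell}+k_2)(d_{\lambda,\ell}+k_2-1)/2}}\sigma_{q}^{\delta_{\ell}-\frac{d_{\lambda,\ell}}{k_2}-1}\frac{1}{(2\pi)^{1/2}}\bigl(C_{\lambda,\ell}(m,\epsilon)\ast^{R_{\ell}}w_{k_2}(\tau,m,\epsilon)\bigr)\\
&+\frac{1}{(q^{1/k_2})^{k_2(k_2-1)/2}}\psi_{k_2}(\tau,m,\epsilon).
\end{align*}
Under hypotheses 1) and 2) the roots $q_{l}(m)$ of $P_{m}$ stay out of $S_{d}\cup\overline{D}(0,\rho_1)\supseteq S_{d}\cup\overline{\mathcal{R}_{d}^{b}}$, hence $1/P_{m}(\tau)$ is holomorphic there, and (\ref{e529}) yields $|P_{m}(\tau)|^{-1}\le \tilde{C}_{P}\,|R_{D}(im)|^{-1}(1+|\tau|)^{-(d_{D}-1)}$ uniformly in $m\in\R$. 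Dividing by $P_{m}(\tau)$ and calling $\mathcal{H}^{k_2}_{\epsilon}$ the resulting right-hand side map, the proposition reduces to: for $\zeta_{\lambda,\ell}$ and $\zeta_{\psi_{k_2}}$ small enough, $\mathcal{H}^{k_2}_{\epsilon}$ sends the closed ball $\overline{D}(0,\varpi)\subseteq\hbox{Exp}^{d}_{(k_2,\beta,\mu,\nu)}$ into itself and is $\tfrac12$-Lipschitz there, uniformly in $\epsilon\in D(0,\epsilon_0)$.

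For a generic convolution summand I would write $1/P_{m}(\tau)=R_{D}(im)^{-1}\bigl(R_{D}(im)/P_{m}(\tau)\bigr)$, absorb the scalar $R_{D}(im)^{-1}$ into the convolution product — legitimate since $\deg R_{D}\ge\deg R_{\ell}$ permits choosing $b$ with $\sup_{m}|b(m)R_{D}(im)|\le1$, turning $\ast^{R_{\ell}}$ into $\ast^{b,R_{\ell}}$ — and let the remaining factor $R_{D}(im)/P_{m}(\tau)$, which obeys $(1+|\tau|)^{d_{D}-1}|R_{D}(im)/P_{m}(\tau)|\le\tilde{C}_{P}$ uniformly in $m$, play the role of the amplitude in (the proof of) Proposition~\ref{prop188}. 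The resulting bound is precisely of the type controlled by Propositions~\ref{prop188} and~\ref{prop197} with $(\gamma_1,\gamma_2,\gamma_3)=\bigl(d_{D}-1,\;d_{\lambda,\ell},\;\tfrac{d_{\lambda,\ell}}{k_2}+1-\delta_{\ell}\bigr)$: here $\gamma_3\ge0$ because $\tfrac{d_{\lambda,\ell}}{k_2}+1\ge\delta_{\ell}$, the condition $\gamma_2\ge k_2\gamma_3$ collapses to $\delta_{\ell}\ge1$, and the condition $\gamma_1+k_2\gamma_3\ge\gamma_2$ is exactly $\tfrac{d_{D}-1}{k_2}+1\ge\delta_{\ell}$, so the inequalities (\ref{e191}) are all guaranteed by (\ref{e180}). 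Combined with (\ref{e571}) and (\ref{e223}), this gives a bound of the form $\tilde{C}\,\epsilon_{0}^{\Delta_{\lambda,\ell}-d_{\lambda,\ell}}\,\zeta_{\lambda,\ell}\,\|w\|_{(k_2,\beta,\mu,\nu)}$ for each such summand. For the forcing term, Lemma~\ref{lema403} provides $\psi_{k_2}\in\hbox{Exp}^{d}_{(k_2,\beta,\mu,\nu)}$ with $\|\psi_{k_2}(\tau,m,\epsilon)\|_{(k_2,\beta,\mu,\nu)}\le C_{\psi_{k_2}}\le\zeta_{\psi_{k_2}}$ (using that the good covering is arranged so that $S_{d}\cup\overline{\mathcal{R}_{d}^{b}}\subseteq\mathcal{R}_{d,\tilde{\delta}}$), and since $1/P_{m}(\tau)$ is bounded on $S_{d}\cup\overline{\mathcal{R}_{d}^{b}}$, Lemma~\ref{lema182} bounds the last summand of $\mathcal{H}^{k_2}_{\epsilon}(w)$ by a constant times $\zeta_{\psi_{k_2}}$. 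Summing, $\|\mathcal{H}^{k_2}_{\epsilon}(w)\|_{(k_2,\beta,\mu,\nu)}$ is at most a fixed finite linear combination of the $\zeta_{\lambda,\ell}\varpi$ and of $\zeta_{\psi_{k_2}}$, hence $\le\varpi$ once the $\zeta$'s are chosen small; since $\mathcal{H}^{k_2}_{\epsilon}$ is affine in $w$, the same estimates applied to $\mathcal{H}^{k_2}_{\epsilon}(w^{1})-\mathcal{H}^{k_2}_{\epsilon}(w^{2})$ (the forcing term drops out) give a Lipschitz constant $\le\tfrac12$ after a further shrinking of the $\zeta_{\lambda,\ell}$. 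The contractive mapping theorem on the complete metric space $\overline{D}(0,\varpi)$ then produces the unique fixed point $w_{k_2}^{d}(\tau,m,\epsilon)$; multiplying back by $P_{m}(\tau)$ and $\tau^{k_2}$ shows that it solves (\ref{e492}), and holomorphy in $\epsilon$ follows because $\mathcal{H}^{k_2}_{\epsilon}$ depends holomorphically on $\epsilon\in D(0,\epsilon_0)$, whence $w_{k_2}^{d}$ is the locally uniform limit of the holomorphic iterates $(\mathcal{H}^{k_2}_{\epsilon})^{n}(0)$.

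The substantial point is not the fixed point mechanism — which is routine once the setup is fixed — but the algebraic bookkeeping around $1/P_{m}(\tau)$: casting each summand of the divided equation into the exact shape handled by Propositions~\ref{prop188} and~\ref{prop197}, and checking that the exponent inequalities (\ref{e191}) are all implied by the standing hypotheses (\ref{e180}). This, in turn, rests on the uniform lower bound (\ref{e529}), which is available only because the geometric conditions 1)--2) keep the roots $q_{l}(m)$ of $P_{m}$ off $S_{d}\cup\overline{\mathcal{R}_{d}^{b}}$, and on the compatibility $S_{d}\cup\overline{\mathcal{R}_{d}^{b}}\subseteq\mathcal{R}_{d,\tilde{\delta}}$ that makes the accelerated forcing term $\psi_{k_2}$ from Lemma~\ref{lema403} an element of the Banach space under consideration.
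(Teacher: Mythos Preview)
Your proof is correct and follows essentially the same approach as the paper's: define $\mathcal{H}^{k_2}_{\epsilon}$ by dividing (\ref{e492}) by $P_{m}(\tau)$, cast each convolution summand into the form handled by Propositions~\ref{prop188} and~\ref{prop197} via the factorisation $1/P_{m}(\tau)=R_{D}(im)^{-1}\cdot\bigl(R_{D}(im)/P_{m}(\tau)\bigr)$ together with the lower bound (\ref{e529}), bound the forcing term using Lemma~\ref{lema403} and Lemma~\ref{lema182}, and conclude by the contractive mapping theorem. Your explicit verification that the choice $(\gamma_{1},\gamma_{2},\gamma_{3})=(d_{D}-1,\,d_{\lambda,\ell},\,\tfrac{d_{\lambda,\ell}}{k_{2}}+1-\delta_{\ell})$ satisfies (\ref{e191}) under (\ref{e180}) is in fact more detailed than what the paper writes.
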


\begin{proof}

We recall $\tilde{C}_{\lambda,\ell}$ and $C_{\psi_{k_2}}$ are stated in (\ref{e223}) and (\ref{e405}) respectively.

Let $\epsilon\in D(0,\epsilon_0)$. We consider the map $\mathcal{H}_{\epsilon}^{k_2}$ defined by
\begin{align}
\mathcal{H}^{k_2}_{\epsilon}(w(\tau,m)):=&\frac{1}{P_{m}(\tau)}\sum_{\ell=1}^{D-1}\left(\sum_{\lambda\in I_{\ell}}\frac{\epsilon^{\Delta_{\lambda,\ell}-d_{\lambda,\ell}}\tau^{d_{\lambda,\ell}}\sigma_q^{\delta_{\ell}-\frac{d_{\lambda,\ell}}{k_2}-1}}{(q^{1/k_2})^{(d_{\lambda,\ell}+k_2)(d_{\lambda,\ell}+k_2-1)/2}}\frac{1}{(2\pi)^{1/2}}(C_{\lambda,\ell}(m,\epsilon)\ast^{R_{\ell}}w(\tau,m))\right)\nonumber\\
&+\frac{1}{(q^{1/k_2})^{k_{2}(k_2-1)/2}}\frac{1}{P_{m}(\tau)}\psi_{k_2}(\tau,m,\epsilon).\label{e578}
\end{align}

Let $\varpi>0$, and take $w(\tau,m)\in\hbox{Exp}^{d}_{(k_2,\beta,\mu,\nu)}$ with $\left\|w(\tau,m)\right\|_{(k_2,\beta,\mu,\nu)}\le\varpi$. For every $1\le \ell\le D-1$ and $\lambda\in I_{\ell}$ we write
\begin{align}
&\frac{\epsilon^{\Delta_{\lambda,\ell}-d_{\lambda,\ell}}}{P_{m}(\tau)}\tau^{d_{\lambda,\ell}}\sigma_{q}^{\delta_{\ell}-\frac{d_{\lambda,\ell}}{k_2}-1}\left(C_{\lambda,\ell}(m,\epsilon)\ast^{R_{\ell}}w(\tau,m)\right)\nonumber \\
&=\frac{\epsilon^{\Delta_{\lambda,\ell}-d_{\lambda,\ell}}}{C_{P}(r_{Q,R_{D}})^{1/d_{D}}}\frac{C_{P}(r_{Q,R_{D}})^{1/d_{D}}R_{D}(im)}{P_{m}(\tau)}\tau^{d_{\lambda,\ell}}\sigma_{q}^{\delta_{\ell}-\frac{d_{\lambda,\ell}}{k_2}-1}\frac{1}{R_{D}(im)}\left(C_{\lambda,\ell}(m,\epsilon)\ast^{R_{\ell}}w(\tau,m)\right) \label{e582}
\end{align}

In view of the properties described at the beginning of Section~\ref{seccion4}, one can apply Proposition~\ref{prop197} to obtain that
\begin{equation}\label{e588}
\left\|\frac{1}{R_{D}(im)}\left(C_{\lambda,\ell}(m,\epsilon)\ast^{R_{\ell}}w(\tau,m)\right)\right\|_{(k_2,\beta,\mu,\nu)}\le \tilde{C}_3\left\|C_{\lambda,\ell}\right\|_{(\beta,\mu)}\left\|w(\tau,m)\right\|_{(k_2,\beta,\mu,\nu)}\le \tilde{C}_3\tilde{C}_{\lambda,\ell}\varpi,
\end{equation}
for some $\tilde{C}_{3}>0$, valid for every $1\le \ell\le D-1$ and all $\lambda\in I_{\ell}$. Let $\gamma_1$ in Proposition~\ref{prop188} be the value $d_D-1$ and put $a_{\gamma_1}(\tau):=\frac{C_{P}(r_{Q,R_{D}})^{1/d_{D}}R_{D}(im)}{P_{m}(\tau)}$. From (\ref{e529}) one has $|a_{\gamma_1}(\tau)|\le\frac{1}{(1+|\tau|)^{d_{D}-1}}$ for every $\tau\in S_d\cup \overline{\mathcal{R}_{d}^{b}}$ and $m\in\R$. In addition to this, and bearing in mind (\ref{e180}), one can apply Proposition~\ref{prop188} to conclude that
$$\left\|\frac{C_{P}(r_{Q,R_{D}})^{1/d_{D}}R_{D}(im)}{P_{m}(\tau)}\tau^{d_{\lambda,\ell}}\sigma_{q}^{\delta_{\ell}-\frac{d_{\lambda,\ell}}{k_2}-1}\frac{1}{R_{D}(im)}\left(C_{\lambda,\ell}(m,\epsilon)\ast^{R_{\ell}}w(\tau,m)\right)\right\|_{(k_2,\beta,\mu,\nu)}$$
is upper bounded by
$$\tilde{C}_{2}\left\|\frac{1}{R_{D}(im)}\left(C_{\lambda,\ell}(m,\epsilon)\ast^{R_{\ell}}w(\tau,m)\right)\right\|_{(k_2,\beta,\mu,\nu)}\le\tilde{C}_{2} \tilde{C}_3\tilde{C}_{\lambda,\ell}\varpi\le \tilde{C}_{2} \tilde{C}_3\zeta_{\lambda,\ell}\varpi.$$

We observe that, without loss of generality, one can assume that $S_{d}\subseteq \mathcal{R}_{d,\tilde{\delta}}$.

Furthermore, as a consequence of (\ref{e529}) and (\ref{e571}), and taking into account that $\psi_{k_2}\in\hbox{Exp}^{d}_{(k_2,\beta,\mu,\nu)}$ in view of (\ref{e405}), one has
\begin{align*}
&\left\|\frac{1}{P_{m}(\tau)}\psi_{k_2}(\tau,m,\epsilon)\right\|_{(k_2,\beta,\mu,\nu)}\le \frac{1}{C_{P}(r_{Q,R_{D}})^{1/d_{D}}}\sup_{m\in\R}\frac{1}{|R_{D}(im)|}\left\|\psi_{k_2}(\tau,m,\epsilon)\right\|_{(k_2,\beta,\mu,\nu)}\\
&\le \frac{1}{C_{P}(r_{Q,R_{D}})^{1/d_{D}}}\sup_{m\in\R}\frac{1}{|R_{D}(im)|}C_{\psi_{k_2}}\le \frac{1}{C_{P}(r_{Q,R_{D}})^{1/d_{D}}}\sup_{m\in\R}\frac{1}{|R_{D}(im)|}\zeta_{\psi_{k_2}}.
\end{align*}

The previous estimates allow us to affirm that
\begin{align*}
\left\|\mathcal{H}_{\epsilon}^{k_2}(w(\tau,m))\right\|_{(k_2,\beta,\mu,\nu)}&\le \frac{\tilde{C}_2\tilde{C}_3}{C_P(r_{Q,R_D})^{1/d_{D}}(2\pi)^{1/2}}\varpi\sum_{\ell=1}^{D-1}\sum_{\lambda\in I_{\ell}}\frac{\zeta_{\lambda,\ell}\epsilon_0^{\Delta_{\lambda,\ell}-d_{\lambda,\ell}}}{(q^{1/k_2})^{(d_{\lambda,\ell}+k_2)(d_{\lambda,\ell}+k_2-1)/2}}\\
&+\frac{1}{(q^{1/k_2})^{k_{2}(k_2-1)/2}}\frac{1}{C_{P}(r_{Q,R_{D}})^{1/d_{D}}}\sup_{m\in\R}\frac{1}{|R_{D}(im)|}\zeta_{\psi_{k_2}}.
\end{align*}

We choose $\zeta_{\psi_{k_2}},\zeta_{\lambda,\ell}$ for every $1\le \ell\le D-1$ and $\lambda\in I_{\ell}$ such that
$$\frac{\tilde{C}_2\tilde{C}_3}{C_P(r_{Q,R_D})^{1/d_{D}}(2\pi)^{1/2}}\sum_{\ell=1}^{D-1}\sum_{\lambda\in I_{\ell}}\frac{\zeta_{\lambda,\ell}\epsilon_0^{\Delta_{\lambda,\ell}-d_{\lambda,\ell}}}{(q^{1/k_2})^{(d_{\lambda,\ell}+k_2)(d_{\lambda,\ell}+k_2-1)/2}}\le\frac{1}{2},\hspace{3cm}$$
$$\hspace{3cm}\frac{1}{(q^{1/k_2})^{k_{2}(k_2-1)/2}}\frac{1}{C_{P}(r_{Q,R_{D}})^{1/d_{D}}}\sup_{m\in\R}\frac{1}{|R_{D}(im)|}\zeta_{\psi_{k_2}}\le\frac{\varpi}{2}.$$

This yields $\left\|\mathcal{H}_{\epsilon}^{k_2}(w(\tau,m))\right\|_{(k_2,\beta,\mu,\nu)}\le\varpi$. The operator $\mathcal{H}_{\epsilon}^{k_2}(w(\tau,m))$ maps $\overline{D}(0,\varpi)\subseteq\hbox{Exp}^{d}_{(k_2,\beta,\mu,\nu)}$ into itself. Let $w^{1}_{k_{2}},w^{2}_{k_{2}}\in\hbox{Exp}^{d}_{(k_2,\beta,\mu,\nu)}$, with $\bigl\|w^{j}_{k_2}\bigr\|_{(\kappa,\beta,\mu,\nu)}\le\varpi$ for $j=1,2$.

Following analogous calculations as before we arrive at
\begin{align*}
&\left\|\mathcal{H}_{\epsilon}^{k_2}(w^1_{k_2}(\tau,m))-\mathcal{H}_{\epsilon}^{k_2}(w^2_{k_2}(\tau,m))\right\|_{(k_2,\beta,\mu,\nu)}\\
&\hspace{2cm}\le \frac{\tilde{C}_2\tilde{C}_3}{C_P(r_{Q,R_D})^{1/d_{D}}(2\pi)^{1/2}}\sum_{\ell=1}^{D-1}\sum_{\lambda\in I_{\ell}}\frac{\zeta_{\lambda,\ell}\epsilon_0^{\Delta_{\lambda,\ell}-d_{\lambda,\ell}}\left\|w^1_{k_2}(\tau,m)-w^2_{k_2}(\tau,m)\right\|_{(k_2,\beta,\mu,\nu)}}{(q^{1/k_2})^{(d_{\lambda,\ell}+k_2)(d_{\lambda,\ell}+k_2-1)/2}}.
\end{align*}

We choose $\zeta_{\lambda,\ell}$ for every $1\le \ell\le D-1$ and $\lambda\in I_{\ell}$ such that
$$\frac{\tilde{C}_2\tilde{C}_3}{C_P(r_{Q,R_D})^{1/d_{D}}(2\pi)^{1/2}}\sum_{\ell=1}^{D-1}\sum_{\lambda\in I_{\ell}}\frac{\zeta_{\lambda,\ell}\epsilon_0^{\Delta_{\lambda,\ell}-d_{\lambda,\ell}}}{(q^{1/k_2})^{(d_{\lambda,\ell}+k_2)(d_{\lambda,\ell}+k_2-1)/2}}\le\frac{1}{2},$$
and conclude

$$\left\|\mathcal{H}_{\epsilon}^{k_2}(w^1_{k_2}(\tau,m))-\mathcal{H}_{\epsilon}^{k_2}(w^2_{k_2}(\tau,m))\right\|_{(k_2,\beta,\mu,\nu)}\le\frac{1}{2}\left\|w^1_{k_2}(\tau,m)-w^2_{k_2}(\tau,m)\right\|_{(k_2,\beta,\mu,\nu)}.$$

The closed disc $\overline{D}(0,\varpi)\subseteq\hbox{Exp}^{d}_{(k_2,\beta,\mu,\nu)}$ is a complete metric space for the norm $\left\|\cdot\right\|_{(k_2,\beta,\mu,\nu)}$. The previous reasonings derive into the conclusion that the operator $\mathcal{H}^{k_2}_{\epsilon}$ is a contractive map from $\overline{D}(0,\varpi)$ into itself. The classical contractive mapping theorem states the existence of a unique fixed point, say $w^{d}_{k_2}(\tau,m,\epsilon)$. Moreover,$w^{d}_{k_2}(\tau,m,\epsilon)$ is a solution of (\ref{e492}) and also  holomorphy of $w^{d}_{k_2}(\tau,m,\epsilon)$ with respect to $\epsilon$ is attained by construction.
\end{proof}

The next result provides the link between the acceleration of $w_{k_{1}}^{d}$, obtained in Proposition~\ref{prop321}, and $w_{k_2}^{d}$ determined in Proposition~\ref{prop571}.Indeed, we prove they coincide as elements in an appropriate Banach space.

\begin{prop}\label{prop638}
Let us consider the function $w_{k_{1}}^{d}(\tau,m,\epsilon)$ constructed in Proposition~\ref{prop321}, solution of (\ref{e224}). For every $\tilde{\delta}>0$, the function
$$\tau\mapsto\mathcal{L}^{d}_{q;1/\kappa}(w_{k_{1}}^{d}(\tau,m,\epsilon)):=\mathcal{L}^{d}_{q;1/\kappa}\left(h\mapsto w_{k_{1}}^{d}(h,m,\epsilon)\right)(\tau)= \frac{1}{\pi_{q^{1/\kappa}}}\int_{L_{d}}\frac{w_{k_1}^d(h,m,\epsilon)}{\Theta_{q^{1/\kappa}}\left(\frac{u}{\tau}\right)}\frac{du}{u}$$
defines a bounded holomorphic function in $\mathcal{R}_{d,\tilde{\delta}}\cap D(0,r_{1})$, with $0<r_1\le q^{(\frac{1}{2}-\alpha)/\kappa}/2$ (we recall $\alpha\in\R$ is fixed at the beginning of Section~\ref{seccion4}). Moreover, for every $\epsilon\in D(0,\epsilon_0)$, the identity
\begin{equation}\label{e638}
\mathcal{L}^{d}_{q;1/\kappa}(w_{k_{1}}^{d})(\tau,m,\epsilon)=w_{k_2}^{d}(\tau,m,\epsilon)
\end{equation}
holds for $\tau\in S^{b}_{d}$, $m\in\R$ and $\epsilon\in D(0,\epsilon_0)$, where $\tilde{\rho}>0$ and $S^{b}_{d}$ is a finite sector of bisecting direction $d$.
\end{prop}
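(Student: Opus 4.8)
The plan is first to give meaning to the left-hand side of (\ref{e638}), then to check that it solves (\ref{e492}), and finally to identify it with $w_{k_2}^{d}$ by a local uniqueness argument on a small sector.

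\emph{Well-definedness.} By Proposition~\ref{prop321}, $w_{k_1}^{d}(\cdot,m,\epsilon)\in\hbox{Exp}^{d}_{(\kappa,\beta,\mu,\alpha,\rho)}$ with norm bounded uniformly in $\epsilon\in D(0,\epsilon_0)$. Replacing the shifted weight $|\tau+\delta|$ by $|\tau|$ exactly as in the opening lines of the proof of Lemma~\ref{lema403}, one sees that $w_{k_1}^{d}$ satisfies (\ref{e205})--(\ref{e209}) of Definition~\ref{def203} with $k=\kappa$, uniformly in $m\in\R$ and $\epsilon\in D(0,\epsilon_0)$. Lemma~\ref{lema219} then gives that $\mathcal{L}^{d}_{q;1/\kappa}(w_{k_1}^{d})$ is bounded holomorphic on $\mathcal{R}_{d,\tilde\delta}\cap D(0,r_1)$ for every $0<r_1\le q^{(\frac12-\alpha)/\kappa}/2$, continuous in $m$ and holomorphic in $\epsilon\in D(0,\epsilon_0)$.

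\emph{The transform solves (\ref{e492}).} Dividing (\ref{e224}) by $\tau^{k_1}/(q^{1/k_1})^{k_1(k_1-1)/2}$ yields the reduced equation satisfied by $w_{k_1}^{d}$: the two terms carrying $w_{k_1}^{d}$ become $Q(im)w_{k_1}^{d}$ and $c_{D}\tau^{d_D}\sigma_q^{-d_D/\kappa}R_D(im)w_{k_1}^{d}$ with $c_{D}=(q^{1/k_1})^{k_1(k_1-1)/2-(d_D+k_1)(d_D+k_1-1)/2}$, the $(\ell,\lambda)$-term is $c_{\lambda,\ell}\epsilon^{\Delta_{\lambda,\ell}-d_{\lambda,\ell}}\tau^{d_{\lambda,\ell}}\sigma_q^{\delta_\ell-\frac{d_{\lambda,\ell}}{k_1}-1}\frac{1}{(2\pi)^{1/2}}(C_{\lambda,\ell}\ast^{R_\ell}w_{k_1}^{d})$ with the analogous $c_{\lambda,\ell}$, and the free term is $\psi_{k_1}$. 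I would then apply $\mathcal{L}^{d}_{q;1/\kappa}$ to both sides: since it acts only on $\tau$ it commutes with multiplication by $Q(im)$ and $R_D(im)$ and — by Fubini, legitimate on $\mathcal{R}_{d,\tilde\delta}\cap D(0,r_1)$ thanks to the bounds above together with $\mu>\deg R_D+1$ — with the convolution $\ast^{R_\ell}$ in $m$; the commutation formula of Proposition~\ref{prop259} (used with $k=\kappa$, and $\sigma=d_D$ resp. $\sigma=d_{\lambda,\ell}$) turns $\tau^{d_D}\sigma_q^{-d_D/\kappa}(\cdot)$ into $(q^{1/\kappa})^{d_D(d_D-1)/2}T^{d_D}(\cdot)$ and $\tau^{d_{\lambda,\ell}}\sigma_q^{\delta_\ell-\frac{d_{\lambda,\ell}}{k_1}-1}(\cdot)$ into $(q^{1/\kappa})^{d_{\lambda,\ell}(d_{\lambda,\ell}-1)/2}T^{d_{\lambda,\ell}}\sigma_q^{\delta_\ell-\frac{d_{\lambda,\ell}}{k_2}-1}(\cdot)$; and Lemma~\ref{lema403} gives $\mathcal{L}^{d}_{q;1/\kappa}(\psi_{k_1})=\psi_{k_2}$. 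A direct check shows the constants then match those of the reduced form of (\ref{e492}): the extra factors $(q^{1/\kappa})^{\sigma(\sigma-1)/2}$ combine with the powers of $q^{1/k_1}$ into powers of $q^{1/k_2}$ precisely because $\frac1\kappa=\frac1{k_1}-\frac1{k_2}$, and the same identity turns $-\frac{d_{\lambda,\ell}}{k_1}$ into $-\frac{d_{\lambda,\ell}}{k_2}$ while $\sigma_q^{-d_D/\kappa}$ disappears. Multiplying back by $\tau^{k_2}/(q^{1/k_2})^{k_2(k_2-1)/2}$ shows $\mathcal{L}^{d}_{q;1/\kappa}(w_{k_1}^{d})$ solves (\ref{e492}) on $\mathcal{R}_{d,\tilde\delta}\cap D(0,r_1)$.

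\emph{Identification.} I would choose a finite sector $S^{b}_{d}$ of bisecting direction $d$ and radius $\tilde\rho>0$ small enough that $S^{b}_{d}\subseteq(\mathcal{R}_{d,\tilde\delta}\cap D(0,r_1))\cap(S_d\cup\mathcal{R}_d^{b})\cap\overline{D}(0,\rho_1)$; on $S^{b}_{d}$ both $\mathcal{L}^{d}_{q;1/\kappa}(w_{k_1}^{d})$ and $w_{k_2}^{d}$ are bounded, solve (\ref{e492}), and carry the decay $(1+|m|)^{-\mu}e^{-\beta|m|}$ in $m$, so their difference $V$ solves the homogeneous equation; dividing by $\tau^{k_2}P_m(\tau)$ (licit since $P_m$ has no zero on $S^{b}_{d}$ by (\ref{e513}) and $\tau\neq0$) this reads $V=\mathcal{H}^{k_2}_{\epsilon,0}(V)$, where $\mathcal{H}^{k_2}_{\epsilon,0}$ denotes the operator (\ref{e578}) without its $\psi_{k_2}$-term. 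Since $\delta_\ell-\frac{d_{\lambda,\ell}}{k_2}-1\le0$ by (\ref{e180}) the dilations $\sigma_q^{\delta_\ell-\frac{d_{\lambda,\ell}}{k_2}-1}$ map $\overline{S^{b}_{d}}$ into itself, $|\tau^{d_{\lambda,\ell}}|$ is bounded on $S^{b}_{d}$, and $|1/P_m(\tau)|\le C|R_D(im)|^{-1}$ there by (\ref{e529}); hence, using the weight $(1+|m|)^{\mu}e^{\beta|m|}$, the convolution estimate (\ref{e159}) (with $\deg R_\ell\le\deg R_D$ and $\mu>\deg R_D+1$) and the smallness of the $\zeta_{\lambda,\ell}$ exactly as in Proposition~\ref{prop571}, $\mathcal{H}^{k_2}_{\epsilon,0}$ has supremum norm $\le\frac12$ on bounded functions over $\overline{S^{b}_{d}}\times\R$. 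Then $V=\mathcal{H}^{k_2}_{\epsilon,0}(V)$ forces $V\equiv0$ on $S^{b}_{d}$, which is (\ref{e638}). The main obstacle I foresee is this last step: because $\mathcal{L}^{d}_{q;1/\kappa}(w_{k_1}^{d})$ is a priori only defined on $\mathcal{R}_{d,\tilde\delta}\cap D(0,r_1)$ and not on the whole domain of $w_{k_2}^{d}$, the global uniqueness from Proposition~\ref{prop571} is not directly applicable and one must set up and run this local contraction on the overlapping sector $S^{b}_{d}$; the bookkeeping of $q$-powers in the middle step, where the relation $\frac1\kappa=\frac1{k_1}-\frac1{k_2}$ is genuinely used, is the other point requiring care.
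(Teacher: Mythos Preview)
Your strategy is the paper's: show $\mathcal{L}^{d}_{q;1/\kappa}(w_{k_1}^{d})$ is well defined via Lemma~\ref{lema219}, transport equation~(\ref{e224}) through $\mathcal{L}^{d}_{q;1/\kappa}$ using Proposition~\ref{prop259} and Fubini to land on~(\ref{e492}), and then identify with $w_{k_2}^{d}$ by a local uniqueness/contraction argument on a small sector $S_{d}^{b}$. The first two steps match the paper closely (the paper carries out the Fubini verification explicitly with the integrand $\Xi(m_1,r)$, but your sketch of why it is legitimate is correct).

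There is a genuine gap in your identification step. You assert that on $S_{d}^{b}$ both $\mathcal{L}^{d}_{q;1/\kappa}(w_{k_1}^{d})$ and $w_{k_2}^{d}$ are bounded, and run the contraction in the $m$-weighted sup norm. For the Laplace transform this is fine. But the only information available on $w_{k_2}^{d}$ is membership in $\hbox{Exp}^{d}_{(k_2,\beta,\mu,\nu)}$, which gives merely
\[
|w_{k_2}^{d}(\tau,m,\epsilon)|\le \varpi\,(1+|m|)^{-\mu}e^{-\beta|m|}\exp\!\Big(\frac{k_2\log^{2}|\tau|}{2\log q}+\nu\log|\tau|\Big),
\]
and this envelope blows up as $\tau\to 0$ in $S_{d}^{b}$ (the quadratic term in $\log|\tau|$ dominates). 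Nothing in Proposition~\ref{prop571} excludes such behavior, so $w_{k_2}^{d}$ need not lie in your sup-norm space, and hence neither does the difference $V$; the fixed-point argument then does not apply.

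The paper resolves exactly this by choosing a contraction space that tolerates growth at $0$: it introduces the auxiliary Banach space $H_{(\beta,\mu,k_2,\Omega)}$ on $S_{d}^{b}\times\R$, with $\Omega=\min(\alpha,\nu)$ and the $\tau$-weight $\exp\!\big(-\tfrac{k_2\log^{2}|\tau|}{2\log q}-\Omega\log|\tau|\big)$. Then $w_{k_2}^{d}\in H_{(\beta,\mu,k_2,\Omega)}$ directly from its norm bound, while $\mathcal{L}^{d}_{q;1/\kappa}(w_{k_1}^{d})$, being bounded with $E_{(\beta,\mu)}$-decay in $m$, also lies in $H_{(\beta,\mu,k_2,\Omega)}$ since that $\tau$-weight is bounded above on $S_{d}^{b}$. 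One then restates the analogues of Lemma~\ref{lema182}, Proposition~\ref{prop188} and Proposition~\ref{prop197} for $H_{(\beta,\mu,k_2,\Omega)}$ and reruns the contraction of Proposition~\ref{prop571} in that space to conclude. Your argument can be repaired by making this same change of norm; everything else you wrote then goes through as stated.
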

\begin{proof}
We recall $w_{k_{1}}^{d}\in\hbox{Exp}^{d}_{(\kappa,\beta,\mu,\alpha,\rho)}$, which implies there exists $C_{w_{k_1}}^{1}>0$ such that 
$$\bigl\|w^{d}_{k_{1}}(\tau,m,\epsilon)\bigr\|_{(\beta,\mu)}\le C_{w_{k_1}}^{1}\exp\left(\frac{\kappa\log^2|\tau+\delta|}{2\log(q)}+\alpha\log|\tau+\delta|\right),$$
for every $\tau\in\overline{D}(0,\rho)\cup U_{d}$, $\epsilon\in D(0,\epsilon_0)$. Direct calculations entail the existence  of $C_{w_{k_1}}^{2},C_{w_{k_1}}^{3}>0$ such that
$$\bigl\|w^{d}_{k_{1}}(\tau,m,\epsilon)\bigr\|_{(\beta,\mu)}\le C_{w_{k_1}}^{2},$$
for every $\tau\in\overline{D}(0,\rho)$, and
$$\bigl\|w^{d}_{k_{1}}(\tau,m,\epsilon)\bigr\|_{(\beta,\mu)}\le C_{w_{k_1}}^{3}\exp\left(\frac{\kappa\log^2|\tau|}{2\log(q)}+\alpha\log|\tau|\right),$$
for every $\tau\in U_{d}$ with $|\tau|\ge\rho$. Definition~\ref{def203} and Lemma~\ref{lema219} guarantee that for every $\tilde{\delta}>0$ the function $\mathcal{L}^{d}_{q;1/\kappa}(w_{k_{1}}^{d}(\tau,m,\epsilon))$ defines a bounded holomorphic function in $\mathcal{R}_{d,\tilde{\delta}}\cap D(0,r_1)$ for $0< r_1\le q^{(\frac{1}{2}-\alpha)/\kappa}/2$, and values in the space $E_{(\beta,\mu)}$.

We now give proof of the identity (\ref{e638}). For this purpose, we consider equation (\ref{e224}) satisfied by $w^{d}_{k_1}(\tau,m,\epsilon)$ and divide both sides by $\tau^{k_1}$. One has

\begin{align}&Q(im)\frac{1}{(q^{1/k_1})^{k_1(k_1-1)/2}}w^d_{k_1}(\tau,m,\epsilon)=\frac{\tau^{d_D}}{(q^{1/k_1})^{(d_{D}+k_1)(d_{D}+k_1-1)/2}}\sigma_{q}^{-d_{D}/\kappa}R_{D}(im)w^d_{k_1}(\tau,m,\epsilon)\nonumber\\
+&\sum_{\ell=1}^{D-1}\left(\sum_{\lambda\in I_{\ell}}\frac{\epsilon^{\Delta_{\lambda,\ell}-d_{\lambda,\ell}}\tau^{d_{\lambda,\ell}}}{(q^{1/k_1})^{(d_{\lambda,\ell}+k_1)(d_{\lambda,\ell}+k_1-1)/2}}\sigma_q^{\delta_{\ell}-\frac{d_{\lambda,\ell}}{k_1}-1}\frac{1}{(2\pi)^{1/2}}(C_{\lambda,\ell}(m,\epsilon)\ast^{R_{\ell}}w^d_{k_1}(\tau,m,\epsilon))\right)\nonumber\\
&\hspace{10cm}+\frac{1}{(q^{1/k_1})^{k_{1}(k_1-1)/2}}\psi_{k_1}(\tau,m,\epsilon).\label{e660}
\end{align}

We now take $q-$Laplace transform of order $\kappa$ and direction $d$ at both sides of (\ref{e660}). In view of Proposition~\ref{prop259}, one has
\begin{equation}\label{e664}
\mathcal{L}_{q;1/\kappa}^{d}\left(\tau^{d_D}\sigma_{q}^{-d_{D}/\kappa}w^d_{k_1}(\tau,m,\epsilon)\right)=(q^{1/\kappa})^{d_{D}(d_{D}-1)/2}\tau^{d_{D}}\mathcal{L}^{d}_{q;1/\kappa}(w_{k_{1}}^{d}(\tau,m,\epsilon)).
\end{equation}
For every $1\le \ell\le D-1$ and $\lambda\in I_{\ell}$, and also
\begin{align}
&\mathcal{L}_{q;1/\kappa}^{d}\left(\tau^{d_{\lambda,\ell}}\sigma_{q}^{\delta_{\ell}-\frac{d_{\lambda,\ell}}{k_1}-1}(C_{\lambda,\ell}(m,\epsilon)\ast^{R_{\ell}}w^d_{k_1}(\tau,m,\epsilon))\right)\hspace{2cm}\nonumber \\
&\hspace{2cm}=(q^{1/\kappa})^{d_{\lambda,\ell}(d_{\lambda,\ell}-1)/2}\tau^{d_{\lambda,\ell}}\sigma_{q}^{-\frac{d_{\lambda,\ell}}{k_2}+\delta_{\ell}-1}\mathcal{L}^{d}_{q;1/\kappa}(C_{\lambda,\ell}(m,\epsilon)\ast^{R_{\ell}}w^d_{k_1}(\tau,m,\epsilon)).\label{e668}
\end{align}

We now proceed to justify the change in the order of integration in the expression 
$$\mathcal{L}^{d}_{q;1/\kappa}(C_{\lambda,\ell}(m,\epsilon)\ast^{R_{\ell}}w^d_{k_1}(\tau,m,\epsilon)).$$
 For every $m_1\in\R$ and $r\ge0$ we define the function
$$(m_1,r)\mapsto \Xi(m_1,r):=\frac{C_{\lambda,\ell}(m-m_1)R_{\ell}(im_{1})w^{d}_{k_{1}}(re^{id},m_1,\epsilon)}{\Theta_{q^{1/\kappa}}\left(\frac{re^{id}}{\tau}\right)r},$$
for $\tau\in\mathcal{R}_{d,\tilde{\delta}}\cap D(0,r_1)$, $m\in\R$ and $\epsilon\in D(0,\epsilon_0)$.

Taking into account (\ref{e223}), Proposition~\ref{prop321} and (\ref{e200}), one has
$$|\Xi(m_1,r)|\le \frac{\tilde{C}_{\lambda,\ell}e^{-|m-m_1|\beta}|R_{\ell}(im_1)|e^{-\beta|m_1|}}{C_{q,\kappa}(1+|m-m_1|)^{\mu}(1+|m_1|)^{\mu}}\frac{\varpi|\tau|^{1/2}}{r^{3/2}\tilde{\delta}}\frac{\exp\left(\frac{\kappa\log^2|re^{id}+\delta|}{2\log(q)}+\alpha\log|re^{id}+\delta|\right)}{\exp\left(\frac{\kappa\log^2\left(\frac{r}{|\tau|}\right)}{2\log(q)}\right)},$$
for $m_1\in\R$, $r\ge0$.

Integrability follows from identically reasoning as that were applied at (\ref{e159}) regarding integrability with respect to variable $m_1$, and the next estimates on the expression
\begin{equation}\label{e683}
\frac{\exp\left(\frac{\kappa\log^2|re^{id}+\delta|}{2\log(q)}+\alpha\log|re^{id}+\delta|\right)}{r^{3/2}\exp\left(\frac{\kappa\log^2\left(\frac{r}{|\tau|}\right)}{2\log(q)}\right)}.
\end{equation}

On one hand, if $r>\rho$, then (\ref{e683}) is upper bounded by
$$r^{-3/2}(r+\delta)^{\alpha}\exp\left(\frac{\kappa\log^2(r+\delta)}{2\log(q)}-\frac{\kappa\log^2\left(\frac{r}{|\tau|}\right)}{2\log(q)}\right)$$
$$= e^{-\frac{\kappa\log^2|\tau|}{2\log(q)}}r^{-3/2}(r+\delta)^{\alpha}\exp\left(\frac{\kappa(\log^2(r+\delta)-\log^2(r))}{2\log(q)}+\frac{\kappa\log(r)\log|\tau|}{\log(q)}\right).$$
There exists $C_{51}>0$ such that $\log^2(r+\delta)-\log^2(r)\le C_{51}$ for every $r>\rho$, and under the assumption that $|\tau|<r_1$, the term $r^{-3/2}(r+\delta)^{\alpha}\exp\left(\frac{\kappa\log(r)\log|\tau|}{\log(q)}\right)$ is upper bounded by $r^{-2}$. This yields integrability of (\ref{e683}) with resèct to $r$.

On the other hand, if $0\le r\le \rho$, (\ref{e683}) is upper estimated by $C_{52}r^{-3/2}\exp\left(-\frac{\kappa\log^2\left(\frac{r}{|\tau|}\right)}{2\log(q)}\right),$
for some $C_{52}>0$. This function is rewritten in the form
$$C_{52}r^{-3/2}\exp\left(-\frac{\kappa\log^2(r)}{2\log(q)}\right)\exp\left(-\frac{\kappa\log^2|\tau|}{2\log(q)}\right)\exp\left(\frac{\kappa\log(\rho)\log|\tau|}{\log(q)} \right),$$
which is integrable with respect to $r$, for $r\in[0,\rho]$.

The function $\Xi(m_1,r)$ is integrable in $\R\times[0,\infty)$ for $\tau\in\mathcal{R}_{d,\tilde{\delta}}\cap D(0,r_1)$, $m\in\R$ and $\epsilon\in D(0,\epsilon_0)$. One can apply Fubini's Theorem to conclude that

\begin{equation}\label{e701}
\mathcal{L}^{d}_{q;1/\kappa}(C_{\lambda,\ell}(m,\epsilon)\ast^{R_{\ell}}w^d_{k_1}(\tau,m,\epsilon))=C_{\lambda,\ell}(m,\epsilon)\ast^{R_{\ell}}\mathcal{L}^{d}_{q;1/\kappa}(w^d_{k_1})(\tau,m,\epsilon),
\end{equation}
$\tau\in\mathcal{R}_{d,\tilde{\delta}}\cap D(0,r_1)$, $m\in\R$ and $\epsilon\in D(0,\epsilon_0)$.

By means of (\ref{e664}), (\ref{e668}), (\ref{e701}) and (\ref{e399}) one obtains that  (\ref{e660}) is transformed from the application of $q-$Laplace transform of order $\kappa$ along direction $d$ into
\begin{align}&Q(im)\frac{\mathcal{L}^{d}_{q;1/\kappa}(w^d_{k_1})(\tau,m,\epsilon)}{(q^{1/k_1})^{k_1(k_1-1)/2}}=\frac{(q^{1/\kappa})^{d_{D}(d_{D}-1)/2}}{(q^{1/k_1})^{(d_{D}+k_1)(d_{D}+k_1-1)/2}}\tau^{d_{D}}R_{D}(im)\mathcal{L}^{d}_{q;1/\kappa}(w^d_{k_1})(\tau,m,\epsilon)\nonumber\\
+&\sum_{\ell=1}^{D-1}\left(\sum_{\lambda\in I_{\ell}}\frac{\epsilon^{\Delta_{\lambda,\ell}-d_{\lambda,\ell}}(q^{1/\kappa})^{d_{\lambda,\ell}(d_{\lambda,\ell}-1)/2}}{(q^{1/k_1})^{(d_{\lambda,\ell}+k_1)(d_{\lambda,\ell}+k_1-1)/2}}\frac{\tau^{d_{\lambda,\ell}}\sigma_{q}^{-\frac{d_{\lambda,\ell}}{k_2}+\delta_{\ell}-1}}{(2\pi)^{1/2}}(C_{\lambda,\ell}(m,\epsilon)\ast^{R_{\ell}}\mathcal{L}^{d}_{q;1/\kappa}(w^d_{k_1})(\tau,m,\epsilon))\right)\nonumber\\
&\hspace{10cm}+\frac{1}{(q^{1/k_1})^{k_{1}(k_1-1)/2}}\psi_{k_2}(\tau,m,\epsilon).\label{e710}
\end{align}

We multiply both sides of the equation by $\tau^{k_2}(q^{1/k_1})^{k_1(k_1-1)/2}(q^{1/k_2})^{-k_2(k_2-1)/2}$ to obtain

\begin{align}&Q(im)\frac{\tau^{k_2}\mathcal{L}^{d}_{q;1/\kappa}(w^d_{k_1})(\tau,m,\epsilon)}{(q^{1/k_2})^{k_2(k_2-1)/2}}=A(k_1,k_2,\kappa,d_{D})\tau^{d_{D}+k_2}R_{D}(im)\mathcal{L}^{d}_{q;1/\kappa}(w^d_{k_1})(\tau,m,\epsilon)\nonumber\\
+&\sum_{\ell=1}^{D-1}\left(\sum_{\lambda\in I_{\ell}}\epsilon^{\Delta_{\lambda,\ell}-d_{\lambda,\ell}}B(k_1,k_2,\kappa,d_{\lambda,\ell})\frac{\tau^{d_{\lambda,\ell}}\sigma_{q}^{-\frac{d_{\lambda,\ell}}{k_2}+\delta_{\ell}-1}}{(2\pi)^{1/2}}(C_{\lambda,\ell}(m,\epsilon)\ast^{R_{\ell}}\mathcal{L}^{d}_{q;1/\kappa}(w^d_{k_1})(\tau,m,\epsilon))\right)\nonumber\\
&\hspace{10cm}+\frac{\tau^{k_2}}{(q^{1/k_2})^{k_{2}(k_2-1)/2}}\psi_{k_2}(\tau,m,\epsilon),\label{e717}
\end{align}
where
$$A(k_1,k_2,\kappa,d_{D})=\frac{(q^{1/k_1})^{k_1(k_1-1)/2}(q^{1/\kappa})^{d_{D}(d_{D}-1)/2}}{(q^{1/k_2})^{k_2(k_2-1)/2}(q^{1/k_1})^{(d_{D}+k_1)(d_{D}+k_1-1)/2}},$$
and 
$$B(k_1,k_2,\kappa,d_{\lambda,\ell})\frac{(q^{1/k_1})^{k_1(k_1-1)/2}(q^{1/\kappa})^{d_{\lambda,\ell}(d_{\lambda,\ell}-1)/2}}{(q^{1/k_2})^{k_2(k_2-1)/2}(q^{1/k_1})^{(d_{\lambda,\ell}+k_1)(d_{\lambda,\ell}+k_1-1)/2}},$$
for every $1\le \ell\le D-1$and $\lambda\in I_{\ell}$.

Usual estimates allow us to prove that 
$$A(k_1,k_2,\kappa,d_{D})=(q^{1/k_2})^{-(d_{D}+k_2)(d_{D}+k_2-1)/2},\quad B(k_1,k_2,\kappa,d_{\lambda,\ell})=(q^{1/k_2})^{-(d_{\lambda,\ell}+k_2)(d_{\lambda,\ell}+k_2-1)/2}.$$
Taking these values into (\ref{e717}) we observe that
\begin{align}&Q(im)\frac{\tau^{k_2}\mathcal{L}^{d}_{q;1/\kappa}(w^d_{k_1})(\tau,m,\epsilon)}{(q^{1/k_2})^{k_2(k_2-1)/2}}=R_{D}(im)\frac{\tau^{d_D+k_2}\mathcal{L}^{d}_{q;1/\kappa}(w^d_{k_1})(\tau,m,\epsilon)}{(q^{1/k_2})^{(d_{D}+k_2)(d_{D}+k_2-1)/2}}\nonumber\\
+&\sum_{\ell=1}^{D-1}\left(\sum_{\lambda\in I_{\ell}}\frac{\epsilon^{\Delta_{\lambda,\ell}-d_{\lambda,\ell}}\tau^{d_{\lambda,\ell}+k_2}\sigma_q^{\delta_{\ell}-\frac{d_{\lambda,\ell}}{k_2}-1}}{(q^{1/k_2})^{(d_{\lambda,\ell}+k_2)(d_{\lambda,\ell}+k_2-1)/2}}\frac{1}{(2\pi)^{1/2}}(C_{\lambda,\ell}(m,\epsilon)\ast^{R_{\ell}}\mathcal{L}^{d}_{q;1/\kappa}(w^d_{k_1})(\tau,m,\epsilon))\right)\nonumber\\
&\hspace{10cm}+\frac{\tau^{k_2}}{(q^{1/k_2})^{k_{2}(k_2-1)/2}}\psi_{k_2}(\tau,m,\epsilon).\label{e732}
\end{align}
recovering equation (\ref{e492}). This yields $\mathcal{L}^{d}_{q;1/\kappa}(w^d_{k_1})(\tau,m,\epsilon)$ is a solution of (\ref{e492}), for $(\tau,m,\epsilon)\in(\mathcal{R}_{d,\tilde{\delta}}\cap D(0,r_1))\times \R\times D(0,\epsilon_0)$. 

Let $S_{d}^{b}$ be a bounded sector of bisecting direction $d$ such that $S_{d}^{b}\subseteq (\mathcal{R}_{d,\tilde{\delta}}\cap D(0,r_1))\cap S_{d}$. We recall this is always possible from the definition of $\mathcal{R}_{d,\tilde{\delta}}$. One firstly observes that both, $\mathcal{L}^{d}_{q;1/\kappa}(w^d_{k_1})(\tau,m,\epsilon)$ and $w_{k_2}^d(\tau,m,\epsilon)$ are continuous complex functions defined on $S_{d}^{b}\times\R\times D(0,\epsilon_0)$ and holomorphic with respect to $\tau$ (resp. $\epsilon$) on $S_{d}^{b}$ (resp. $D(0,\epsilon_0)$). This assertion can be checked when regarding $w_{k_2}^d(\tau,m,\epsilon)$ in Proposition~\ref{prop571}, and for $\mathcal{L}^{d}_{q;1/\kappa}(w^d_{k_1})(\tau,m,\epsilon)$ from the properties which are endowed by this function from $w^d_{k_1}(\tau,m,\epsilon)$ which were pointed out in Proposition~\ref{prop321}. 

Let $\epsilon\in D(0,\epsilon_0)$ and put $\Omega=\min\{\alpha,\nu\}$, where $\nu$ is stated in Lemma~\ref{lema403}. We define the auxiliary Banach space $H_{(\beta,\mu,k_2,\Omega)}$ consisting of all continuous complex functions $(\tau,m)\mapsto h(\tau,m)$, defined on $S_{d}^{b}\times\R$, holomorphic with respect to $\tau$ variable in $S_{d}^{b}$, such that
$$\left\|h(\tau,m)\right\|_{H_{(\beta,\mu,k_2,\Omega)}}:=\sup_{\tau\in S_{d}^{b},m\in \R}(1+|m|)^{\mu}e^{\beta|m|}\exp\left(-\frac{k_2\log^2|\tau|}{2\log(q)}-\Omega\log|\tau|\right)|h(\tau,m)|<\infty.$$ 
We observe that, for every $\epsilon\in D(0,\epsilon_0)$, $\mathcal{L}^{d}_{q;1/\kappa}(w^d_{k_1})(\tau,m,\epsilon)$ belongs to $H_{(\beta,\mu,k_2,\Omega)}$ because 
$$\sup_{\tau\in\mathcal{R}_{d,\tilde{\delta}}\cap D(0,r_1)}\left\|\mathcal{L}^{d}_{q;1/\kappa}(w^d_{k_1})(\tau,m,\epsilon)\right\|_{(\beta,\mu)}<\infty.$$
This implies the existence of positive constants $B_{\mathcal{L}(w_1)},C_{\mathcal{L}(w_1)}$ such that
\begin{align*}
\left|\mathcal{L}^{d}_{q;1/\kappa}(w^d_{k_1})(\tau,m,\epsilon)\right|&\le B_{\mathcal{L}(w_1)}(1+|m|)^{-\mu}e^{-\beta|m|}\\
 & \le C_{\mathcal{L}(w_1)}(1+|m|)^{-\mu}e^{-\beta|m|}\exp\left(\frac{k_2\log^2|\tau|}{2\log(q)}+\Omega\log|\tau|\right),
\end{align*}
for all$\tau\in S_{b}^{d}$,$m\in\R$. Also, $w^{d}_{k_2}(\tau,m,\epsilon)$ belongs to $H_{(\beta,\mu,k_2,\Omega)}$ in view of (\ref{prop571}), and also $\psi_{k_2}(\tau,m,\epsilon)$ belongs to $H_{(\beta,\mu,k_2,\Omega)}$ taking into account (\ref{e405}). 

We conclude the proof of (\ref{e638}) by demonstrating that the operator $\mathcal{H}_{k_2}^{\epsilon}$ defined in (\ref{e578}) admits a unique fixed point when restricted to the elements in certain closed disc in $H_{(\beta,\mu,k_2,\Omega)}$, whilst $\mathcal{L}^{d}_{q;1/\kappa}(w^d_{k_1})(\tau,m,\epsilon)$ and $w^{d}_{k_2}(\tau,m,\epsilon)$ are both fixed points belonging to that closed disc in $H_{(\beta,\mu,k_2,\Omega)}$.

For that purpose, one can state analogous results as Lemma~\ref{lema182}, Proposition~\ref{prop188} and Proposition~\ref{prop197} when considering the Banach space $H_{(\beta,\mu,k_2,\Omega)}$. This result follows exact arguments as there, so we omit the details. One can reproduce the same steps as in the proof of Proposition~\ref{prop571} to conclude that, if there exist small enough positive constants $\zeta_{\psi_{k_2}}$ and $\zeta_{\lambda,\ell}$ for $1\le \ell\le D-1$ and $\lambda\in I_{\ell}$ such that (\ref{e571}) holds, then equation (\ref{e492}) admits a unique solution in the space $H_{(\beta,\mu,k_2,\Omega)}$. Bearing in mind that $\mathcal{L}^{d}_{q;1/\kappa}(w^d_{k_1})(\tau,m,\epsilon)$ and $w^{d}_{k_2}(\tau,m,\epsilon)$ both solve (\ref{e492}) and belong to $\overline{B}(0,\varpi)\subseteq H_{(\beta,\mu,k_2,\Omega)}$, they both coincide in the domain $S_{d}^{b}\times \R\times D(0,\epsilon_0)$, and the result follows.
\end{proof}

\begin{corol}\label{coro777}
Under the hypotheses made on Proposition~\ref{prop638}, for every $m\in\R$ and $\epsilon\in D(0,\epsilon_0)$, the function $\tau\mapsto\mathcal{L}^{d}_{q;1/\kappa}(w_{k_{1}}^{d}(\tau,m,\epsilon))$, holomorphic and bounded on the domain $\mathcal{R}_{d,\tilde{\delta}}\cap D(0,r_1)$, can be analytically prolonged to an infinite sector of bisecting direction $d$, $S_{d}$, and there exists $C_{w_{k_2}}>0$ such that
\begin{equation}\label{e755}
\left|\mathcal{L}^{d}_{q;1/\kappa}(w_{k_{1}}^{d}(\tau,m,\epsilon))\right|\le C_{w_{k_2}}(1+|m|)^{-\mu}e^{-\beta|m|}\exp\left(\frac{k_2\log^2|\tau|}{2\log(q)}+\nu\log|\tau|\right),
\end{equation}
for every $\tau\in S_{d}$. Here, $\nu$ is obtained in Lemma~\ref{lema403}. In addition to that, the extension is continuous for $(m,\epsilon)\in\R\times D(0,\epsilon_0)$ and holomorphic with respect to $\epsilon\in D(0,\epsilon_0)$. 
\end{corol}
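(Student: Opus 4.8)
The plan is to glue the two available representations of the solution and then read off the bound. By Proposition~\ref{prop638}, the function $\tau\mapsto\mathcal{L}^{d}_{q;1/\kappa}(w_{k_{1}}^{d}(\tau,m,\epsilon))$ is holomorphic and bounded on $\mathcal{R}_{d,\tilde{\delta}}\cap D(0,r_1)$ and coincides with $w_{k_2}^{d}(\tau,m,\epsilon)$ on the bounded sector $S_{d}^{b}$, while Proposition~\ref{prop571} provides $w_{k_2}^{d}$ as a function holomorphic in $\tau$ on the whole domain $S_{d}\cup\mathcal{R}_{d}^{b}$. Since $S_{d}^{b}$ is a nonempty open subset of the connected open set $(\mathcal{R}_{d,\tilde{\delta}}\cap D(0,r_1))\cap S_{d}$, on which both $\mathcal{L}^{d}_{q;1/\kappa}(w_{k_{1}}^{d})$ and $w_{k_2}^{d}$ are holomorphic, the identity principle forces them to agree throughout that overlap. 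Hence the function equal to $\mathcal{L}^{d}_{q;1/\kappa}(w_{k_{1}}^{d}(\tau,m,\epsilon))$ for $\tau\in\mathcal{R}_{d,\tilde{\delta}}\cap D(0,r_1)$ and to $w_{k_2}^{d}(\tau,m,\epsilon)$ for $\tau\in S_{d}$ is well defined and holomorphic on $(\mathcal{R}_{d,\tilde{\delta}}\cap D(0,r_1))\cup S_{d}$; in particular it is the desired analytic prolongation of $\tau\mapsto\mathcal{L}^{d}_{q;1/\kappa}(w_{k_{1}}^{d}(\tau,m,\epsilon))$ to the infinite sector $S_{d}$.

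For the estimate, on $S_{d}$ the prolongation coincides with $w_{k_2}^{d}$, and by Proposition~\ref{prop571} one has $\bigl\|w_{k_2}^{d}(\tau,m,\epsilon)\bigr\|_{(k_2,\beta,\mu,\nu)}\le\varpi$ for every $\epsilon\in D(0,\epsilon_0)$, with $\nu$ as in Lemma~\ref{lema403}. Spelling out the definition of $\left\|\cdot\right\|_{(k_2,\beta,\mu,\nu)}$ gives, for all $\tau\in S_{d}$, $m\in\R$ and $\epsilon\in D(0,\epsilon_0)$,
$$\left|\mathcal{L}^{d}_{q;1/\kappa}(w_{k_{1}}^{d}(\tau,m,\epsilon))\right|=\left|w_{k_2}^{d}(\tau,m,\epsilon)\right|\le\varpi\,(1+|m|)^{-\mu}e^{-\beta|m|}\exp\left(\frac{k_2\log^2|\tau|}{2\log(q)}+\nu\log|\tau|\right),$$
which is exactly (\ref{e755}) with $C_{w_{k_2}}=\varpi$.

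The remaining assertions are inherited from the two pieces: on $S_{d}$, continuity in $(m,\epsilon)\in\R\times D(0,\epsilon_0)$ and holomorphy in $\epsilon$ hold for $w_{k_2}^{d}$ by Proposition~\ref{prop571}, whereas on $\mathcal{R}_{d,\tilde{\delta}}\cap D(0,r_1)$ they hold for $\mathcal{L}^{d}_{q;1/\kappa}(w_{k_{1}}^{d})$ by Proposition~\ref{prop638} (via Lemma~\ref{lema219}); since the two functions agree on the overlap, the glued function shares these properties on $(\mathcal{R}_{d,\tilde{\delta}}\cap D(0,r_1))\cup S_{d}$. The only point requiring a touch of care is the verification that $(\mathcal{R}_{d,\tilde{\delta}}\cap D(0,r_1))\cap S_{d}$ is connected, so that the identity principle applies on the full overlap and not merely near $S_{d}^{b}$; this is immediate from the geometry, $\mathcal{R}_{d,\tilde{\delta}}$ being a neighbourhood of the ray $L_{d}$ and $S_{d}$ a sector around the same direction $d$. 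No new analytic difficulty occurs here: the corollary merely repackages Propositions~\ref{prop571} and~\ref{prop638}.
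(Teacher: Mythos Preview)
Your proof is correct and takes essentially the same approach as the paper, which states this corollary without proof as an immediate consequence of Propositions~\ref{prop571} and~\ref{prop638}. You have simply made explicit the gluing argument and the unpacking of the norm $\left\|\cdot\right\|_{(k_2,\beta,\mu,\nu)}$ that the paper leaves implicit.
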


\section{Analytic solutions of a linear initial value Cauchy problem}\label{seccion5}
Let $k_1,k_2,D$ be positive integers such that $k_1<k_2$ and $D\ge 3$. Let $\kappa>0$ be defined by (\ref{e236}). Let $q\in\R$ with $q>1$ and assume that for every $1\le \ell\le D-1$, $I_{\ell}$ is a finite nonempty subset of nonnegative integers.

Let $d_{D}\ge1$ be an integer. For every $1\le \ell\le D-1$, we consider an integer $\delta_{\ell}\ge1$. In  addition, for each $\lambda\in I_{\ell}$, we choose integers $d_{\lambda,\ell}\ge 1$ , $\Delta_{\lambda,\ell}\ge0$.
We make the assumption that
\begin{equation}\label{e796}
\quad\delta_1=1,\quad \delta_{\ell}<\delta_{\ell+1}\quad ,
\end{equation}
for every $1\le\ell\le D-1$.

We assume that 
\begin{equation}\label{e800}
\Delta_{\lambda,\ell}\ge d_{\lambda,\ell},\quad\frac{d_{\lambda,\ell}}{k_2}+1\ge \delta_\ell,\qquad \frac{d_{D}-1}{k_2}+1\ge \delta_{\ell}
\end{equation}
for every $1\le \ell\le D-1$ and all $\lambda\in I_{\ell}$. Let $Q,R_{\ell}\in\C[X]$ with
\begin{equation}\label{e804}
\deg(Q)\ge\deg(R_{D})\ge \deg(R_{\ell}), \quad Q(im)\neq0,\quad R_{D}(im)\neq 0,
\end{equation}
for all $1\le \ell\le D-1$ and $m\in\R$.

We require the existence of an unbounded sector 
$$S_{Q,R_{D}}=\left\{z\in\C:|z|\ge r_{Q,R_{D}},|\arg(z)-d_{Q,R_{D}}|\le \eta_{Q,R_{D}}\right\},$$
for some $r_{Q,R_{D}},\eta_{Q,R_{D}}>0$, such that 
\begin{equation}\label{e814}
\frac{Q(im)}{R_{D}(im)}\in S_{Q,R_{D}},\quad m\in\R.
\end{equation}

\begin{defin}\label{def817}
Let $\varsigma\ge 2$ be an integer. Let $\mathcal{E}_{p}$ be an open sector with vertex at the origin and radius $\epsilon_0$ for every $0\le p\le \varsigma-1$ and such that $\mathcal{E}_{j}\cap\mathcal{E}_{k}\neq0$ for every $0\le j,k\le\varsigma-1$ if and only if $|j-k|\le1$ (under the notation $\mathcal{E}_{\varsigma}:=\mathcal{E}_{0}$) and such that $\cup_{p=0}^{\varsigma-1}\mathcal{E}_{p}=\mathcal{U}\setminus\{0\}$, for some neighborhood of the origin, $\mathcal{U}$. A family $(\mathcal{E}_p)_{0\le p\le \varsigma -1}$ satisfying these properties is known as a good covering in $\C^{\star}$. 
\end{defin}

\begin{defin}\label{def818}
Let $(\mathcal{E}_{p})_{0\le p\le \varsigma-1}$ be a good covering in $\C^{\star}$. Let $\mathcal{T}$ be an open bounded sector with vertex at 0 and radius $r_{\mathcal{T}}>0$. We make the assumption that

\begin{equation}\label{e824}
0<\epsilon_0,r_{\mathcal{T}}<1,\quad \nu+\frac{k_2}{\log(q)}\log(r_{\mathcal{T}})<0,\quad \alpha+\frac{\kappa}{\log(q)}\log(\epsilon_0 r_{\mathcal{T}})<0,\quad \epsilon_0r_{\mathcal{T}}\le q^{\left(\frac{1}{2}-\nu\right)/k_2}/2
\end{equation}
for $\nu$ constructed in Lemma~\ref{lema403}.

We consider a family of unbounded sectors $U_{\mathfrak{d}{p}}$ with bisecting direction $\mathfrak{d}_{p}\in\R$ and a family of open domains $\mathcal{R}_{\mathfrak{d}_{p}}^{b}:=\mathcal{R}_{\mathfrak{d}_{p},\tilde{\delta}}\cap D(0,\epsilon_0r_{\mathcal{T}})$, where
$$\mathcal{R}_{\mathfrak{d}_{p},\tilde{\delta}}=\left\{T\in\C^{\star}:\left|1+\frac{e^{i\mathfrak{d}_{p}}}{T}r\right|>\tilde{\delta},\quad\hbox{for every }r\ge0\right\}.$$

We assume $\mathfrak{d}_{p}$, $0\le p\le \varsigma-1$, are chosen so that some conditions are satisfied. On order to enumerate them, we denote $q_{\ell}(m)$ the roots of the polynomial $P_m(\tau)$, defined in (\ref{e558}). We take an unbounded sector with vertex at 0 and bisecting direction $\mathfrak{d}_{p}$, $S_{\mathfrak{d}_p}$, $0\le p\le \varsigma-1$; and we choose $\rho>0$ such that:
\begin{itemize}
\item[1)] There exists $M_1>0$ such that (\ref{e513}) holds for all $m\in\R$, $\tau\in S_{\mathfrak{d}_{p}}\cup \overline{D}(0,\rho)$, all $0\le p\le \varsigma-1$ and all $0\le l\le d_{D}-1$.
\item[2)] There exists $M_2>0$ and $l_0\in\{0,...,d_{D}-1\}$ such that (\ref{e520}) holds for every $m\in\R$, $\tau\in S_{\mathfrak{d}_{p}}\cup\overline{D}(0,\rho)$, and all $0\le p\le \varsigma-1$.
\item[3)] For every $0\le p\le \varsigma-1$ we have $\mathcal{R}_{\mathfrak{d}_{p}}^{b}\cap \mathcal{R}_{\mathfrak{d}_{p+1}}^{b}\neq\emptyset$, and for all $t\in\mathcal{T}$ and $\epsilon\in\mathcal{E}_{p}$, we have that $\epsilon t\in\mathcal{R}_{\mathfrak{d}_{p}}^{b}$. Here we have put $\mathcal{R}_{\mathfrak{d}_{\varsigma}}^{b}:=\mathcal{R}_{\mathfrak{d}_{0}}^{b}$. 
\end{itemize}
The family $\{(\mathcal{R}_{\mathfrak{d}_{p},\tilde{\delta}})_{0\le p\le \varsigma-1},D(0,\rho),\mathcal{T}\}$ is said to be associated to the good covering $(\mathcal{E}_{p})_{0\le p\le \varsigma-1}$.
\end{defin}

We consider a good covering $(\mathcal{E}_{p})_{0\le p\le \varsigma-1}$ and a family $\{(\mathcal{R}_{\mathfrak{d}_{p},\tilde{\delta}})_{0\le p\le \varsigma-1},D(0,\rho),\mathcal{T}\}$ associated to it. For every $0\le p\le \varsigma-1$ we study the next initial value Cauchy problem
\begin{align}
&Q(\partial_z)\sigma_qu^{\mathfrak{d}_{p}}(t,z,\epsilon)=(\epsilon t)^{d_{D}}\sigma_{q}^{\frac{d_{D}}{k_2}+1}R_{D}(\partial_{z})u^{\mathfrak{d}_{p}}(t,z,\epsilon)\nonumber\\
&\hspace{3cm}+\sum_{\ell=1}^{D-1}\left(\sum_{\lambda\in I_{\ell}}t^{d_{\lambda,\ell}}\epsilon^{\Delta_{\lambda,\ell}}\sigma_{q}^{\delta_\ell}c_{\lambda,\ell}(z,\epsilon)R_{\ell}(\partial_z)u^{\mathfrak{d}_{p}}(t,z,\epsilon)\right)+\sigma_{q}f^{\mathfrak{d}_{p}}(t,z,\epsilon).\label{e771}
\end{align}

The operator $\sigma_q$ acts on variable $t$. The coefficients $c_{\lambda,\ell}(z,\epsilon)$ with $1\le \ell\le D-1$ and $\lambda\in I_{\ell}$, and the forcing term $f^{\mathfrak{d}_{p}}(t,z,\epsilon)$ are constructed as follows. For every $1\le \ell\le D-1$ and $\lambda\in I_{\ell}$ and every integer $n\ge0$, we consider the functions $m\mapsto C_{\lambda,\ell}(m,\epsilon)$ and $m\mapsto F_{n}(m,\epsilon)$ belonging to the space $E_{(\beta,\mu)}$, for some $\beta>0$ and $\mu>\deg(R_{D})+1$. We assume all these functions depend holomorphically on $\epsilon\in D(0,\epsilon_0)$. Moreover, we assume there exist $\tilde{C}_{\lambda,\ell},C_{F}>0$ such that (\ref{e223}) and (\ref{e270}) hold for all $1\le \ell\le D-1$, $\lambda\in I_{\ell}$, $n\ge0$ and $\epsilon\in D(0,\epsilon_0)$.Then, we put
\begin{equation}\label{e855b}
c_{\lambda,\ell}(z,\epsilon)=\mathcal{F}^{-1}(m\mapsto C_{\lambda,\ell}(m,\epsilon))(z),
\end{equation}
which, for every $1\le \ell\le D-1$, $\lambda\in I_{\ell}$, defines a bounded holomorphic function on $H_{\beta'}\times D(0,\epsilon_0)$ for any $0<\beta'<\beta$. We assume the formal power series
$$\psi_{k_1}(\tau,m,\epsilon)=\sum_{n\ge0}F_{n}(m,\epsilon)\frac{\tau^n}{(q^{1/k_1})^{\frac{n(n-1)}{2}}},$$
which is convergent on the disc $D(0,\rho)$, can be analytically continued with respect to $\tau$ as a function $\tau\mapsto\psi_{k_{1}}^{\mathfrak{d}_{p}}(\tau,m,\epsilon)$ on an infinite sector $U_{\mathfrak{d}_{p}}$ of bisecting direction $\mathfrak{d}_{p}$, and $\psi_{k_{1}}^{\mathfrak{d}_{p}}(\tau,m,\epsilon)\in \hbox{Exp}^{\mathfrak{d}_{p}}_{(k_1,\beta,\mu,\alpha,\rho)}$ for some $\alpha>0$, and such that there exists $\zeta_{\psi_{k_1}}>0$ with
\begin{equation}\label{e859}
\left\|\psi_{k_{1}}^{\mathfrak{d}_{p}}(\tau,m,\epsilon)\right\|_{(k_1,\beta,\mu,\alpha,\rho)} \le\zeta_{\psi_{k_1}},
\end{equation}
which does not depend on $\epsilon\in D(0,\epsilon_0)$. Lemma~\ref{lema403} guarantees the function
$$\psi_{k_2}^{\mathfrak{d}_{p}}(\tau,m,\epsilon):=\mathcal{L}^{\mathfrak{d}_{p}}_{q;1/\kappa}(h\mapsto \psi_{k_1}^{\mathfrak{d}_{p}}(h,m,\epsilon))(\tau)$$
is an element of the space $\hbox{Exp}^{\mathfrak{d}_{p}}_{(k_2,\beta,\mu,\nu)}$, for some $\nu\in\R$.

Moreover, we get a constant $\zeta_{\psi_{k_2}}>0$ with
\begin{equation}\label{e868}
\left\|\psi_{k_2}^{\mathfrak{d}_{p}}(\tau,m,\epsilon)\right\|_{(k_2,\beta,\mu,\nu)}\le \zeta_{\psi_{k_2}},
\end{equation}
for every $\epsilon\in D(0,\epsilon_0)$. Without loss of generality, one can reduce the opening of sector $S_{\mathfrak{d}_{p}}$ so that it might be considered the corresponding one involved in the definition of the space $\hbox{Exp}^{\mathfrak{d}_{p}}_{(k_2,\beta,\mu,\nu)}$. In view of the proof of Lemma~\ref{lema403}, the constant $\zeta_{\psi_{k_2}}$ depends on $\zeta_{\psi_{k_1}}$ in such a way that $\zeta_{\psi_{k_2}}(\zeta_{\psi_{k_1}})\to 0$ when $\zeta_{\psi_{k_1}}$ tends to 0. One can apply $q-$Laplace transform of order $k_2$ to the function $\psi_{k_2}^{\mathfrak{d}_{p}}$ in $\tau$ variable and, in direction $\mathfrak{d}_{p}$, and obtain that the function 
\begin{equation}\label{e875b}
F^{\mathfrak{d}_{p}}(T,m,\epsilon):=\mathcal{L}^{\mathfrak{d}_{p}}_{q;1/k_2}(\tau\mapsto\psi_{k_2}^{\mathfrak{d}_{p}}(\tau,m,\epsilon))(T),
\end{equation}
is a holomorphic function with respect to $T$ variable in the set $\mathcal{R}_{\mathfrak{d}_{p},\tilde{\delta}}\cap D(0,r_1)$ for any $0<r_1\le q^{\left(\frac{1}{2}-\nu\right)/k_2}/2$.

We define the forcing term $f^{\mathfrak{d}_{p}}(t,z,\epsilon)$ by
\begin{equation}\label{e876}
f^{\mathfrak{d}_{p}}(t,z,\epsilon):=\mathcal{F}^{-1}\left(m\mapsto F^{\mathfrak{d}_{p}}(\epsilon t,m,\epsilon)\right)(z),
\end{equation}
which turns out to be a bounded holomorphic function defined on $\mathcal{T}\times H_{\beta'}\times\mathcal{E}_{p}$ provided that (\ref{e824}) holds.

The next results provide estimates of the difference of two consecutive solutions of the equation (\ref{e771}) with respect to the perturbation parameter. They can be of two different nature depending on the existence or not of singularities of some auxiliary equation in between the integration lines where the solutions of (\ref{e771}) are constructed. This phenomena, studied in the sequel, turns out to be the reason for different levels to appear on the asymptotic behavior of the solution with respect to the perturbation parameter.  

We first describe the procedure to solve equation (\ref{e771}), and the nature of its solution, crucial in the asymptotic behavior to be described afterwards.

\begin{theo}\label{teo872}
Under the construction made at the beginning of Section~\ref{seccion5}, assume that the conditions (\ref{e796}), (\ref{e800}), (\ref{e804}) and (\ref{e814}) hold. Let $(\mathcal{E}_{p})_{0\le p\le \varsigma -1}$ be a good covering in $\C^{\star}$, for which a family $\{(\mathcal{R}_{\mathfrak{d}_{p},\tilde{\delta}})_{0\le p\le \varsigma-1},D(0,\rho),\mathcal{T}\}$ associated to this covering is considered.

Then, there exist large enough $r_{Q,R_{D}}$, and constants $\zeta_{\psi}>0$ and $\zeta_{\lambda,\ell}>0$ for $0\le \ell\le D-1$ and $\lambda\in I_{\ell}$ such that if
$$\tilde{C}_{F}\le \zeta_{\psi},\quad \tilde{C}_{\lambda,\ell}\le \zeta_{\lambda,\ell},$$
then, for every $0\le p\le\varsigma-1$, one can construct a solution $u^{\mathfrak{d}_{p}}(t,z,\epsilon)$ of (\ref{e771}), which defines a holomorphic function on $\mathcal{T}\times H_{\beta'}\times\mathcal{E}_{p}$, for every $0<\beta'<\beta$.
\end{theo}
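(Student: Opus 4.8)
The plan is to construct $u^{\mathfrak{d}_p}$ by running the chain of transforms $\psi_{k_1}^{\mathfrak{d}_p}\;\mapsto\;w_{k_1}^{\mathfrak{d}_p}\;\mapsto\;w_{k_2}^{\mathfrak{d}_p}=\mathcal{L}^{\mathfrak{d}_p}_{q;1/\kappa}(w_{k_1}^{\mathfrak{d}_p})\;\mapsto\;U^{\mathfrak{d}_p}=\mathcal{L}^{\mathfrak{d}_p}_{q;1/k_2}(w_{k_2}^{\mathfrak{d}_p})\;\mapsto\;u^{\mathfrak{d}_p}=\mathcal{F}^{-1}(U^{\mathfrak{d}_p}(\epsilon t,\cdot,\epsilon))$, and to track the equation through each stage. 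First I would fix $r_{Q,R_D}$ large enough that $C_{R_DQ}=\sup_{m\in\R}|R_D(im)/Q(im)|$ is as small as required (possible since $|Q(im)/R_D(im)|\ge r_{Q,R_D}$ by (\ref{e814})), which also makes $C_P (r_{Q,R_D})^{1/d_D}$ as large as needed for Proposition~\ref{prop571}; then shrink $\zeta_{\psi_{k_1}}$ and the $\zeta_{\lambda,\ell}$ so that the smallness hypotheses of Proposition~\ref{prop321} and Proposition~\ref{prop571} both hold, using that $\zeta_{\psi_{k_2}}(\zeta_{\psi_{k_1}})\to0$ as $\zeta_{\psi_{k_1}}\to0$ (proof of Lemma~\ref{lema403}, and (\ref{e859}), (\ref{e868})). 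With $d=\mathfrak{d}_p$ and $\psi_{k_1}:=\psi_{k_1}^{\mathfrak{d}_p}$, Proposition~\ref{prop321} gives the unique $w_{k_1}^{\mathfrak{d}_p}\in\hbox{Exp}^{\mathfrak{d}_p}_{(\kappa,\beta,\mu,\alpha,\rho)}$ solving (\ref{e224}), holomorphic in $\epsilon\in D(0,\epsilon_0)$; Lemma~\ref{lema403} produces $\psi_{k_2}^{\mathfrak{d}_p}=\mathcal{L}^{\mathfrak{d}_p}_{q;1/\kappa}(\psi_{k_1}^{\mathfrak{d}_p})$ with the bound (\ref{e405}); and Proposition~\ref{prop571}, whose geometric hypotheses 1), 2) are exactly conditions 1), 2) of Definition~\ref{def818}, gives the unique $w_{k_2}^{\mathfrak{d}_p}\in\hbox{Exp}^{\mathfrak{d}_p}_{(k_2,\beta,\mu,\nu)}$ solving (\ref{e492}). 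Proposition~\ref{prop638} and Corollary~\ref{coro777} then identify $\mathcal{L}^{\mathfrak{d}_p}_{q;1/\kappa}(w_{k_1}^{\mathfrak{d}_p})$ with $w_{k_2}^{\mathfrak{d}_p}$, so that $w_{k_2}^{\mathfrak{d}_p}$ is defined, continuous in $m$, holomorphic in $\tau$ on $\mathcal{R}^{b}_{\mathfrak{d}_p}\cup S_{\mathfrak{d}_p}$ and in $\epsilon$ on $D(0,\epsilon_0)$, bounded on $\mathcal{R}^{b}_{\mathfrak{d}_p}$, and satisfying the $q$-exponential bound of order $k_2$ in (\ref{e755}) (equivalently (\ref{e7})) on $S_{\mathfrak{d}_p}$.

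Next I would set, with $L_{\gamma_p}$ a ray from $0$ to $\infty$ inside $S_{\mathfrak{d}_p}$,
\[
U^{\mathfrak{d}_p}(T,m,\epsilon):=\mathcal{L}^{\mathfrak{d}_p}_{q;1/k_2}\bigl(\tau\mapsto w_{k_2}^{\mathfrak{d}_p}(\tau,m,\epsilon)\bigr)(T)=\frac{1}{\pi_{q^{1/k_2}}}\int_{L_{\gamma_p}}\frac{w_{k_2}^{\mathfrak{d}_p}(u,m,\epsilon)}{\Theta_{q^{1/k_2}}(u/T)}\frac{du}{u}.
\]
By the $k_2$-growth estimate on $w_{k_2}^{\mathfrak{d}_p}$ and Lemma~\ref{lema219} (with $k=k_2$ and $\alpha$ there replaced by $\nu$), $U^{\mathfrak{d}_p}$ is a bounded holomorphic function of $T$ on $\mathcal{R}_{\mathfrak{d}_p,\tilde{\delta}}\cap D(0,r_1)$ for any $0<r_1\le q^{(1/2-\nu)/k_2}/2$, with values in $E_{(\beta,\mu)}$, independent of $\gamma_p$ by Cauchy's theorem, and holomorphic in $\epsilon$. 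I would then verify that $U^{\mathfrak{d}_p}$ solves the auxiliary convolution equation (\ref{e149}) with forcing term $F^{\mathfrak{d}_p}$ from (\ref{e875b}): starting from (\ref{e492}), divide by $\tau^{k_2}(q^{1/k_2})^{-k_2(k_2-1)/2}$, apply $\mathcal{L}^{\mathfrak{d}_p}_{q;1/k_2}$, use the commutation formula of Proposition~\ref{prop259} on the $T^{\sigma}\sigma_q^{j}$-factors and a Fubini exchange for the convolution-in-$m$ terms (justified exactly as for (\ref{e701}) in the proof of Proposition~\ref{prop638}); the normalization identities $A(k_1,k_2,\kappa,\cdot)=(q^{1/k_2})^{-(\cdot+k_2)(\cdot+k_2-1)/2}$ used there reappear and collapse the equation back to (\ref{e149}), with the source term becoming $\mathcal{L}^{\mathfrak{d}_p}_{q;1/k_2}(\psi_{k_2}^{\mathfrak{d}_p})=F^{\mathfrak{d}_p}$.

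Finally I would define
\[
u^{\mathfrak{d}_p}(t,z,\epsilon):=\mathcal{F}^{-1}\bigl(m\mapsto U^{\mathfrak{d}_p}(\epsilon t,m,\epsilon)\bigr)(z),\qquad(t,z,\epsilon)\in\mathcal{T}\times H_{\beta'}\times\mathcal{E}_p.
\]
Condition 3) of Definition~\ref{def818} ensures $\epsilon t\in\mathcal{R}^{b}_{\mathfrak{d}_p}\subseteq\mathcal{R}_{\mathfrak{d}_p,\tilde{\delta}}\cap D(0,\epsilon_0 r_{\mathcal{T}})$, and (\ref{e824}) gives $\epsilon_0 r_{\mathcal{T}}\le q^{(1/2-\nu)/k_2}/2$, so $U^{\mathfrak{d}_p}$ may be evaluated at $T=\epsilon t$; since $\mu>\deg(R_D)+1>1$ and $U^{\mathfrak{d}_p}(\epsilon t,\cdot,\epsilon)\in E_{(\beta,\mu)}$, Proposition~\ref{prop267} makes $u^{\mathfrak{d}_p}$ holomorphic on $H_{\beta'}$ for each $0<\beta'<\beta$, and Lemma~\ref{lema219} gives boundedness and holomorphy in $(t,\epsilon)\in\mathcal{T}\times\mathcal{E}_p$. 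That $u^{\mathfrak{d}_p}$ solves (\ref{e771}) then follows by transporting (\ref{e149}) (with $F=F^{\mathfrak{d}_p}$) through $\mathcal{F}^{-1}$ under $T=\epsilon t$: by Proposition~\ref{prop267}, $Q(im)\cdot$ and $R_\ell(im)\cdot$ become $Q(\partial_z)$ and $R_\ell(\partial_z)$ via (\ref{e272}), the convolutions $\frac{1}{(2\pi)^{1/2}}C_{\lambda,\ell}\ast^{R_\ell}\cdot$ become multiplication by $c_{\lambda,\ell}(z,\epsilon)$ via (\ref{e278}), the $\sigma_q$ in $T$ becomes the $\sigma_q$ in $t$, $T^{d_{\lambda,\ell}}\epsilon^{\Delta_{\lambda,\ell}-d_{\lambda,\ell}}=t^{d_{\lambda,\ell}}\epsilon^{\Delta_{\lambda,\ell}}$ and $T^{d_D}=(\epsilon t)^{d_D}$, while the forcing term becomes $f^{\mathfrak{d}_p}$ by (\ref{e876}). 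The main obstacle is the middle step: verifying that the threefold composition of formal and analytic $q$-Borel/$q$-Laplace operators genuinely returns a solution of (\ref{e149}), i.e. bookkeeping all the $q$-power constants and justifying the Fubini exchanges; everything else is assembly of Propositions~\ref{prop321}, \ref{prop571}, \ref{prop638}, Corollary~\ref{coro777}, Lemma~\ref{lema219} and Proposition~\ref{prop267} once the geometric compatibility built into Definition~\ref{def818} and (\ref{e824}) is invoked.
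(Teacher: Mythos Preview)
Your proposal is correct and follows essentially the same approach as the paper's proof: construct $w_{k_1}^{\mathfrak{d}_p}$ via Proposition~\ref{prop321}, $w_{k_2}^{\mathfrak{d}_p}$ via Proposition~\ref{prop571} (identified with $\mathcal{L}^{\mathfrak{d}_p}_{q;1/\kappa}(w_{k_1}^{\mathfrak{d}_p})$ by Proposition~\ref{prop638}/Corollary~\ref{coro777}), then set $U^{\mathfrak{d}_p}=\mathcal{L}^{\mathfrak{d}_p}_{q;1/k_2}(w_{k_2}^{\mathfrak{d}_p})$ and $u^{\mathfrak{d}_p}=\mathcal{F}^{-1}(U^{\mathfrak{d}_p}(\epsilon t,\cdot,\epsilon))$, checking via Proposition~\ref{prop259} and Proposition~\ref{prop267} that the original equation (\ref{e771}) is recovered. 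The paper's proof is terser on exactly the bookkeeping you flag as the main obstacle (the $q$-power constants and the Fubini exchanges when pushing (\ref{e492}) back to (\ref{e149}) under $\mathcal{L}^{\mathfrak{d}_p}_{q;1/k_2}$), but your plan to mimic the computation (\ref{e701})--(\ref{e732}) from the proof of Proposition~\ref{prop638} is precisely what is intended.
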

\begin{proof}
Let $0\le p\le \varsigma-1$ and consider the equation
\begin{align}&Q(im)\sigma_{q}U^{\mathfrak{d}_{p}}(T,m,\epsilon)=T^{d_{D}}\sigma_{q}^{\frac{d_{D}}{k_2}+1}R_{D}(im)U^{\mathfrak{d}_{p}}(T,m,\epsilon)\nonumber\\
+&\sum_{\ell=1}^{D-1}\left(\sum_{\lambda\in I_{\ell}}T^{d_{\lambda,\ell}}\epsilon^{\Delta_{\lambda,\ell}-d_{\lambda,\ell}}\frac{1}{(2\pi)^{1/2}}\int_{-\infty}^{\infty}C_{\lambda,\ell}(m-m_1,\epsilon)R_{\ell}(im_1)U^{\mathfrak{d}_{p}}(q^{\delta_{\ell}}T,m_1,\epsilon)dm_1\right)\nonumber \\
&\hspace{8cm}+\sigma_{q}F^{\mathfrak{d}_{p}}(T,m,\epsilon).\label{e882}
\end{align}
Under an appropriate choice of the constants $\zeta_{\psi}$ and $\zeta_{\lambda,\ell}$ for all $0\le \ell\le D-1$ and $\lambda\in I_{\ell}$, one can follow the constructions in Section~\ref{seccion4} and the properties of $q-$Laplace transformation described in Proposition~\ref{prop259} in order to apply Proposition~\ref{prop571} and obtain a solution $U^{\mathfrak{d}_{p}}(T,m,\epsilon)$ of (\ref{e882}). This function can be written as the $q-$Laplace transform of order $k_2$ in the form
\begin{equation}\label{e886}
U^{\mathfrak{d}_{p}}(T,m,\epsilon)=\frac{1}{\pi_{q^{1/k_2}}}\int_{L_{\gamma_p}}\frac{w^{\mathfrak{d}_{p}}_{k_2}(u,m,\epsilon)}{\Theta_{q^{1/k_2}}\left(\frac{u}{T}\right)}\frac{du}{u},
\end{equation}
where $L_{\gamma_p}=\R_{+}e^{\sqrt{-1}\gamma_{p}}\subseteq S_{\mathfrak{d}_{p}}\cup\{0\}$ is a halfline with direction depending on $T$. Here, $w^{\mathfrak{d}_{p}}_{k_2}(\tau,m,\epsilon)$ defines a continuous function on $(\overline{\mathcal{R}_{\mathfrak{d}_{p}}^{b}}\cup S_{\mathfrak{d}_{p}})\times\R\times D(0,\epsilon_0)$, which is holomorphic with respect to $(\tau,\epsilon)$ in $(\mathcal{R}_{\mathfrak{d}_{p}}^{b}\cup S_{\mathfrak{d}_{p}})\times D(0,\epsilon_0)$ for every $m\in\R$. Moreover, it satisfies there exists $C_{w^{\mathfrak{d}_{p}}_{k_2}}>0$ such that
\begin{equation}\label{e890}
|w^{\mathfrak{d}_{p}}_{k_2}(\tau,m,\epsilon)|\le C_{w^{\mathfrak{d}_{p}}_{k_2}}\frac{1}{(1+|m|)^{\mu}}e^{-\beta |m|}\exp\left(\frac{k_2\log^2|\tau|}{2\log(q)}+\nu\log|\tau|\right),
\end{equation}
for some $\nu\in\R$. This is valid for $\tau\in\mathcal{R}_{\mathfrak{d}_{p}}^{b}\cup S_{\mathfrak{d}_{p}}$, $m\in\R$ and $\epsilon\in D(0,\epsilon_0)$. Taking into account Proposition~\ref{prop638}, the function $w^{\mathfrak{d}_{p}}_{k_2}(\tau,m,\epsilon)$ is the analytic continuation with respect to $\tau$ variable of the function given by
\begin{equation}\label{e894}
\tau\mapsto \frac{1}{\pi_{q^{1/\kappa}}}\int_{L_{\gamma^1_p}}\frac{w^{\mathfrak{d}_{p}}_{k_1}(u,m,\epsilon)}{\Theta_{q^{1/\kappa}}\left(\frac{u}{T}\right)}\frac{du}{u},
\end{equation}
where $L_{\gamma^1_p}=\R_{+}e^{\sqrt{-1}\gamma^1_{p}}\subseteq S_{\mathfrak{d}_{p}}\cup\{0\}$ is a halfline with direction depending on $\tau$, and analytic in  the set $\mathcal{R}_{\mathfrak{d}_{p},\tilde{\delta}}\cap D(0,r_{1})$, for $0<r_1\le q^{\left(\frac{1}{2}-\alpha\right)/\kappa}/2$, for some $\alpha\in\R$. The function $w^{\mathfrak{d}_{p}}_{k_1}(\tau,m,\epsilon)$ is a continuous function defined on $(\overline{D}(0,\rho)\cup U_{\mathfrak{d}_{p}})\times \R\times D(0,\epsilon_0)$ and holomorphic with respect to $(\tau, \epsilon)$ on $(D(0,\rho)\cup U_{\mathfrak{d}_{p}})\times D(0,\epsilon_0)$ for every $m\in\R$. In addition to that, one has 

\begin{equation}\label{e898}
|w^{\mathfrak{d}_{p}}_{k_1}(\tau,m,\epsilon)|\le C_{w^{\mathfrak{d}_{p}}_{k_1}}\frac{1}{(1+|m|)^{\mu}}e^{-\beta|m|}\exp\left(\frac{\kappa\log^2|\tau+\delta|}{2\log(q)}+\alpha\log|\tau+\delta|\right),
\end{equation}
for some $C_{w^{\mathfrak{d}_{p}}_{k_1}},\delta>0$, valid for every $\tau\in(D(0,\rho)\cup U_{\mathfrak{d}_{p}})$, $m\in\R$ and $\epsilon\in D(0,\epsilon_0)$. Indeed, $w^{\mathfrak{d}_{p}}_{k_1}$ is the extension of a function $w_{k_1}(\tau,m,\epsilon)$, common for every $0\le p\le \varsigma-1$, which is continuous on $\overline{D}(0,\rho)\times\R\times D(0,\epsilon_0)$ and holomorphic with respect to $(\tau,\epsilon)$ on $D(0,\rho)\times D(0,\epsilon_0)$.

The bounds attained in (\ref{e890}) with respect to $m$ variable are transmitted to the function $U^{\mathfrak{d}_{p}}(T,m,\epsilon)$ described in (\ref{e886}). This guarantees one can define $\mathcal{F}^{-1}(m\mapsto U^{\mathfrak{d}_{p}}(T,m,\epsilon))(z)$ in such a way that the function

\begin{align}
u^{\mathfrak{d}_{p}}(t,z,\epsilon):=&\mathcal{F}^{-1}(m\mapsto U^{\mathfrak{d}_{p}}(\epsilon t,m,\epsilon))(z) \nonumber \\
=&\frac{1}{(2\pi)^{1/2}}\frac{1}{\pi_{q^{1/k_2}}}\int_{-\infty}^{\infty}\int_{L_{\gamma_p}}\frac{w^{\mathfrak{d}_{p}}_{k_2}(u,m,\epsilon)}{\Theta_{q^{1/k_2}}\left(\frac{u}{\epsilon t}\right)}\frac{du}{u}\exp(izm)dm, \label{e910}
\end{align}

defines a bounded holomorphic function on $\mathcal{T}\times H_{\beta'}\times\mathcal{E}_{p}$, in view of 3) in Definition~\ref{def818}.

The properties held by inverse Fourier transform, described in Proposition~\ref{prop267}, allow us to conclude that $u^{\mathfrak{d}_{p}}(t,z,\epsilon)$ is a solution of the equation (\ref{e771}) defined on $\mathcal{T}\times H_{\beta'}\times\mathcal{E}_{p}$.
\end{proof}

\begin{prop}\label{prop925}
Let $0\le p \le \varsigma -1$. Under the hypotheses of Theorem~\ref{teo872}, assume that the unbounded sectors $U_{\mathfrak{d}_{p}}$ and $U_{\mathfrak{d}_{p+1}}$ are wide enough so that $U_{\mathfrak{d}_{p}}\cap U_{\mathfrak{d}_{p+1}}$ contains the sector $U_{\mathfrak{d}_{p},\mathfrak{d}_{p+1}}=\{\tau\in\C^{\star}:\arg(\tau)\in[\mathfrak{d}_{p},\mathfrak{d}_{p+1}]\}$. Then, there exist $K_1>0$ and $K_2\in\R$ such that
\begin{align}
&|u^{\mathfrak{d}_{p+1}}(t,z,\epsilon)-u^{\mathfrak{d}_{p}}(t,z,\epsilon)|\le K_{1}\exp\left(-\frac{k_2}{2\log(q)}\log^2|\epsilon|\right)|\epsilon|^{K_2},\nonumber\\
&\hspace{3cm}|f^{\mathfrak{d}_{p+1}}(t,z,\epsilon)-f^{\mathfrak{d}_{p}}(t,z,\epsilon)|\le K_{1}\exp\left(-\frac{k_2}{2\log(q)}\log^2|\epsilon|\right)|\epsilon|^{K_2}, \label{e927}
\end{align}
for every $t\in\mathcal{T}$, $z\in H_{\beta'}$, and $\epsilon\in\mathcal{E}_{p}\cap\mathcal{E}_{p+1}$. 

\end{prop}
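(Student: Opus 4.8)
The plan is to reduce, via the integral representations of $u^{\mathfrak{d}_p}$ and $f^{\mathfrak{d}_p}$, to a uniform estimate on the difference of two $q$-Laplace transforms of order $k_2$ taken along two different rays, and to obtain that estimate by a contour deformation that uses the wide-sector hypothesis to glue the Borel-plane data.

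\medskip

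\noindent\textbf{Step 1: reduction to the Borel plane.} By (\ref{e910}) and (\ref{e886}), together with (\ref{e876})--(\ref{e875b}) for the forcing term, one has $u^{\mathfrak{d}_{p+1}}(t,z,\epsilon)-u^{\mathfrak{d}_{p}}(t,z,\epsilon)=\mathcal{F}^{-1}\bigl(m\mapsto U^{\mathfrak{d}_{p+1}}(\epsilon t,m,\epsilon)-U^{\mathfrak{d}_{p}}(\epsilon t,m,\epsilon)\bigr)(z)$ and $f^{\mathfrak{d}_{p+1}}-f^{\mathfrak{d}_{p}}=\mathcal{F}^{-1}\bigl(m\mapsto F^{\mathfrak{d}_{p+1}}(\epsilon t,m,\epsilon)-F^{\mathfrak{d}_{p}}(\epsilon t,m,\epsilon)\bigr)(z)$. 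Since the bound for $\mathcal{F}^{-1}$ on the strip $H_{\beta'}$ (Proposition~\ref{prop267}) is controlled by $\sup_m(1+|m|)^{\mu}e^{\beta|m|}|\cdot|$, it suffices to prove, uniformly in $m\in\R$, $t\in\mathcal{T}$, $\epsilon\in\mathcal{E}_p\cap\mathcal{E}_{p+1}$, a bound of the form
\[
\bigl|U^{\mathfrak{d}_{p+1}}(\epsilon t,m,\epsilon)-U^{\mathfrak{d}_{p}}(\epsilon t,m,\epsilon)\bigr|\le C\,\frac{e^{-\beta|m|}}{(1+|m|)^{\mu}}\exp\!\Bigl(-\frac{k_2}{2\log(q)}\log^2|\epsilon|\Bigr)|\epsilon|^{K_2},
\]
and the identical statement with $F^{\mathfrak{d}_j}$, $\psi_{k_2}^{\mathfrak{d}_j}$ in place of $U^{\mathfrak{d}_j}$, $w_{k_2}^{\mathfrak{d}_j}$.

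\medskip

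\noindent\textbf{Step 2: gluing the Borel-plane data.} The hypothesis $U_{\mathfrak{d}_p}\cap U_{\mathfrak{d}_{p+1}}\supseteq U_{\mathfrak{d}_p,\mathfrak{d}_{p+1}}$ forces $\psi_{k_1}^{\mathfrak{d}_p}=\psi_{k_1}^{\mathfrak{d}_{p+1}}$ on $U_{\mathfrak{d}_p,\mathfrak{d}_{p+1}}$ (both are analytic continuations of the same convergent series $\psi_{k_1}$), and by the uniqueness in Proposition~\ref{prop321} (the two functions $w_{k_1}^{\mathfrak{d}_j}$ restrict to a common $w_{k_1}$ on $D(0,\rho)$, which is connected to $U_{\mathfrak{d}_p,\mathfrak{d}_{p+1}}$) also $w_{k_1}^{\mathfrak{d}_p}=w_{k_1}^{\mathfrak{d}_{p+1}}$ there. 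Rotating the integration ray $L_{\mathfrak{d}_p}$ of the $q$-Laplace transform of order $\kappa$ to $L_{\mathfrak{d}_{p+1}}$ inside $U_{\mathfrak{d}_p,\mathfrak{d}_{p+1}}$ — the zeros of $\Theta_{q^{1/\kappa}}(\cdot/\tau)$ being avoided by the $\tilde\delta$-geometry of Definition~\ref{def818} and the arc at infinity vanishing for $|\tau|$ small thanks to $\alpha+\frac{\kappa}{\log(q)}\log(\epsilon_0 r_{\mathcal{T}})<0$ in (\ref{e824}) — one deduces via Cauchy's theorem (in the spirit of Lemma~\ref{lema219}) that $\psi_{k_2}^{\mathfrak{d}_p}=\psi_{k_2}^{\mathfrak{d}_{p+1}}$ and $w_{k_2}^{\mathfrak{d}_p}=w_{k_2}^{\mathfrak{d}_{p+1}}$ on the common part of their domains — in particular on a full punctured neighbourhood of $0$ inside $\mathcal{R}_{\mathfrak{d}_p,\tilde\delta}\cap\mathcal{R}_{\mathfrak{d}_{p+1},\tilde\delta}$ — while each retains on its own domain the bounds (\ref{e405}), (\ref{e890}).

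\medskip

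\noindent\textbf{Step 3: contour deformation and estimate.} Fix $m$, $\epsilon\in\mathcal{E}_p\cap\mathcal{E}_{p+1}$, $t\in\mathcal{T}$ and set $T=\epsilon t$. Choose a radius $\rho^\ast>0$ and split $U^{\mathfrak{d}_j}(T,m,\epsilon)=\frac{1}{\pi_{q^{1/k_2}}}\bigl(\int_{L_{\gamma_j}\cap\{|u|\le\rho^\ast\}}+\int_{L_{\gamma_j}\cap\{|u|\ge\rho^\ast\}}\bigr)\frac{w_{k_2}^{\mathfrak{d}_j}(u,m,\epsilon)}{\Theta_{q^{1/k_2}}(u/T)}\frac{du}{u}$. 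On $|u|\le\rho^\ast$ the two integrands coincide by Step~2 and are holomorphic in the region between $L_{\gamma_p}$ and $L_{\gamma_{p+1}}$ (the zeros of $\Theta_{q^{1/k_2}}(\cdot/T)$ are kept out because $T\in\mathcal{R}_{\mathfrak{d}_p}^{b}\cap\mathcal{R}_{\mathfrak{d}_{p+1}}^{b}$ by condition~3) of Definition~\ref{def818}), so Cauchy's theorem turns the difference of these pieces into an integral over the circular arc $C_{\rho^\ast}$ from argument $\gamma_p$ to $\gamma_{p+1}$. The two remaining ``tails'' $\int_{L_{\gamma_j}\cap\{|u|\ge\rho^\ast\}}$ are estimated separately. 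Using (\ref{e890}) for $w_{k_2}^{\mathfrak{d}_j}$, the lower bound (\ref{e200}) for $\Theta_{q^{1/k_2}}$ (applicable since $T\in\mathcal{R}_{\mathfrak{d}_j,\tilde\delta}$), and the identity $\frac{k_2}{2\log(q)}\bigl(\log^2|u|-\log^2|u/T|\bigr)=\frac{k_2\log|u|\log|T|}{\log(q)}-\frac{k_2\log^2|T|}{2\log(q)}$, each of the arc integral and the two tail integrals is majorised by
\[
C\,\frac{e^{-\beta|m|}}{(1+|m|)^{\mu}}\,|T|^{1/2}\exp\!\Bigl(-\frac{k_2}{2\log(q)}\log^2|T|\Bigr)\int_{\rho^\ast}^{\infty}r^{\nu-\frac32+\frac{k_2\log|T|}{\log(q)}}\,dr,
\]
the integral converging because $|T|<\epsilon_0 r_{\mathcal{T}}$ and $\nu+\frac{k_2}{\log(q)}\log(r_{\mathcal{T}})<0$ in (\ref{e824}). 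Hence the left side of the displayed inequality in Step~1 is $\le C'\,\frac{e^{-\beta|m|}}{(1+|m|)^{\mu}}\exp\bigl(-\frac{k_2}{2\log(q)}\log^2|\epsilon t|\bigr)|\epsilon t|^{\tilde\nu}$ for some $\tilde\nu\in\R$. The argument for $F^{\mathfrak{d}_{p+1}}-F^{\mathfrak{d}_p}$ is word-for-word the same, with $\psi_{k_2}^{\mathfrak{d}_j}$ and (\ref{e405}) replacing $w_{k_2}^{\mathfrak{d}_j}$ and (\ref{e890}) and the rays being $L_{\mathfrak{d}_j}$.

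\medskip

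\noindent\textbf{Step 4: conversion to a $t$-free bound; main obstacle.} Since $|t|<r_{\mathcal{T}}<1$ and $|\epsilon|<\epsilon_0<1$, write $\log|\epsilon t|=\log|\epsilon|+\log|t|$ and use that $\log^2|\epsilon t|\ge\log^2|\epsilon|$ (both logarithms being negative), together with a completion of the square in $\exp\bigl(-\frac{k_2}{2\log(q)}\log^2|\epsilon t|\bigr)|\epsilon t|^{\tilde\nu}$, to absorb the $|t|$-dependence and produce $C''\exp\bigl(-\frac{k_2}{2\log(q)}\log^2|\epsilon|\bigr)|\epsilon|^{K_2}$ for a suitable $K_2\in\R$ depending only on $\tilde\nu,r_{\mathcal{T}},k_2,q$. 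Feeding this into the Fourier-inversion bound of Step~1 gives (\ref{e927}). I expect the real work to be in Steps~2--3: verifying that every rotation of the order-$\kappa$ and order-$k_2$ $q$-Laplace contours genuinely avoids the zeros of the corresponding Jacobi theta kernels and that the arcs at infinity die — precisely what the $\tilde\delta$-definition of the $\mathcal{R}$-regions (Definition~\ref{def818}) and the smallness conditions (\ref{e824}) were designed for — and, to a lesser extent, the final massaging that turns the natural bound $\exp\bigl(-\frac{k_2}{2\log(q)}\log^2|\epsilon t|\bigr)|\epsilon t|^{\tilde\nu}$ (bounded, but not manifestly $t$-uniform when $\tilde\nu<0$) into the stated $t$-independent one.
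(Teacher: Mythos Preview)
Your proposal is correct and follows essentially the same route as the paper: glue $w_{k_2}^{\mathfrak{d}_p}$ and $w_{k_2}^{\mathfrak{d}_{p+1}}$ on $\mathcal{R}_{\mathfrak{d}_p}^b\cap\mathcal{R}_{\mathfrak{d}_{p+1}}^b$ via the wide-sector hypothesis and Proposition~\ref{prop638}, deform the $k_2$-Laplace contour into two tails and one arc, and estimate each piece with (\ref{e890}) and (\ref{e200}). The paper carries out your Step~4 more explicitly---expanding $\log^2|\epsilon t|$ and bounding the cross term $\exp\bigl(-\tfrac{k_2}{\log q}\log|\epsilon|\log|t|\bigr)\le|\epsilon|^{-\frac{k_2}{\log q}\log r_{\mathcal T}}$ together with $\sup_{x>0}x^{c}\exp\bigl(-\tfrac{k_2}{2\log q}\log^2 x\bigr)<\infty$---which resolves the ``$\tilde\nu<0$'' worry you flag, and is equivalent to your completion-of-the-square idea.
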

\begin{proof}
Let $0\le p \le \varsigma -1$. Taking into account that $U_{\mathfrak{d}_{p},\mathfrak{d}_{p+1}}\subseteq U_{\mathfrak{d}_{p}}\cap U_{\mathfrak{d}_{p+1}}$, we observe from the construction of the functions $U^{\mathfrak{d}_{p}}$ and $U^{\mathfrak{d}_{p+1}}$ that $\mathcal{L}^{\mathfrak{d}_{p}}_{q;1/\kappa}(w_{k_1}^{\mathfrak{d}_{p}})(\tau,m,\epsilon)$ and $\mathcal{L}^{\mathfrak{d}_{p+1}}_{q;1/\kappa}(w_{k_1}^{\mathfrak{d}_{p+1}})(\tau,m,\epsilon)$ coincide in the domain $(\mathcal{R}_{\mathfrak{d}_{p}}^{b}\cap \mathcal{R}_{\mathfrak{d}_{p+1}}^{b})\times \R\times D(0,\epsilon_0)$. This entails the existence of $w_{k_2}^{\mathfrak{d}_{p},\mathfrak{d}_{p+1}}(\tau,m,\epsilon)$, holomorphic with respect to $\tau$ on $\mathcal{R}_{\mathfrak{d}_{p}}^{b}\cup \mathcal{R}_{\mathfrak{d}_{p+1}}^{b}$, continuous with respect to $m\in\R$ and holomorphic with respect to $\epsilon$ in $D(0,\epsilon_0)$ which coincides with $\mathcal{L}^{\mathfrak{d}_{p}}_{q;1/\kappa}(w_{k_1}^{\mathfrak{d}_{p}})(\tau,m,\epsilon)$ on $\mathcal{R}^{b}_{\mathfrak{d}_{p}}\times\R\times D(0,\epsilon_0)$ and also with $\mathcal{L}^{\mathfrak{d}_{p+1}}_{q;1/\kappa}(w_{k_1}^{\mathfrak{d}_{p+1}})(\tau,m,\epsilon)$ on $\mathcal{R}^{b}_{\mathfrak{d}_{p+1}}\times\R\times D(0,\epsilon_0)$.

Let $\rho_{\mathfrak{d}_{p},\mathfrak{d}_{p+1}}$ be such that $\rho_{\mathfrak{d}_{p},\mathfrak{d}_{p+1}}e^{i\gamma_{p}}\subseteq\mathcal{R}^{b}_{\mathfrak{d}_{p}}$ and $\rho_{\mathfrak{d}_{p},\mathfrak{d}_{p+1}}e^{i\gamma_{p+1}}\subseteq\mathcal{R}^{b}_{\mathfrak{d}_{p+1}}$. The function 
$$u\mapsto \frac{w_{k_2}^{\mathfrak{d}_{p},\mathfrak{d}_{p+1}}(u,m,\epsilon)}{\Theta_{q^{1/k_2}}\left(\frac{u}{\epsilon t}\right)}$$
is holomorphic on $\mathcal{R}^{b}_{\mathfrak{d}_{p}}\cup \mathcal{R}^{b}_{\mathfrak{d}_{p+1}}$ for all $(m,\epsilon)\in\R\times (\mathcal{E}_{p}\cap \mathcal{E}_{p+1})$  and its integral along the closed path constructed by concatenation of the segment starting at the origin and with ending point fixed at $\rho_{\mathfrak{d}_{p},\mathfrak{d}_{p+1}}e^{i\gamma_{p}}$, the arc of circle with radius $\rho_{\mathfrak{d}_{p},\mathfrak{d}_{p+1}}$ connecting $\rho_{\mathfrak{d}_{p},\mathfrak{d}_{p+1}}e^{i\gamma_{p}}$ with $\rho_{\mathfrak{d}_{p},\mathfrak{d}_{p+1}}e^{i\gamma_{p+1}}\subseteq\mathcal{R}^{b}_{\mathfrak{d}_{p+1}}$, and the segment from $\rho_{\mathfrak{d}_{p},\mathfrak{d}_{p+1}}e^{i\gamma_{p+1}}$ to 0, vanishes. The difference $u^{\mathfrak{d}_{p+1}}-u^{\mathfrak{d}_{p}}$ can be written in the form

$$u^{\mathfrak{d}_{p+1}}(t,z,\epsilon)-u^{\mathfrak{d}_{p}}(t,z,\epsilon)$$
\begin{align}
&=\frac{1}{(2\pi)^{1/2}}\frac{1}{\pi_{q^{1/k_2}}}\int_{-\infty}^{\infty}\int_{L_{\gamma_{p+1},\rho_{\mathfrak{d}_{p},\mathfrak{d}_{p+1}}}}\frac{w^{\mathfrak{d}_{p+1}}_{k_2}(u,m,\epsilon)}{\Theta_{q^{1/k_2}}\left(\frac{u}{\epsilon t}\right)}\exp(izm)\frac{du}{u}dm,\hspace{3cm}\nonumber\\ 
&\hspace{2cm}-\frac{1}{(2\pi)^{1/2}}\frac{1}{\pi_{q^{1/k_2}}}\int_{-\infty}^{\infty}\int_{L_{\gamma_p,\rho_{\mathfrak{d}_{p},\mathfrak{d}_{p+1}}}}\frac{w^{\mathfrak{d}_{p}}_{k_2}(u,m,\epsilon)}{\Theta_{q^{1/k_2}}\left(\frac{u}{\epsilon t}\right)}\exp(izm)\frac{du}{u}dm\nonumber\\
&\hspace{4cm}+\frac{1}{(2\pi)^{1/2}}\frac{1}{\pi_{q^{1/k_2}}}\int_{-\infty}^{\infty}\int_{C_{\rho_{\mathfrak{d}_{p},\mathfrak{d}_{p+1}},\gamma_p,\gamma_{p+1}}}\frac{w^{\mathfrak{d}_{p},\mathfrak{d}_{p+1}}_{k_2}(u,m,\epsilon)}{\Theta_{q^{1/k_2}}\left(\frac{u}{\epsilon t}\right)}\exp(izm)\frac{du}{u}dm,\label{e943}
\end{align}
where $L_{\gamma_{j},\rho_{\mathfrak{d}_{p},\mathfrak{d}_{p+1}}}=[\rho_{\mathfrak{d}_{p},\mathfrak{d}_{j}},+\infty)e^{i\gamma_{j}}$ for $j\in\{p,p+1\}$ and $C_{\rho_{\mathfrak{d}_{p},\mathfrak{d}_{p+1}},\gamma_p,\gamma_{p+1}}$ is the arc of circle connecting $\rho_{\mathfrak{d}_{p},\mathfrak{d}_{p+1}}e^{i\gamma_{p}}$ with $\rho_{\mathfrak{d}_{p},\mathfrak{d}_{p+1}}e^{i\gamma_{p+1}}$ (see Figure 1).

\begin{figure}[h]
	\centering
		\label{fig:conf0}
		\includegraphics[width=0.50\textwidth]{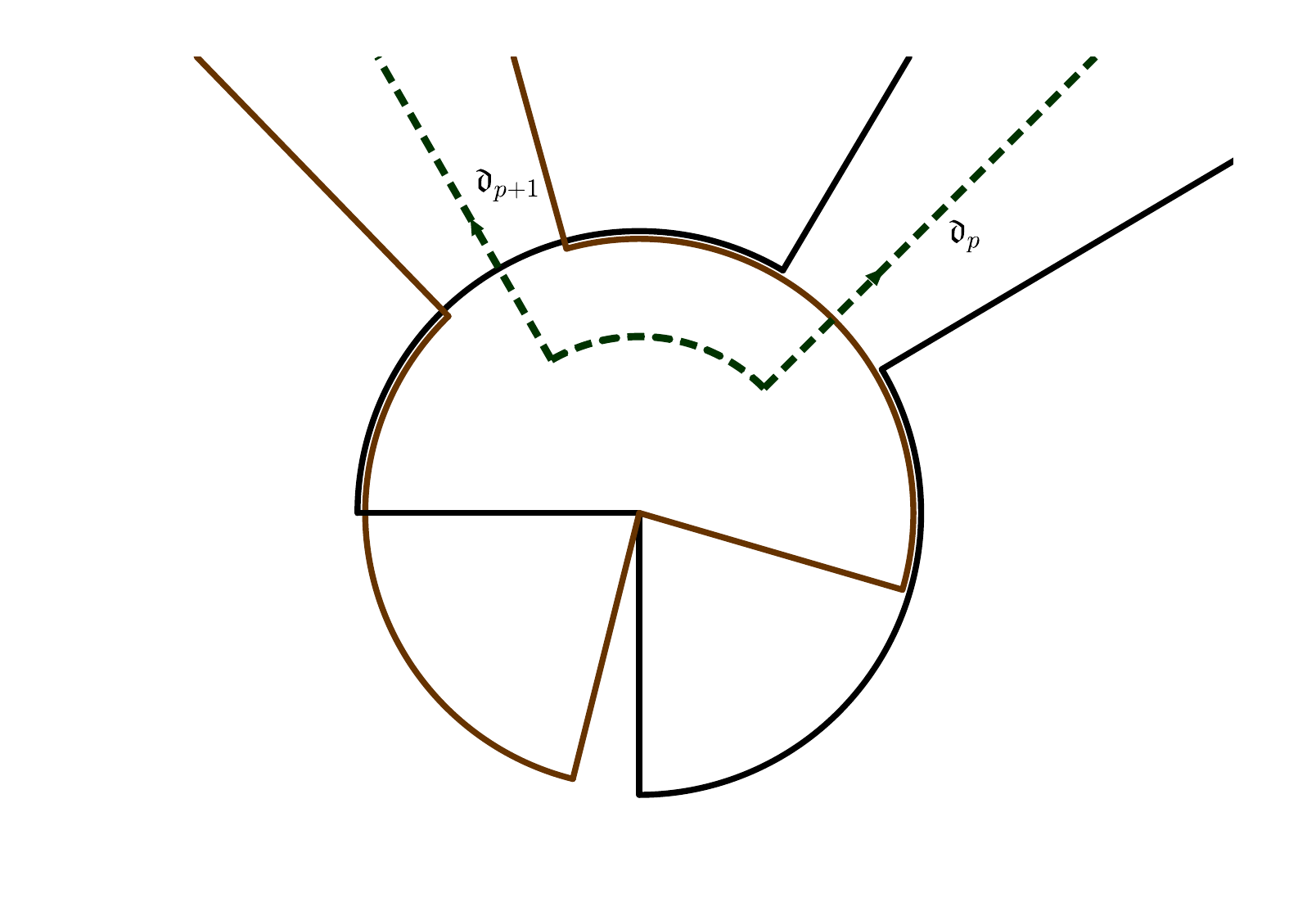}
	\caption{Deformation of the path of integration, first case.}
\end{figure}

Let us put
$$I_{1}:=\left|\frac{1}{(2\pi)^{1/2}}\frac{1}{\pi_{q^{1/k_2}}}\int_{-\infty}^{\infty}\int_{L_{\gamma_{p+1},\rho_{\mathfrak{d}_{p},\mathfrak{d}_{p+1}}}}\frac{w^{\mathfrak{d}_{p+1}}_{k_2}(u,m,\epsilon)}{\Theta_{q^{1/k_2}}\left(\frac{u}{\epsilon t}\right)}\exp(izm)\frac{du}{u}dm\right|.$$
In view of (\ref{e890}) and (\ref{e200}), one has
\begin{align}
I_{1}&\le \frac{\tilde{C}_{w_{k_2}^{\mathfrak{d}_{p+1}}}}{C_{q,k_2}\tilde{\delta}(2\pi)^{1/2}}\frac{|\epsilon t|^{1/2}}{\pi_{q^{1/k_2}}}\int_{-\infty}^{\infty}e^{-\beta|m|-m\Im(z)}\frac{dm}{(1+|m|)^{\mu}}\nonumber\\
&\times\int_{\rho_{\mathfrak{d}_{p},\mathfrak{d}_{p+1}}}^{\infty}\exp\left(\frac{k_2\log^2|u|}{2\log(q)}+\nu\log|u|\right)|u|^{-3/2}\exp\left(-\frac{k_2\log^2\left(\frac{|u|}{|\epsilon t|}\right)}{2\log(q)}\right)d|u|.\label{e953}
\end{align}
We recall that we have restricted the domain on the variable $z$ such that $|\Im(z)|\le\beta'<\beta$. Then, the first integral in the previous expression in convergent, and one derives
$$I_{1}\le \frac{\tilde{C}_{w_{k_2}^{\mathfrak{d}_{p+1}}}}{(2\pi)^{1/2}}\frac{(\epsilon_0 r_{\mathcal{T})^{1/2}}}{\pi_{q^{1/k_2}}}\int_{\rho_{\mathfrak{d}_{p},\mathfrak{d}_{p+1}}}^{\infty}\exp\left(\frac{k_2\log^2|u|}{2\log(q)}\right)\exp\left(-\frac{k_2\log^2\left(\frac{|u|}{|\epsilon t|}\right)}{2\log(q)}\right)|u|^{\nu-3/2}d|u|,$$
for some $\tilde{C}_{w_{k_2}^{\mathfrak{d}_{p+1}}}>0$.
We derive
\begin{align*}
\exp\left(\frac{k_2\log^2|u|}{2\log(q)}\right)\exp\left(-\frac{k_2\log^2\left(\frac{|u|}{|\epsilon t|}\right)}{2\log(q)}\right)&=\exp\left(\frac{k_2}{2\log(q)}(-\log^2|\epsilon|-2\log|\epsilon|\log|t|-\log^2|t|)\right)\\
&\times \exp\left(\frac{k_2}{\log(q)}(\log|u|\log|\epsilon|+\log|u|\log|t|)\right).
\end{align*}
From the assumption that $0<\epsilon_0<1$ and $0<r_{\mathcal{T}}<1$, we get
\begin{equation}\label{e964}
\exp\left(-\frac{k_2}{\log(q)}\log|\epsilon|\log|t|\right)\le|\epsilon|^{-\frac{k_2}{\log(q)}\log(r_{\mathcal{T}})},\quad \exp\left(\frac{k_2}{\log(q)}\log|u|\log|\epsilon|\right)\le |\epsilon|^{\frac{k_2}{\log(q)}\log(\rho_{\mathfrak{d}_{p},\mathfrak{d}_{p+1}})},
\end{equation}
for $t\in\mathcal{T}$, $\epsilon\in\mathcal{E}_{p}\cap\mathcal{E}_{p+1}$, $|u|\ge \rho_{\mathfrak{d}_{p},\mathfrak{d}_{p+1}},$ and also
\begin{align}
&\exp\left(\frac{k_2}{\log(q)}\log|u|\log|t|\right)\le|t|^{\frac{k_2}{\log(q)}\log(\rho_{\mathfrak{d}_{p},\mathfrak{d}_{p+1}})}, \hbox{ if } \rho_{\mathfrak{d}_{p},\mathfrak{d}_{p+1}}\le |u|\le 1\nonumber \\
&\hspace{5cm}\exp\left(\frac{k_2}{\log(q)}\log|u|\log|t|\right)\le |u|^{\frac{k_2}{\log(q)}\log(r_{\mathcal{T}})},\hbox{ if } |u|\ge 1,\label{e968}
\end{align}
for $t\in\mathcal{T}$. In addition to that, there exists $K_{k_2,\rho_{\mathfrak{d}_{p},\mathfrak{d}_{p+1}},q}>0$ such that
\begin{equation}\label{e973}
\sup_{x>0}x^{\frac{k_2}{\log(q)}\log(\rho_{\mathfrak{d}_{p},\mathfrak{d}_{p+1}})}\exp\left(-\frac{k_2}{2\log(q)}\log^2(x)\right)\le K_{k_2,\rho_{\mathfrak{d}_{p},\mathfrak{d}_{p+1}},q}.
\end{equation}

In view of (\ref{e964}), (\ref{e968}), (\ref{e973}), and bearing in mind that (\ref{e824}) holds, we deduce there exist $\tilde{K}^{1}\in\R$, $\tilde{K}^{2}>0$ such that
$$ \exp\left(\frac{k_2\log^2|u|}{2\log(q)}\right)\exp\left(-\frac{k_2\log^2\left(\frac{|u|}{|\epsilon t|}\right)}{2\log(q)}\right)|u|^{\nu}\le \tilde{K}^2\exp\left(-\frac{k_2}{2\log(q)}\log^2|\epsilon|\right)|\epsilon|^{\tilde{K}^{1}},$$
for $t\in\mathcal{T}$, $r\ge \rho_{\mathfrak{d}_{p},\mathfrak{d}_{p+1}}$, and $\epsilon\in\mathcal{E}_{p}\cap\mathcal{E}_{p+1}$. Provided this last inequality, 
we arrive at
\begin{align}
I_{1}&\le\frac{\tilde{K}^{2}C_{w_{k_2}^{\mathfrak{d}_{p+1}}}}{C_{q,k_2}\tilde{\delta}(2\pi)^{1/2}}\frac{(\epsilon_0 r_{\mathcal{T})^{1/2}}}{\pi_{q^{1/k_2}}}\int_{\rho_{\mathfrak{d}_{p},\mathfrak{d}_{p+1}}}^{\infty}\frac{d|u|}{|u|^{3/2}}\exp\left(-\frac{k_2}{2\log(q)}\log^2|\epsilon|\right)|\epsilon|^{\tilde{K}^{1}}\nonumber \\
&=\tilde{K}^3\exp\left(-\frac{k_2}{2\log(q)}\log^2|\epsilon|\right)|\epsilon|^{\tilde{K}^{1}}\label{e983},
\end{align}
for some $\tilde{K}^{3}>0$, for all $t\in\mathcal{T}$, $z\in H_{\beta'}$, and $\epsilon\in\mathcal{E}_{p}\cap\mathcal{E}_{p+1}$.

We can estimate in the same manner the expression
$$I_{2}:=\left|\frac{1}{(2\pi)^{1/2}}\frac{1}{\pi_{q^{1/k_2}}}\int_{-\infty}^{\infty}\int_{L_{\gamma_{p},\rho_{\mathfrak{d}_{p},\mathfrak{d}_{p+1}}}}\frac{w^{\mathfrak{d}_{p}}_{k_2}(u,m,\epsilon)}{\Theta_{q^{1/k_2}}\left(\frac{u}{\epsilon t}\right)}\exp(izm)\frac{du}{u}dm\right|.$$
to arrive at the existence of $\tilde{K}^{4}>0$ such that
\begin{equation}\label{e990}
I_{2}\le\tilde{K}^4\exp\left(-\frac{k_2}{2\log(q)}\log^2|\epsilon|\right)|\epsilon|^{\tilde{K}^{1}},
\end{equation}
for all $t\in\mathcal{T}$, $z\in H_{\beta'}$, and $\epsilon\in\mathcal{E}_{p}\cap\mathcal{E}_{p+1}$. We now provide upper bounds for the quantity
$$I_{3}:=\left|\frac{1}{(2\pi)^{1/2}}\frac{1}{\pi_{q^{1/k_2}}}\int_{-\infty}^{\infty}\int_{C_{\rho_{\mathfrak{d}_{p},\mathfrak{d}_{p+1}},\gamma_p,\gamma_{p+1}}}\frac{w^{\mathfrak{d}_{p},\mathfrak{d}_{p+1}}_{k_2}(u,m,\epsilon)}{\Theta_{q^{1/k_2}}\left(\frac{u}{\epsilon t}\right)}\exp(izm)\frac{du}{u}dm\right|.$$
The estimates in (\ref{e890}) and (\ref{e200}) allow us to obtain the existence of $C^{\mathfrak{d}_{p}\mathfrak{d}_{p+1}}_{w_{k_2}}>0$ such that
$$I_3\le\frac{C_{w_{k_2}}^{\mathfrak{d}_{p},\mathfrak{d}_{p+1}}}{(2\pi)^{1/2}}\frac{1}{\pi_{q^{1/k_2}}}\frac{\epsilon_0^{1/2}}{C_{q,k_2}\tilde{\delta}\rho_{\mathfrak{d}_{p},\mathfrak{d}_{p+1}}^{1/2}}\int_{-\infty}^{\infty}\frac{e^{-\beta|m|-m\Im(z)}}{(1+|m|)^{\mu}}dm|\gamma_{p+1}-\gamma_{p}||t|^{1/2}\exp\left(-\frac{k_2\log^2\left(\frac{\rho_{\mathfrak{d}_{p}\mathfrak{d}_{p+1}}}{|\epsilon t|}\right)}{2\log(q)}\right),$$
for all $t\in\mathcal{T}$, $z\in H_{\beta'}$, and $\epsilon\in\mathcal{E}_{p}\cap\mathcal{E}_{p+1}$. We can follow analogous arguments as in the previous steps to provide upper estimates of the expression
$$|t|^{1/2}\exp\left(-\frac{k_2\log^2\left(\frac{\rho_{\mathfrak{d}_{p}\mathfrak{d}_{p+1}}}{|\epsilon t|}\right)}{2\log(q)}\right).$$
Indeed, 
\begin{align*}
|t|^{1/2}\exp\left(-\frac{k_2\log^2\left(\frac{\rho_{\mathfrak{d}_{p}\mathfrak{d}_{p+1}}}{|\epsilon t|}\right)}{2\log(q)}\right)&=\exp\left(-\frac{k_2\log^2(\rho_{\mathfrak{d}_{p}\mathfrak{d}_{p+1}})}{2\log(q)}\right)|\epsilon|^{\frac{k_2\log(\rho_{\mathfrak{d}_{p}\mathfrak{d}_{p+1}})}{\log(q)}}|t|^{\frac{k_2\log(\rho_{\mathfrak{d}_{p}\mathfrak{d}_{p+1}})}{\log(q)}}\\
&\times \exp\left(\frac{k_2}{2\log(q)}(-\log^2|\epsilon|-2\log|\epsilon|\log|t|-\log^2|t|)\right)|t|^{1/2}.
\end{align*}
From the assumption $0\le \epsilon_0 <1$ we check that
$$\exp\left(-\frac{k_2}{\log(q)}\log|\epsilon|\log|t|\right)\le|\epsilon|^{-\frac{k_2}{\log(q)}\log(r_{\mathcal{T}})},$$
for $t\in\mathcal{T}$, $\epsilon\in\mathcal{E}_{p}\cap\mathcal{E}_{p+1}$. Gathering (\ref{e973}), we get the existence of $\tilde{K}^{5}\in\R$, $\tilde{K}^{6}>0$ such that
$$|t|^{1/2}\exp\left(-\frac{k_2\log^2\left(\frac{\rho_{\mathfrak{d}_{p}\mathfrak{d}_{p+1}}}{|\epsilon t|}\right)}{2\log(q)}\right)\le \tilde{K}^{6}\exp\left(-\frac{k_2}{2\log(q)}\log^2|\epsilon|\right)|\epsilon|^{\tilde{K}^{5}},$$
to conclude that 
\begin{equation}\label{e1009}
I_{3}\le \tilde{K}^{7}\exp\left(-\frac{k_2}{2\log(q)}\log^2|\epsilon|\right)|\epsilon|^{\tilde{K}^{5}},
\end{equation}
for some $\tilde{K}^{7}>0$, all $t\in\mathcal{T}$, $z\in H_{\beta'}$, and $\epsilon\in\mathcal{E}_{p}\cap\mathcal{E}_{p+1}$.
We conclude the proof of this result in view of (\ref{e983}), (\ref{e990}), (\ref{e1009}) and the decomposition (\ref{e943}).

in order to obtain analogous estimates for the forcing term $f^{\mathfrak{d}_{p}}$, one can follow analogous estimates as for $u^{\mathfrak{d}_{p}}$ under the consideration of the estimates in (\ref{e868}).
\end{proof}

We now state the second situation one can find when estimating the difference of two consecutive solutions. For these purpose, we enunciate the next
\begin{lemma}\label{lema1031}
Let $0\le p \le \varsigma -1$. Under the hypotheses of Theorem~\ref{teo872}, assume that $U_{\mathfrak{d}_{p}}\cap U_{\mathfrak{d}_{p+1}}=\emptyset$. Then, there exist $K_p^{\mathcal{L}}>0$, $M_p^{\mathcal{L}}\in\R$ such that
\begin{equation}\label{e1020}
\left|\mathcal{L}_{q;1/\kappa}^{\mathfrak{d}_{p+1}}(w_{k_1}^{\mathfrak{d}_{p+1}})(\tau,m,\epsilon)-\mathcal{L}_{q;1/\kappa}^{\mathfrak{d}_{p}}(w_{k_1}^{\mathfrak{d}_{p}})(\tau,m,\epsilon)\right|\le K_p^{\mathcal{L}}e^{-\beta|m|}(1+|m|)^{-\mu}\exp\left(-\frac{\kappa}{2\log(q)}\log^2|\tau|\right)|\tau|^{M_p^{\mathcal{L}}},
\end{equation}
for every $\epsilon\in(\mathcal{E}_{p}\cap\mathcal{E}_{p+1})$, $\tau\in(\mathcal{R}^{b}_{\mathfrak{d}_{p}}\cap\mathcal{R}^{b}_{\mathfrak{d}_{p+1}})$ and $m\in\R$.
\end{lemma}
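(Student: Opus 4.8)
The plan is to exploit the fact that when $U_{\mathfrak{d}_p}\cap U_{\mathfrak{d}_{p+1}}=\emptyset$, the two functions $w_{k_1}^{\mathfrak{d}_p}$ and $w_{k_1}^{\mathfrak{d}_{p+1}}$ are \emph{both} analytic continuations of the common function $w_{k_1}$ (continuous on $\overline{D}(0,\rho)\times\R\times D(0,\epsilon_0)$, holomorphic in $(\tau,\epsilon)$ on $D(0,\rho)\times D(0,\epsilon_0)$, as recorded at the end of Theorem~\ref{teo872}), and that the kernel $u\mapsto 1/\Theta_{q^{1/\kappa}}(u/\tau)$ is holomorphic on $\C^\star$. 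First I would fix $\epsilon\in\mathcal{E}_p\cap\mathcal{E}_{p+1}$, $m\in\R$ and $\tau\in\mathcal{R}^b_{\mathfrak{d}_p}\cap\mathcal{R}^b_{\mathfrak{d}_{p+1}}$, and write both $q$-Laplace transforms along rays $L_{\gamma_p}$, $L_{\gamma_{p+1}}$ (with $e^{i\gamma_j}\in S_{\mathfrak{d}_j}$) as contour integrals. Splitting each ray at a small radius $\rho_1<\rho$, the two integrals over the far parts $[\rho_1,\infty)e^{i\gamma_j}$ remain as they are, while the two integrals over $[0,\rho_1]e^{i\gamma_j}$ can be joined by an arc $C_{\rho_1,\gamma_p,\gamma_{p+1}}$ inside $\overline{D}(0,\rho)$ where $w_{k_1}$ is single-valued, so by Cauchy's theorem the closed-path contribution vanishes and the difference collapses to three pieces: a tail along $L_{\gamma_{p+1}}$ with integrand $w_{k_1}^{\mathfrak{d}_{p+1}}/\Theta_{q^{1/\kappa}}$, a tail along $L_{\gamma_p}$ with integrand $w_{k_1}^{\mathfrak{d}_p}/\Theta_{q^{1/\kappa}}$, and a small arc at radius $\rho_1$ with integrand $w_{k_1}/\Theta_{q^{1/\kappa}}$.

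Next I would estimate each of the three pieces. For the two tails, I would invoke the bound (\ref{e898}) on $|w_{k_1}^{\mathfrak{d}_j}(u,m,\epsilon)|$ together with the lower bound (\ref{e200}) for $|\Theta_{q^{1/\kappa}}(u/\tau)|$ (using that $\tau\in\mathcal{R}_{\mathfrak{d}_j,\tilde\delta}$), reducing each to an integral of the shape
$$
\int_{\rho_1}^{\infty}\exp\!\left(\frac{\kappa\log^2 r}{2\log(q)}+\alpha\log r\right)\frac{1}{\exp\!\left(\frac{\kappa\log^2(r/|\tau|)}{2\log(q)}\right)r^{1/2}}\,\frac{dr}{r},
$$
exactly the type handled in the proof of Lemma~\ref{lema403}: expanding $\log^2(r/|\tau|)=\log^2 r-2\log r\log|\tau|+\log^2|\tau|$ produces the factor $\exp(-\kappa\log^2|\tau|/(2\log(q)))$ times a $|\tau|$-power, so each tail is bounded by a constant times $e^{-\beta|m|}(1+|m|)^{-\mu}\exp(-\kappa\log^2|\tau|/(2\log(q)))|\tau|^{M}$ for a suitable exponent $M$ (the Gaussian decay in $r$ at infinity now coming from the fact that we integrate only from $\rho_1$ on, so there is no cancellation issue). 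For the arc at radius $\rho_1$, I would use the boundedness of $w_{k_1}$ on $\overline{D}(0,\rho)$, the bound (\ref{e200}) for the theta kernel evaluated at $|u/\tau|=\rho_1/|\tau|$, and the bounded arc length $\le|\gamma_{p+1}-\gamma_p|\rho_1$, which gives a term proportional to $\rho_1^{-1/2}|\tau|^{1/2}\exp(-\kappa\log^2(\rho_1/|\tau|)/(2\log(q)))$, again of the stated form after the same $\log^2$ expansion. Collecting the three contributions and taking $M_p^{\mathcal{L}}$ to be the minimum of the resulting exponents yields (\ref{e1020}).

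The main obstacle I anticipate is purely bookkeeping: making sure the contour deformation is legitimate, i.e.\ that $\rho_1$ can be chosen small enough that the arc $C_{\rho_1,\gamma_p,\gamma_{p+1}}$ lies in $\overline{D}(0,\rho)$ where $w_{k_1}$ is genuinely holomorphic and single-valued (so there is no monodromy obstruction to applying Cauchy's theorem), while simultaneously keeping $e^{i\gamma_j}\in S_{\mathfrak{d}_j}\subseteq\mathcal{R}_{\mathfrak{d}_j,\tilde\delta}$ so that the theta lower bound (\ref{e200}) applies along the whole deformed path. This is exactly the geometric configuration used in Proposition~\ref{prop925}, the only difference being that the common germ here is the $\kappa$-Borel datum $w_{k_1}$ rather than its $\kappa$-Laplace transform, and the output bound is in the $\tau$-variable (Borel plane) with $q$-exponential \emph{decay} of order $\kappa$ rather than in $\epsilon$; aside from that, every estimate reduces to the elementary one-variable integral lemma already exploited in the proof of Lemma~\ref{lema403}, so no new analytic input is required.
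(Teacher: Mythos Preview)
Your contour deformation and overall strategy match the paper's proof exactly: split each ray at a small radius, use Cauchy on the common germ $w_{k_1}$ in $\overline{D}(0,\rho)$, and estimate two tails plus one arc via (\ref{e898}) and (\ref{e200}).

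There is one point to correct in your tail estimate. You write that the tail integral is ``exactly the type handled in the proof of Lemma~\ref{lema403}'' and that there is ``Gaussian decay in $r$ at infinity''. This is not so: in Lemma~\ref{lema403} the numerator carries $k_1$-growth and the denominator $\kappa$-decay, leaving a genuine Gaussian factor $\exp\bigl(-\tfrac{(\kappa-k_1)}{2\log q}\log^2 r\bigr)$. Here the numerator bound (\ref{e898}) already has $\kappa$-growth, so after your expansion $\log^2(r/|\tau|)=\log^2 r-2\log r\log|\tau|+\log^2|\tau|$ the $\log^2 r$ terms cancel \emph{exactly}, and the integrand reduces to the pure power
\[
r^{\,\alpha-\frac{3}{2}+\frac{\kappa\log|\tau|}{\log q}}\,\exp\!\Bigl(-\tfrac{\kappa}{2\log q}\log^2|\tau|\Bigr).
\]
Convergence at $r=\infty$ therefore does \emph{not} come from any Gaussian decay; it comes from the standing hypothesis (\ref{e824}), namely $\alpha+\tfrac{\kappa}{\log q}\log(\epsilon_0 r_{\mathcal{T}})<0$, together with $|\tau|<\epsilon_0 r_{\mathcal{T}}$ on $\mathcal{R}^b_{\mathfrak{d}_p}\cap\mathcal{R}^b_{\mathfrak{d}_{p+1}}$, which forces the exponent of $r$ below $-1$. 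This is precisely how the paper handles $I_1^{\mathcal{L}}$. Once you replace your parenthetical justification by the appeal to (\ref{e824}), the rest of your argument goes through unchanged.
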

\begin{proof}
We first recall that, without loss of generality, the intersection $\mathcal{R}_{\mathfrak{d}_{p}}^d\cap \mathcal{R}_{\mathfrak{d}_{p+1}}^d$ can be assumed to be a nonempty set because one  can vary $\tilde{\delta}$ in advance to be as close to 0 as desired.

Analogous arguments as in the beginning of the proof of Proposition~\ref{prop925} allow us to write
$$\mathcal{L}_{q;1/\kappa}^{\mathfrak{d}_{p+1}}(w_{k_1}^{\mathfrak{d}_{p+1}})(\tau,m,\epsilon)-\mathcal{L}_{q;1/\kappa}^{\mathfrak{d}_{p}}(w_{k_1}^{\mathfrak{d}_{p}})(\tau,m,\epsilon)$$
in the form  
\begin{align}
&=\frac{1}{\pi_{q^{1/\kappa}}}\int_{L_{\gamma_{p+1},\rho_{\mathfrak{d}_{p},\mathfrak{d}_{p+1}}}}\frac{w^{\mathfrak{d}_{p+1}}_{k_1}(u,m,\epsilon)}{\Theta_{q^{1/\kappa}}\left(\frac{u}{\tau}\right)}\frac{du}{u},\hspace{3cm}\nonumber\\ 
&\hspace{2cm}-\frac{1}{\pi_{q^{1/\kappa}}}\int_{L_{\gamma_p,\rho_{\mathfrak{d}_{p},\mathfrak{d}_{p+1}}}}\frac{w^{\mathfrak{d}_{p}}_{k_1}(u,m,\epsilon)}{\Theta_{q^{1/\kappa}}\left(\frac{u}{\tau}\right)}\frac{du}{u}\nonumber\\
&\hspace{4cm}+\frac{1}{\pi_{q^{1/\kappa}}}\int_{C_{\rho_{\mathfrak{d}_{p},\mathfrak{d}_{p+1}},\gamma_p,\gamma_{p+1}}}\frac{w_{k_1}(u,m,\epsilon)}{\Theta_{q^{1/\kappa}}\left(\frac{u}{\tau}\right)}\frac{du}{u},\label{e943b}
\end{align}
where $\rho_{\mathfrak{d}_{p},\mathfrak{d}_{p+1}}$, $L_{\gamma_{p},\rho_{\mathfrak{d}_{p},\mathfrak{d}_{p+1}}}$, $L_{\gamma_{p+1},\rho_{\mathfrak{d}_{p},\mathfrak{d}_{p+1}}}$ and $C_{\rho_{\mathfrak{d}_{p},\mathfrak{d}_{p+1}},\gamma_p,\gamma_{p+1}}$ are constructed in Proposition~\ref{prop925}.

In view of (\ref{e898}) and (\ref{e200}), one has
$$I^{\mathcal{L}}_1:=\left|\frac{1}{\pi_{q^{1/\kappa}}}\int_{L_{\gamma_{p},\rho_{\mathfrak{d}_{p},\mathfrak{d}_{p+1}}}}\frac{w^{\mathfrak{d}_{p}}_{k_1}(u,m,\epsilon)}{\Theta_{q^{1/\kappa}}\left(\frac{u}{\tau}\right)}\frac{du}{u}
\right|$$
\begin{align*}
&\le \frac{C_{w^{\mathfrak{d}_{p}}_{k_1}}}{C_{q,\kappa}\tilde{\delta}}\frac{|\tau|^{1/2}}{(1+|m|)^{\mu}}e^{-\beta|m|}\int_{\rho_{\mathfrak{d}_{p},\mathfrak{d}_{p+1}}}^{\infty}\frac{\exp\left(\frac{\kappa\log^2|re^{i\gamma_{p}}+\delta|}{2\log(q)}+\alpha\log|re^{i\gamma_{p}}+\delta|\right)}{\exp\left(\frac{\kappa}{2}\frac{\log^2\left(\frac{r}{|\tau|}\right)}{\log(q)}\right)}\frac{dr}{r^{3/2}}\\
&\le K_{p,1}^{\mathcal{L}}|\tau|^{1/2}(1+|m|)^{-\mu}e^{-\beta|m|}\int_{\rho_{\mathfrak{d}_{p},\mathfrak{d}_{p+1}}}^{\infty}\frac{\exp\left(\frac{\kappa\log^2r}{2\log(q)}+\alpha\log r\right)}{\exp\left(\frac{\kappa}{2}\frac{\log^2\left(\frac{r}{|\tau|}\right)}{\log(q)}\right)}\frac{dr}{r^{3/2}}
\end{align*}
for some $K_{p,1}^{\mathcal{L}}>0$. Usual calculations, and taking into account the choice of $\alpha$ in (\ref{e824}) one derives the previous expression equals
$$K_{p,1}^{\mathcal{L}}|\tau|^{1/2}(1+|m|)^{-\mu}e^{-\beta|m|}\exp\left(-\frac{\kappa}{2\log(q)}\log^2|\tau|\right)\int_{\rho_{\mathfrak{d}_{p},\mathfrak{d}_{p+1}}}^{\infty}r^{\frac{\kappa\log|\tau|}{\log(q)}+\alpha-3/2}dr,$$
which yields
\begin{equation}\label{e1058}
I^{\mathcal{L}}_1\le K_{p,2}^{\mathcal{L}}(1+|m|)^{-\mu}e^{-\beta|m|}\exp\left(-\frac{\kappa}{2\log(q)}\log^2|\tau|\right),
\end{equation}
for some $K_{p,2}^{\mathcal{L}}>0$
Analogous arguments allow us to obtain the existence of $K_{p,3}^{\mathcal{L}}>0$
such that
\begin{equation}\label{e1064}
\left|\frac{1}{\pi_{q^{1/\kappa}}}\int_{L_{\gamma_{p+1},\rho_{\mathfrak{d}_{p},\mathfrak{d}_{p+1}}}}\frac{w^{\mathfrak{d}_{p+1}}_{k_1}(u,m,\epsilon)}{\Theta_{q^{1/\kappa}}\left(\frac{u}{\tau}\right)}\frac{du}{u}
\right|\le K_{p,3}^{\mathcal{L}}(1+|m|)^{-\mu}e^{-\beta|m|}\exp\left(-\frac{\kappa}{2\log(q)}\log^2|\tau|\right).
\end{equation}

We write 
$$I^{\mathcal{L}}_{2}:=\left|\frac{1}{\pi_{q^{1/\kappa}}}\int_{C_{\rho_{\mathfrak{d}_{p},\mathfrak{d}_{p+1}},\gamma_p,\gamma_{p+1}}}\frac{w_{k_1}(u,m,\epsilon)}{\Theta_{q^{1/\kappa}}\left(\frac{u}{\tau}\right)}\frac{du}{u}\right|.$$
Regarding (\ref{e898}) and (\ref{e200}), one derives that
\begin{align*}
I^{\mathcal{L}}_{2}&\le\frac{C_{w^{\mathfrak{d}_{p}}_{k_1}}}{\pi_{q^{1/\kappa}}}\frac{e^{-\beta|m|}}{(1+|m|)^{\mu}}\frac{|\tau|^{1/2}}{\rho_{\mathfrak{d}_{p},\mathfrak{d}_{p+1}}^{1/2}C_{q,\kappa}\tilde{\delta}}\int_{\gamma_p}^{\gamma_{p+1}}\frac{\exp\left(\frac{\kappa\log^2|\rho_{\mathfrak{d}_{p},\mathfrak{d}_{p+1}}e^{i\theta}+\delta|}{2\log(q)}+\alpha\log|\rho_{\mathfrak{d}_{p},\mathfrak{d}_{p+1}}e^{i\theta}+\delta|\right)}{\exp\left(\frac{\kappa}{2}\frac{\log^2\left(\frac{\rho_{\mathfrak{d}_{p},\mathfrak{d}_{p+1}}}{|\tau|}\right)}{\log(q)}\right)}d\theta\\
&\le K_{p,4}^{\mathcal{L}}|\tau|^{1/2}\frac{e^{-\beta|m|}}{(1+|m|)^{\mu}}\exp\left(-\frac{\kappa}{2}\frac{\log^2\left(\frac{\rho_{\mathfrak{d}_{p},\mathfrak{d}_{p+1}}}{|\tau|}\right)}{\log(q)}\right)
\end{align*}
with 
$$K_{p,4}^{\mathcal{L}}=|\gamma_{p+1}-\gamma_{p}|\frac{C_{w^{\mathfrak{d}_{p}}_{k_1}}}{\pi_{q^{1/\kappa}}}\frac{1}{\rho_{\mathfrak{d}_{p},\mathfrak{d}_{p+1}}^{1/2}C_{q,\kappa}\tilde{\delta}}\exp\left(\frac{\kappa\log^2(\rho_{\mathfrak{d}_{p},\mathfrak{d}_{p+1}}+\delta)}{2\log(q)}+\alpha\log(\rho_{\mathfrak{d}_{p},\mathfrak{d}_{p+1}}+\delta)\right).$$
Let $K_{p,5}^{\mathcal{L}}=K_{p,4}^{\mathcal{L}}\exp(-\frac{\kappa}{2\log(q)}\log^2(\rho_{\mathfrak{d}_{p},\mathfrak{d}_{p+1}}))$. It is straight to check that
\begin{equation}\label{e1076}
I^{\mathcal{L}}_{2}\le K_{p,5}^{\mathcal{L}}|\tau|^{1/2+\frac{\kappa\log(\rho_{\mathfrak{d}_{p},\mathfrak{d}_{p+1}})}{\log(q)}}\frac{e^{-\beta|m|}}{(1+|m|)^{\mu}}\exp\left(-\frac{\kappa}{2}\frac{\log^2|\tau|}{\log(q)}\right).
\end{equation}

From (\ref{e1058}), (\ref{e1064}) and (\ref{e1076}), put into (\ref{e943b}), we conclude the result.
\end{proof}

\begin{prop}\label{prop1047}
Let $0\le p \le \varsigma -1$. Under the hypotheses of Theorem~\ref{teo872}, assume that $U_{\mathfrak{d}_{p}}\cap U_{\mathfrak{d}_{p+1}}=\emptyset$. Then, there exist $K_3>0$ and $K_4\in\R$ such that
\begin{align}
&|u^{\mathfrak{d}_{p+1}}(t,z,\epsilon)-u^{\mathfrak{d}_{p}}(t,z,\epsilon)|\le K_{3}\exp\left(-\frac{k_1}{2\log(q)}\log^2|\epsilon|\right)|\epsilon|^{K_4},\nonumber \\
&\hspace{3cm}|f^{\mathfrak{d}_{p+1}}(t,z,\epsilon)-f^{\mathfrak{d}_{p}}(t,z,\epsilon)|\le K_{3}\exp\left(-\frac{k_1}{2\log(q)}\log^2|\epsilon|\right)|\epsilon|^{K_4}.\label{e1049a}
\end{align}
for every $t\in\mathcal{T}$, $z\in H_{\beta'}$, and $\epsilon\in\mathcal{E}_{p}\cap\mathcal{E}_{p+1}$. 
\end{prop}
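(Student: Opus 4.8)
The strategy parallels the proof of Proposition~\ref{prop925}, but now the obstruction to deforming the path of integration at the $k_2$-level forces us to descend to the $k_1$-level, where Lemma~\ref{lema1031} provides the correct estimate. First I would recall that, by Theorem~\ref{teo872} and Proposition~\ref{prop638}, for each $p$ one has $w_{k_2}^{\mathfrak{d}_{p}}=\mathcal{L}^{\mathfrak{d}_{p}}_{q;1/\kappa}(w_{k_1}^{\mathfrak{d}_{p}})$ on the common domain, and $u^{\mathfrak{d}_{p}}(t,z,\epsilon)$ is obtained by applying $q$-Laplace of order $k_2$ in $\tau$ along a ray $L_{\gamma_p}\subseteq S_{\mathfrak{d}_p}$ and then $\mathcal{F}^{-1}$ in $m$. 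Since now $U_{\mathfrak{d}_{p}}\cap U_{\mathfrak{d}_{p+1}}=\emptyset$, the functions $w_{k_2}^{\mathfrak{d}_{p}}$ and $w_{k_2}^{\mathfrak{d}_{p+1}}$ need not glue on an infinite sector; however, they do agree on the bounded region $\mathcal{R}^{b}_{\mathfrak{d}_{p}}\cap\mathcal{R}^{b}_{\mathfrak{d}_{p+1}}$ (both equal $\mathcal{L}_{q;1/\kappa}$ of $w_{k_1}^{\cdot}$ there, and the $w_{k_1}^{\cdot}$ share the germ $w_{k_1}$ near the origin). So I would write
$$u^{\mathfrak{d}_{p+1}}(t,z,\epsilon)-u^{\mathfrak{d}_{p}}(t,z,\epsilon)
=\frac{1}{(2\pi)^{1/2}\pi_{q^{1/k_2}}}\int_{-\infty}^{\infty}\left(\int_{\Gamma}\frac{W(u,m,\epsilon)}{\Theta_{q^{1/k_2}}(u/(\epsilon t))}\frac{du}{u}\right)\exp(izm)\,dm,$$
where $\Gamma$ is obtained by deforming the two rays $L_{\gamma_p}, L_{\gamma_{p+1}}$ towards the origin and closing with a small arc, $W$ being $w_{k_2}^{\mathfrak{d}_{p+1}}$, $w_{k_2}^{\mathfrak{d}_{p}}$ or $w_{k_2}^{\mathfrak{d}_{p},\mathfrak{d}_{p+1}}:=\mathcal{L}_{q;1/\kappa}(w_{k_1})$ on the respective pieces — exactly as in~(\ref{e943}). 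The novelty is that the relevant piece near the origin is now controlled by Lemma~\ref{lema1031} rather than by the raw bound~(\ref{e890}).

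Next I would split $|u^{\mathfrak{d}_{p+1}}-u^{\mathfrak{d}_{p}}|$ into three integrals $J_1,J_2,J_3$ corresponding to the two truncated rays and the connecting arc, in analogy with $I_1,I_2,I_3$ of Proposition~\ref{prop925}. For $J_3$ (the arc over $C_{\rho_{\mathfrak{d}_{p},\mathfrak{d}_{p+1}},\gamma_p,\gamma_{p+1}}$) the integrand involves $w_{k_2}^{\mathfrak{d}_{p},\mathfrak{d}_{p+1}}=\mathcal{L}^{\mathfrak{d}_{p}}_{q;1/\kappa}(w_{k_1}^{\mathfrak{d}_{p}})$, so I can instead push the whole $J_3$-contour down to a circle of radius $\rho_{\mathfrak{d}_{p},\mathfrak{d}_{p+1}}$ in the $\tau$-plane at the $k_1$-level and apply~(\ref{e1020}): this gives a factor $\exp(-\tfrac{\kappa}{2\log q}\log^2|\tau|)|\tau|^{M_p^{\mathcal{L}}}$ on the circle; since $k_1<\kappa$ the $q$-exponential decay of order $\kappa$ is at least as strong as that of order $k_1$, so after carrying this through the outer $q$-Laplace of order $k_2$ in the variable $\epsilon t$ one obtains, via the lower bound~(\ref{e200}) and the kernel identity $-\kappa+\kappa^2/(\kappa-k_1)=k_2$ used in Lemma~\ref{lema403}, a bound of the form $K\exp(-\tfrac{k_1}{2\log q}\log^2|\epsilon|)|\epsilon|^{M}$. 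For $J_1$ and $J_2$ (the truncated rays, where the integrand is the genuine $w_{k_2}^{\mathfrak{d}_{p\pm1}}$ with bound~(\ref{e890})), I would repeat verbatim the chain of estimates~(\ref{e953})--(\ref{e983}) of Proposition~\ref{prop925}: the only difference is that the radius of truncation is now $\rho_{\mathfrak{d}_{p},\mathfrak{d}_{p+1}}$ and the resulting exponential factor is $\exp(-\tfrac{k_2}{2\log q}\log^2|\epsilon|)$, which, since $k_1<k_2$, is dominated by $\exp(-\tfrac{k_1}{2\log q}\log^2|\epsilon|)$, so it is absorbed into the claimed bound. Collecting the three pieces and taking $K_4$ to be the minimum of the relevant exponents $\tilde K^1$, $\tilde K^5$, $M$ yields~(\ref{e1049a}). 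The forcing term estimate follows identically, using that $f^{\mathfrak{d}_{p}}=\mathcal{F}^{-1}(m\mapsto \mathcal{L}^{\mathfrak{d}_{p}}_{q;1/k_2}(\psi_{k_2}^{\mathfrak{d}_{p}})(\epsilon t,m,\epsilon))(z)$, the bound~(\ref{e868}), and the gluing of $\psi_{k_2}^{\mathfrak{d}_{p}}$ over $\mathcal{R}^{b}_{\mathfrak{d}_p}\cap\mathcal{R}^{b}_{\mathfrak{d}_{p+1}}$ coming from Lemma~\ref{lema403} applied to the common $\psi_{k_1}$.

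The step I expect to be the main obstacle is the treatment of $J_3$: one must be careful that the path deformation at the $\kappa$-level is legitimate (the integrand $w_{k_2}^{\mathfrak{d}_{p},\mathfrak{d}_{p+1}}/\Theta_{q^{1/k_2}}(\cdot)$ is holomorphic on $\mathcal{R}^{b}_{\mathfrak{d}_{p}}\cup\mathcal{R}^{b}_{\mathfrak{d}_{p+1}}$ so Cauchy's theorem applies, but one should verify the arc contributions vanish and that $\rho_{\mathfrak{d}_{p},\mathfrak{d}_{p+1}}$ can be chosen uniformly), and that the bookkeeping of the two nested $q$-Laplace transforms correctly converts the order-$\kappa$ Gaussian decay in $\tau$ from~(\ref{e1020}) into order-$k_1$ Gaussian decay in $\epsilon$ — this is precisely the computation behind $-\kappa+\kappa^2/(\kappa-k_1)=k_2$, but now run with the "remaining" order, which after the $k_1/\kappa$ split leaves $k_1$. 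Once that identity is invoked correctly, the rest is routine and parallels Proposition~\ref{prop925}.
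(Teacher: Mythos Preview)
There is a genuine gap in your decomposition. You assert that $w_{k_2}^{\mathfrak{d}_{p}}$ and $w_{k_2}^{\mathfrak{d}_{p+1}}$ agree on $\mathcal{R}^{b}_{\mathfrak{d}_{p}}\cap\mathcal{R}^{b}_{\mathfrak{d}_{p+1}}$, arguing that both equal a $q$-Laplace transform of order $\kappa$ of functions sharing the germ $w_{k_1}$ near the origin. This is false: the transform $\mathcal{L}^{\mathfrak{d}_p}_{q;1/\kappa}(w_{k_1}^{\mathfrak{d}_p})$ integrates along an \emph{infinite} ray inside $U_{\mathfrak{d}_p}$, and since $U_{\mathfrak{d}_p}\cap U_{\mathfrak{d}_{p+1}}=\emptyset$ the two rays cannot be deformed into one another through a region where the integrand is holomorphic. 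If the two $w_{k_2}^{\cdot}$ did coincide there you would be back in the setting of Proposition~\ref{prop925} and obtain $k_2$-order decay, contradicting the very phenomenon this proposition isolates. Lemma~\ref{lema1031} says precisely that the \emph{difference} $\mathcal{L}^{\mathfrak{d}_{p+1}}_{q;1/\kappa}(w_{k_1}^{\mathfrak{d}_{p+1}})-\mathcal{L}^{\mathfrak{d}_{p}}_{q;1/\kappa}(w_{k_1}^{\mathfrak{d}_{p}})$ is nonzero but $q$-exponentially small of order $\kappa$ on the overlap.

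Consequently the three-piece splitting \`a la~(\ref{e943}) does not close up: the two radial segments from $0$ to $\rho_{\mathfrak{d}_p,\mathfrak{d}_{p+1}}$ no longer cancel. The paper's decomposition~(\ref{e1049}) introduces an intermediate direction $\theta_{p,p+1}$ with $\rho_{\mathfrak{d}_p,\mathfrak{d}_{p+1}}e^{i\theta_{p,p+1}}\in\mathcal{R}^{b}_{\mathfrak{d}_{p}}\cap\mathcal{R}^{b}_{\mathfrak{d}_{p+1}}$ and yields \emph{five} pieces: the two truncated rays $J_1,J_2$, two separate arcs $J_3,J_4$ (from $\gamma_{p+1}$, resp.\ $\gamma_p$, to $\theta_{p,p+1}$, each carrying its own $w_{k_2}^{\mathfrak{d}_{\cdot}}$), and a radial segment $J_5$ over $[0,\rho_{\mathfrak{d}_p,\mathfrak{d}_{p+1}}]e^{i\theta_{p,p+1}}$ whose integrand is exactly the difference bounded in~(\ref{e1020}). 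The pieces $J_1$--$J_4$ are treated as in Proposition~\ref{prop925} and give order-$k_2$ decay. For $J_5$ one inserts~(\ref{e1020}) under the $\Theta_{q^{1/k_2}}$ kernel and the Gaussian bookkeeping uses the identity $\tfrac{k_2^2}{\kappa+k_2}-k_2=-k_1$ (not $-\kappa+\kappa^2/(\kappa-k_1)=k_2$): this is what turns the $\kappa$-order decay in $\tau$ into $k_1$-order decay in $\epsilon$. Your proposal to ``push the $J_3$-contour down and apply~(\ref{e1020})'' conflates a pointwise bound on a \emph{difference} with a contour deformation of a \emph{single} integrand, and cannot recover the missing $J_5$ term.
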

\begin{proof}
Let $0\le p\le \varsigma-1$. Under the assumptions of the enunciate, one can not proceed as in the proof of Proposition~\ref{prop925} for there does not exist a common function for both $p$ and $p+1$, defined in $\mathcal{R}^{b}_{\mathfrak{d}_{p}}\cup \mathcal{R}^{b}_{\mathfrak{d}_{p+1}}$ in the variable of integration, when applying q-Laplace transform. However, one can use the analytic continuation property (\ref{e894}) and write the difference $u^{\mathfrak{d}_{p+1}}-u^{\mathfrak{d}_{p}}$ as follows. Let $\rho_{\mathfrak{d}_{p},\mathfrak{d}_{p+1}}$ be such that $\rho_{\mathfrak{d}_{p},\mathfrak{d}_{p+1}}e^{i\gamma_{p}}\in\mathcal{R}^{b}_{\mathfrak{d}_{p}}$ and $\rho_{\mathfrak{d}_{p},\mathfrak{d}_{p+1}}e^{i\gamma_{p+1}}\in\mathcal{R}^{b}_{\mathfrak{d}_{p+1}}$, and let $\theta_{p,p+1}\in\R$ be such that $\rho_{\mathfrak{d}_{p},\mathfrak{d}_{p+1}}e^{\theta_{p,p+1}}$ lies in both $\mathcal{R}^{b}_{\mathfrak{d}_{p}}$ and  $\mathcal{R}^{b}_{\mathfrak{d}_{p+1}}$. We write
$$u^{\mathfrak{d}_{p+1}}(t,z,\epsilon)-u^{\mathfrak{d}_{p}}(t,z,\epsilon)$$
\begin{align}
&=\frac{1}{(2\pi)^{1/2}}\frac{1}{\pi_{q^{1/k_2}}}\int_{-\infty}^{\infty}\int_{L_{\gamma_{p+1},\rho_{\mathfrak{d}_{p},\mathfrak{d}_{p+1}}}}\frac{w^{\mathfrak{d}_{p+1}}_{k_2}(u,m,\epsilon)}{\Theta_{q^{1/k_2}}\left(\frac{u}{\epsilon t}\right)}\exp(izm)\frac{du}{u}dm,\hspace{3cm}\nonumber\\ 
&\hspace{2cm}-\frac{1}{(2\pi)^{1/2}}\frac{1}{\pi_{q^{1/k_2}}}\int_{-\infty}^{\infty}\int_{L_{\gamma_p,\rho_{\mathfrak{d}_{p},\mathfrak{d}_{p+1}}}}\frac{w^{\mathfrak{d}_{p}}_{k_2}(u,m,\epsilon)}{\Theta_{q^{1/k_2}}\left(\frac{u}{\epsilon t}\right)}\exp(izm)\frac{du}{u}dm\nonumber\\
&\hspace{4cm}-\frac{1}{(2\pi)^{1/2}}\frac{1}{\pi_{q^{1/k_2}}}\int_{-\infty}^{\infty}\int_{C_{\rho_{\mathfrak{d}_{p},\mathfrak{d}_{p+1}},\theta_{p,p+1},\gamma_{p+1}}}\frac{w^{\mathfrak{d}_{p},\mathfrak{d}_{p+1}}_{k_2}(u,m,\epsilon)}{\Theta_{q^{1/k_2}}\left(\frac{u}{\epsilon t}\right)}\exp(izm)\frac{du}{u}dm\nonumber\\
&\hspace{4cm}+\frac{1}{(2\pi)^{1/2}}\frac{1}{\pi_{q^{1/k_2}}}\int_{-\infty}^{\infty}\int_{C_{\rho_{\mathfrak{d}_{p},\mathfrak{d}_{p+1}},\theta_{p,p+1},\gamma_{p}}}\frac{w^{\mathfrak{d}_{p},\mathfrak{d}_{p+1}}_{k_2}(u,m,\epsilon)}{\Theta_{q^{1/k_2}}\left(\frac{u}{\epsilon t}\right)}\exp(izm)\frac{du}{u}dm\nonumber\\
&+\frac{1}{(2\pi)^{1/2}}\frac{1}{\pi_{q^{1/k_2}}}\int_{-\infty}^{\infty}\int_{L_{0,\rho_{\mathfrak{d}_{p},\mathfrak{d}_{p+1}},\theta_{p,p+1}}}\frac{\mathcal{L}_{q;1/\kappa}^{\mathfrak{d}_{p+1}}(w_{k_1}^{\mathfrak{d}_{p+1}})(\tau,m,\epsilon)-\mathcal{L}_{q;1/\kappa}^{\mathfrak{d}_{p}}(w_{k_1}^{\mathfrak{d}_{p}})(\tau,m,\epsilon)}{\Theta_{q^{1/k_2}}\left(\frac{u}{\epsilon t}\right)}\exp(izm)\frac{du}{u}dm.
\label{e1049}
\end{align}
Here, we have denoted $L_{\gamma_{j},\rho_{\mathfrak{d}_{p},\mathfrak{d}_{p+1}}}=[\rho_{\mathfrak{d}_{p},\mathfrak{d}_{j}},+\infty)e^{i\gamma_{j}}$ for $j\in\{p,p+1\}$, $C_{\rho_{\mathfrak{d}_{p},\mathfrak{d}_{p+1}},\theta_{p,p+1},\gamma_{p+1}}$ is the arc of circle connecting $\rho_{\mathfrak{d}_{p},\mathfrak{d}_{p+1}}e^{i\gamma_{p+1}}$ with $\rho_{\mathfrak{d}_{p},\mathfrak{d}_{p+1}}e^{i\theta_{p,p+1}}$, $C_{\rho_{\mathfrak{d}_{p},\mathfrak{d}_{p+1}},\theta_{p,p+1},\gamma_{p}}$ is the arc of circle connecting $\rho_{\mathfrak{d}_{p},\mathfrak{d}_{p+1}}e^{i\gamma_{p}}$ with $\rho_{\mathfrak{d}_{p},\mathfrak{d}_{p+1}}e^{i\theta_{p,p+1}}$, $L_{0,\rho_{\mathfrak{d}_{p},\mathfrak{d}_{p+1}},\theta_{p,p+1}}=[0,\rho_{\mathfrak{d}_{p},\mathfrak{d}_{p+1}}]e^{i\theta_{p,p+1}}$, as it is shown in Figure 2.

\begin{figure}[h]
	\centering
		\includegraphics[width=0.50\textwidth]{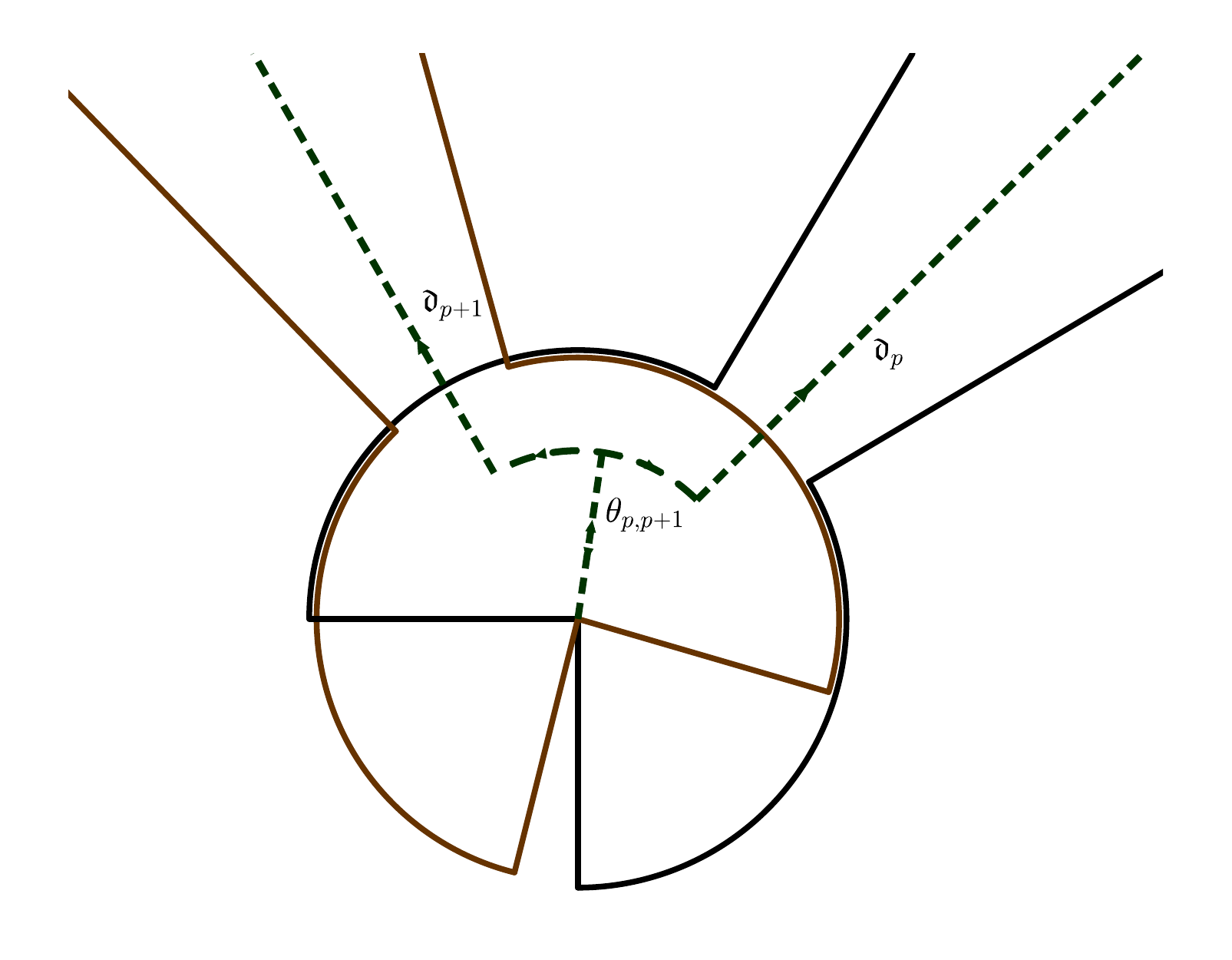}
	\label{fig:conf1}
	\caption{Deformation of the path of integration, second case.}
\end{figure}

Following the same line of arguments as those in the proof of Proposition~\ref{prop925}, we can guarantee the existence of $\hat{K}^{j}>0$ and $\hat{K}^{k}\in\R$ for $1\le j\le 4$ and $5\le k\le 8$ such that

\begin{align}
&J_{1}:=\left|\frac{1}{(2\pi)^{1/2}}\frac{1}{\pi_{q^{1/k_2}}}\int_{-\infty}^{\infty}\int_{L_{\gamma_{p+1},\rho_{\mathfrak{d}_{p},\mathfrak{d}_{p+1}}}}\frac{w^{\mathfrak{d}_{p+1}}_{k_2}(u,m,\epsilon)}{\Theta_{q^{1/k_2}}\left(\frac{u}{\epsilon t}\right)}\exp(izm)\frac{du}{u}dm\right|\nonumber\\
&\hspace{6cm}\le \hat{K}_{1}\exp\left(-\frac{k_2}{2\log(q)}\log^2|\epsilon|\right)|\epsilon|^{\hat{K}^{5}},\nonumber\\ 
&J_{2}:=\left|\frac{1}{(2\pi)^{1/2}}\frac{1}{\pi_{q^{1/k_2}}}\int_{-\infty}^{\infty}\int_{L_{\gamma_p,\rho_{\mathfrak{d}_{p},\mathfrak{d}_{p+1}}}}\frac{w^{\mathfrak{d}_{p}}_{k_2}(u,m,\epsilon)}{\Theta_{q^{1/k_2}}\left(\frac{u}{\epsilon t}\right)}\exp(izm)\frac{du}{u}dm\right|\nonumber\\
&\hspace{6cm}\le \hat{K}_{2}\exp\left(-\frac{k_2}{2\log(q)}\log^2|\epsilon|\right)|\epsilon|^{\hat{K}^{6}},\nonumber\\
&J_{3}:=\left|\frac{1}{(2\pi)^{1/2}}\frac{1}{\pi_{q^{1/k_2}}}\int_{-\infty}^{\infty}\int_{C_{\rho_{\mathfrak{d}_{p},\mathfrak{d}_{p+1}},\theta_{p,p+1},\gamma_{p+1}}}\frac{w^{\mathfrak{d}_{p},\mathfrak{d}_{p+1}}_{k_2}(u,m,\epsilon)}{\Theta_{q^{1/k_2}}\left(\frac{u}{\epsilon t}\right)}\exp(izm)\frac{du}{u}dm\right|\nonumber\\
&\hspace{6cm}\le \hat{K}_{3}\exp\left(-\frac{k_2}{2\log(q)}\log^2|\epsilon|\right)|\epsilon|^{\hat{K}^{7}},\nonumber\\
&J_{4}:=\left|\frac{1}{(2\pi)^{1/2}}\frac{1}{\pi_{q^{1/k_2}}}\int_{-\infty}^{\infty}\int_{C_{\rho_{\mathfrak{d}_{p},\mathfrak{d}_{p+1}},\theta_{p,p+1},\gamma_{p}}}\frac{w^{\mathfrak{d}_{p},\mathfrak{d}_{p+1}}_{k_2}(u,m,\epsilon)}{\Theta_{q^{1/k_2}}\left(\frac{u}{\epsilon t}\right)}\exp(izm)\frac{du}{u}dm\right|\nonumber\\
&\hspace{6cm}\le \hat{K}_{4}\exp\left(-\frac{k_2}{2\log(q)}\log^2|\epsilon|\right)|\epsilon|^{\hat{K}^{8}}\label{e1080}
\end{align}  

We now give estimates for
\begin{equation}\label{e1088}
J_{5}:=\left|\frac{1}{(2\pi)^{1/2}}\frac{1}{\pi_{q^{1/k_2}}}\int_{-\infty}^{\infty}\int_{L_{0,\rho_{\mathfrak{d}_{p},\mathfrak{d}_{p+1}},\theta_{p,p+1}}}\frac{\mathcal{L}_{q;1/\kappa}^{\mathfrak{d}_{p+1}}(w_{k_1}^{\mathfrak{d}_{p+1}})(u,m,\epsilon)-\mathcal{L}_{q;1/\kappa}^{\mathfrak{d}_{p}}(w_{k_1}^{\mathfrak{d}_{p}})(u,m,\epsilon)}{\Theta_{q^{1/k_2}}\left(\frac{u}{\epsilon t}\right)}\exp(izm)\frac{du}{u}dm\right|.
\end{equation}
In view of Lemma~\ref{lema1031} and (\ref{e200}), one has 
$$J_{5}\le\frac{K_p^{\mathcal{L}}}{(2\pi)^{1/2}}\frac{1}{\pi_{q^{1/k_2}}}\int_{-\infty}^{\infty}e^{-\beta|m|-\Im(z)m}\frac{dm}{(1+|m|)^{\mu}}\int_{0}^{\rho_{\mathfrak{d}_{p},\mathfrak{d}_{p+1}}}\frac{\exp\left(-\frac{\kappa}{2\log(q)}\log^2|u|\right)|u|^{M_p^{\mathcal{L}}}}{C_{q,k_2}\tilde{\delta}\exp\left(\frac{k_2}{2}\frac{\log^2\left|\frac{u}{\epsilon t}\right|}{\log(q)}\right)\left|\frac{u}{\epsilon t}\right|^{1/2}}\frac{d|u|}{|u|}.$$
We recall that $z\in H_{\beta'}$ for some $\beta'<\beta$. Then, there exists $K_{31}>0$ such that
$$J_{5}\le\frac{K_p^{\mathcal{L}}K_{31}}{(2\pi)^{1/2}}\frac{|\epsilon|^{1/2}r_{\mathcal{T}}^{1/2}}{\pi_{q^{1/k_2}}C_{q,k_2}\tilde{\delta}}\int_{0}^{\rho_{\mathfrak{d}_{p},\mathfrak{d}_{p+1}}}\frac{\exp\left(-\frac{\kappa}{2\log(q)}\log^2|u|\right)|u|^{M_p^{\mathcal{L}}}}{\exp\left(\frac{k_2}{2}\frac{\log^2\left|\frac{u}{\epsilon t}\right|}{\log(q)}\right)}\frac{d|u|}{|u|^{3/2}}.$$

We now proceed to prove the expression
$$\int_{0}^{\rho_{\mathfrak{d}_{p},\mathfrak{d}_{p+1}}}\frac{\exp\left(-\frac{\kappa}{2\log(q)}\log^2|u|\right)}{\exp\left(\frac{k_2}{2}\frac{\log^2\left|\frac{u}{\epsilon t}\right|}{\log(q)}\right)}\exp\left(\frac{k_1}{2\log(q)}\log^2|\epsilon|\right)\frac{d|u|}{|u|^{3/2-M_p^{\mathcal{L}}}}$$
is upper bounded by a positive constant times a certain power of $|\epsilon|$ for every $\epsilon\in(\mathcal{E}_{p}\cap \mathcal{E}_{p+1})$ and $t\in\mathcal{T}$. This concludes the existence of $K_{32}>0$ such that
\begin{equation}\label{e1109}J_{5}\le K_{32}|\epsilon|^{1/2}\exp\left(-\frac{k_1}{2\log(q)}\log^2|\epsilon|\right),
\end{equation}
for every $\epsilon\in(\mathcal{E}_{p}\cap \mathcal{E}_{p+1})$, $t\in\mathcal{T}$ and $z\in H_{\beta'}$.

Indeed, we have 
$$\int_{0}^{\rho_{\mathfrak{d}_{p},\mathfrak{d}_{p+1}}}\frac{\exp\left(-\frac{\kappa}{2\log(q)}\log^2|u|\right)}{\exp\left(\frac{k_2}{2}\frac{\log^2\left(\frac{|u|}{|\epsilon t|}\right)}{\log(q)}\right)}\exp\left(\frac{k_1}{2\log(q)}\log^2|\epsilon |\right)\frac{d|u|}{|u|^{3/2-M_p^{\mathcal{L}}}}$$
equals
\begin{equation}\label{e1111}
\exp\left(\frac{k_1}{2\log(q)}\log^2|\epsilon|-\frac{k_2}{2\log(q)}\log^2|\epsilon t|\right)\int_{0}^{\rho_{\mathfrak{d}_{p},\mathfrak{d}_{p+1}}}\exp\left(-\frac{(\kappa+k_2)}{2\log(q)}\log^2|u|\right)|u|^{\frac{k_2\log|\epsilon t|}{\log(q)}-\frac{3}{2}+M_p^{\mathcal{L}}}d|u|.
\end{equation}
Given $m_1\in\R$ and $m_2>0$, the function $[0,\infty)\ni x\mapsto H(x)=x^{m_1}\exp(-m_2\log^2(x))$ attains its maximum value at $x_0=\exp(\frac{m_1}{2m_2})$ with $H(x_0)=\exp(\frac{m_1^2}{4m_2})$. This yields and upper bound for the integrand in (\ref{e1111}); the expression in (\ref{e1111}) is estimated from above by
\begin{align}
&\rho_{\mathfrak{d}_{p},\mathfrak{d}_{p+1}}\exp\left(\frac{(M_{p}^{\mathcal{L}}-3/2)^2\log(q)}{2(\kappa+k_2)}\right) \exp\left(\frac{1}{2\log(q)}(\frac{k_2^2}{\kappa+k_2}-k_2+k_1)\log^2|\epsilon|\right)|\epsilon|^{\frac{k_2(M_{p}^{\mathcal{L}}-3/2)}{\kappa+k_2}}\nonumber\\
&\times\exp\left(\frac{1}{2\log(q)}(\frac{k_2^2}{\kappa+k_2}-k_2)\log^2|t|\right)|t|^{\frac{k_2(M_{p}^{\mathcal{L}}-3/2)}{\kappa+k_2}}\nonumber\\
&\times \exp\left(\frac{1}{\log(q)}(\frac{k_2^2}{\kappa+k_2}-k_2)\log|\epsilon|\log|t|\right).\label{e1156}
\end{align}
The second line in (\ref{e1156}) is upper bounded for every $t$ because $\frac{k_2^2}{\kappa+k_2}<k_2$ and also, one has an upper bound for the third line in (\ref{e1156}) is 1. Regarding (\ref{e824}), and taking into account that
$$\frac{k_2^2}{\kappa+k_2}-k_2=-k_1,$$
the expression (\ref{e1156}) is upper bounded by
$$K_{33}|\epsilon|^{\frac{k_2(M_{p}^{\mathcal{L}}-3/2)}{\kappa+k_2}}$$
for some $K_{33}>0$. The conclusion is achieved.

The result follows from (\ref{e1049}), (\ref{e1080}) and (\ref{e1109}).

The proof for the estimates of $f^{\mathfrak{d}_{p}}$ is analogous as that for $u^{\mathfrak{d}_{p}}$. In this case, one has to take into account (\ref{e859}), (\ref{e868}) and (\ref{e876}) to apply the same arguments as above.

\end{proof}

\textbf{Example:} Let $A>0$. An example of problem under study in this work is given by the equation
\begin{align*}
\partial_z(\partial_z+iA)^2u(qt,z,\epsilon)&=(\epsilon t)^3\partial_zu(q^{5/2}t,z,\epsilon)+t\epsilon c_{1,1}(z,\epsilon)u(qt,z,\epsilon)\\
&+t^2\epsilon^3 c_{1,2}(z,\epsilon)\partial_{z}u(q^2t,z,\epsilon)+f(qt,z,\epsilon),
\end{align*}

with $c_{1,1},c_{1,2}$ and $f$ constructed following the procedure described at the beginning of Section~\ref{seccion5}.
 
Here, $D=3$, $R_{1}=1$, $R_{2}=R_{3}(x)=x$ and $Q(x)=(x+iA)^2R_{3}(x)$.
Every condition on the constants are satisfied. Also, we observe that one can choose large enough $A>0$ in order to choose large enough $r_{Q,r_{D}}>0$, with the only forbidden direction given by the negative real ray. 

\section{Existence of formal series solutions in the complex parameter and asymptotic expansion in two levels}\label{seccion6}

In the first part of this section, we develop a two-level q-analog of Ramis-Sibuya theorem. This result provides the tool to guarantee the existence of a formal power series in the perturbation parameter which formally solves the main problem and such that it asymptotically represents the analytic solution of that equation.

This asymptotic representation is held in the sense of $1-$asymptotic expansions of certain positive order.

\begin{defin}
Let $V$ be a bounded open sector with vertex at 0 in $\C$. Let $(\mathbb{F},\left\|\cdot\right\|_{\mathbb{F}})$ be a complex Banach space. Let $q\in\R$ with $q>1$ and let $k$ be a positive integer. We say that a holomorphic function $f:V\to\mathbb{F}$ admits the formal power series $\hat{f}(\epsilon)=\sum_{n\ge0}f_n\epsilon^n\in\mathbb{F}[[\epsilon]]$ as its $q-$Gevrey asymptotic expansion of order $1/k$ if for every open subsector $U$ with $(\overline{U}\setminus\{0\})\subseteq V$, there exist $A,C>0$ such that
$$\left\|f(\epsilon)-\sum_{n=0}^{N}f_n\epsilon^n\right\|_{\mathbb{F}}\le CA^{N+1}q^{\frac{N(N+1)}{2k}}|\epsilon|^{N+1},$$
for every $\epsilon\in U$, and $N\ge0$.
\end{defin}

The set of functions which admit null q-Gevrey asymptotic expansion of certain positive order are characterized as follows. The proof of this result, already stated in~\cite{ma15}, provides the $q-$analog of Theorem XI-3-2 in~\cite{hssi}.

\begin{lemma}
A holomorphic function $f:V\to\mathbb{F}$ admits the null formal power series $\hat{0}\in\mathbb{F}[[\epsilon]]$ as its $q-$Gevrey asymptotic expansion of order $1/k$ if and only if for every open subsector $U$ with $(\overline{U}\setminus\{0\})\subseteq V$ there exist constants $K_1\in\R$ and $K_2>0$ with
$$\left\|f(\epsilon)\right\|_{\mathbb{F}}\le K_2\exp\left(-\frac{k}{2\log(q)}\log^2|\epsilon|\right)|\epsilon|^{K_1},$$
for all $\epsilon\in U$.
\end{lemma}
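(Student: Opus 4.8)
The plan is to prove the two directions of the equivalence separately, both by elementary estimates on the $q$-Gevrey tails, using the explicit closed form $\exp\bigl(-\tfrac{k}{2\log q}\log^2|\epsilon|\bigr)$ as the natural bridge between the two formulations.

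For the ``only if'' direction, I would start from the assumption that $f$ admits $\hat 0$ as its $q$-Gevrey asymptotic expansion of order $1/k$, so that on a fixed subsector $U$ there are $A,C>0$ with $\left\|f(\epsilon)\right\|_{\mathbb{F}}\le CA^{N+1}q^{N(N+1)/(2k)}|\epsilon|^{N+1}$ for all $N\ge 0$. The idea is to optimize over $N$: write the right-hand side as $\exp\bigl(\log C+(N+1)\log(A|\epsilon|)+\tfrac{N(N+1)}{2k}\log q\bigr)$ and minimize the exponent in $N$. Treating $N$ as a continuous variable, the minimizer is roughly $N+1\approx -\tfrac{k\log(A|\epsilon|)}{\log q}$, which is nonnegative once $|\epsilon|$ is small enough that $A|\epsilon|<1$ (and if the optimal $N$ is negative one simply uses $N=0$, which handles the region where $|\epsilon|$ is bounded below inside $U$ by absorbing constants). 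Substituting this value back gives an exponent of the form $-\tfrac{k}{2\log q}\log^2|\epsilon|+K_1\log|\epsilon|+(\text{const})$ for a suitable $K_1\in\R$ depending on $A,q,k$, which is exactly the claimed bound $\left\|f(\epsilon)\right\|_{\mathbb{F}}\le K_2\exp\bigl(-\tfrac{k}{2\log q}\log^2|\epsilon|\bigr)|\epsilon|^{K_1}$ after rounding $N$ to the nearest integer and absorbing the rounding error into $K_2$.

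For the ``if'' direction, I would assume $\left\|f(\epsilon)\right\|_{\mathbb{F}}\le K_2\exp\bigl(-\tfrac{k}{2\log q}\log^2|\epsilon|\bigr)|\epsilon|^{K_1}$ on each subsector $U$ and must produce, for every $N\ge 0$, a bound $\left\|f(\epsilon)\right\|_{\mathbb{F}}\le CA^{N+1}q^{N(N+1)/(2k)}|\epsilon|^{N+1}$. Equivalently, dividing through, I need $\exp\bigl(-\tfrac{k}{2\log q}\log^2|\epsilon|\bigr)|\epsilon|^{K_1-N-1}\le C'A^{N+1}q^{N(N+1)/(2k)}$ uniformly in $\epsilon\in U$ and $N\ge 0$. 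Fixing $N$, the left-hand side is a function of $|\epsilon|$ of the form $\exp\bigl(-\tfrac{k}{2\log q}\log^2 x+(K_1-N-1)\log x\bigr)$, which attains its supremum over $x>0$ at $\log x=\tfrac{(K_1-N-1)\log q}{k}$, with maximal value $\exp\bigl(\tfrac{(K_1-N-1)^2\log q}{2k}\bigr)$. Expanding the square, this is $\exp\bigl(\tfrac{(N+1)^2\log q}{2k}+(\text{linear in }N)\log q/k+(\text{const})\bigr)$; since $(N+1)^2=N(N+1)+(N+1)$, the dominant factor is precisely $q^{N(N+1)/(2k)}$ times $q^{(N+1)/(2k)}$ (which is of the form $A_0^{N+1}$) times an exponential of a constant plus a linear-in-$N$ term with coefficient $\log q/k$, again absorbable into $A^{N+1}$. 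Choosing $A$ large enough to dominate all these geometric factors and $C$ to absorb the constant, the required inequality holds for every $N\ge 0$, which is the definition of null $q$-Gevrey asymptotic expansion of order $1/k$.

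The main obstacle, and the only point requiring genuine care, is the discretization in the first direction: the optimal $N$ produced by the continuous minimization is generally not an integer, so one must verify that rounding to $\lfloor N\rfloor$ or $\lceil N\rceil$ changes the exponent only by a quantity bounded by a linear function of $\log|\epsilon|$ plus a constant, uniformly in $\epsilon$, so that it can be swallowed into the $|\epsilon|^{K_1}$ factor and into $K_2$; one must also separately treat the regime where the continuous optimum is negative (i.e.\ $|\epsilon|$ not small, where the sector $U$ keeps $|\epsilon|$ bounded below and $N=0$ suffices). Everything else is a routine completion of the square and comparison of exponentials, and this argument is the direct $q$-analog of the classical Gevrey characterization (Theorem XI-3-2 in~\cite{hssi}) with $q^{N(N+1)/(2k)}$ in place of $(N!)^{1/k}$ and $\exp\bigl(-\tfrac{k}{2\log q}\log^2|\epsilon|\bigr)$ in place of $\exp(-c|\epsilon|^{-k})$.
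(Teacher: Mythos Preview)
Your proposal is correct and follows the standard approach. The paper itself does not give a proof of this lemma but defers to~\cite{ma15}, noting only that it is the $q$-analog of Theorem~XI-3-2 in~\cite{hssi}; your argument---optimize the quadratic-in-$N$ exponent by completing the square, then handle the integer rounding and the non-small-$|\epsilon|$ regime separately---is exactly that $q$-analog, so there is nothing substantively different to compare.
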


We recall the one-level version of the $q-$analog of Ramis-Sibuya theorem proved in~\cite{ma15}.

\begin{theo}($q-$RS)\label{teoqrs}
Let $(\mathbb{F},\left\|\cdot\right\|_{\mathbb{F}})$ be a Banach space and $(\mathcal{E}_{p})_{0\le p\le \varsigma-1}$ be a good covering in $\C^{\star}$. For every $0\le p\le \varsigma-1$, let $G_{p}(\epsilon)$ be a holomorphic function from $\mathcal{E}_{p}$ into $\mathbb{F}$ and let the cocycle $\Delta_{p}(\epsilon)=G_{p+1}(\epsilon)-G_{p}(\epsilon)$ be a holomorphic function from $Z_{p}=\mathcal{E}_{p}\cap\mathcal{E}_{p+1}$ into $\mathbb{F}$ (we put $\mathcal{E}_{\varsigma}=\mathcal{E}_{0}$ and $G_{\varsigma}=G_{0}$). We also make the further assumptions:
\begin{enumerate}
\item[\textbf{1)}] The functions $G_p(\epsilon)$ are bounded as $\epsilon$ tends to 0 on $\mathcal{E}_{p}$ for every $0\le p\le \varsigma -1$.
\item[\textbf{2)}] For all $0\le p\le \varsigma-1$, the function $\Delta_{p}(\epsilon)$ is $q-$exponentially flat of order $k$ on $Z_p$, i.e. there exist constants $C_{p}^1\in\R$ and $C_p^2>0$ such that
$$\left\|\Delta_p(\epsilon)\right\|_{\mathbb{F}}\le C_p^2|\epsilon|^{C_p^1}\exp\left(-\frac{k}{2\log(q)}\log^2|\epsilon|\right),$$
for every $\epsilon\in Z_p$, all $0\le p\le \varsigma-1$.
\end{enumerate}
Then, there exists a formal power series $\hat{G}(\epsilon)\in\mathbb{F}[[\epsilon]]$ which is the common $q-$Gevrey asymptotic expansion of order $1/k$ of the functions $G_p(\epsilon)$ on $\mathcal{E}_{p}$, which is common for all $0\le p\le \varsigma-1$.
\end{theo}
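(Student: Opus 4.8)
\medskip\noindent\textbf{Proof (sketch / plan).} The plan is to run the classical Ramis--Sibuya argument, with exponential flatness replaced by $q$-exponential flatness of order $k$ and with the Gevrey remainder estimates replaced by $q$-Gevrey ones of order $1/k$. The backbone is a Cauchy--Heine decomposition of the cocycle $(\Delta_p)_{0\le p\le\varsigma-1}$: I would construct holomorphic maps $\Psi_p:\mathcal{E}_p\to\mathbb{F}$, each admitting a $q$-Gevrey asymptotic expansion of order $1/k$, and satisfying $\Psi_{p+1}(\epsilon)-\Psi_p(\epsilon)=\Delta_p(\epsilon)$ on $Z_p$.

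For the construction, fix for each $l$ a segment $L_l=[0,\rho_l]e^{\sqrt{-1}\theta_l}$ running into $Z_l=\mathcal{E}_l\cap\mathcal{E}_{l+1}$ and set
$$
\Psi_p(\epsilon)=\frac{1}{2\pi\sqrt{-1}}\sum_{l}\int_{L_l}\frac{\Delta_l(\xi)}{\xi-\epsilon}\,d\xi ,
$$
where $l$ ranges over an appropriate $p$-dependent subset of $\{0,\dots,\varsigma-1\}$, chosen exactly as in the classical Cauchy--Heine construction so that $\Psi_p$ is holomorphic on $\mathcal{E}_p$ and the jump across $L_p$ recovers $\Delta_p$ (indices mod $\varsigma$, with $\mathcal{E}_\varsigma=\mathcal{E}_0$, $\Delta_\varsigma=\Delta_0$); the Sokhotski--Plemelj jump relation together with a contour deformation using the holomorphy of $\Delta_l$ on $Z_l$ gives $\Psi_{p+1}-\Psi_p=\Delta_p$ on $Z_p$. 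Writing $\frac{1}{\xi-\epsilon}=\sum_{n=0}^{N}\epsilon^n\xi^{-n-1}+\epsilon^{N+1}\xi^{-N-1}(\xi-\epsilon)^{-1}$ produces the candidate expansion $\hat{\Psi}_p(\epsilon)=\sum_{n\ge0}\psi_{p,n}\epsilon^n$, with $\psi_{p,n}=\frac{1}{2\pi\sqrt{-1}}\sum_{l}\int_{L_l}\Delta_l(\xi)\xi^{-n-1}\,d\xi$, and an explicit integral remainder.

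The main obstacle is the $q$-Gevrey control of that remainder. Substituting the flatness bound $\|\Delta_l(\xi)\|_{\mathbb{F}}\le C_l^2|\xi|^{C_l^1}\exp\!\big(-\tfrac{k}{2\log q}\log^2|\xi|\big)$ and estimating $|\xi-\epsilon|^{-1}$ on a narrowed subsector reduces matters to bounding integrals of the shape $\int_0^{\rho_l}t^{C_l^1-N-1}\exp\!\big(-\tfrac{k}{2\log q}\log^2 t\big)\,dt$. Maximizing the integrand $t\mapsto t^{m}\exp\!\big(-\tfrac{k}{2\log q}\log^2 t\big)$ over $t>0$ — the same optimization carried out repeatedly in the body of the paper, e.g. in the proof of Lemma~\ref{lema403} — yields a factor $\exp\!\big(\tfrac{\log q}{2k}m^2\big)$ with $m\asymp N$, i.e. precisely the $q$-Gevrey weight $q^{N(N+1)/(2k)}$. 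One thus obtains $\big\|\Psi_p(\epsilon)-\sum_{n=0}^{N}\psi_{p,n}\epsilon^n\big\|_{\mathbb{F}}\le CA^{N+1}q^{N(N+1)/(2k)}|\epsilon|^{N+1}$ on every proper subsector, so $\Psi_p$ admits $\hat{\Psi}_p$ as $q$-Gevrey asymptotic expansion of order $1/k$; in particular each $\Psi_p$ is bounded near $0$. This is the $q$-analog of the classical estimate $\int_0^\rho t^{-N}e^{-t^{-k}}\,dt\lesssim\Gamma(N/k)$ and is the only genuinely new computation.

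To finish, set $a_p:=G_p-\Psi_p$ on $\mathcal{E}_p$. Then $a_{p+1}-a_p=\Delta_p-(\Psi_{p+1}-\Psi_p)=0$ on $Z_p$, so the $a_p$ glue into a single holomorphic function $a$ on $\mathcal{U}\setminus\{0\}$; by hypothesis~\textbf{1)} and the boundedness of the $\Psi_p$ it is bounded near $0$, hence extends holomorphically across $0$ and has a convergent Taylor series $\hat a(\epsilon)$, trivially its $q$-Gevrey-$1/k$ expansion on any subsector. Therefore $G_p=a+\Psi_p$ admits $\hat G_p:=\hat a+\hat\Psi_p$ as $q$-Gevrey asymptotic expansion of order $1/k$ on $\mathcal{E}_p$. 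Finally $\Psi_{p+1}-\Psi_p=\Delta_p$ is $q$-exponentially flat of order $k$, so by the Lemma characterizing the null $q$-Gevrey expansion it admits $\hat0$ as its $q$-Gevrey-$1/k$ expansion; by uniqueness of asymptotic expansions $\hat\Psi_{p+1}=\hat\Psi_p$ for all $p$, whence $\hat G_{p+1}=\hat G_p$. Calling this common series $\hat G(\epsilon)\in\mathbb{F}[[\epsilon]]$ gives the assertion.
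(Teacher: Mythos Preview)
Your sketch is correct and follows exactly the approach the paper relies on: the paper does not reprove Theorem~\ref{teoqrs} in full but cites \cite{ma15}, and the strategy there---made explicit in Lemma~\ref{lema1226}, which the paper quotes verbatim in the proof of the two-level Theorem~\ref{teo1215}---is precisely your Cauchy--Heine construction of the $\Psi_p$, the $q$-Gevrey remainder estimate via the supremum of $t^m\exp(-\tfrac{k}{2\log q}\log^2 t)$, and the gluing of $a_p=G_p-\Psi_p$ into a convergent series. Nothing is missing.
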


The next result leans on the one level version of the $q-$analog of Ramis Sibuya theorem, and states a two level result in this framework. It is straight to generalize this result to a higher number of levels, but for practical purposes, we develop it just in two. 

\begin{theo}\label{teo1215}
Let $(\mathbb{F},\left\|\cdot\right\|_{\mathbb{F}})$ be a Banach space and $(\mathcal{E}_{p})_{0\le p\le \varsigma-1}$ be a good covering in $\C^{\star}$. Let $0<k_1<k_2$, consider a holomorphic function $G_{p}:\mathcal{E}_{i}\to\mathbb{F}$ for every $0\le p\le \varsigma-1$ and put $\Delta_{p}(\epsilon)=G_{p+1}(\epsilon)-G_{p}( \epsilon)$ for every $\epsilon\in Z_{p}:=\mathcal{E}_{p}\cap\mathcal{E}_{p+1}$. Moreover, we assume:
\begin{enumerate}
\item[\textbf{1)}] The functions $G_p(\epsilon)$ are bounded as $\epsilon$ tends to 0 on $\mathcal{E}_{p}$ for every $0\le p\le \varsigma -1$.
\item[\textbf{2)}] There exist nonempty sets $I_1,I_2\subseteq\{0,1,...,\varsigma-1\}$ such that $I_1\cup I_2=\{0,1,...,\varsigma-1\}$ and $I_1\cap I_2=\emptyset$. Also,
\begin{itemize}
\item[-] for every $p\in I_1$ there exist constants $K_1>0$, $M_1\in\R$ such that
$$\left\|\Delta_p(\epsilon)\right\|_{\mathbb{F}}\le K_1|\epsilon|^{M_1}\exp\left(-\frac{k_1}{2\log(q)}\log^2|\epsilon|\right),\quad \epsilon\in Z_p,$$
\item[-] and, for every $p\in I_2$ there exist constants $K_2>0$, $M_2\in\R$ such that
$$\left\|\Delta_p(\epsilon)\right\|_{\mathbb{F}}\le K_2|\epsilon|^{M_2}\exp\left(-\frac{k_2}{2\log(q)}\log^2|\epsilon|\right),\quad \epsilon\in Z_p.$$
\end{itemize}
\end{enumerate}
Then, there exists a convergent power series $a(\epsilon)\in\mathbb{F}\{\epsilon\}$ defined on some neighborhood of the origin and $\hat{G}^1(\epsilon),\hat{G}^2(\epsilon)\in\mathbb{F}[[\epsilon]]$ such that $G_p$ can be written in the form
$$G_p(\epsilon)=a(\epsilon)+G_{p}^{1}(\epsilon)+G_{p}^{2}(\epsilon).$$
$G_{p}^1(\epsilon)$ is holomorphic on $\mathcal{E}_{p}$ and admits $\hat{G}^{1}(\epsilon)$ as its $q-$Gevrey asymptotic expansion of order $1/k_1$ on $\mathcal{E}_{p}$, for every $p\in I_1$; whilst $G_{p}^2(\epsilon)$ is holomorphic on $\mathcal{E}_{p}$ and admits $\hat{G}^{2}(\epsilon)$ as its $q-$Gevrey asymptotic expansion of order $1/k_2$ on $\mathcal{E}_{p}$, for every $p\in I_2$.
\end{theo}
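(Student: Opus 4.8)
The plan is to reduce the two-level statement to two successive applications of the one-level $q$-analog of Ramis--Sibuya theorem (Theorem~\ref{teoqrs}). The key device is a \emph{$q$-Gevrey splitting of cocycles}: given the two families of indices $I_1,I_2$ controlling the flatness order of the cocycles $\Delta_p$, I will manufacture auxiliary cocycles $\Delta_p^1$ and $\Delta_p^2$ with $\Delta_p=\Delta_p^1+\Delta_p^2$, where $\Delta_p^1$ is $q$-exponentially flat of order $k_1$ on every $Z_p$ and $\Delta_p^2$ is $q$-exponentially flat of order $k_2$ on every $Z_p$. The natural choice is $\Delta_p^2:=\Delta_p$ and $\Delta_p^1:=0$ for $p\in I_2$, and $\Delta_p^1:=\Delta_p$, $\Delta_p^2:=0$ for $p\in I_1$; since $k_1<k_2$, a cocycle that is flat of order $k_2$ is a fortiori flat of order $k_1$, so each $\Delta_p^1$ is flat of order $k_1$ and each $\Delta_p^2$ is flat of order $k_2$, as required.

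Next I would realize these two cocycles by $0$-cochains. Because the nerve of the good covering $(\mathcal{E}_p)_{0\le p\le\varsigma-1}$ is a cycle (its first Čech cohomology with values in a sheaf of holomorphic functions vanishes in the relevant sense — this is exactly the interpolation lemma used in the classical Ramis--Sibuya setup, and a $q$-version of it is implicit in~\cite{ma15}), there exist holomorphic functions $G_p^1:\mathcal{E}_p\to\mathbb{F}$ and $G_p^2:\mathcal{E}_p\to\mathbb{F}$, bounded near the origin, with
$$G_{p+1}^1(\epsilon)-G_p^1(\epsilon)=\Delta_p^1(\epsilon),\qquad G_{p+1}^2(\epsilon)-G_p^2(\epsilon)=\Delta_p^2(\epsilon),$$
for every $\epsilon\in Z_p$ and all $p$ (with the cyclic convention $G_\varsigma^j=G_0^j$). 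One then sets $a_p(\epsilon):=G_p(\epsilon)-G_p^1(\epsilon)-G_p^2(\epsilon)$ and observes that $a_{p+1}(\epsilon)-a_p(\epsilon)=\Delta_p(\epsilon)-\Delta_p^1(\epsilon)-\Delta_p^2(\epsilon)=0$ on each $Z_p$, so the $a_p$ glue into a single holomorphic function $a(\epsilon)$ on a full punctured neighborhood of $0$; being bounded near $0$, it extends holomorphically through the origin by Riemann's removable singularity theorem, hence $a(\epsilon)\in\mathbb{F}\{\epsilon\}$. This yields the decomposition $G_p(\epsilon)=a(\epsilon)+G_p^1(\epsilon)+G_p^2(\epsilon)$.

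Finally I would apply Theorem~\ref{teoqrs} twice. The family $(G_p^1)_{0\le p\le\varsigma-1}$ is bounded near $0$ and its cocycle $(\Delta_p^1)$ is $q$-exponentially flat of order $k_1$ on every $Z_p$; hence there is a formal series $\hat{G}^1(\epsilon)\in\mathbb{F}[[\epsilon]]$ which is the common $q$-Gevrey asymptotic expansion of order $1/k_1$ of all the $G_p^1$ on $\mathcal{E}_p$ — in particular for $p\in I_1$. Likewise, $(G_p^2)$ with cocycle $(\Delta_p^2)$ flat of order $k_2$ gives a formal series $\hat{G}^2(\epsilon)$ which is the common $q$-Gevrey asymptotic expansion of order $1/k_2$ of all the $G_p^2$, in particular for $p\in I_2$. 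This is exactly the conclusion of the theorem.

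The main obstacle, and the step that requires genuine care rather than bookkeeping, is the $q$-Gevrey interpolation lemma producing the cochains $(G_p^j)$ from the cocycles $(\Delta_p^j)$ while \emph{preserving} the $q$-exponential flatness of each difference with the correct order $k_j$. In the classical (Gevrey) Ramis--Sibuya argument this is handled by a Cauchy--Heine-type integral against the flat cocycle and an estimate on the resulting contour integral; here one needs the $q$-analog of that estimate, i.e. that integrating a function with bound $|\epsilon|^{M}\exp(-\tfrac{k_j}{2\log q}\log^2|\epsilon|)$ along the appropriate arcs yields a function obeying the same type of bound with a possibly shifted exponent $M'$. That lemma is the substance behind Theorem~\ref{teoqrs} as proved in~\cite{ma15}; since I am allowed to invoke Theorem~\ref{teoqrs} as stated, the only delicate point remaining in my argument is to check that splitting $\Delta_p$ as $\Delta_p=\Delta_p^1+\Delta_p^2$ with one summand identically zero does not break the cocycle condition — which it does not, since $\Delta_p^1$ and $\Delta_p^2$ are defined index-by-index and summing them recovers $\Delta_p$ on each $Z_p$, and the cocycle (cyclic sum) condition $\sum_p\Delta_p=0$ is inherited additively from that of $(\Delta_p)$.
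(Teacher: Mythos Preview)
Your proof is correct and follows essentially the same route as the paper's: the paper defines the same split cocycles $\Delta_i^j$ (equal to $\Delta_i$ on $I_j$, zero elsewhere), invokes a lemma (Lemma~\ref{lema1226}, which is Lemma~8 of~\cite{ma15} resting on Lemma~XI-2-6 of~\cite{hssi}) that simultaneously produces the cochains $\Psi_i^j$ and their $q$-Gevrey asymptotic expansions of order $1/k_j$, then glues $a_i=G_i-\Psi_i^1-\Psi_i^2$ exactly as you do. Your only cosmetic difference is separating the interpolation step from the asymptotic step and invoking Theorem~\ref{teoqrs} for the latter, which amounts to the same thing; your closing worry about a ``cyclic sum'' condition on the split cocycles is unnecessary (the Cauchy--Heine construction requires no such condition), and in fact the individual sums $\sum_p\Delta_p^j$ need not vanish---but this is harmless since nothing in the argument uses it.
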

\begin{proof}
For every $0\le i\le \varsigma$, we define the functions $\Delta_{i}^j(\epsilon)\in\mathcal{O}(Z_i)$ for $j=1,2$, by
$$\Delta_{i}^{j}(\epsilon)=\left\{ 
\begin{array}{cc}
\Delta_{i}(\epsilon) &\hbox{ if } i\in I_j\\
0 & \hbox{ if } i\in\{0,1,...,\varsigma-1\}\setminus I_{j}
\end{array}\right.
$$
for $\epsilon\in Z_j$. 

As an introductory lemma, we provide the following result without proof which can be found in Lemma 8 of~\cite{ma15}, and it rests on the arguments of Lemma XI-2-6 from~\cite{hssi}.

\begin{lemma}\label{lema1226}
Under the assumptions of Theorem~\ref{teoqrs}, for every $0\le i\le \varsigma-1$ and $j=1,2$, there exist bounded holomorphic functions $\Psi_{i}^j:\mathcal{E}_{i}\to\C$ such that
$$\Delta_{i}^j(\epsilon)=\Psi_{i+1}^j(\epsilon)-\Psi_{i}^j(\epsilon),$$
for every $\epsilon\in Z_i$ ($\Psi_\varsigma^j(\epsilon):=\Psi_0^j(\epsilon)$). Moreover, there exist $\varphi_m^j\in\mathbb{F}$, for every $m\ge0$, such that for every $0\le i\le \varsigma-1$ and any closed proper subsector $\mathcal{W}\subseteq \mathcal{E}_{p}$, with vertex at 0, there exist $\hat{K}_{p},\hat{M}_{p}>0$ with
$$ \left\|\Psi_{i}^j(\epsilon)- \sum_{m=0}^{M}\varphi^j_m\epsilon^m \right\|_{\mathbb{F}}\le \hat{K}_{p}(\hat{M}_{p})^{M+1}q^{\frac{(M+1)M}{2k_j}}|\epsilon|^{M+1}, $$
for every $\epsilon\in\mathcal{W}$, and $M\ge0$.
\end{lemma}

We now consider the bounded holomorphic functions $a_i(\epsilon)=G_i(\epsilon)-\Psi_{i}^1(\epsilon)-\Psi_{i}^2(\epsilon)$, for all $0\le i\le \varsigma-1$, and $\epsilon\in\mathcal{E}_i$. By definition, for $j=1,2$ and $i\in I_j$ we have
$$a_{i+1}(\epsilon)-a_i(\epsilon)=G_{i+1}(\epsilon)-\Delta_{i}^1(\epsilon)-\Delta_{i}^2(\epsilon)=G_{i+1}(\epsilon)-G_i(\epsilon)-\Delta_i(\epsilon)=0,$$
for $\epsilon\in Z_i$. Therefore, each $a_i(\epsilon)$ is the restriction on $\mathcal{E}_i$ of a holomorphic function $a(\epsilon)$, defined on a neighborhood of the origin but zero. Indeed, $a(\epsilon)$ is bounded on $\cup_{0\le i\le \varsigma-1}\mathcal{E}_i$ so the origin turns out to be a removable singularity and, as a consequence, $a(\epsilon)$ defines a convergent power series on the neighborhood of the origin $\left(\cup_{0\le i\le \varsigma-1}\mathcal{E}_i\right)\cup\{0\}$.

One can finish the proof by rewriting
$$G_i(\epsilon)=a(\epsilon)+\Psi_{i}^1(\epsilon)+\Psi_i^2(\epsilon),$$
and bearing in mind Lemma~\ref{lema1226}. 
\end{proof}

We conclude this section with the main result in the work in which we guarantee the existence of a formal solution of the main problem (\ref{e771}), written as a formal power series in the perturbation parameter, with coefficients in an appropriate Banach space, say $\hat{u}(t,z,\epsilon)$. Moreover, it represents, in some sense to be precised, each solution $u^{\mathfrak{d}_{p}}(t,z,\epsilon)$ of the problem (\ref{e771}). 

This result is based on the existence of a common formal power series $\hat{f}(t,z,\epsilon)$ which is the $q-$Gevrey asymptotic expansion of order $1/k_1$, seen as a formal power series in the perturbation parameter $\epsilon$ with coefficients in a certain Banach space, of every $f^{\mathfrak{d}_{p}}$ on $\mathcal{E}_{p}$.

From now on, $\mathbb{F}$ stands for the Banach space of bounded holomorphic functions defined on $\mathcal{T}\times H_{\beta'}$, with the supremum norm,where $\beta'<\beta$, as above.

\begin{lemma}\label{lema1271}
Under the hypotheses on Theorem~\ref{teo872}, there exists a formal power series 
$$\hat{f}(t,z,\epsilon)=\sum_{m\ge0}f_m(t,z)\frac{\epsilon^m}{m!},$$
with $f_m(t,z)\in\mathbb{F}$ for $m\ge0$, which is the common $q-$Gevrey asymptotic expansion of order $1/k_1$ on $\mathcal{E}_{p}$ of the functions $f^{\mathfrak{d}_{p}}$, seen as holomorphic functions from $\mathcal{E}_{p}$ to $\mathbb{F}$, for all $0\le p\le \varsigma-1$.
\end{lemma}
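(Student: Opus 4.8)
The plan is to apply the one-level $q-$analog of Ramis--Sibuya theorem (Theorem~\ref{teoqrs}) to the family $f^{\mathfrak{d}_{p}}$, viewed as holomorphic functions of $\epsilon\in\mathcal{E}_{p}$ with values in $\mathbb{F}$. First I would check that each $f^{\mathfrak{d}_{p}}(t,z,\epsilon)$, constructed in (\ref{e876}) as the inverse Fourier transform of $m\mapsto F^{\mathfrak{d}_{p}}(\epsilon t,m,\epsilon)$, is indeed a holomorphic $\mathbb{F}$-valued function on $\mathcal{E}_{p}$ that stays bounded as $\epsilon\to 0$; this is already recorded in Section~\ref{seccion5}, since $F^{\mathfrak{d}_{p}}$ is built from $\psi_{k_2}^{\mathfrak{d}_{p}}$ via $q-$Laplace transform of order $k_2$ under the size hypotheses (\ref{e824}), (\ref{e868}), and inverse Fourier transform sends the exponential decay in $m$ into a bounded holomorphic function on $H_{\beta'}$ (Proposition~\ref{prop267}). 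This verifies hypothesis \textbf{1)} of Theorem~\ref{teoqrs}.

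Next I would control the cocycle $\Delta_{p}^{f}(\epsilon):=f^{\mathfrak{d}_{p+1}}(t,z,\epsilon)-f^{\mathfrak{d}_{p}}(t,z,\epsilon)$ on $Z_{p}=\mathcal{E}_{p}\cap\mathcal{E}_{p+1}$. The key observation is that the second estimate in both Proposition~\ref{prop925} and Proposition~\ref{prop1047} already gives bounds on precisely this difference: in the ``overlapping sectors'' case one has the $q-$exponential flatness of order $k_2$, and in the ``disjoint sectors'' case the flatness of order $k_1$. Since $k_1<k_2$, the bound $\exp(-\tfrac{k_2}{2\log(q)}\log^2|\epsilon|)\le \exp(-\tfrac{k_1}{2\log(q)}\log^2|\epsilon|)$ holds for $|\epsilon|$ small, so in every case $\Delta_{p}^{f}(\epsilon)$ is $q-$exponentially flat of order $k_1$ on $Z_p$: there are $K_{1}>0$, $M_{1}\in\R$ with $\|\Delta_{p}^{f}(\epsilon)\|_{\mathbb{F}}\le K_{1}|\epsilon|^{M_1}\exp(-\tfrac{k_1}{2\log(q)}\log^2|\epsilon|)$. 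This is exactly hypothesis \textbf{2)} of Theorem~\ref{teoqrs} with $k=k_1$.

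With both hypotheses verified, Theorem~\ref{teoqrs} applied with $k=k_1$ furnishes a formal power series in $\mathbb{F}[[\epsilon]]$, which after reindexing the coefficients as $f_{m}(t,z)/m!$ we denote $\hat{f}(t,z,\epsilon)=\sum_{m\ge0}f_m(t,z)\tfrac{\epsilon^m}{m!}$, and which is the common $q-$Gevrey asymptotic expansion of order $1/k_1$ of every $f^{\mathfrak{d}_{p}}$ on $\mathcal{E}_{p}$; this is the assertion of the lemma. I do not expect a serious obstacle here: the whole content is that the difference estimates of Propositions~\ref{prop925} and~\ref{prop1047} are already at our disposal, and that taking the worse (i.e. $k_1$) of the two flatness orders makes the single-level theorem applicable uniformly. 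The only mild care needed is the bookkeeping between the two possible geometric configurations that may occur for different indices $p$, together with the harmless passage to a common neighborhood of the origin on which all the $\mathcal{E}_{p}$ and the constants are controlled.
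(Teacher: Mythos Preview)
Your proposal is correct and follows essentially the same approach as the paper's own proof: verify boundedness of the $f^{\mathfrak{d}_{p}}$ as $\mathbb{F}$-valued maps on $\mathcal{E}_{p}$, invoke the second estimates in Propositions~\ref{prop925} and~\ref{prop1047} for the cocycle, downgrade the $k_2$-flatness to $k_1$-flatness using $k_1<k_2$, and apply Theorem~\ref{teoqrs} with $k=k_1$. The paper's argument is merely a more condensed version of what you wrote.
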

\begin{proof}
Let $0\le p\le \varsigma-1$. We consider the function $f^{\mathfrak{d}_{p}}$ constructed in (\ref{e876}), and define $G^{f}_p(\epsilon):=(t,z)\mapsto f^{\mathfrak{d}_{p}}(t,z,\epsilon)$, which is a holomorphic and bounded function from $\mathcal{E}_{p}$ into $\mathbb{F}$. Regarding (\ref{e927}) and (\ref{e1049a}), and taking into account that $k_1<k_2$, we have that (\ref{e1049a}) holds for every $0\le p\le \varsigma-1$. This yields the cocycle $\Delta_p^f(\epsilon):=G_{p+1}^f(\epsilon)-G_{p}^f(\epsilon)$ satisfies the conditions of Theorem~\ref{teoqrs} for $k=k_1$, and one concludes the result by the application of Theorem~\ref{teoqrs}.
\end{proof}

\begin{theo}\label{teo1281}
Under the hypotheses of Theorem~\ref{teo872},  there exists a formal power series 
$$\hat{u}(t,z,\epsilon)=\sum_{m\ge0}h_m(t,z)\frac{\epsilon^m}{m!}\in\mathbb{F}[[\epsilon]],$$
formal solution of the equation
\begin{align}
&Q(\partial_z)\sigma_q\hat{u}(t,z,\epsilon)=(\epsilon t)^{d_{D}}\sigma_{q}^{\frac{d_{D}}{k_2}+1}R_{D}(\partial_{z})\hat{u}(t,z,\epsilon)\nonumber\\
&\hspace{3cm}+\sum_{\ell=1}^{D-1}\left(\sum_{\lambda\in I_{\ell}}t^{d_{\lambda,\ell}}\epsilon^{\Delta_{\lambda,\ell}}\sigma_{q}^{\delta_\ell}c_{\lambda,\ell}(z,\epsilon)R_{\ell}(\partial_z)\hat{u}(t,z,\epsilon)\right)+\sigma_{q}\hat{f}(t,z,\epsilon).\label{e771b}
\end{align}
Moreover, $\hat{u}(t,z,\epsilon)$ turns out to be the common $q-$Gevrey asymptotic expansion of order $1/k_1$ on $\mathcal{E}_{p}$ of the function $u^{\mathfrak{d}_{p}}$, seen as holomorphic function from $\mathcal{E}_{p}$ into $\mathbb{F}$, for $0\le p\le \varsigma-1$. In addition to that, $\hat{u}$ is of the form
$$\hat{u}(t,z,\epsilon)=a(t,z,\epsilon)+\hat{u}_1(t,z,\epsilon)+\hat{u}_2(t,z,\epsilon),$$
where $a(t,z,\epsilon)\in\mathbb{F}\{\epsilon\}$ and $\hat{u}_1(t,z,\epsilon),\hat{u}_2(t,z,\epsilon)\in\mathbb{F}[[\epsilon]]$ and such that for every $0\le p\le \varsigma-1$, the function $u^{\mathfrak{d}_p}$ can be written in the form
$$u^{\mathfrak{d}_{p}}(t,z,\epsilon)=a(t,z,\epsilon)+u^{\mathfrak{d}_{p}}_1(t,z,\epsilon)+u^{\mathfrak{d}_{p}}_2(t,z,\epsilon),$$
where $\epsilon\mapsto u_1^{\mathfrak{d}_{p}}(t,z,\epsilon)$ is a $\mathbb{F}$-valued function that admits $\hat{u}_1(t,z,\epsilon)$ as its $q-$Gevrey asymptotic expansion of order $1/k_1$ on $\mathcal{E}_{p}$ and also $\epsilon\mapsto u_2^{\mathfrak{d}_{p}}(t,z,\epsilon)$ is a $\mathbb{F}$-valued function that admits $\hat{u}_2(t,z,\epsilon)$ as its $q-$Gevrey asymptotic expansion of order $1/k_2$ on $\mathcal{E}_{p}$.
\end{theo}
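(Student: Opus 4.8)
The strategy is to apply the two-level $q$-analog of the Ramis--Sibuya theorem (Theorem~\ref{teo1215}) to the family $(u^{\mathfrak{d}_{p}})_{0\le p\le\varsigma-1}$, viewed as holomorphic $\mathbb{F}$-valued functions of $\epsilon$, and then to check that the resulting formal power series is a formal solution of (\ref{e771b}). First I would set $G_{p}(\epsilon):=(t,z)\mapsto u^{\mathfrak{d}_{p}}(t,z,\epsilon)$, which by Theorem~\ref{teo872} is a bounded holomorphic function from $\mathcal{E}_{p}$ into $\mathbb{F}$, so assumption \textbf{1)} of Theorem~\ref{teo1215} holds. For assumption \textbf{2)} I would split the index set $\{0,1,\dots,\varsigma-1\}$ according to the geometry of the underlying directions: put $p\in I_1$ when $U_{\mathfrak{d}_{p}}\cap U_{\mathfrak{d}_{p+1}}=\emptyset$ (so Proposition~\ref{prop1047} applies and gives the bound with $k_1$ in the exponential) and $p\in I_2$ when the sectors overlap widely (so Proposition~\ref{prop925} applies and gives the bound with $k_2$). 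These two sets are disjoint with union $\{0,\dots,\varsigma-1\}$, which is exactly what Theorem~\ref{teo1215} requires. Applying Theorem~\ref{teo1215} then produces $a(t,z,\epsilon)\in\mathbb{F}\{\epsilon\}$ and $\hat{u}_1,\hat{u}_2\in\mathbb{F}[[\epsilon]]$ with $u^{\mathfrak{d}_{p}}=a+u_1^{\mathfrak{d}_{p}}+u_2^{\mathfrak{d}_{p}}$, where $u_j^{\mathfrak{d}_{p}}$ admits $\hat{u}_j$ as its $q$-Gevrey asymptotic expansion of order $1/k_j$ on $\mathcal{E}_{p}$; setting $\hat{u}:=a+\hat{u}_1+\hat{u}_2$ gives a series that is the common $q$-Gevrey asymptotic expansion of order $1/k_1$ of every $u^{\mathfrak{d}_{p}}$ (the order $1/k_2$ expansion of $\hat{u}_2$ is a fortiori an order $1/k_1$ expansion since $k_1<k_2$, while $a$ is convergent).

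Next I would verify that $\hat{u}$ formally solves (\ref{e771b}). The standard argument is: each $u^{\mathfrak{d}_{p}}$ solves (\ref{e771}) with right-hand side forcing term $f^{\mathfrak{d}_{p}}$, and by Lemma~\ref{lema1271} the family $(f^{\mathfrak{d}_{p}})$ has $\hat{f}$ as its common $q$-Gevrey asymptotic expansion of order $1/k_1$. Applying the linear $q$-difference-differential operator
$$\mathcal{P}(t,z,\epsilon,\partial_z,\sigma_q):=Q(\partial_z)\sigma_q-(\epsilon t)^{d_{D}}\sigma_{q}^{\frac{d_{D}}{k_2}+1}R_{D}(\partial_{z})-\sum_{\ell=1}^{D-1}\sum_{\lambda\in I_{\ell}}t^{d_{\lambda,\ell}}\epsilon^{\Delta_{\lambda,\ell}}\sigma_{q}^{\delta_\ell}c_{\lambda,\ell}(z,\epsilon)R_{\ell}(\partial_z)$$
to the asymptotic identity $u^{\mathfrak{d}_{p}}\sim\hat{u}$ term by term, and using that this operator preserves $q$-Gevrey asymptotics of order $1/k_1$ (the coefficients $(\epsilon t)^{d_D}$, $\epsilon^{\Delta_{\lambda,\ell}}$, $c_{\lambda,\ell}(z,\epsilon)$ are analytic in $\epsilon$ and $\partial_z$, $\sigma_q$ act on the other variables; multiplication by an analytic function and application of a fixed differential operator only shift the Gevrey estimates by a bounded factor), one gets that $\mathcal{P}\hat{u}$ is the common $q$-Gevrey asymptotic expansion of order $1/k_1$ of $\mathcal{P}u^{\mathfrak{d}_{p}}=\sigma_q f^{\mathfrak{d}_{p}}$. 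But $\sigma_q\hat{f}$ is also the common such expansion of $\sigma_q f^{\mathfrak{d}_{p}}$, and on a good covering a function with null $q$-Gevrey asymptotic expansion on all sectors is zero (by the characterization lemma and a removable-singularity argument, as in the proof of Theorem~\ref{teo1215}); uniqueness of the asymptotic expansion forces $\mathcal{P}\hat{u}=\sigma_q\hat{f}$ as formal series, which is exactly (\ref{e771b}). Alternatively, and perhaps more cleanly, one can argue purely formally at the level of coefficients: identifying powers of $\epsilon$ in $u^{\mathfrak{d}_p}\sim\hat u$ shows that the coefficients $h_m(t,z)$ of $\hat u$ must satisfy the recursion obtained by plugging $\hat u$ into (\ref{e771b}) and reading off each power of $\epsilon$, which is consistent because each $u^{\mathfrak{d}_p}$ satisfies the corresponding analytic relation.

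Finally, the decomposition $\hat{u}=a+\hat{u}_1+\hat{u}_2$ together with $u^{\mathfrak{d}_{p}}=a+u_1^{\mathfrak{d}_{p}}+u_2^{\mathfrak{d}_{p}}$ is delivered directly by Theorem~\ref{teo1215}, with $a$ the restriction of a convergent series and $u_j^{\mathfrak{d}_{p}}$ holomorphic on $\mathcal{E}_{p}$ admitting $\hat{u}_j$ as $q$-Gevrey expansion of order $1/k_j$; there is nothing more to prove here beyond recording the identifications. The main obstacle I expect is the bookkeeping in the second paragraph: one must carefully justify that the linear operator $\mathcal{P}$ maps functions with $q$-Gevrey asymptotics of a given order into functions with $q$-Gevrey asymptotics of the \emph{same} order, i.e.\ that multiplication by the analytic coefficients $(\epsilon t)^{d_D}$, $\epsilon^{\Delta_{\lambda,\ell}}$ and $c_{\lambda,\ell}(z,\epsilon)$, and application of $\partial_z$, $R_\ell(\partial_z)$, $\sigma_q$ (which act on $t,z$, not on $\epsilon$), only perturb the constants $A,C$ in the definition by bounded amounts; this is routine but must be done with the $q$-Gevrey weights $q^{N(N+1)/(2k)}$ in hand (multiplication by an analytic germ $\sum b_m\epsilon^m$ with $|b_m|\le B\rho^{-m}$ composes with the weight harmlessly since the product of the two Cauchy-type estimates still has the required shape). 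Everything else reduces to invoking the earlier results in the prescribed order.
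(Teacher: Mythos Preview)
Your proposal is correct and follows essentially the same strategy as the paper: define $G_p(\epsilon)=(t,z)\mapsto u^{\mathfrak{d}_p}(t,z,\epsilon)$, split the index set according to whether Proposition~\ref{prop925} or Proposition~\ref{prop1047} applies, invoke the two-level $q$-Ramis--Sibuya Theorem~\ref{teo1215}, and read off the decomposition $\hat u=a+\hat u_1+\hat u_2$.

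The only noteworthy difference is in the verification that $\hat u$ is a formal solution of (\ref{e771b}). Your primary argument is the abstract one: the linear operator $\mathcal{P}$ has coefficients analytic in $\epsilon$ and acts by $\partial_z,\sigma_q$ on the ``silent'' variables, hence preserves $q$-Gevrey asymptotics of a fixed order, so $\mathcal{P}\hat u$ and $\sigma_q\hat f$ are both the (unique) asymptotic expansion of $\sigma_q f^{\mathfrak{d}_p}$ and therefore coincide. The paper instead carries out the concrete computation you mention as an alternative: it uses that the asymptotic relation forces $\lim_{\epsilon\to 0}\partial_\epsilon^m u^{\mathfrak{d}_p}=h_m$ and $\lim_{\epsilon\to 0}\partial_\epsilon^m f^{\mathfrak{d}_p}=f_m$ uniformly on $\mathcal{T}\times H_{\beta'}$, differentiates the analytic equation (\ref{e771}) $m$ times in $\epsilon$ via Leibniz, lets $\epsilon\to 0$, and obtains the recursion that exactly encodes (\ref{e771b}) at the level of coefficients. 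Your abstract route is cleaner once one accepts the stability lemma for $q$-Gevrey asymptotics under such operators; the paper's route avoids formulating that lemma at the cost of a short explicit computation. Either way the content is the same.
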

\begin{proof}
For every $0\le p\le \varsigma -1$, one can consider the function $u^{\mathfrak{d}_{p}}(t,z,\epsilon)$ constructed in Theorem~\ref{teo872}. We define $G_{p}(\epsilon):=(t,z)\mapsto u_p(t,z,\epsilon)$, which is a holomorphic and bounded function from $\mathcal{E}_{p}$ into $\mathbb{F}$. In view of Proposition~\ref{prop925} and Proposition~\ref{prop1047}, one can split the set $\{0,1,...,\varsigma-1\}$ in two nonempty subsets of indices, $I_1$ and $I_2$ with $\{0,1,...,\varsigma-1\}=I_1\cup I_2$ and such that $I_1$ (resp. $I_2$) consists of all the elements in $\{0,1,...,\varsigma-1\}$ such that $U_{\mathfrak{d}_{p}}\cap U_{\mathfrak{d}_{p+1}}$ contains the sector $U_{\mathfrak{d}_{p},\mathfrak{d}_{p+1}}$, as defined in Proposition~\ref{prop925} (resp. $U_{\mathfrak{d}_{p}}\cap U_{\mathfrak{d}_{p+1}}=\emptyset$). From (\ref{e927}) and (\ref{e1049a}) one can apply Theorem~\ref{teo1215} and deduce the existence of formal power series $\hat{G}^1(\epsilon),\hat{G}^2(\epsilon)\in\mathbb{F}[[\epsilon]]$, a convergent power series $a(\epsilon)\in\mathbb{F}\{\epsilon\}$ and holomorphic functions $G_{p}^1(\epsilon),G_{p}^2(\epsilon)$ defined on $\mathcal{E}_{p}$ and with values in $\mathbb{F}$ such that 
$$G_p(\epsilon)=a(\epsilon)+G^1_p(\epsilon)+G^2_p(\epsilon),$$
and for $j=1,2$, one has $G_{p}^j(\epsilon)$ admits $\hat{G}^j(\epsilon)$ as its $q-$Gevrey asymptotic expansion or order $1/k_j$ on $\mathcal{E}_{p}$. 
We put 
$$\hat{u}(t,z,\epsilon)=\sum_{m\ge0}h_m(t,z)\frac{\epsilon^m}{m!}:=a(\epsilon)+\hat{G}^1_p(\epsilon)+\hat{G}^2_p(\epsilon).$$

It only rests to prove that $\hat{u}(t,z,\epsilon)$ is the solution of (\ref{e771b}). Indeed, since $u^{\mathfrak{d}_{p}}$ (resp. $f^{\mathfrak{d}_{p}}$) admits $\hat{u}(t,z,\epsilon)$ (resp. $\hat{f}$) as its $q-$Gevrey asymptotic expansion of order $1/k_1$ on $\mathcal{E}_{p}$, we have that
\begin{align}
&\lim_{\epsilon\to 0,\epsilon\in\mathcal{E}_{p}}\sup_{t\in\mathcal{T},z\in H_{\beta'}}|\partial_\epsilon^m u^{\mathfrak{d}_{p}}(t,z,\epsilon)- h_m(t,z)|=0,\nonumber \\
&\hspace{3cm} \lim_{\epsilon\to 0,\epsilon\in\mathcal{E}_{p}}\sup_{t\in\mathcal{T},z\in H_{\beta'}}|\partial_\epsilon^m f^{\mathfrak{d}_{p}}(t,z,\epsilon)- f_m(t,z)|=0,\label{e1300}
\end{align}
for every $0\le p\le \varsigma -1$ and $m\ge0$. Let $p\in\{0,1,...,\varsigma-1\}$. By construction, the function $u^{\mathfrak{d}_{p}}(t,z,\epsilon)$ solves equation (\ref{e771}). We take derivatives of order $m\ge 0$ with respect to $\epsilon$ at both sides of equation (\ref{e771}) and deduce that $\partial_\epsilon^mu^{\mathfrak{d}_{p}}(t,z,\epsilon)$ satisfies
\begin{align}
&Q(\partial_z)\sigma_q(\partial_\epsilon^m u^{\mathfrak{d}_{p}})(t,z,\epsilon)=\sum_{m_1+m_2=m}\frac{m!}{m_1!m_2!}\partial_\epsilon^{m_1}(\epsilon^{d_{D}})t^{d_{D}}\sigma_{q}^{\frac{d_{D}}{k_2}+1}R_{D}(\partial_{z})(\partial_{\epsilon}^m u^{\mathfrak{d}_{p}})\nonumber\\
&+\sum_{\ell=1}^{D-1}\sum_{\lambda\in I_{\ell}}\sum_{m_1+m_2+m_3=m}\frac{m!}{m_1!m_2!m_3!}(\partial_{\epsilon}^{m_1}\epsilon^{\Delta_{\lambda,\ell}})t^{d_{\lambda,\ell}}(\partial_{\epsilon}^{m_2}c_{\lambda,\ell}(z,\epsilon))R_{\ell}(\partial_z)\sigma_{q}^{\delta_\ell}(\partial_\epsilon^m u^{\mathfrak{d}_{p}})\nonumber\\
&\hspace{6cm}+\sigma_{q}(\partial_\epsilon^m f^{\mathfrak{d}_{p}})(t,z,\epsilon),\label{e771c}
\end{align}
for every $(t,z,\epsilon)\in\mathcal{T}\times H_{\beta'}\times\mathcal{E}_{p}$. We let $\epsilon\to0$ in (\ref{e771c}) and obtain the recursion formula
\begin{align}
&Q(\partial_z)\sigma_qh_{m}(t,z)=\frac{m!}{(m-d_{D})!}t^{d_{D}}\sigma_{q}^{\frac{d_{D}}{k_2}+1}R_{D}h_{m-d_{D}}(t,z)\nonumber\\
&+\sum_{\ell=1}^{D-1}\sum_{\lambda\in I_{\ell}}\sum_{m_2+m_3=m-\Delta_{\lambda,\ell}}\frac{m!}{m_2!m_3!}t^{d_{\lambda,\ell}}(\partial_{\epsilon}^{m_2}c_{\lambda,\ell}(z,0))R_{\ell}(\partial_z)\sigma_{q}^{\delta_\ell}h_{m_3}(t,z)+\sigma_{q}f_{m}(t,z),\label{e771d}
\end{align}
for every $m\ge \max\{d_{D},\max_{1\le \ell\le D-1,\lambda\in I_{\ell}}\Delta_{\lambda,\ell}\}$, and all $(t,z)\in\mathcal{T}\times H_{\beta'}$. Bearing in mind that $c_{\lambda,\ell}$ (constructed in (\ref{e855b})) is holomorphic with respect to $\epsilon$ in a neighborhood of the origin, in such neighborhood one has
\begin{equation}\label{e1316}
c_{\lambda,\ell}(z,\epsilon)=\sum_{m\ge0}\frac{(\partial_\epsilon^m c_\ell)(z,0)}{m!}\epsilon^m,
\end{equation}
for every $1\le \ell\le D-1$ and $\lambda\in I_{\ell}$. By direct calculations on the recursion (\ref{e771d}) and (\ref{e1316}) one concludes that the formal power series $\hat{u}(t,z,\epsilon)=\sum_{m\ge0}h_m(t,z)\epsilon^m/m!$ is a solution of equation (\ref{e771b}).  

\end{proof}

\section{Application}\label{seccion7}

Let $\textbf{D}\ge3$ and $k_1\ge1$ be integers. Let $q\in\R$ with $q>1$ and assume that for every $1\le \ell\le \textbf{D}-1$, $\textbf{I}_{\ell}$ is a finite nonempty subset of nonnegative integers.

Let $\textbf{d}_{\textbf{D}}\ge1$ be an integer. For every $1\le \ell\le \textbf{D}-1$, we consider $\boldsymbol{\delta}\ge1$ and for each $\lambda\in\textbf{I}_{\ell}$, we take integers $\textbf{d}_{\lambda,\ell}\ge1$, $\boldsymbol{\Delta}_{\lambda,\ell}\ge0$. We assume that 
$$\quad\boldsymbol{\delta}_1=1,\quad \boldsymbol{\delta}_{\ell}<\boldsymbol{\delta}_{\ell+1}\quad ,$$
for every $1\le\ell\le \textbf{D}-1$ and all $\lambda\in \textbf{I}_{\ell}$, and also
\begin{equation}\label{e1336}
\boldsymbol{\Delta}_{\lambda,\ell}\ge \textbf{d}_{\lambda,\ell}, \quad \frac{\textbf{d}_{\lambda,\ell}}{k_1}+1\ge \boldsymbol{\delta}_{\ell}, \quad \frac{\textbf{d}_{\textbf{D}}-1}{k_1}+1\ge\boldsymbol{\delta}_{\ell},
\end{equation}
for every $1\le \ell\le \textbf{D}-1$, and all $\lambda\in \textbf{I}_{\ell}$.

We consider $\textbf{Q},\textbf{R}_{\ell}\in\C[X]$ for all $1\le \ell\le \textbf{D}$ with  
$$\deg(\textbf{Q})\ge\deg(\textbf{R}_{\textbf{D}})\ge \deg(\textbf{R}_{\ell}), \quad \textbf{Q}(im)\neq0,\quad \textbf{R}_{\textbf{D}}(im)\neq 0$$
for every $m\in\R$ and all $0\le \ell\le \textbf{D}-1$. We take $\beta>0$ and $\mu>\deg(\textbf{Q})+1$.

For every $1\le \ell\le \textbf{D}-1$ and all $\lambda\in \textbf{I}_{\ell}$, the function $m\mapsto \textbf{C}_{\lambda,\ell}(m,\epsilon)$ belongs to the space $E_{(\beta,\mu)}$ for some $\beta>0$ and $\mu>\deg(\textbf{R}_{\textbf{D}})+1$. In addition to that, we assume this functions depend holomorphically on $\epsilon\in D(0,\epsilon_0)$, and also the existence of apositive constant $\tilde{\textbf{C}}_{\lambda,\ell}$ such that 
\begin{equation}\label{e223b}
\left\|\textbf{C}_{\lambda,\ell}(m,\epsilon)\right\|_{(\beta,\mu)}\le \tilde{\textbf{C}}_{\lambda,\ell},
\end{equation}
for every $\epsilon\in D(0,\epsilon_0)$.

Let $N_0\ge0$ be an integer. We choose $\textbf{F}(T,m,\epsilon)=\sum_{n=0}^{N_0}\textbf{F}_{n}(m,\epsilon)T^n\in E_{(\beta,\mu)}[[T]]$ which depend holomorphically on $\epsilon\in D(0,\epsilon_0)$. We assume there exist positive constants $\mathbf{K}_0,\mathbf{T}_0$ such that 
$$\left\|\textbf{F}_{n}(m,\epsilon)\right\|_{(\beta,\mu)}\le \mathbf{K}_0\frac{1}{\mathbf{T}_0^{n}},$$
for every $0\le n\le N_0$ and $\epsilon\in D(0,\epsilon_0)$. Under this last condition, $\hat{\mathcal{B}}_{q,1/k_1}(\textbf{F}(T,m,\epsilon))(\tau)$ is an element of $\hbox{Exp}^{\mathfrak{d}_{p}}_{(k_1,\beta,\mu,\alpha,\rho)}$, for some $\alpha\in\R$ and $\rho>0$, and every $p$, which will be denoted by $\boldsymbol{\psi}_{k_1}^{\mathfrak{d}_{p}}(\tau,m,\epsilon)$. Following the same arguments as in Lemma 5 in~\cite{ma15}, one can check that
\begin{equation}\label{e268b}
|\boldsymbol{\psi}_{k_1}^{\mathfrak{d}_{p}}(\tau,m,\epsilon)|\le \textbf{C}_{\boldsymbol{\psi}_{k_1}}\frac{e^{-\beta|m|}}{(1+|m|)^{\mu}}\exp\left(\frac{k_1\log^{2}|\tau+\delta|}{2\log(q)}+\alpha\log|\tau+\delta|\right),
\end{equation}
for some $\textbf{C}_{\boldsymbol{\psi}_{k_1}}>0$, valid for all $\epsilon\in D(0,\epsilon_0)$, $\tau\in U_{\mathfrak{d}_{p}}\cup D(0,\rho)$ and $m\in\R$. 

We consider $k_2>k_1$ as chosen in Section~\ref{seccion4}, and $\kappa$ determined by (\ref{e236}). From the fact that the $q-$Laplace transform of some order provides an extension of the inverse operator for the formal $q-$Borel transform of the same order when defined on monomials, it is direct to check that 
\begin{equation}\label{e1458}
\sum_{n=0}^{N_0}\textbf{F}_n(m,\epsilon)T^n=\mathcal{L}_{q;1/k_2}\mathcal{L}_{q;1/\kappa}\boldsymbol{\psi}_{k_1}^{\mathfrak{d}_{p}}(\tau,m,\epsilon),
\end{equation}
for every $\epsilon\in D(0,\epsilon_0)$, $m\in\R$. 

We depart from the formal power series $F(T,m,\epsilon)=\sum_{n\ge0}F_n(m,\epsilon)T^n$, where $F_n(m,\epsilon)$ are defined after (\ref{e771}), in Section~\ref{seccion5}, and assume that this formal power series formally solves the equation

\begin{align}&\textbf{Q}(im)\sigma_{q}F(T,m,\epsilon)=T^{\textbf{d}_{\textbf{D}}}\sigma_{q}^{\frac{\textbf{d}_{\textbf{D}}}{k_1}+1}\textbf{R}_{\textbf{D}}(im)F(T,m,\epsilon)\nonumber\\
+&\sum_{\ell=1}^{\textbf{D}-1}\left(\sum_{\lambda\in \textbf{I}_{\ell}}T^{\textbf{d}_{\lambda,\ell}}\epsilon^{\boldsymbol{\Delta}_{\lambda,\ell}-\textbf{d}_{\lambda,\ell}}\frac{1}{(2\pi)^{1/2}}\int_{-\infty}^{\infty}\textbf{C}_{\lambda,\ell}(m-m_1,\epsilon)\textbf{R}_{\ell}(im_1)F(q^{\boldsymbol{\delta}_{\ell}}T,m_1,\epsilon)dm_1\right)\nonumber\\
&\hspace{6cm}+\sigma_{q}\textbf{F}(T,m,\epsilon).\label{e149b}
\end{align}
\begin{prop}
Assume that $F_{p}(m,\epsilon)$ are fixed elements in $E_{(\beta,\mu)}$ which depend holomorphically on $\epsilon\in D(0,\epsilon_0)$, for $$p\in\left\{0,1,...,\max\{\max_{1\le \ell\le \textbf{D}-1,\lambda\in \textbf{I}_{\ell}}\textbf{d}_{\lambda,\ell},\textbf{d}_{\textbf{D}}\}\right\}.$$
Then, there exists a unique formal solution power series $F(T,m,\epsilon)=\sum_{n\ge0}F_{n}(m,\epsilon)T^n$,
where $m\mapsto F_{n}(m,\epsilon)$ is an element of $E_{(\beta,\mu)}$ which depend holomorphically on $\epsilon\in D(0,\epsilon_0)$.
\end{prop}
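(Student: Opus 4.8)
## Proof Plan

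The plan is to mirror the proof of the first Proposition in Section~\ref{seccion4} (the one establishing the existence of a unique formal solution $\hat{U}$ to~(\ref{e149})), since equation~(\ref{e149b}) has exactly the same structural form as~(\ref{e149}): a linear $q$-difference-differential equation in $T$ with a convolution term in $m$, a leading-order dilation term, and a forcing term $\sigma_q\textbf{F}$ which here is a \emph{polynomial} in $T$ of degree $N_0$. The only differences are cosmetic (bold-faced data, the roles of $k_1$ and $k_2$ swapped in the exponents of $\sigma_q$, and the hypotheses~(\ref{e1336}) in place of~(\ref{e180})), so the same recursion argument applies verbatim.

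First I would substitute the ansatz $F(T,m,\epsilon)=\sum_{n\ge0}F_n(m,\epsilon)T^n$ into~(\ref{e149b}) and identify coefficients of $T^n$ on both sides. Using the elementary identity $\sigma_q^{j}\hat{a}(T)=\sum_n q^{jn}a_n T^n$ and the fact that $T^{\sigma}\sigma_q^j$ shifts the index by $\sigma$ while multiplying by a power of $q$, this produces a recursion of the shape
\begin{align*}
&\textbf{Q}(im)\,q^{n}\,F_n(m,\epsilon)=\textbf{R}_{\textbf{D}}(im)\,q^{\left(\frac{\textbf{d}_{\textbf{D}}}{k_1}+1\right)(n-\textbf{d}_{\textbf{D}})}\,F_{n-\textbf{d}_{\textbf{D}}}(m,\epsilon)\\
&+\sum_{\ell=1}^{\textbf{D}-1}\sum_{\lambda\in \textbf{I}_{\ell}}\epsilon^{\boldsymbol{\Delta}_{\lambda,\ell}-\textbf{d}_{\lambda,\ell}}\,q^{(n-\textbf{d}_{\lambda,\ell})\boldsymbol{\delta}_{\ell}}\frac{1}{(2\pi)^{1/2}}\int_{-\infty}^{\infty}\textbf{C}_{\lambda,\ell}(m-m_1,\epsilon)\textbf{R}_{\ell}(im_1)F_{n-\textbf{d}_{\lambda,\ell}}(m_1,\epsilon)\,dm_1+q^{n}\,\textbf{F}_n(m,\epsilon),
\end{align*}
with the convention $\textbf{F}_n\equiv0$ for $n>N_0$ and $F_j\equiv0$ for $j<0$. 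Since $\textbf{Q}(im)\neq0$ for all $m\in\R$, we may divide by $\textbf{Q}(im)q^n$ and solve for $F_n$ recursively; the first finitely many coefficients (for $n$ up to $\max\{\max_{\ell,\lambda}\textbf{d}_{\lambda,\ell},\textbf{d}_{\textbf{D}}\}$) are prescribed by hypothesis, which makes the recursion well-posed and the solution unique.

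The remaining point is to check that each $F_n(m,\epsilon)$ indeed lies in $E_{(\beta,\mu)}$ and depends holomorphically on $\epsilon\in D(0,\epsilon_0)$. Holomorphy in $\epsilon$ is immediate by induction: $F_n$ is obtained from $F_{n-\textbf{d}_{\textbf{D}}}$, $F_{n-\textbf{d}_{\lambda,\ell}}$, $\textbf{F}_n$, $\textbf{C}_{\lambda,\ell}$ by multiplication, integration and division by the nonvanishing $\textbf{Q}(im)$, and the exponents $\boldsymbol{\Delta}_{\lambda,\ell}-\textbf{d}_{\lambda,\ell}\ge0$ guaranteed by~(\ref{e1336}) keep all powers of $\epsilon$ nonnegative. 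Membership in $E_{(\beta,\mu)}$ follows by the same induction, invoking Lemma~\ref{lema123} for the products with $q$-powers, Proposition~\ref{prop128} (with $\textbf{Q},\textbf{R}_{\textbf{D}}$ playing the roles of $Q,R$, using $\mu>\deg(\textbf{R}_{\textbf{D}})+1$ and~(\ref{e223b})) to control the convolution term $\textbf{C}_{\lambda,\ell}\ast^{\textbf{R}_{\ell}}F_{n-\textbf{d}_{\lambda,\ell}}$, and the hypothesis $\textbf{F}_n\in E_{(\beta,\mu)}$ for the forcing contribution.

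I do not expect any serious obstacle: the statement is a purely formal (coefficient-by-coefficient) existence and uniqueness result, with no convergence or growth estimates to establish — those are deferred to the subsequent Borel–Laplace analysis exactly as in Section~\ref{seccion4}. The only mild subtlety is bookkeeping: one must be slightly careful that the condition $\frac{\textbf{d}_{\textbf{D}}}{k_1}+1$ (note: $\textbf{d}_{\textbf{D}}$, not $\textbf{d}_{\textbf{D}}-1$) appearing in~(\ref{e149b}) still yields well-defined shifts and that the index shifts in the recursion never require a coefficient with negative index beyond those already set to zero, which is why the prescribed initial block is taken up to $\max\{\max_{\ell,\lambda}\textbf{d}_{\lambda,\ell},\textbf{d}_{\textbf{D}}\}$. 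Hence the proof is essentially a one-line reference to the argument of the first Proposition of Section~\ref{seccion4}, carried out with the bold-faced data and the hypotheses~(\ref{e1336}), (\ref{e223b}) in place of their plain counterparts.
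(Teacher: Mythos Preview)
Your proposal is correct and follows essentially the same approach as the paper: plug the formal power series into (\ref{e149b}), derive the recursion for $F_n$ in terms of $F_{n-\textbf{d}_{\textbf{D}}}$, $F_{n-\textbf{d}_{\lambda,\ell}}$ and $\textbf{F}_n$, and then invoke Lemma~\ref{lema123} and Proposition~\ref{prop128} to conclude that each $F_n\in E_{(\beta,\mu)}$ depends holomorphically on $\epsilon$. The paper's proof is in fact even terser than yours, simply displaying the recursion (after dividing by $\textbf{Q}(im)q^n$) and citing those two results.
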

\begin{proof}
By plugging the formal power series $F(T,m,\epsilon)$ into equation (\ref{e149b}), one gets that its coefficients satisfy the following recursion formula in order to be a formal solution of (\ref{e149b}):
\begin{align*}
&F_{n}(m,\epsilon)=\frac{\textbf{R}_{\textbf{D}}(im)}{\textbf{Q}(im)}q^{(\frac{\textbf{d}_{\textbf{D}}}{k_1}+1)(n-\textbf{d}_{\textbf{D}})-n}F_{n-\textbf{d}_{\textbf{D}}}(m,\epsilon)\\
&+\frac{1}{\textbf{Q}(im)}\sum_{\ell=1}^{\textbf{D}-1}\sum_{\lambda\in \textbf{I}_{\ell}}\epsilon^{\boldsymbol{\Delta}_{\lambda,\ell}-\textbf{d}_{\lambda,\ell}}\frac{1}{(2\pi)^{1/2}}\int_{-\infty}^{\infty}\textbf{C}_{\lambda,\ell}(m-m_1,\epsilon)\textbf{R}_{\ell}(im_1)q^{\boldsymbol{\delta}_{n}(n-\textbf{d}_{\lambda,\ell})-n}F_{n-\textbf{d}_{\lambda,\ell}}dm_1\\
&\hspace{8cm}+\frac{1}{\textbf{Q}(im)}\textbf{F}_n,
\end{align*}
for every $n\ge\max\{\max_{1\le \ell\le \textbf{D}-1,\lambda\in \textbf{I}_{\ell}}\textbf{d}_{\lambda,\ell},\textbf{d}_{\textbf{D}}\}$.
In view of Lemma~\ref{lema123} and Proposition~\ref{prop128}, the coefficients $F_n$ belong to $E_{(\beta,\mu)}$, and depend holomorphically on $\epsilon\in D(0,\epsilon_0)$.
\end{proof}

We multiply equation (\ref{e149b}) by $T^{k_1}$ and apply the formal $q-$Borel transform of order $k_1$ on both sides of the resulting equation. Regarding Proposition~\ref{prop157}, we arrive at

\begin{align}&\textbf{Q}(im)\frac{\tau^{k_1}}{(q^{1/k_1})^{k_1(k_1-1)/2}}\psi_{k_1}(\tau,m,\epsilon)=\frac{\tau^{\textbf{d}_\textbf{D}+k_1}}{(q^{1/k_1})^{(\textbf{d}_{\textbf{D}}+k_1)(\textbf{d}_{\textbf{D}}+k_1-1)/2}}\textbf{R}_{\textbf{D}}(im)\psi_{k_1}(\tau,m,\epsilon)\nonumber\\
+&\sum_{\ell=1}^{D-1}\left(\sum_{\lambda\in \textbf{I}_{\ell}}\frac{\epsilon^{\boldsymbol{\Delta}_{\lambda,\ell}-\textbf{d}_{\lambda,\ell}}\tau^{\textbf{d}_{\lambda,\ell}+k_1}}{(q^{1/k_1})^{(\textbf{d}_{\lambda,\ell}+k_1)(\textbf{d}_{\lambda,\ell}+k_1-1)/2}}\sigma_q^{\boldsymbol{\delta}_{\ell}-\frac{\textbf{d}_{\lambda,\ell}}{k_1}-1}\frac{1}{(2\pi)^{1/2}}(\textbf{C}_{\lambda,\ell}(m,\epsilon)\ast^{\textbf{R}_{\ell}}\psi_{k_1}(\tau,m,\epsilon))\right)\nonumber\\
&\hspace{10cm}+\frac{\tau^{k_1}}{(q^{1/k_1})^{k_{1}(k_1-1)/2}}\boldsymbol{\psi}_{k_1}(\tau,m,\epsilon),\label{e224b}
\end{align}
where 
$$\psi_{k_1}(\tau,m,\epsilon)=\sum_{n\ge0}F_n(m,\epsilon)\frac{\tau^n}{(q^{1/k_1})^{n(n-1)/2}},\qquad \boldsymbol{\psi}_{k_1}(\tau,m,\epsilon)=\sum_{n\ge0}\textbf{F}_n(m,\epsilon)\frac{\tau^n}{(q^{1/k_1})^{n(n-1)/2}}.$$

We now make the additional assumption that there exists an unbounded sector
$$S_{\textbf{Q},\textbf{R}_{\textbf{D}}}=\left\{z\in\C:|z|\ge r_{\textbf{Q},\textbf{R}_{\textbf{D}}},|\arg(z)-d_{\textbf{Q},\textbf{R}_{\textbf{D}}}|\le \boldsymbol{\eta}_{\textbf{Q},\textbf{R}_{\textbf{D}}}\right\},$$
for some $\boldsymbol{\eta}_{\textbf{Q},\textbf{R}_{\textbf{D}}}>0$ and $d_{\textbf{Q},\textbf{R}_{\textbf{D}}}\in\R$ such that 
$$\frac{\textbf{Q}(im)}{\textbf{R}_{\textbf{D}}(im)}\in S_{\textbf{Q},\textbf{R}_{\textbf{D}}},\quad m\in\R.$$
We consider the polynomial 
$$\textbf{P}_{m}(\tau)=\frac{\textbf{Q}(im)}{(q^{1/k_1})^{k_1(k_1-1)/2}}-\frac{\textbf{R}_{\textbf{D}}(im)}{(q^{1/k_1})^{\frac{(\textbf{d}_{\textbf{D}}+k_1)(\textbf{d}_{\textbf{D}}+k_1-1)}{2}}}\tau^{\textbf{d}_\textbf{D}},$$
and factorize it 
$$\textbf{P}_{m}(\tau)=-\frac{\textbf{R}_{\textbf{D}}(im)}{(q^{1/k_1})^{\frac{(\textbf{d}_{\textbf{D}}+k_1)(\textbf{d}_{\textbf{D}}+k_1-1)}{2}}}\prod_{l=0}^{\textbf{d}_{\textbf{D}}-1}(\tau-\textbf{q}_l(m)),$$
with
\begin{align*}
\textbf{q}_{l}(m)&=\left(\frac{|\textbf{Q}(im)|}{|\textbf{R}_{\textbf{D}}(im)|}(q^{1/k_1})^{\frac{(\textbf{d}_{\textbf{D}}+k_1)(\textbf{d}_{\textbf{D}}+k_1-1)-k_1(k_1-1)}{2}}\right)^{1/\textbf{d}_{\textbf{D}}}\nonumber \\
&\times\exp\left(\sqrt{-1}\left(\frac{1}{\textbf{d}_{\textbf{D}}}\arg\left(\frac{\textbf{Q}(im)}{\textbf{R}_{\textbf{D}}(im)}(q^{1/k_1})^{\frac{(\textbf{d}_{\textbf{D}}+k_1)(\textbf{d}_{\textbf{D}}+k_1-1)-k_1(k_1-1)}{2}}\right)+\frac{2\pi l}{\textbf{d}_{\textbf{D}}}\right)\right),
\end{align*}
for every $0\le l\le \textbf{d}_{\textbf{D}}-1$.

Moreover, we choose the family of unbouded sectors $U_{\mathfrak{d}_{p}}$, $0\le p\le \varsigma$, with vertex at 0, a small closed disc $\overline{D}(0,\rho)$, established in Definition~\ref{def818}, and choose $S_{\textbf{Q},\textbf{R}_{\textbf{D}}}$ satisfying
\begin{enumerate}
\item[1)] There exists $\textbf{M}_1>0$ such that
$$|\tau-\textbf{q}_{l}(m)|\ge \textbf{M}_{1}(1+|\tau|),$$
for every $0\le l\le \textbf{d}_{\textbf{D}}-1$, all $m\in\R$ and  $\tau\in U_{\mathfrak{d}_p}\cup \overline{D}(0,\rho)$, $0\le p\le \varsigma -1$. 
\item[2)] There exists $\textbf{M}_2>0$ and $l_0\in\{0,...,\textbf{d}_{\textbf{D}}-1\}$ such that
$$|\tau-\textbf{q}_{\ell_0}(m)|\ge \textbf{M}_2|\textbf{q}_{l_0}(m)|,$$
for every $m\in\R$ and $\tau\in \overline{D}(0,\rho)\cup U_{\mathfrak{d}_{p}}$, and all $0\le p\le \varsigma-1$.
\end{enumerate}

One can follow analogous steps as in (\ref{e529}) to conclude the existence of a constant $C_{\textbf{P}}>0$ such that
\begin{align}
|\textbf{P}_{m}(\tau)|\ge C_{\textbf{P}}(r_{\textbf{Q},\textbf{R}_{\textbf{D}}})^{1/\textbf{d}_{\textbf{D}}}|\textbf{R}_{\textbf{D}}(im)|(1+|\tau|)^{\textbf{d}_{\textbf{D}}-1},\label{e529b}
\end{align}
for every $\tau\in \overline{D}(0,\rho)\cup U_{\mathfrak{d}_{p}}$, for all $0\le p\le \varsigma-1$, and $m\in\R$.

\begin{prop}
Let $\boldsymbol{\varpi}>0$. Under the hypotheses made at the beginning of Section~\ref{seccion7} on the elements involved in the construction of equation (\ref{e224b}) and (\ref{e1336}), under assumptions 1) and 2) above, and if there exist small enough positive constants $\boldsymbol{\zeta}_{\boldsymbol{\psi}_{k_1}},\boldsymbol{\zeta}_{\lambda,\ell}$ for $1\le \ell\le \textbf{D}-1$ and $\lambda\in \textbf{I}_{\ell}$ such that 
$$\tilde{\textbf{C}}_{\lambda,\ell}\le\boldsymbol{\zeta}_{\lambda,\ell}\qquad \textbf{C}_{\boldsymbol{\psi}_{k_1}}\le\boldsymbol{\zeta}_{\boldsymbol{\psi}_{k_1}},$$
then the equation (\ref{e224b}) admits a unique solution $\psi_{k_1}^{\mathfrak{d}_{p}}(\tau,m,\epsilon)$ in the space $\hbox{Exp}^{\mathfrak{d}_{p}}_{(k_1,\beta,\mu,\alpha)}$, for some $\nu\in\R$. Moreover, $\bigl\|\psi_{k_1}^{\mathfrak{d}_{p}}(\tau,m,\epsilon)\bigr\|_{(k_1,\beta,\mu,\alpha)}\le \boldsymbol{\varpi}$ for every $\epsilon\in D(0,\epsilon_0)$, and this function is holomorphic with respect to $\epsilon$ in $D(0,\epsilon_0)$.
\end{prop}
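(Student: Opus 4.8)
The plan is to mimic exactly the fixed point argument already carried out in the proof of Proposition~\ref{prop571}, transposed from the space $\hbox{Exp}^{d}_{(k_2,\beta,\mu,\nu)}$ to $\hbox{Exp}^{\mathfrak{d}_{p}}_{(k_1,\beta,\mu,\alpha)}$, since the equation (\ref{e224b}) has precisely the same structure as (\ref{e492}): the operator $\textbf{P}_{m}(\tau)$ plays the role of $P_m(\tau)$, the lower-order convolution terms carry factors $\tau^{\textbf{d}_{\lambda,\ell}+k_1}\sigma_q^{\boldsymbol{\delta}_\ell-\textbf{d}_{\lambda,\ell}/k_1-1}$ in place of $\tau^{d_{\lambda,\ell}+k_2}\sigma_q^{\delta_\ell-d_{\lambda,\ell}/k_2-1}$, and $\boldsymbol{\psi}_{k_1}(\tau,m,\epsilon)$ replaces $\psi_{k_2}(\tau,m,\epsilon)$ as the forcing term. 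First I would fix $\epsilon\in D(0,\epsilon_0)$, divide (\ref{e224b}) by $\tau^{k_1}\textbf{P}_m(\tau)$, and define the analogue $\boldsymbol{\mathcal{H}}^{k_1}_{\epsilon}$ of the map (\ref{e578}):
\begin{align*}
\boldsymbol{\mathcal{H}}^{k_1}_{\epsilon}(w(\tau,m)):=&\frac{1}{\textbf{P}_{m}(\tau)}\sum_{\ell=1}^{\textbf{D}-1}\left(\sum_{\lambda\in \textbf{I}_{\ell}}\frac{\epsilon^{\boldsymbol{\Delta}_{\lambda,\ell}-\textbf{d}_{\lambda,\ell}}\tau^{\textbf{d}_{\lambda,\ell}}\sigma_q^{\boldsymbol{\delta}_{\ell}-\frac{\textbf{d}_{\lambda,\ell}}{k_1}-1}}{(q^{1/k_1})^{(\textbf{d}_{\lambda,\ell}+k_1)(\textbf{d}_{\lambda,\ell}+k_1-1)/2}}\frac{C_{\lambda,\ell}(m,\epsilon)\ast^{\textbf{R}_{\ell}}w(\tau,m)}{(2\pi)^{1/2}}\right)\\
&+\frac{1}{(q^{1/k_1})^{k_{1}(k_1-1)/2}}\frac{\boldsymbol{\psi}_{k_1}(\tau,m,\epsilon)}{\textbf{P}_{m}(\tau)}.
\end{align*}

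Next I would verify that $\boldsymbol{\mathcal{H}}^{k_1}_{\epsilon}$ stabilizes a closed ball $\overline{D}(0,\boldsymbol{\varpi})$ and is a $\tfrac12$-contraction there. For the stability estimate I would use the lower bound (\ref{e529b}) to dominate $C_{\textbf{P}}(r_{\textbf{Q},\textbf{R}_{\textbf{D}}})^{1/\textbf{d}_{\textbf{D}}}\textbf{R}_{\textbf{D}}(im)/\textbf{P}_m(\tau)$ by $(1+|\tau|)^{-(\textbf{d}_{\textbf{D}}-1)}$, then invoke Proposition~\ref{prop188} with $\gamma_1=\textbf{d}_{\textbf{D}}-1$, $\gamma_2=\textbf{d}_{\lambda,\ell}$, $\gamma_3=\textbf{d}_{\lambda,\ell}/k_1+1-\boldsymbol{\delta}_\ell$ — whose admissibility conditions (\ref{e191}) read here $\gamma_2\ge k_1\gamma_3$ i.e. $\textbf{d}_{\lambda,\ell}\ge k_1(\tfrac{\textbf{d}_{\lambda,\ell}}{k_1}+1-\boldsymbol{\delta}_\ell)$, equivalently $\tfrac{\textbf{d}_{\lambda,\ell}}{k_1}+1\ge\boldsymbol{\delta}_\ell$, and $\gamma_1+k_1\gamma_3\ge\gamma_2$ i.e. $\textbf{d}_{\textbf{D}}-1+\textbf{d}_{\lambda,\ell}+k_1-k_1\boldsymbol{\delta}_\ell\ge\textbf{d}_{\lambda,\ell}$, equivalently $\tfrac{\textbf{d}_{\textbf{D}}-1}{k_1}+1\ge\boldsymbol{\delta}_\ell$ — both guaranteed by (\ref{e1336}) — together with Proposition~\ref{prop197} for the convolution and the bound (\ref{e223b}). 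The forcing term is handled by (\ref{e529b}) and $\boldsymbol{\psi}_{k_1}\in\hbox{Exp}^{\mathfrak{d}_{p}}_{(k_1,\beta,\mu,\alpha,\rho)}\subseteq\hbox{Exp}^{\mathfrak{d}_{p}}_{(k_1,\beta,\mu,\alpha)}$ in view of (\ref{e268b}). Choosing $\tilde{\textbf{C}}_{\lambda,\ell}\le\boldsymbol{\zeta}_{\lambda,\ell}$ and $\textbf{C}_{\boldsymbol{\psi}_{k_1}}\le\boldsymbol{\zeta}_{\boldsymbol{\psi}_{k_1}}$ small enough makes the resulting constants sum to at most $\boldsymbol{\varpi}$ for stability and at most $\tfrac12$ for contraction, exactly as in Proposition~\ref{prop571}. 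An application of the contractive mapping theorem on the complete metric space $\overline{D}(0,\boldsymbol{\varpi})\subseteq\hbox{Exp}^{\mathfrak{d}_{p}}_{(k_1,\beta,\mu,\alpha)}$ then yields a unique fixed point $\psi_{k_1}^{\mathfrak{d}_{p}}$, which by construction solves (\ref{e224b}); holomorphy in $\epsilon$ follows from the uniform-in-$\epsilon$ contraction, as before.

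I do not expect any genuine obstacle here: the proof is essentially a line-by-line copy of Proposition~\ref{prop571} with $k_2\rightsquigarrow k_1$ and the bold-font data. The one point requiring a little care is the bookkeeping of the $q$-powers and the verification that the hypotheses (\ref{e1336}) are precisely what is needed for Proposition~\ref{prop188} to apply — in the $k_2$-case the corresponding conditions came from (\ref{e180}), and one must check that replacing $k_2$ by $k_1$ throughout turns (\ref{e180}) into (\ref{e1336}), which it does. A secondary minor caveat is purely cosmetic: the statement refers to $\nu$ although the space $\hbox{Exp}^{\mathfrak{d}_{p}}_{(k_1,\beta,\mu,\alpha)}$ is indexed by $\alpha$, so I would simply work with the parameter $\alpha$ fixed in (\ref{e268b}) and keep the norm estimate $\bigl\|\psi_{k_1}^{\mathfrak{d}_{p}}(\tau,m,\epsilon)\bigr\|_{(k_1,\beta,\mu,\alpha)}\le\boldsymbol{\varpi}$.
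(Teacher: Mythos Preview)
Your proposal is correct and matches the paper's approach exactly: the paper's proof reads in its entirety ``The proof of this result follows analogous steps as those in the demonstration of Proposition~\ref{prop571}, so we omit it.'' Your verification that the hypotheses (\ref{e1336}) are precisely the $k_1$-analogues of (\ref{e180}) needed for Proposition~\ref{prop188}, and your remark on the cosmetic leftover ``for some $\nu\in\R$'', are both accurate observations.
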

\begin{proof}
The proof of this result follows analogous steps as those in the demonstration of Proposition~\ref{prop571}, so we omit it.
\end{proof}

Let $F^{\mathfrak{d}_{p}}(T,m,\epsilon)$ be defined in (\ref{e875b}).

We put 
$\textbf{c}_{\lambda,\ell}(z,\epsilon)=\mathcal{F}^{-1}(m\mapsto \textbf{C}_{\lambda,\ell}(m,\epsilon)$, which is a holomorphic function defined in $H_{\beta'}\times D(0,\epsilon_0)$, and  $$\textbf{f}(t,z,\epsilon)=\sum_{n\ge0}\mathcal{F}^{-1}(m\mapsto \textbf{F}_n(m,\epsilon))(z)(t\epsilon)^n,$$
which is a holomorphic function defined in $D(0,r)\times H_{\beta'}\times D(0,\epsilon_0)$, for small enough $r>0$.

One can apply $\mathcal{F}^{-1}$ to (\ref{e149b}) to deduce that $f^{\mathfrak{d}_{p}}$  is a solution of the equation

\begin{align}
&\textbf{Q}(\partial_z)\sigma_qf^{\mathfrak{d}_p}(t,z,\epsilon)=(\epsilon t)^{\textbf{d}_{\textbf{D}}}\sigma_{q}^{\frac{\textbf{d}_{\textbf{D}}}{k_1}+1}\textbf{R}_{\textbf{D}}(\partial_{z})f^{\mathfrak{d}_p}(t,z,\epsilon)\nonumber\\
&\hspace{3cm}+\sum_{\ell=1}^{\textbf{D}-1}\left(\sum_{\lambda\in \textbf{I}_{\ell}}t^{\textbf{d}_{\lambda,\ell}}\epsilon^{\boldsymbol{\Delta}_{\lambda,\ell}}\sigma_{q}^{\boldsymbol{\delta}_\ell}\textbf{c}_{\lambda,\ell}(z,\epsilon)\textbf{R}_{\ell}(\partial_z)f^{\mathfrak{d}_p}(t,z,\epsilon)\right)+\sigma_{q}\textbf{f}(t,z,\epsilon).\label{e771h}
\end{align}

\begin{theo}\label{teofin}
We assume the hypotheses of Theorem~\ref{teo872} and those stated in this section hold. We denote
\begin{align*}
P(t,z,\epsilon,\partial_z,\sigma_q)&=Q(\partial_z)\sigma_q-(\epsilon t)^{d_D}\sigma_{q}^{\frac{d_D}{k_2}+1}R_{D}(\partial_{z})-\sum_{\ell=1}^{D-1}\left(\sum_{\lambda\in I_{\ell}}t^{d_{\lambda,\ell}}\epsilon^{\Delta_{\lambda,\ell}}\sigma_{q}^{\delta_\ell}c_{\lambda,\ell}(z,\epsilon)R_{\ell}(\partial_z)\right),\\
\textbf{P}(t,z,\epsilon,\partial_z,\sigma_q)&=\textbf{Q}(\partial_z)\sigma_q-(\epsilon t)^{\textbf{d}_{\textbf{D}}}\sigma_{q}^{\frac{\textbf{d}_{\textbf{D}}}{k_1}+1}\textbf{R}_{\textbf{D}}(\partial_{z})-\sum_{\ell=1}^{\textbf{D}-1}\left(\sum_{\lambda\in \textbf{I}_{\ell}}t^{\textbf{d}_{\lambda,\ell}}\epsilon^{\boldsymbol{\Delta}_{\lambda,\ell}}\sigma_{q}^{\boldsymbol{\delta}_\ell}\textbf{c}_{\lambda,\ell}(z,\epsilon)\textbf{R}_{\ell}(\partial_z)\right).
\end{align*}
Then, the functions $u^{\mathfrak{d}_{p}}(t,z,\epsilon)$ constructed in Theorem~\ref{teo872} solve the problem
\begin{equation}\label{e1450}
\textbf{P}(t,z,\epsilon,\partial_z,\sigma_q)\sigma_q^{-1}P(t,z,\epsilon,\partial_z,\sigma_q)u^{\mathfrak{d}_{p}}(t,z,\epsilon)=\sigma_q\textbf{f}(t,z,\epsilon),
\end{equation}
whose coefficients and forcing term $\textbf{f}$ are analytic functions on $D(0,r_{\mathcal{T}})\times H_{\beta'}\times D(0,\epsilon_0)$. Moreover, the formal power series $\hat{u}(t,z,\epsilon)$, constructed in Theorem~\ref{teo1281} formally solves equation (\ref{e1450}).
\end{theo}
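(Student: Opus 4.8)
The plan is to verify Theorem~\ref{teofin} by composing the two Cauchy problems at the level of the function $u^{\mathfrak{d}_{p}}$ and the forcing terms, then transferring the identity to the formal level. First I would recall that, by Theorem~\ref{teo872}, each $u^{\mathfrak{d}_{p}}(t,z,\epsilon)$ is a holomorphic solution of (\ref{e771}) on $\mathcal{T}\times H_{\beta'}\times\mathcal{E}_{p}$, which can be rewritten as
$$P(t,z,\epsilon,\partial_z,\sigma_q)u^{\mathfrak{d}_{p}}(t,z,\epsilon)=\sigma_q f^{\mathfrak{d}_{p}}(t,z,\epsilon),$$
where $P$ is the operator introduced in the statement and $\sigma_q$ acts on the variable $t$. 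Next I would observe that the forcing term $f^{\mathfrak{d}_{p}}$ of the first problem, by the construction in (\ref{e876}) together with the discussion surrounding (\ref{e149b}) and (\ref{e771h}), is itself a solution of the second Cauchy problem, namely
$$\textbf{P}(t,z,\epsilon,\partial_z,\sigma_q)f^{\mathfrak{d}_{p}}(t,z,\epsilon)=\sigma_q\textbf{f}(t,z,\epsilon),$$
valid on $\mathcal{T}\times H_{\beta'}\times\mathcal{E}_{p}$, after possibly shrinking $r_{\mathcal{T}}$ so that this holds on $D(0,r_{\mathcal{T}})\times H_{\beta'}\times D(0,\epsilon_0)$ for the data. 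Applying $\sigma_q^{-1}$ to the first identity gives $\sigma_q^{-1}P(t,z,\epsilon,\partial_z,\sigma_q)u^{\mathfrak{d}_{p}}=f^{\mathfrak{d}_{p}}$, and then applying $\textbf{P}$ to both sides and using the second identity yields
$$\textbf{P}(t,z,\epsilon,\partial_z,\sigma_q)\sigma_q^{-1}P(t,z,\epsilon,\partial_z,\sigma_q)u^{\mathfrak{d}_{p}}(t,z,\epsilon)=\textbf{P}(t,z,\epsilon,\partial_z,\sigma_q)f^{\mathfrak{d}_{p}}(t,z,\epsilon)=\sigma_q\textbf{f}(t,z,\epsilon),$$
which is exactly (\ref{e1450}). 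I would also note that the coefficients $\textbf{c}_{\lambda,\ell}$ and the forcing term $\textbf{f}$ are analytic on $D(0,r_{\mathcal{T}})\times H_{\beta'}\times D(0,\epsilon_0)$ by the inverse Fourier transform properties of Proposition~\ref{prop267} and the bounds (\ref{e223b}) on the $\textbf{C}_{\lambda,\ell}$ and on the $\textbf{F}_n$.

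For the formal statement, I would argue as in the proof of Theorem~\ref{teo1281}: the formal power series $\hat{u}(t,z,\epsilon)=\sum_{m\ge0}h_m(t,z)\epsilon^m/m!$ satisfies (\ref{e771b}) with forcing term $\hat{f}$, which we rewrite as $P\hat{u}=\sigma_q\hat{f}$; on the other hand, $\hat{f}$ is the common $q$-Gevrey asymptotic expansion of the $f^{\mathfrak{d}_{p}}$ (Lemma~\ref{lema1271}), and the $f^{\mathfrak{d}_{p}}$ solve (\ref{e771h}), so by letting $\epsilon\to0$ in the $\epsilon$-derivatives of (\ref{e771h}) — exactly the argument producing the recursion (\ref{e771d}) — one obtains that $\hat{f}$ formally solves $\textbf{P}\hat{f}=\sigma_q\textbf{f}$, where $\textbf{f}$ is the (convergent, hence its own formal expansion) forcing term of (\ref{e771h}). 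Composing the two formal identities in the ring $\mathbb{F}[[\epsilon]]$, applying $\sigma_q^{-1}$ and then $\textbf{P}$, yields $\textbf{P}\sigma_q^{-1}P\hat{u}=\sigma_q\textbf{f}$, which is the formal version of (\ref{e1450}). Here one uses that $\sigma_q^{-1}$ and the differential operators $Q(\partial_z)$, $R_\ell(\partial_z)$, and multiplication by the analytic $c_{\lambda,\ell}$, $\textbf{c}_{\lambda,\ell}$ all act coefficient-wise on $\mathbb{F}[[\epsilon]]$ and therefore commute with taking formal expansions, so no convergence issue arises.

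The main obstacle, and the only genuinely delicate point, is justifying that $f^{\mathfrak{d}_{p}}$ really solves the second Cauchy problem (\ref{e771h}) on the \emph{same} domain where $u^{\mathfrak{d}_{p}}$ is defined, and in particular that the radius $r_{\mathcal{T}}$ and the sectors $\mathcal{E}_{p}$, $\mathcal{R}_{\mathfrak{d}_{p},\tilde{\delta}}$ can be chosen to serve simultaneously for the construction of $u^{\mathfrak{d}_{p}}$ (Section~\ref{seccion5}) and for the construction of $f^{\mathfrak{d}_{p}}$ as output of the chain $\mathcal{L}_{q;1/k_2}\circ\mathcal{L}_{q;1/\kappa}$ applied to $\boldsymbol{\psi}_{k_1}^{\mathfrak{d}_{p}}$. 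This requires checking that the smallness conditions $\tilde{\textbf{C}}_{\lambda,\ell}\le\boldsymbol{\zeta}_{\lambda,\ell}$, $\textbf{C}_{\boldsymbol{\psi}_{k_1}}\le\boldsymbol{\zeta}_{\boldsymbol{\psi}_{k_1}}$ and the geometric conditions 1), 2) on $\textbf{P}_m$ relative to the sectors $U_{\mathfrak{d}_{p}}$ are compatible with the analogous conditions already imposed for $P$; since $F(T,m,\epsilon)$ is a \emph{finite}-radius convergent object after the $q$-Borel transform (indeed (\ref{e1458}) identifies $\textbf{F}$ with a polynomial), the required bounds follow from the construction of $F$ as the Borel–Laplace resummation of $F(T,m,\epsilon)$ and the $q$-difference recursion it satisfies. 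Once these compatibility checks are in place, the composition argument above is purely algebraic and immediate.
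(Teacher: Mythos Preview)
Your proposal is correct and follows essentially the same approach as the paper: both argue that $f^{\mathfrak{d}_{p}}$ solves (\ref{e771h}) (the paper does this by checking that $F^{\mathfrak{d}_{p}}$ solves (\ref{e149b}) via (\ref{e1458}) and Proposition~\ref{prop259}), then compose with $P u^{\mathfrak{d}_{p}}=\sigma_q f^{\mathfrak{d}_{p}}$ from Theorem~\ref{teo872}, and for the formal part both invoke that $\hat{u}$ solves (\ref{e771b}) and that $\hat{f}$ formally solves $\textbf{P}\hat{f}=\sigma_q\textbf{f}$. Your write-up is in fact more explicit than the paper's about the compatibility of domains and about how the formal identity $\textbf{P}\hat{f}=\sigma_q\textbf{f}$ is obtained (via the $\epsilon$-derivative/limit argument of Theorem~\ref{teo1281}), which the paper simply asserts.
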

\begin{proof}
For the first part of the proof, one can check that $F^{\mathfrak{d}_{p}}(T,m,\epsilon)$, as defined in (\ref{e875b}) is an analytic solution of equation (\ref{e149b}). This assertion comes from the equality (\ref{e1458}) and the application of the properties of the $q-$Laplace transform, stated in Proposition~\ref{prop259} in the same manner as it has been done throughout the work, so we omit the details at this point. Then, the first part of the result is straight from (\ref{e771h}) and Theorem~\ref{teo872}. In order to prove that $\hat{u}(t,z,\epsilon)$ provides a formal solution of equation (\ref{e1450}) one takes into account that $\hat{u}(t,z,\epsilon)$ formally solves equation (\ref{e771}) and  that $\hat{f}(t,z,\epsilon)$, constructed in Lemma~\ref{lema1271} formally solves equation
$$\textbf{P}(t,z,\epsilon,\partial_z,\sigma_q)\hat{f}(t,z,\epsilon)=\sigma_q\textbf{f}(t,z,\epsilon).$$
\end{proof}

\end{document}